\documentclass[10pt]{amsart}
\usepackage{xypic,amscd,amssymb,latexsym}
\usepackage{amsmath}	
\usepackage{graphicx}
\usepackage{pdfsync}
\setlength{\parindent}{0.5em}

\usepackage{cancel}

\usepackage{pstricks}
\usepackage{pst-poly}

\usepackage[breaklinks=true]{hyperref}

\usepackage{url}

\usepackage{setspace}

\usepackage{changepage}

\addtolength{\oddsidemargin}{-.65in}
\addtolength{\evensidemargin}{-.65in}

\addtolength{\textwidth}{1.35in}
\addtolength{\textheight}{1.35in}
\addtolength{\topmargin}{-0.9in}

\begin{document}
\pagenumbering{arabic}

\newtheorem{theorem}{Theorem}[section]
\newtheorem{proposition}[theorem]{Proposition}
\newtheorem{lemma}[theorem]{Lemma}
\newtheorem{corollary}[theorem]{Corollary}
\newtheorem{remark}[theorem]{Remark}
\newtheorem{definition}[theorem]{Definition}
\newtheorem{conjecture}[theorem]{Conjecture}

\def\qed{{\quad \vrule height 8pt width 8pt depth 0pt}}

\newcommand{\cplx}[0]{\mathbb{C}}

\newcommand{\vs}[0]{\vspace{2mm}}

\newcommand{\til}[1]{\widetilde{#1}}

\newcommand{\mcal}[1]{\mathcal{#1}}

\newcommand{\ul}[1]{\underline{#1}}

\newcommand{\ol}[1]{\overline{#1}}

\newcommand{\wh}[1]{\widehat{#1}}


\author{So Young Cho}

\address{Department of Mathematics, Ewha Womans University, 52 Ewhayeodae-gil, Seodaemun-gu, Seoul 03760, Republic of Korea}

\email[So Young Cho]{csy961015@gmail.com}

\author{Hyuna Kim}

\address{Department of Mathematics, Ewha Womans University, 52 Ewhayeodae-gil, Seodaemun-gu, Seoul 03760, Republic of Korea}

\email[Hyuna Kim]{lunamartina@naver.com}

\author[Hyun Kyu Kim]{Hyun Kyu Kim}

\address{Department of Mathematics, Ewha Womans University, 52 Ewhayeodae-gil, Seodaemun-gu, Seoul 03760, Republic of Korea}

\email[Hyun Kyu Kim]{hyunkyukim@ewha.ac.kr, hyunkyu87@gmail.com}

\author{Doeun Oh}

\address{Department of Mathematics, Ewha Womans University, 52 Ewhayeodae-gil, Seodaemun-gu, Seoul 03760, Republic of Korea}

\email[Doeun Oh]{doeun007@hanmail.net}

\numberwithin{equation}{section}

\title[Laurent positivity of quantized canonical bases for quantum cluster varieties]{Laurent positivity of quantized canonical bases for quantum cluster varieties from surfaces}

\begin{abstract}
In 2006, Fock and Goncharov constructed a nice basis of the ring of regular functions on the moduli space of framed ${\rm PGL}_2$-local systems on a punctured surface $S$. The moduli space is birational to a cluster $\mathcal{X}$-variety, whose positive real points recover the enhanced Teichm\"uller space of $S$. Their basis is enumerated by integral laminations on $S$, which are collections of closed curves in $S$ with integer weights. Around ten years later, a quantized version of this basis, still enumerated by integral laminations, was constructed by Allegretti and Kim. For each choice of an ideal triangulation of $S$, each quantum basis element is a Laurent polynomial in the exponential of quantum shear coordinates for edges of the triangulation, with coefficients being Laurent polynomials in $q$ with integer coefficients. We show that these coefficients are Laurent polynomials in $q$ with positive integer coefficients. Our result was expected in a positivity conjecture for framed protected spin characters in physics and provides a rigorous proof of it, and may also lead to other positivity results, as well as categorification. A key step in our proof is to solve a purely topological and combinatorial ordering problem about an ideal triangulation and a closed curve on $S$. For this problem we introduce a certain graph on $S$, which is interesting in its own right.
\end{abstract}

\maketitle

\vspace{-7mm}

\tableofcontents

\section{Introduction}

\subsection{Background: quantum Teichm\"uller theory}

Fock and Goncharov \cite{FG06} \cite{FG09} defined three kinds of {\em cluster varieties}, denoted by the letters $\mathcal{A}, \mathcal{D}, \mathcal{X}$, associated to any given {\em exchange matrix $(\varepsilon_{ij})_{i,j}$}, which is a skew-symmetrizable square matrix with integer entries. Among them is an important class of special cases which are birational to some moduli spaces $\mathcal{A}_{G,S}$, $\mathcal{X}_{G,S}$, $\mathcal{D}_{G,S}$ associated to punctured surfaces $S$ and reductive algebraic groups $G$. Roughly speaking, $\mathcal{A}_{G,S}$ and $\mathcal{X}_{G,S}$ are some versions of moduli spaces of $G$-local systems on $S$. So, a point on one of these moduli spaces consists of a monodromy representation $\pi_1(S)\to G$ of the fundamental group of $S$ into $G$ defined up to conjugation in $G$, satisfying some conditions, together with certain data at the punctures of $S$. Meanwhile, each cluster variety is obtained by gluing affine varieties along birational maps given by explicit formulas that follow a certain pattern, which involves only mutliplication, division, and addition, but not subtraction. Hence for each semifield $K$, e.g. $K = \mathbb{R}_{>0}=$the positive reals, one can ask for the set of $K$-points of a cluster variety. Let $S$ be an oriented punctured surface, say a compact genus $g$ surface minus $s$ points, with $s\ge 1$ and $2-2g-s<0$. In case $G= {\rm SL}_2$ or ${\rm PGL}_2$, the sets of $\mathbb{R}_{>0}$-points of the corresponding cluster $\mathcal{A}$- and $\mathcal{X}$-varieties recover some versions of the classical Teichm\"uller space of $S$
\begin{align*}
\mathcal{A}_{{\rm SL}_2,S}(\mathbb{R}_{>0}) & \cong \mbox{Penner's decorated Teichm\"uller space of $S$}, \\
\mathcal{X}_{{\rm PGL}_2,S}(\mathbb{R}_{>0}) & \cong \mbox{the enhanced Teichm\"uller space of $S$},
\end{align*}
where by Teichm\"uller space we mean the set of all faithful group homomorphisms $\pi_1(S) \to {\rm PSL}_2(\mathbb{R})$ with discrete image, defined modulo conjugation in ${\rm PSL}_2(\mathbb{R})$. The above two versions have certain restrictions on the monodromy around punctures, and some extra data at punctures.

\vs

One of the major achievements of the paper \cite{FG06} is a certain `duality' map
\begin{align}
\label{eq:I}
\mathbb{I} : \mathcal{A}_{{\rm SL}_2,S}(\mathbb{Z}^t) \hookrightarrow \mathcal{O}(\mathcal{X}_{{\rm PGL}_2,S}),
\end{align}
where $\mathbb{Z}^t$ denotes the semifield of tropical integers. The left hand side $\mathcal{A}_{{\rm SL}_2,S}(\mathbb{Z}^t)$ is in bijection with $\mathbb{Z}^n$ as a set for some positive integer $n$, and has a natural geometric realization as the set of all {\em even integral laminations} on $S$. An integral lamination is a collection of nontrivial homotopy classes of non-intersecting closed curves on $S$ with integer weights on curves, satisfying some conditions, and `even' refers to a certain parity condition on weights; see Def.\ref{def:integral_laminations} for a precise definition. Fock and Goncharov naturally assigned to each even integral lamination $\ell$ an element $\mathbb{I}(\ell)$ of $\mathcal{O}(\mathcal{X}_{{\rm PGL}_2,S})$, i.e. a regular function on the moduli space $\mathcal{X}_{{\rm PGL}_2,S}$. For example, if $\ell$ consists of a single loop $\gamma$ not homotopic to a puncture, with positive integer weight $k$, the regular function $\mathbb{I}(\ell)$ is given by the trace of the monodromy along $\gamma^k = \gamma.\gamma.\cdots.\gamma$ ($k$ times winding around $\gamma$), i.e. $\mathbb{I}(\ell)(\rho)={\rm Trace}(\til{\rho}([\gamma^k]))$, $\forall \rho \in \mathcal{X}_{{\rm PGL}_2,S}$, where $\til{\rho} : \pi_1(S) \to {\rm SL}_2$ is a certain lift of the monodromy representation $\rho : \pi_1(S) \to {\rm PGL}_2$. When $\ell$ consists of several non-intersecting non-homotopic loops, then $\mathbb{I}(\ell)$ is defined as the product of these functions for each constituent loop. They showed that these $\mathbb{I}(\ell)$'s are indeed regular, form a $\mathbb{Q}$-basis of the ring $\mathcal{O}(\mathcal{X}_{{\rm PGL}_2,S})$ of all regular functions, and that they also satisfy a number of favorable properties. Recently, using ideas from mirror symmetry, Gross, Hacking, Keel, and Kontsevich \cite{GHKK} constructed a duality map $\mathcal{A}(\mathbb{Z}^t) \hookrightarrow \mathcal{O}(\mathcal{X})$ for more general cluster varieties, which is expected to specialize to the above map $\mathbb{I}$ in eq.\eqref{eq:I}.

\vs

Let us give a little more detail on what is a `regular' function on the moduli space $\mathcal{X}_{{\rm PGL}_2,S}$. By construction, the charts of an atlas of this moduli space are enumerated by {\em ideal triangulations} of $S$. An {\em ideal arc} is a homotopy class of unoriented non-nullhomotopic paths running between punctures, where the two endpoint punctures need not be distinct. An ideal triangulation is a maximal collection of ideal arcs that mutually do not intersect in their interior parts, i.e. may intersect only at punctures. An ideal triangulation divides $S$ into regions called {\em ideal triangles}, each of which is bounded by three not-necessarily distinct ideal arcs. For a chosen ideal triangulation $T$ of $S$, for each ideal arc $e$ constituting it, one assigns a coordinate function $X_e$. The affine variety associated to $T$ is the {\em split algebraic torus} given by the ${\rm Spec}$ of the ring of all Laurent polynomials in the variables $X_e$'s ($e\in T$) with coefficients in $\mathbb{Q}$, and the moduli space $\mathcal{X}_{{\rm PGL}_2,S}$ is birational to the cluster $\mathcal{X}$-variety which is obtained by gluing these tori by some birational maps. It is shown \cite{FG06} that a {\em regular} function on the moduli space $\mathcal{X}_{{\rm PGL}_2,S}$ can be written, for {\em each} ideal triangulation $T$, as a Laurent polynomial in $X_e$'s ($e\in T$) with coefficients in $\mathbb{Q}$; hence a regular function is said to be {\em universally Laurent}. In Teichm\"uller theory, $X_e$ corresponds to the exponential of the {\em shear coordinate function} along the ideal arc $e$, studied by Penner and Thurston in 1980's \cite{P87} \cite{Thurs} \cite{P}. Fock and Goncharov showed that the above mentioned $\mathbb{I}(\ell)$ which can be viewed as a function on the enhanced Teichm\"uller space and which is essentially given by the trace of monodromy, can be written as a Laurent polynomial over $\mathbb{Z}$ in these exponential shear coordinates, for each ideal triangulation $T$. Moreover, in their proof, they explicitly write down the monodromy $\rho(\gamma)$ of each loop in terms of $X_e$'s, and it is manifest that $\mathbb{I}(\ell)$ is a Laurent polynomial in $X_e$'s with {\em positive} integer coefficients.

\vs

Moving forward, as a step toward a deformation quantization of the moduli space $\mathcal{X}_{{\rm PGL}_2,S}$ with respect to a canonical Poisson structure, Fock and Goncharov \cite{FG09} obtained a quantum version of $\mathcal{X}_{{\rm PGL}_2,S}$. For each triangulation $T$, they first deformed the classical ring of regular functions on the torus associated to $T$, i.e. a commutative Laurent polynomial ring, to a family of non-commutative rings, given by the ring of Laurent polynomials in non-commuting variables $\wh{X}_e$'s ($e\in T$) with coefficients being in $\mathbb{Z}[q,q^{-1}]$. Here $q$ is a quantum parameter, which can be thought of as being a formal symbol, where $q=1$ or $q\to 1$ represents the `classical limit'. The new variables satisfy the relations $\wh{X}_e \wh{X}_f = q^{2\varepsilon_{ef}} \wh{X}_f \wh{X}_e$, $\forall e,f\in T$, where $\varepsilon_{ef}$ is an integer encoding the combinatorics of the triangulation $T$. Then they also deformed the classical gluing birational maps to some non-commutative birational maps between the above `non-commutative tori', in a consistent manner. So the result may be thought of as having a `non-commutative' variety, say $\mathcal{X}_{{\rm PGL}_2,S}^q$, where this symbol actually denotes the ring of `quantum' regular functions on this quantized variety, that is, the ring of all elements that can be written as Laurent polynomials in $\wh{X}_e$'s ($e\in T$) with coefficients in $\mathbb{Z}[q,q^{-1}]$, for each $T$. The word `quantum Teichm\"uller space' may vaguely refer to this ring $\mathcal{X}^q_{{\rm PGL}_2,S}$.

\subsection{The main result}

Fock and Goncharov \cite{FG06} conjectured the existence of a quantum version of the classical duality map $\mathbb{I}$ in eq.\eqref{eq:I}, 
$$
\wh{\mathbb{I}}^q : \mathcal{A}_{{\rm SL}_2,S}(\mathbb{Z}^t) \hookrightarrow \mathcal{X}^q_{{\rm PGL}_2,S},
$$
with favorable properties analogous to those satisfied by $\mathbb{I}$, plus the condition that it recovers $\mathbb{I}$ in the classical limit $q\to 1$. One major consequence of this is that it yields a deformation quantization map
\begin{align}
\label{eq:deformation_quantization}
\mathcal{O}(\mathcal{X}_{{\rm PGL}_2,S}) \to \mathcal{X}^q_{{\rm PGL}_2,S}
\end{align}
for the Poisson moduli space $\mathcal{X}_{{\rm PSL}_2,S}$, defined as sending each basis element $\mathbb{I}(\ell)$ to $\wh{\mathbb{I}}^q(\ell)$, for each $\ell \in \mathcal{A}_{{\rm SL}_2,S}(\mathbb{Z}^t)$ \footnote{This simple observation, which has not been emphasized so much in the literature, was found during a discussion of the third author with Carlos Scarinci.}. That is, $\mathcal{O}(\mathcal{X}_{{\rm PGL}_2,S})$ is the algebra of classical observables, and the map \eqref{eq:deformation_quantization} gives an answer to the `quantum ordering problem' of how assign to each classical observable a quantum observable, such that the map does not depend on the choice of an ideal triangulation. This conjecture, which was believed to have much importance, remained open for about 10 years. In \cite{AK}, Dylan Allegretti and Hyun Kyu Kim, the third author of the present paper, constructed one such map $\wh{\mathbb{I}}^q$ for the first time, building on the work of Bonahon and Wong \cite{BW}, and showed that it satisfies many of the desired properties. In particular, for each even integral lamination $\ell$ on $S$, they constructed an element $\wh{\mathbb{I}}^q(\ell)$ which, for each triangulation $T$, can be written as a Laurent polynomial in $\wh{X}_e$'s ($e\in T$) with each coefficient being a Laurent polynomial in $q$ with integer coefficients, which in case $q=1$ coincides with the Laurent polynomial expression of $\mathbb{I}(\ell)$ in variables $X_e$'s ($e\in T$), under the identification $\wh{X}_e \leftrightarrow X_e$. As mentioned earlier, $\mathbb{I}(\ell)$ is a Laurent polynomial in $X_e$'s ($e\in T$) with {\em positive} integer coefficients; so it is a natural question to ask if this positivity phenomenon persists in the quantum version too. 

\vs

Indeed it does, and that is the main result of the present paper.
\begin{theorem}[main result: `Laurent' positivity of Allegretti-Kim quantum elements]
\label{thm:main}
For each even integral lamination $\ell \in \mathcal{A}_{{\rm SL}_2,S}(\mathbb{Z}^t)$ and each ideal triangulation $T$ of an oriented punctured surface $S$, the Allegretti-Kim quantum element $\wh{\mathbb{I}}^q(\ell) \in\mathcal{X}^q_{{\rm PGL}_2,S}$ constructed in \cite{AK}, corresponding to the Fock-Goncharov regular function $\mathbb{I}(\ell) \in \mathcal{O}(\mathcal{X}_{{\rm PGL}_2,S})$, is a Laurent polynomial in the quantum cluster $X$-variables $\wh{X}_e$'s ($e\in T$) with each coefficient being an element of $\mathbb{Z}_{\ge 0}[q,q^{-1}]$, i.e. a Laurent polynomial in $q$ with \ul{\em positive} integer coefficients.
\end{theorem}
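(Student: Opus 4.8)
The plan is to reduce the statement to a positivity statement about an explicit combinatorial/operadic formula for $\wh{\mathbb{I}}^q(\ell)$, and then to prove that positivity by controlling the noncommutative monomial reorderings that arise. First I would recall the construction of $\wh{\mathbb{I}}^q(\ell)$ from \cite{AK}: for a single loop $\gamma$ it is built from the Bonahon--Wong quantum trace, which expresses the trace of the monodromy as a sum, over certain states on the intersection points of $\gamma$ with the edges of $T$, of products of ``elementary'' matrix entries attached to the triangles crossed by $\gamma$, each entry being a monomial in the $\wh{X}_e^{\pm 1}$ (more precisely in square-root variables) with coefficient a power of $q$. The total expression, after collecting terms, is a Laurent polynomial in the $\wh{X}_e$ whose coefficients are Laurent polynomials in $q$. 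Positivity of the \emph{classical} coefficients is manifest from Fock--Goncharov's monodromy formula; the content here is that no cancellation among the powers of $q$ can occur once one fixes a $\wh{X}$-monomial and sums the contributions that land on it.

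The key technical point is a \textbf{Weyl-ordering / reordering} argument. Each summand in the state-sum is an ordered product of monomials read off along $\gamma$; to compare two summands contributing to the same Laurent monomial in $\wh{X}_e$, one must commute factors past each other, which introduces factors $q^{2\varepsilon_{ef}}$. The claim is that, after normalizing every monomial to its Weyl-ordered (balanced) form $[\,\wh{X}^{\mathbf{a}}\,] := q^{-(\text{something})}\wh{X}^{\mathbf{a}}$, the resulting $q$-coefficient of each summand is a \emph{single monomial in $q$ with coefficient $+1$} (or a positive integer, from genuine multiplicity of states), so that the total coefficient of $[\wh{X}^{\mathbf{a}}]$ is a sum of positive powers of $q$ — hence lies in $\mathbb{Z}_{\ge 0}[q,q^{-1}]$. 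Establishing this requires showing that the ``correction exponent'' picked up in passing from the naive ordered product to the Weyl-ordered form is exactly accounted for, and in particular is \emph{independent of the order in which one reads the intersection points off $\gamma$}. This is precisely the ``topological and combinatorial ordering problem'' advertised in the abstract: one needs a canonical way to order the intersections of $\gamma$ with $T$ (and the triangle-contributions) so that all the commutator corrections are consistent, and the paper's new graph on $S$ is presumably the device that provides and proves the well-definedness of this ordering.

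After the single-loop case, I would handle a general even integral lamination $\ell = \sum k_i \gamma_i$ by the multiplicativity properties of $\wh{\mathbb{I}}^q$ established in \cite{AK}: $\wh{\mathbb{I}}^q(\ell)$ is (up to a $q$-power normalization) a product of the $\wh{\mathbb{I}}^q(\gamma_i)$'s and of quantum Chebyshev-type polynomials applied to $\wh{\mathbb{I}}^q(\gamma_i)$ to implement the weight $k_i$. Since a product of elements of $\mathcal{X}^q_{{\rm PGL}_2,S}$ with coefficients in $\mathbb{Z}_{\ge 0}[q,q^{-1}]$, when expanded and Weyl-normalized, again has coefficients in $\mathbb{Z}_{\ge 0}[q,q^{-1}]$ (the reordering introduces only positive $q$-powers, again by the ordering lemma), and since the relevant quantum Chebyshev polynomials have nonnegative coefficients, positivity propagates. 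I also need the compatibility with punctures: loops encircling punctures and the ``elementary'' laminations there must be checked directly, but these are low-complexity cases with explicit formulas. The main obstacle, and the heart of the paper, is the ordering lemma — proving that the intersection pattern of $\gamma$ with an arbitrary ideal triangulation admits a consistent linear order controlling all the $q$-commutator corrections, so that the per-summand $q$-coefficient is genuinely a single positive monomial; everything else is bookkeeping built on \cite{AK} and on standard positivity of Chebyshev polynomials.
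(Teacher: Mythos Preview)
Your overall architecture matches the paper's: reduce to single non-peripheral loops via multiplicativity, apply the Bonahon--Wong quantum trace state-sum, identify a combinatorial ordering problem as the crux, then handle weights via Chebyshev polynomials and peripheral loops directly. But there are two genuine gaps.

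First, your account of what the ordering lemma does is off-target. You describe each state-sum term as a product of triangle contributions, each a monomial with $q$-power coefficient, and frame the problem as controlling ``commutator corrections'' in passing to Weyl-ordered form. If that were the whole story, positivity would be automatic: a sum of Weyl-ordered monomials, each with coefficient a single power of $q$, trivially has coefficients in $\mathbb{Z}_{\ge 0}[q,q^{-1}]$, and reordering in a quantum torus only ever introduces $q$-powers. What you are missing is the \emph{biangle factors} in the Bonahon--Wong formula. To compute ${\rm Tr}^\omega_T$ one isotopes the link into a good position over the split triangulation $\wh{T}$, choosing elevations for the loop segments over each triangle; the state-sum (eq.~\eqref{eq:BW_formula}) then carries, besides the triangle monomials, a scalar ${\rm Tr}^\omega_{B_i}([K\cap(B_i\times[0,1]),\tau_i]) \in \mathbb{Z}[\omega,\omega^{-1}]$ for each biangle, and these can be genuinely negative (the formula \eqref{eq:BW_biangle_formula} involves $(-1)^{b^+_- + c^-_+}$ and $(-\omega^4-\omega^{-4})^d$). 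The ordering theorem (Thm.~\ref{thm:ordering_problem}) says one can choose the triangle-orderings compatibly at every ideal arc; this forces the diagram of $K$ over each biangle to be disjoint parallel strands, so every biangle factor is $0$ or $1$. That is the mechanism: the ordering kills the minus signs in the biangle contributions, not commutator exponents.

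Second, your assertion that ``the relevant quantum Chebyshev polynomials have nonnegative coefficients'' is false: $F_2(x)=x^2-2$, $F_3(x)=x^3-3x$, and so on. The paper explicitly flags this and invokes a separate argument (Prop.~\ref{prop:positivity_of_non-peripheral_k}, taken from the first arXiv version \cite{AK1}) that $\mathbb{I}^\omega_T(\ell)\in(\mathcal{Z}^\omega_T)^+$ implies $F_k(\mathbb{I}^\omega_T(\ell))\in(\mathcal{Z}^\omega_T)^+$; this uses the specific structure of $\mathbb{I}^\omega_T(\ell)$, not just its positivity, and is not immediate.
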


In fact, this main theorem holds for a little more general class of surfaces than just punctured surfaces, as appropriate for the theory of cluster varieties. Namely, we allow $S$ to have circular boundary components with marked points (see Def.\ref{def:decorated_surface}). In this introduction, we restrict ourselves to the punctured surfaces, to simplify the discussion.

\vs

Notice that the statement is not obvious. For example, for a given classical expression $\mathbb{I}(\ell) = X_e X_f + 2 X_e X_f^{-1} + X_e^{-1} X_f^{-1}$, there may be many possible quantum expressions that recover the classical one as $q\to 1$, like $q \wh{X}_e \wh{X}_f + 2 \wh{X}_e \wh{X}_f^{-1} + q \wh{X}_e^{-1} \wh{X}_f^{-1}$, or $q \wh{X}_e \wh{X}_f + (q^3+q^{-3}) \wh{X}_e \wh{X}_f^{-1} + q \wh{X}_e^{-1} \wh{X}_f^{-1}$, or even something like $q \wh{X}_e \wh{X}_f + (q^3+q^{-3}+q^5+q^{-5}-2) \wh{X}_e \wh{X}_f^{-1} + q \wh{X}_e^{-1} \wh{X}_f^{-1} + (2-q-q^{-1}) \wh{X}_e^{-1} \wh{X}_f$. The properties of Allegretti-Kim's map $\wh{\mathbb{I}}^q$ which are omitted in the above discussion but are proven in \cite{AK} give good restriction to what $\wh{\mathbb{I}}^q(\ell)$ can be among all such possible quantum expressions, but do not precisely pin down one answer. One can easily see that our Thm.\ref{thm:main} gives quite strong an extra restriction on what $\wh{\mathbb{I}}^q(\ell)$ can be, since terms like $(2-q-q^{-1}) \wh{X}_e^{-1} \wh{X}_f$ are not allowed anymore. We expect that this restriction will help us when studying other properties of $\wh{\mathbb{I}}^q(\ell)$, see \S\ref{sec:further_research}; for example, if we write $\mathbb{I}(\ell)$ as the sum of monomials with coefficient $1$, then we now know that the deformation quantization map $\mathbb{I}(\ell) \mapsto \wh{\mathbb{I}}^q(\ell)$ is a certain term-by-term quantization, replacing each monomial with a quantum monomial with a $q$-power coefficient.

\vs

The problem of Laurent positivity of universally Laurent expressions is more widely known in the case of cluster $\mathcal{A}$ varieties, for they are more directly related to cluster algebras. A classical case is proved by Lee and Schiffler \cite{LS}, and a quantum case is proved by Davison \cite{Davison}. Lee-Schiffler's positivity also follows as a consequence of a result of \cite{GHKK} under some condition.

\vs

We note that the Laurent positivity for quantum regular functions on cluster $\mathcal{X}$ varieties from surfaces, which we proved in the present paper, is closely related to what is called the `strong positivity conjecture', related to `framed BPS states' and `framed protected spin characters' in the physics literature \cite{GMN} \footnote{This is pointed out to the third author by Dylan Allegretti.}. The framed protected spin characters in physics are expected to coincide with the coefficients $\in \mathbb{Z}[q,q^{-1}]$ of monomials in $\wh{X}_e$'s in the above element $\wh{\mathbb{I}}^q(\ell)$; the classical version of such correspondence is partially established in \cite{Al2}. Recently, Gabella \cite{Gabella} constructed a `quantum holonomy' for closed loops on punctured surfaces, in a way which is qualitatively quite different from those of Allegretti-Kim and Bonahon-Wong, using ideas from physics; the equality of Gabella's quantum holonomy and Allegretti-Kim's $\wh{\mathbb{I}}^q(\ell)$ is proved in the joint work \cite{KS} of the third author and Miri Son. One of Gabella's assertions is the Laurent positivity of the coefficients of his quantum holonomy. However, Gabella's proof of positivity \cite[\S6.4]{Gabella} is only very cursory and does not deal with all possible complications which might arise; see \S\ref{subsec:BW} of the present paper. We claim that dealing with such complications is actually the main difficulty, and that only our present work, together with \cite{KS}, provides a sound proof of his positivity assertion. 

\vs

We also note that the (quantum) Laurent positivity was proved in the `disk case' in \cite[Thm.4.7]{Al}.

\vs

As usual for positivity results in general, our main theorem hints to the existence of a categorification of each of the quantized basis element $\wh{\mathbb{I}}^q(\ell)$. We note that a categorification of the classical counterpart $\mathbb{I}(\ell)$ was partially established in \cite{Al2}.

\subsection{Turning into a topological and combinatorial ordering problem}

To explain our approach to proof of the main theorem Thm.\ref{thm:main}, we first review Allegretti-Kim's construction \cite{AK} of $\wh{\mathbb{I}}^q(\ell)$, which uses Bonahon-Wong's work \cite{BW}, which related the {\em skein algebra} of a punctured surface $S$ to the quantum Teichm\"uller space $\mathcal{X}^q_{{\rm PGL}_2,S}$. A caveat is that the discussion here is a short survey which is not completely precise but is meant to give the readers a rough idea only. Precise notations and constructions can be found in \S\ref{sec:applying} of the present paper.

\vs

The skein algebra $\mathcal{S}^A(S)$, for a parameter $A \in \mathbb{C}^*$,  is generated by {\em skeins}, which are isotopy classes of framed links in the three-dimensional space $S \times [0,1]$, satisfying some conditions. A link is a disjoint union of finitely many non-intersecting closed curves, and a framing on a link is a continuous choice of a tangent vector to $S\times [0,1]$ at each point of the link, so that the vector does not live in the tangent space to the link. One can thus view a framed link as being `a link that knows how much it is twisted', or a link with thickness, e.g. a `ribbon link'. Multiplication of two skeins is defined as vertically stacking one over the other, modded out by certain relations called the `skein relations', in which the parameter $A$ appears. For each skein, first deform the framing to `upward vertical framing', and project down the framed link to $S$; this way one can record a skein as a diagram on $S$, with `crossings' which indicate the different {\em elevations} $\in[0,1]$ of segments. A skein having crossings can be `resolved' to linear combination of skeins without crossings, with the help of skein relations.

\vs

For each skein, and for each chosen ideal triangulation $T$ of $S$, Bonahon and Wong constructed an algorithm to obtain a Laurent polynomial in $\wh{Z}_e$'s ($e\in T$) with coefficients in $\mathbb{Z}[\omega,\omega^{-1}]$, where $\omega^4 = q$, and $\wh{Z}_e$ is the square-root quantum variable satisfying $\wh{Z}_e^2 = \wh{X}_e$. They showed that, if we chose a different triangulation $T'$, then the resulting Laurent polynomial in the square-root quantum variables for $T'$ is related to the expression for $T$ via a square-root version of the quantum birational map between the quantum tori constructed in quantum Teichm\"uller theory, mentioned in the previous subsection. Thus, in a sense their (universally) Laurent polynomial expression for a skein is independent of the choice of $T$.

\vs

Let us denote by $\mathcal{Z}^\omega_{{\rm PGL}_2,S}$ the ring of all `quantum functions' that can be written as Laurent polynomials in $\wh{Z}_e$'s ($e\in T$) with coefficients in $\mathbb{Z}[\omega,\omega^{-1}]$, for each triangulation $T$. In particular, $\mathcal{X}^q_{{\rm PGL}_2,S}$ is a subring of $\mathcal{Z}^\omega_{{\rm PGL}_2,S}$. Bonahon and Wong's result \cite{BW} can be written as an algebra map
$$
{\rm Tr}^\omega_S : \mathcal{S}^A(S) \to \mathcal{Z}^\omega_{{\rm PGL}_2,S},
$$
where the parameter $A$ is put to be $\omega^{-2}$. What Allegretti and Kim \cite{AK} did is, given an integral lamination $\ell$ on $S$, for each constituent curve $\gamma$ of weight $1$ not retractible to a puncture, lift it to a skein $[\til{\gamma}]$ by giving it a constant elevation and the upward vertical framing everywhere, and then apply Bonahon-Wong's map to obtain an element ${\rm Tr}^\omega_S([\til{\gamma}])$ of $\mathcal{Z}^\omega_{{\rm PGL}_2,S}$; in particular, we deal with skeins having no crossings in their projected diagrams. Other constituent curves of $\ell$ are dealt with appropriately. So each constituent curve of $\ell$ gets assigned an element of $\mathcal{Z}^\omega_{{\rm PGL}_2,S}$; the quantum element $\mathbb{I}^\omega(\ell)$ is defined to be the product of all these elements. Meanwhile, for an even integral lamination $\ell$, it is shown in \cite{AK} that the element $\mathbb{I}^\omega(\ell)\in \mathcal{Z}^\omega_{{\rm PGL}_2,S}$ constructed this way lies in the subalgebra $\mathcal{X}^q_{{\rm PGL}_2,S}$, as a consequence of parity consideration. The Allegretti-Kim quantum element for an even integral lamination $\ell$ is then defined to be $\wh{\mathbb{I}}^q(\ell):=\mathbb{I}^\omega(\ell) \in \mathcal{X}^q_{{\rm PGL}_2,S}$. Some basic properties of these quantum elements follow immediately from Bonahon-Wong's results, and other important properties were proven separately in \cite{AK}.

\vs

To actually compute the image under the Bonahon-Wong map ${\rm Tr}^\omega_S$ of a constituent loop $\gamma$ not retractible to a puncture, we first choose an ideal triangulation $T$ of $S$; the constituent ideal arcs of $T$ divide $\gamma$ into the {\em loop segments}. We then continuously deform $\gamma$ so that each loop segment connects two distinct ideal arcs. Then, each loop segment$\subset S$ will be lifted to $S\times [0,1]$ at some constant elevation, i.e. in $S\times \{h\}$ for some $h\in [0,1]$; we may choose these elevations to be {\em any} numbers in $[0,1]$, under only one condition that the loop segments over one triangle have mutually distinct elevations. If we really chose elevations at loop segments randomly, then we get into trouble at the {\em junctures}, where the loop $\gamma$ meets ideal arcs of $T$. Each juncture is attached to two loop segments living in two triangles, and if these two loop segments are not given the same elevation, the lifted picture in $S\times [0,1]$ will not be continuous. Meanwhile, a {\em juncture-state} is a choice of sign$\in\{+,-\}$ at each juncture. For each juncture-state, for each ideal arc $e$ of $T$, the net sum of signs will be the power of the variable $\wh{Z}_e$; multiplying all these yield a monomial $\wh{Z}_e^a \wh{Z}_f^b \cdots$. According to some rule, an element in $\mathbb{Z}[\omega,\omega^{-1}]$ is assigned as a coefficient of this monomial, and these monomials are summed over all possible juncture-states, to yield the sought-for image ${\rm Tr}^\omega_S([\til{\gamma}])$. 

\vs

We notice that this coefficient might involve `minus' only when at some juncture of an ideal arc of $T$ there is a discrepancy of elevations of loop segments as mentioned. More precisely, what matters is only the ordering on the set of all loop segments in each triangle, induced by the elevations. We find that a sufficient condition for the coefficients to not involve any minus is that these orderings on loop segments in triangles are {\em compatible} at each ideal arc of $T$, that is, for each ideal arc, the ordering on the junctures of this arc induced by the ordering on the set of attached loop segments from one of the two triangles having this arc as one of their sides coincides with that induced by the ordering on loop segments from the other triangle.  See \S\ref{sec:applying} of the present paper for more details and justification of this assertion.

\vs

The major part of the present paper is devoted to show that this compatibility condition can be fulfilled, which is a purely topological and combinatorial problem:

\begin{theorem}[ordering problem for loop segments]
\label{thm:ordering_problem}
Let $T$ be any ideal triangulation of a punctured surface $S$, and let $\gamma$ be a simple closed curve in $S$ not retractible to a puncture or a point in $S$. Continuously deform $\gamma$ so that the ideal arcs of $T$ divide $\gamma$ into `loop segments', each of which connecting two distinct ideal arcs. 

Then, it is possible to give, for each ideal triangle of $T$, an ordering on the set of all loop segments living in this triangle, so that these orderings for triangles are compatible at each ideal arc of $T$ in the above sense.
\end{theorem}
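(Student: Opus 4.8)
The plan is to reformulate the compatibility requirement as a question about acyclicity of a directed graph built from the local orderings, and then produce a global ordering by exhibiting a natural topological structure that certifies no cycles arise. Concretely, I would first record, for a given positioning of $\gamma$ in general position with respect to $T$, the combinatorial data: the set of loop segments, grouped by the triangle they live in, and the set of junctures on each ideal arc $e$, each juncture being shared by exactly two loop segments (one on each side of $e$, unless $e$ bounds the same triangle on both sides, which must be handled as a degenerate case). Choosing an ordering of the loop segments in each triangle is the same as choosing a linear order on each "triangle-block"; the compatibility condition at an arc $e$ says that the two induced orders on the junctures of $e$ — one read off from the triangle on one side, one from the triangle on the other — agree. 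The key observation I would exploit is that inside a single triangle the loop segments are \emph{arcs connecting two of the three sides}, so they come in (at most) three "types" according to which pair of sides they join, and within one type the segments are automatically linearly ordered by how nested they are (they are disjoint arcs in a disk with endpoints on the boundary). The nontrivial content is choosing how to interleave the three types, and then checking these local choices can be made to propagate consistently.

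The main step is to build an auxiliary graph $G$ on $S$ — I expect this is the graph the abstract advertises as "interesting in its own right" — whose role is to linearize the segments globally. I would build $G$ so that its vertices sit at the junctures (or at the loop segments) and its edges record the "forced" order relations: within each triangle, the nesting order of same-type segments forces certain precedences, and I would want the combinatorics of how the three types of a triangle meet its three sides to determine a consistent cyclic or linear pattern around the triangle. The natural guess is that $\gamma$, together with the arcs of $T$, cuts $S$ into regions, and one can orient a dual-type graph so that following it never returns to start; acyclicity of $G$ would then be equivalent to the existence of compatible orderings (topological sort). So the proof structure is: (i) define $G$ and the candidate order relations; (ii) show compatible triangle-orderings exist $\iff$ $G$ has no directed cycle; (iii) prove $G$ has no directed cycle using a topological/geometric invariant.

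For step (iii), the hard part, I would look for a "potential function" or a monotone quantity along any directed path in $G$. A promising candidate is to use the elevations themselves as a heuristic and then replace them with something intrinsic: since $\gamma$ is a \emph{simple} closed curve, a directed cycle in $G$ should correspond to $\gamma$ wrapping around in a way forced to be monotone in elevation yet returning to its start — contradicting simplicity, or contradicting the planarity of the picture inside each triangle (a disk). Equivalently, I expect a directed cycle in $G$ would yield a sequence of loop segments each nested strictly inside the next, eventually closing up, which is impossible in a disk; or it would force a curve transverse to $\gamma$ with nonzero algebraic intersection number with a null-homologous cycle. I would try the homological argument first: assign to the cycle in $G$ a closed transversal, compute its intersection with $\gamma$, and derive a contradiction from $\gamma$ being simple (so the geometric and algebraic intersection of $\gamma$ with anything disjoint from it, or with itself, is controlled). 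The delicate points I anticipate are (a) arcs $e$ of $T$ with the same triangle on both sides, and triangles two of whose sides are the same arc (self-folded triangles), where "the other side" is not well-defined and the juncture count is off; (b) ensuring the within-triangle nesting orders of the three types can always be interleaved locally in a way consistent with \emph{both} incident arcs simultaneously — this is where the graph $G$ does real work rather than being a bookkeeping device; and (c) loop segments that are isotopic/parallel, so the ordering must be chosen (not forced) and one must check the freedom is enough. If the homological obstruction argument does not close immediately, the fallback is an explicit inductive construction: order the triangles by a spanning tree of the dual graph of $T$, fix the ordering in the root triangle arbitrarily, and extend across each edge of the tree, checking that the finitely many "back edges" impose no contradiction — again reducing to acyclicity of $G$.
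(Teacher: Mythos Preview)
Your instinct to introduce an auxiliary graph is right, but both the graph and the argument differ substantially from the paper's, and your step~(iii) is a genuine gap.

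First, the paper's graph (the \emph{regional graph}~$\mathcal{R}$) is not built on junctures or loop segments. Its vertices are the \emph{narrow regions}: the complementary regions of~$\gamma$ inside each triangle whose boundary contains at least one \emph{inner arc segment} (a piece of an ideal arc between two adjacent junctures). Edges of~$\mathcal{R}$ correspond bijectively to inner arc segments, so each vertex has valence $1$, $2$, or~$3$. The paper does \emph{not} argue via acyclicity and topological sort. Instead it assigns to each edge of~$\mathcal{R}$ an orientation together with a weight~$2^m$, with all weights distinct; these transfer to orientations and ``difference numbers'' on inner arc segments, which in turn determine a \emph{dyadic} ordering on the junctures of each arc (the order of two junctures is read off from the single largest~$2^m$ between them). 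The sufficient conditions are local flow rules at vertices of~$\mathcal{R}$ (e.g.\ at a $2$-valent vertex: one incoming edge of weight~$2^m$, one outgoing of weight~$2^{m+1}$), and the paper constructs such data by partitioning~$\mathcal{R}$ into chains via an explicit algorithm. The crucial topological input is not a homological obstruction but a structural lemma: $\mathcal{R}$ has at most one connected component without $1$-valent vertices, and such a component, if present, corresponds to the side of~$\gamma$ containing no puncture or boundary.

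Second, your reduction~(ii) is not quite correct as stated, and your step~(iii) is not an argument. A global total order on loop segments obtained by topological sort does \emph{not} automatically give compatible triangle-orderings: compatibility at an arc~$e$ compares the order on segments in the triangle on one side with the order on segments in the triangle on the other side \emph{via the shared junctures}, and nothing in a global order on segments forces $s_a<s_b$ on one side to match $s'_a<s'_b$ on the other. The paper handles this by working instead with orderings on junctures (arc-orderings), and then must verify an additional \emph{sanity} condition (no ``insane triples'' of segments in the three corners of a triangle) to lift back to triangle-orderings; this sanity check is precisely where the structural lemma on~$\mathcal{R}$ does real work. Your homological sketch in~(iii) does not say what closed curve a directed cycle in~$G$ would represent, nor why its intersection with~$\gamma$ would be forced nonzero; absent that, there is no argument.
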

As explained briefly so far, this theorem implies:
\begin{theorem}[Laurent positivity of some Bonahon-Wong quantum traces]
\label{thm:Laurent_positivity_of_BW}
Let $T,S,\gamma$ as in Thm.\ref{thm:ordering_problem}. Let $[K]$ be the skein in $S$ obtained by lifting $\gamma$ to a constant elevation with constant upward vertical framing. Then ${\rm Tr}^\omega_S([K])$ is a Laurent-positive element of $\mathcal{Z}^\omega_{{\rm PGL}_2,S}$. That is, for each ideal triangulation $T$ of $S$, ${\rm Tr}^\omega_S([K])$ is a Laurent polynomial in the square-root quantum variables $\wh{Z}_e$'s ($e\in T$) with coefficients in $\mathbb{Z}_{\ge 0}[\omega,\omega^{-1}]$. 
\end{theorem}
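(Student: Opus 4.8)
The plan is to fix an ideal triangulation $T$ and to prove the asserted positivity of the Laurent expansion of ${\rm Tr}^\omega_S([K])$ in the variables $\wh{Z}_e$ ($e\in T$) directly for that $T$; since Laurent positivity of an element of $\mathcal{Z}^\omega_{{\rm PGL}_2,S}$ is by definition a statement ``for each $T$'', and since Thm.\ref{thm:ordering_problem} is available for an arbitrary $T$, this is enough, and I will not even need Bonahon--Wong's naturality across triangulations.

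First I would put $\gamma$ in good position with respect to $T$: transverse to every ideal arc, with the number of intersection points minimal in the isotopy class, and --- using the elementary deformation recalled in \S\ref{sec:applying}, which is available precisely because $\gamma$ is not retractible to a puncture or to a point --- so that each loop segment of $\gamma$ joins two \emph{distinct} sides of the ideal triangle containing it. Then I would invoke Thm.\ref{thm:ordering_problem} to obtain, for every ideal triangle $t$ of $T$, a linear order $\mathcal{O}_t$ on the set of loop segments lying in $t$ such that the family $(\mathcal{O}_t)_t$ is compatible at every ideal arc of $T$, and I would realize these orders geometrically: lift $\gamma$ to a framed link $\wh{K}\subset S\times[0,1]$ with constant upward vertical framing by giving each loop segment a constant elevation so that within each triangle $t$ the elevations induce exactly $\mathcal{O}_t$, interpolating inside a small collar of each ideal arc $e$ to reconcile the two elevations at each juncture of $e$; since the two induced orders on the junctures of $e$ agree, this interpolation can be performed by an order-preserving vertical isotopy, so the projection to $S$ acquires no crossing, $\wh{K}$ is isotopic to $K$, and the loop segments in each triangle have mutually distinct elevations, as the Bonahon--Wong algorithm requires.

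Next I would run the Bonahon--Wong state-sum algorithm \cite{BW} on this representative. Its output is $\sum_s c(s)\,\prod_{e\in T}\wh{Z}_e^{\,a_e(s)}$, the sum over all juncture-states $s$ (a choice of sign in $\{+,-\}$ at each juncture), and the coefficient $c(s)\in\mathbb{Z}[\omega,\omega^{-1}]$ factors as a product of local contributions: a \emph{triangle factor} for each triangle $t$, built from Bonahon--Wong's local building blocks and always a monomial in $\omega$ with coefficient $+1$, times a \emph{reordering factor} for each ideal arc $e$, which is the trivial monomial $1$ exactly when the juncture-order on $e$ induced from the triangle on one side of $e$ agrees with the one induced from the triangle on the other side. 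Negative coefficients in the state sum can enter only through the reordering factors --- this is the assertion justified in \S\ref{sec:applying} --- so the compatibility supplied by Thm.\ref{thm:ordering_problem} forces every reordering factor to equal $1$, hence each $c(s)$ is a monomial in $\omega$ with coefficient $+1$. Since distinct juncture-states contributing the same monomial in the $\wh{Z}_e$ then add with coefficients all equal to $+1$, no cancellation occurs, and the Laurent expansion of ${\rm Tr}^\omega_S([K])$ for $T$ has coefficients in $\mathbb{Z}_{\ge 0}[\omega,\omega^{-1}]$; as $T$ was arbitrary, the theorem follows.

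The main obstacle is concentrated in the two ingredients I am quoting rather than reproving here. The genuine difficulty, occupying the bulk of the paper, is Thm.\ref{thm:ordering_problem} itself --- the existence of globally compatible orderings, which fails for the naive attempt and which is handled via the auxiliary graph on $S$ introduced later. Within the present reduction, the remaining work is the careful verification, carried out in \S\ref{sec:applying}, that the only source of negative coefficients in Bonahon and Wong's formulas is indeed the reordering factors, and that the degenerate configurations --- an ideal arc bounding the same triangle on both sides, self-folded triangles, loop segments running alongside a puncture spike, or several loop segments sharing an endpoint on an edge --- neither break the factorization of $c(s)$ nor spoil the positivity of the triangle factors.
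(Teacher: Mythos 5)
Your proposal matches the paper's proof in both structure and key ideas: fix $T$, invoke Thm.\ref{thm:ordering_problem} to obtain triangle-orderings compatible at every ideal arc, realize these as elevations in a good-position representative of $K$, and then observe that in Bonahon--Wong's state sum the triangle factors (eq.\eqref{eq:BW_formula_triangle_factor_one_segment}) are always $0$ or a power of $\omega$ times a $\wh{Z}$-monomial, while the per-arc ``reordering'' factors are exactly the biangle evaluations ${\rm Tr}^\omega_{B_i}$ of eq.\eqref{eq:BW_biangle_formula}, which reduce to $0$ or $1$ precisely because the compatible orderings eliminate crossings and circles over each biangle (so $b^{\sigma_1}_{\sigma_2}=c^{\sigma_1}_{\sigma_2}=d=0$). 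This is the paper's argument in \S\ref{sec:applying}; the only stylistic difference is that you speak of ``reordering factors'' where the paper works explicitly with the split ideal triangulation $\wh T$ and its biangles, but these are the same objects.
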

We note that Thm.\ref{thm:Laurent_positivity_of_BW} easily generalizes to a bordered surface $S$ and any skein $[K]$ in $S$ that is `closed' (i.e. $\partial K = {\O}$) and whose projected diagram in $S$ has no crossings. The statement of Thm.\ref{thm:Laurent_positivity_of_BW} is the most difficult and crucial part of the proof of our main result, Thm.\ref{thm:main}. However, there is one more important step needed for Thm.\ref{thm:main}, regarding the cases when the integral lamination $\ell$ has a constituent curve $\gamma$ that is not retractible to a puncture and has weight $k\ge 1$.  As explained above, in case $k=1$, this constituent curve contributes the factor ${\rm Tr}^\omega_S([\til{\gamma}])$ in the construction of $\mathbb{I}^\omega(\ell)$. In case $k>1$, this constituent curve contributes the factor $F_k({\rm Tr}^\omega_S([\til{\gamma}]))$ to the Allegretti-Kim element $\mathbb{I}^\omega(\ell)$, where $F_k(x) \in \mathbb{Z}[x]$ is the `$k$-th Chebyshev polynomial $F_k$',  defined recursively as $F_0=2$, $F_1 = x$, $F_{k+1} = x F_k - F_{k-1}$; note that the appearance of $F_k$ is not surprising, because ${\rm Tr}(M^k) = F_k({\rm Tr}(M))$ for any $2\times 2$ matrix $M$ with determinant $1$. The Laurent positivity of $F_k({\rm Tr}^\omega_S([\til{\gamma}]))$ is not immediate, for not all the coefficients of $F_k$ are positive. However, it is relatively easy show this, using some properties of $F_k$ and ${\rm Tr}^\omega_S([\til{\gamma}])$; we note that it is already done in the first arXiv version \cite{AK1} of the paper \cite{AK}. 

\subsection{How we solved}

A basic philosophy is to keep turning the problem into another one, so that it is easier to solve than before. The original problem we try to attack is Thm.\ref{thm:ordering_problem}, about how to give orderings on loop segments on each triangle, so that these orderings are compatible at each ideal arc. We consider yet another problem of giving orderings on the junctures on each ideal arc, i.e. the intersection points of the loop $\gamma$ and this ideal arc, so that these orderings are `compatible at each ideal triangle' in a certain sense. We prove that this new problem implies the original. Now, for orderings on junctures of an arc, we first look at all pairs of adjacent junctures. We investigate the orderings on each of these pairs, what these orderings on two-element-sets must satisfy, as a necessary condition for our purpose. Good thing about the ordering on this two-element-set is that it can be conveniently depicted as one inequality symbol $>$ or $<$ written in between the two adjacent junctures, which can be thought of as an orientation on the segment of the ideal arc delimited by an adjacent pair of junctures; call such a segment an {\em inner arc segment}. Later, to recover the actual ordering on the set of all junctures on an arc, per each inner arc segment we also choose a real number too, indicating the `difference' of two endpoint junctures. 

\vs

To solve the desired problem on giving orderings on the junctures on each arc so that these orderings are compatible at triangles, we find that we must study the relationship between the orientation and the `difference' number written on an inner arc segment $i_1$ and those on another inner arc segment $i_2$ that is `connected' to $i_1$ in a triangle via a `region' formed by loop segments. We thus study the regions of triangles divided by loop segments, and how they connect different inner arc segments. Not all the regions are needed, and we just need the ones having at least one inner arc segment in its boundary; we call them {\em narrow regions}. We then construct a special graph on the surface $S$ as follows: each narrow region corresponds to a vertex, and two narrow regions are connected by $m$ edges iff they share $m$ inner arc segments in their boundaries. It turns out that in our case we have $m\in \{0,1\}$. In practice, one can choose any one point in the interior of each narrow region and use it as a vertex, and connect these vertices by an edge that traverses exactly one inner arc segment once and not the loop. We call this graph the \ul{\em regional graph $\mathcal{R}$}; it depends on $S$, $T$, and $\gamma$, of course up to homotopy for the latter two, and each point of $\mathcal{R}$ has valence $1,2$, or $3$. Notice that the inner arc segments are in one-to-one correspondence with the edges of the regional graph $\mathcal{R}$; so we turn the problem into giving orientations and numbers to edges of $\mathcal{R}$, so that it induces orientations and numbers on inner arc segments, which would in turn induce orderings on junctures on each ideal arc, satisfying the desired compatibility. 

\vs

We first find a sufficient condition on the orientations and numbers on edges of $\mathcal{R}$ which would give us the desired result, and then show that it is indeed possible to find a choice of orientations and numbers on edges of $\mathcal{R}$ satisfying this condition. Both of these two tasks require elementary but somewhat arduous and careful arguments, which make use of the properties of $\mathcal{R}$ coming from its topological nature. One strength of our argument is that it is constructive; given any $S$, $T$, and $\gamma$, we provide an algorithm to produce an ordering on loop segments of each triangle so that these ordering are compatible at ideal arcs.

\vs

\noindent{\bf Acknowledgments.} This research was supported by the 2017 UREP program of Ewha Womans University, Department of Mathematics. We thank the referee for helpful comments. \qquad Hyun Kyu Kim: This research was supported by Basic Science Research Program through the National Research Foundation of Korea(NRF) funded by the Ministry of Education(grant number 2017R1D1A1B03030230). H.K. thanks Dylan Allegretti and Thang Le for help, discussion, questions, comments, and encouragements.

\section{Description of the ordering problem}

We shall describe the sought-for Thm.\ref{thm:ordering_problem} more explicitly, and its variants.

\subsection{Basic definitions}
\label{subsec:basic_definitions}

From the literature \cite{AK} \cite{P} \cite{FST} we recall definitions of basic concepts necessary to formulate the problem. No new concept is introduced in this subsection.

\begin{definition}
\label{def:decorated_surface}
A \ul{\em decorated surface} $S$ is a compact oriented surface with boundary, together with the choice of a (possibly empty) collection of distinguished points on the boundary, called the \ul{\em marked points}.
\end{definition}

So $S$ can be thought of as a compact surface of genus $g$ minus $s$ discs, with $m$ marked points on the boundary. The boundary $\partial S$ of $S$ is homeomorphic to disjoint union of circles. 
\begin{definition}
Call ${\rm int}(S) := S\setminus \partial S$ the \ul{\em interior} of $S$.

\vs

A component of $\partial S\setminus\{\mbox{marked points}\}$ not homeomorphic to a circle is called a \ul{\em boundary arc}. Let $N$ be the number of components of $\partial S \setminus \{\mbox{markted points}\}$. Throughout the paper, we assume
\begin{align}
\label{eq:S_condition}
\mbox{$N\ge 3$ in case $g=0$; \quad otherwise $N\ge 1$.}
\end{align}

\end{definition}
Shrink each component of $\partial S$ without a marked point to a \ul{\em puncture}. So, for example, if $S$ had no marked point at all, then after shrinking, $S$ would look like a compact surface of genus $g$ minus $s$ points. 
\begin{definition}
An \ul{\em ideal arc} in $S$ is a homotopy class of unoriented non-self-intersecting paths in ${\rm int}(S)$ running between punctures and marked points, not homotopic to a point of ${\rm int}(S)$, a puncture of $S$, or a boundary arc.
\end{definition}
To be more precise, it can be thought of as a homotopy class of a path in 
$$
{\rm int}(S) \cup(\{\mbox{punctures}\}\cup\{\mbox{marked points}\})
$$
whose endpoints lie in $\{\mbox{punctures}\}\cup\{\mbox{marked points}\}$. The homotopy is taken rel endpoints, and the two endpoints need not be distinct.
\begin{definition}
An \ul{\em ideal triangulation} $T$ of $S$ is a maximal collection of distinct ideal arcs and boundary arcs in $S$ that have simultaneous representative paths that mutually do not intersect except at their endpoints. Members of $T$ are called \ul{\em constituent arcs} of $T$.

\vs

An ideal triangulation $T$ divides $S$ into regions called \ul{\em ideal triangles}.

\vs

An ideal triangle of $T$ is delimited by its \ul{\em sides}, each being a constituent arc of $T$.

\vs

An ideal triangle having only two distinct sides is said to be \ul{\em self-folded}. The `multiplicity two' side of a self-folded triangle, i.e. the `middle' side, is called a \ul{\em self-folded arc}.
\end{definition}
It is well-known that a decorated surface $S$ satisfying \eqref{eq:S_condition} admits an ideal triangulation. For the study of all possible ideal triangulations of $S$, see \cite{P} \cite{FST}. Observe that we are reserving the word `edge' for later use; we only use `arcs' for an ideal triangulation.

\vs

The following somewhat ad-hoc terms are introduced for convenience.
\begin{definition}
\label{def:good_loop}
A \ul{\em good loop} in $S$ is a non-self-intersecting connected closed curve in ${\rm int}(S)$ that is not retractible to a point of ${\rm int}(S)$. In other words, a good loop is a non-contractible simple closed curve in $S$.

\vs

A good loop is said to be \ul{\em peripheral} if it is retractible to a puncture of $S$. 
\end{definition}

For the following definition and throughout the paper, we regard an ideal triangulation of $T$ as being a collection of representative paths of ideal arcs. When necessary we shall allow to continuously deform the paths.
\begin{definition}
\label{def:minimal_position}
A good loop $\gamma$ in $S$ is said to be in a \ul{\em minimal position} with respect to an ideal triangulation of $T$ if the number of intersections of it with the ideal arcs of $T$ is minimal, in the sense that $\gamma$ cannot be continuously deformed so that it has less number of intersections.
\end{definition}
Whenever we deal with a good loop $\gamma$ and an ideal triangulation $T$, we assume that $\gamma$ is in a minimal position. Note that a good loop never meets a boundary arc.

\subsection{The ordering problems}
\label{subsec:the_ordering_problems}

Now we introduce some new notions, in order to formulate our problem.
\begin{definition}
\label{def:basic}
Let $T$ be an ideal triangulation of a decorated surface $S$, and $\gamma$ be a non-peripheral good loop in $S$, in a minimal position with respect to $T$.

\vs

Denote the intersection points of $\gamma$ and the ideal arcs of $T$ by \ul{\em junctures}. 
Junctures divide the loop $\gamma$ into \ul{\em loop segments}. We say that a loop segment \ul{\em connects} the two not-necessarily distinct sides on which the two endpoints of the loop segment live in. Each loop segment is located in a unique \ul{\em corner} of a triangle, delimited by the two sides that this loop segments connects. We say that a loop segment is \ul{\em attached to} each of its endpoint junctures.
\end{definition}
One of the basic facts is that each triangle has three distinct corners, whether or not the triangle is self-folded. One also observes:
\begin{lemma}[basic facts about loop segments]
\label{lem:basic_lemma_on_endpoints_of_loop_segments}
The following hold.

\begin{enumerate}
\item[\rm (1)] The two endpoint junctures of a loop segment are distinct. 

\item[\rm (2)] A loop segment cannot live in a \ul{\em self-folded corner}, i.e. the corner of a self-folded triangle delimited from both sides by the self-folded arc.

\item[\rm (3)] A loop segment always connects two distinct ideal arcs.
\end{enumerate}
\end{lemma}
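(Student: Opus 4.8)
The plan is to prove each of the three statements by a minimal-position argument, using the fact that $\gamma$ cannot be deformed to reduce the number of intersections with $T$. The underlying principle is the same in all three cases: if any of (1), (2), (3) fails, one produces a configuration that yields a ``bigon'' or an ``obvious shortcut'' for $\gamma$ across a single triangle or across a single ideal arc, which then lets us isotope $\gamma$ to lower the intersection number, contradicting minimality.

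First I would prove (2), since it is the most local. Suppose a loop segment $\sigma$ lives in a self-folded corner, bounded on both sides by the self-folded arc $e$. Then both endpoint junctures of $\sigma$ lie on the same arc $e$, and $\sigma$ together with a sub-path of $e$ between those two junctures bounds a small disk (the ``inside'' of the monogon cut out near the self-folded corner, which contains no puncture since the enclosed puncture of a self-folded triangle is on the other side). Pushing $\sigma$ across this disk and slightly past $e$ removes two intersection points with $e$ without creating new ones, contradicting minimality. (One should be careful to check the disk is genuinely a disk with no puncture inside; this follows from the standard description of a self-folded triangle, where the self-folded corner is the ``outer'' corner.) Statement (1) then follows quickly: if the two endpoint junctures of a loop segment coincide, that single juncture $p$ lies on some arc $e$, and the loop segment is a small arc leaving $p$ and returning to $p$ on the same side, hence bounding a disk (no puncture inside, again by minimality-type reasoning and the fact that $\gamma$ is non-peripheral) — pushing $\gamma$ across this disk removes two intersections with $e$. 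Alternatively, I would observe that (1) is a degenerate case of the bigon argument and can be folded into the treatment of (3).

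For (3): a loop segment always connects two distinct ideal arcs. The possible failures are (a) it connects an ideal arc to itself, or (b) it connects an ideal arc to a boundary arc, or connects a boundary arc to itself. Case (b) is impossible because a good loop lives in ${\rm int}(S)$ and never meets a boundary arc (noted just before Def.~\ref{def:basic}), so no endpoint juncture can lie on a boundary arc; this disposes of all subcases involving boundary arcs. For case (a), a loop segment $\sigma$ lying in a corner delimited by the \emph{same} ideal arc $e$ on both sides has both endpoints on $e$; if $e$ is the self-folded arc of the corner we are back in case (2), and otherwise $\sigma$ plus a sub-arc of $e$ bounds a disk inside the triangle (a triangle is a disk, and the two sides of the corner being the same arc $e$ means the triangle is self-folded with $e$ either the self-folded arc — case (2) — or one of the two identified outer sides, but the two outer sides of a self-folded triangle are distinct arcs, so in fact a non-self-folded delimiting configuration cannot occur). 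Hence the only way a loop segment connects an ideal arc to itself is across a self-folded triangle, which is ruled out by (2). I should double-check the combinatorics of which corners of which triangle types can be delimited by a repeated arc, using the ``three distinct corners'' fact stated right after Def.~\ref{def:basic}.

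The main obstacle I anticipate is being careful and exhaustive about the self-folded triangle bookkeeping: a self-folded triangle has three \emph{distinct} corners but only two \emph{distinct} sides, so several of the ``same arc on both sides'' possibilities genuinely occur combinatorially and must each be checked, and one must correctly identify in each case whether the relevant disk bounded by $\sigma$ and a sub-arc of $e$ contains a puncture (if it does, one cannot isotope across it, but then one must argue such a $\sigma$ cannot exist for other reasons — e.g. it would force $\gamma$ to be peripheral or contractible, contradicting the hypotheses on $\gamma$). Making the ``push across a disk reduces intersections'' step rigorous — i.e. exhibiting the innermost such disk and checking no other strand of $\gamma$ obstructs the isotopy — is the usual technical heart of minimal-position arguments and is where I would spend the most care; everything else is routine.
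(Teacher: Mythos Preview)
Your general strategy via minimal-position/bigon arguments is sound and matches the paper's treatment of the non-self-folded situations (what the paper calls the ``half-circle'' argument). However, your handling of the self-folded corner in (2) rests on a concrete geometric error.

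You write that the self-folded corner is the ``outer'' corner and that the enclosed puncture is ``on the other side.'' This is backwards. By definition the self-folded arc is the multiplicity-two side --- the arc running from the outer puncture $p$ to the inner puncture $q$ --- and the self-folded corner, the one delimited on \emph{both} sides by this arc, is the corner at the inner puncture $q$. Any loop segment $\sigma$ living in that corner necessarily winds around $q$. Consequently the closed curve formed by $\sigma$ together with the sub-arc of $e$ between its two junctures bounds a region that \emph{contains} $q$, and the bigon isotopy you propose is obstructed by the puncture. The same obstruction undermines your argument for (1) when the coinciding juncture lies on a self-folded arc.

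The paper repairs this as follows. For (1): if the two junctures coincide then the corner is self-folded, and $\sigma$ is itself a small closed loop around $q$ --- hence $\gamma=\sigma$ is peripheral, contradicting the hypothesis. For (2): once the two junctures are known to be distinct, follow $\gamma$ past $\sigma$ into the adjacent triangle; but the adjacent triangle across a self-folded arc is the \emph{same} self-folded triangle, and the next loop segment is trapped between $\sigma$ and $q$, hence must again wind around $q$ (it cannot be a half-circle by your bigon argument). Iterating, $\gamma$ spirals toward $q$ indefinitely, which is absurd since $\gamma$ has finitely many junctures. Your fallback remark (``it would force $\gamma$ to be peripheral or contractible'') gestures toward the first of these, but the spiral argument for (2) is a genuinely missing ingredient. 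Part (3) then follows exactly as you outline once (2) is secured.
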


\begin{proof}
If the two endpoint junctures of a loop segment coincide, then this juncture must live in a self-folded arc of a self-folded triangle, and this loop segment itself forms a peripheral loop, contradicting to $\gamma$ being a non-peripheral loop. Hence part (1) is proved.

\vs

Notice that no loop segment looks like a `half-circle' attached to one ideal arc, bounding a half-disc region; if so, all loop segments living inside the closure of this half-disc region are all half-circles `parallel' to each other, and so $\gamma$ can be homotoped to remove all these half-circles. This means that $\gamma$ possessing such half-circles is not in a minimal position with respect to $T$.

\vs

Now, suppose part (2) is false, i.e. there is a loop segment living in a self-folded corner. As just seen, the two endpoints cannot coincide, so the situation is as in the left of Fig.\ref{fig:loop_segment_in_a_self-folded_triangle}, without loss of generality. Now, suppose that one is traveling along this loop segment towards the indicated direction. The next loop segment must then live inside the shaded region, hence its two endpoints also live in this same self-folded arc. Since this new loop segment cannot be a half-circle, it must go around the puncture and meet the arc as in the right of Fig.\ref{fig:loop_segment_in_a_self-folded_triangle}. Such situation must go on and on and never ends, which is absurd because $\gamma$ must be a simple closed curve. This proves part (2).

\begin{figure}
\hspace{20mm} \includegraphics[width=70mm]{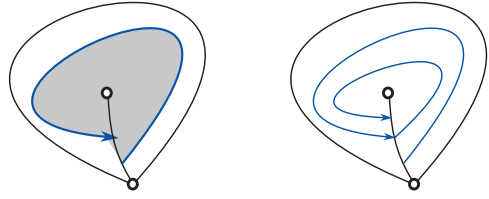} 
\caption{Loop segment in a self-folded triangle}
\label{fig:loop_segment_in_a_self-folded_triangle}
\end{figure}

\vs

Now, suppose part (3) is false, i.e. there is a loop segment whose two endpoints live in one ideal arc. If this arc is not self-folded, then this loop segment must be a half-circle, which we saw is impossible. So this arc must be a self-folded arc, which we saw is impossible by part (2). This proves part (3). 
\end{proof}

\begin{definition}
A \ul{\em triangle-ordering} on an ideal triangle in $T$ is the choice of a total ordering on the set of all loop segments living in this triangle.

\vs

An \ul{\em arc-ordering} on an ideal arc in $T$ is the choice of a total ordering on the set of all junctures living this this arc.

\vs

A triangle-ordering on an ideal triangle naturally \ul{\em induces} an arc-ordering for each of its sides.
\end{definition}

Now, Thm.\ref{thm:ordering_problem} can be rewritten as:

\vs

{\bf Equivalent form of Thm.\ref{thm:ordering_problem}. (triangle-ordering problem)} {\em Let $S,T,\gamma$ as in Def.\ref{def:basic}. There exists a choice of a triangle-ordering on each triangle of $T$ so that for each pair of triangles of $T$, for each side shared by these two triangles, the triangle-orderings of these two triangles induce the same arc-ordering on this common side.}

\vs

We find it difficult to directly attack this problem on triangle-orderings, and thus turn it into a problem on arc-orderings. To do this, we first investigate the relationship between triangle-orderings and arc-orderings. As mentioned already, any triangle-ordering on a triangle uniquely induces an arc-ordering on each of its sides. Now, if we give any arc-orderings on the sides of a triangle, does there exist a triangle-ordering on this triangle inducing the given arc-orderings, and if so, is it unique? Uniqueness is obvious, but existence is not; for this, we shall completely characterize the arc-orderings on the sides that can be induced from a triangle-ordering.

\begin{definition}
\label{def:compatible}
Arc-orderings on a pair of sides of an ideal triangle are said to be \ul{\em compatible} at this triangle if the arc-orderings on these sides induce the same ordering on the set of all loop segments connecting these sides.

\vs

Arc-orderings on the sides of an ideal triangle is said to be \ul{\em compatible} if arc-orderings on each pair of sides of the triangle are compatible.

\vs

Given arc-orderings on the sides of an ideal triangle, a triple of loop segments $j,k,l$ living at three distinct corners is called an \ul{\em insane triple} with respect to these arc-orderings if the ordering on their endpoint junctures on each side is `cyclic', i.e. induced by a clockwise or counterclockwise orientation on the sides of this triangle. See Fig.\ref{fig:three_loop_segments} for an example.

\vs

A choice of arc-orderings on the sides of an ideal triangle is said to be \ul{\em sane} at this triangle if there is no insane triple of loop segments with respect to these arc-orderings.
\end{definition}

\begin{figure}
\hspace{15mm} \includegraphics[width=60mm]{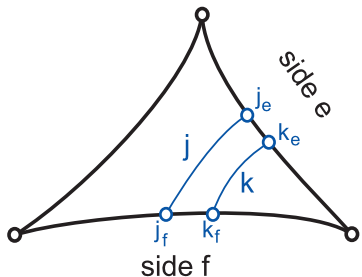} 
\caption{Criterion for compatibility of arc-orderings}
\label{fig:compatibility_criterion}
\end{figure}

\begin{lemma}[criterion for compatibility of arc-orderings]
\label{lem:criterion_for_compatibility_of_arc-orderings}
Arc-orderings on a pair of sides $e,f$ of an ideal triangle is compatible at this triangle if and only if for each two loop segments $j,k$ connecting these two sides, their endpoint junctures $j_e,j_f,k_e,k_f$ as in Fig.\ref{fig:compatibility_criterion} satisfy either $j_e<k_e$ and $j_f<k_f$ simultaneously, or $j_e>k_e$ and $j_f>k_f$ simultaneously. \qed
\end{lemma}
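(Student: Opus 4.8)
The plan is to prove the criterion by directly unwinding the definition of compatibility into a pairwise condition, the only mild input being that a loop segment connecting $e$ and $f$ is determined by either one of its two endpoint junctures.

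First I would record the needed bijection. Every juncture on an ideal arc is an endpoint of exactly two loop segments --- the two consecutive pieces into which $\gamma$ is cut at that point --- and since $\gamma$ crosses the arc transversally there, these two loop segments lie on opposite sides of the arc, hence in two distinct corners. Consequently at most one loop segment connecting the fixed pair of sides $e,f$ can have a given juncture of $e$ as its endpoint on $e$; so, writing $L_{ef}$ for the set of loop segments connecting $e$ and $f$ (all of which live in the single corner of the triangle delimited by $e$ and $f$, so that Def.\ref{def:compatible} is unambiguous), the assignment $j \mapsto j_e$ is an injection from $L_{ef}$ into the set of junctures on $e$, and likewise $j \mapsto j_f$ is an injection into the junctures on $f$. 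Pulling back the arc-ordering of $e$ (resp.\ of $f$) along these injections gives two total orderings $<_e$ and $<_f$ on the same finite set $L_{ef}$, and by the definition of the induced ordering, compatibility of the arc-orderings on $e$ and $f$ at the triangle is precisely the assertion that $<_e$ and $<_f$ are equal as orderings of $L_{ef}$.

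Next I would use the elementary fact that two total orderings on a finite set coincide if and only if they agree on every two-element subset. Applying this to $<_e$ and $<_f$: they are equal iff for each pair $\{j,k\} \subseteq L_{ef}$ one has $j <_e k \iff j <_f k$, i.e.\ $j_e < k_e \iff j_f < k_f$. Since $<$ is a total order on the junctures of each arc, this biconditional is in turn equivalent to the disjunction "$j_e < k_e$ and $j_f < k_f$ simultaneously" or "$j_e > k_e$ and $j_f > k_f$ simultaneously", which is exactly the condition in the statement (as depicted in Fig.\ref{fig:compatibility_criterion}). This yields both implications of the lemma at once.

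There is essentially no hard step here; the lemma is a reformulation of the definition of compatibility, so the only point requiring care is the geometric fact invoked in the second paragraph --- that each juncture is shared by exactly two loop segments, lying in distinct corners --- which is immediate from $\gamma$ being a simple closed curve in minimal position with respect to $T$, together with Lemma \ref{lem:basic_lemma_on_endpoints_of_loop_segments}.
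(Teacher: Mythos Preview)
Your proof is correct and matches the paper's approach: the paper treats this lemma as an immediate consequence of Def.~\ref{def:compatible} and gives no proof beyond the \qed\ mark, and your argument is precisely the routine unpacking of that definition into a pairwise condition. The only nontrivial ingredient you supply---injectivity of $j\mapsto j_e$ on $L_{ef}$---is indeed what makes the ``induced ordering'' in Def.~\ref{def:compatible} a total ordering, and your justification via the two loop segments at a juncture lying on opposite sides of the arc (hence in distinct corners) is sound.
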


\begin{lemma}
\label{lem:self-folded_is_always_sane}
Any choice of arc-orderings on the sides of a self-folded triangle is sane.
\end{lemma}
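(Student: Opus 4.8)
The claim is Lemma~\ref{lem:self-folded_is_always_sane}: any choice of arc-orderings on the sides of a self-folded triangle is sane, i.e. admits no insane triple.

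Recall a self-folded triangle has three corners but only two distinct sides: the self-folded arc appears twice (as two of the three sides) and one outer arc appears once. By Lemma~\ref{lem:basic_lemma_on_endpoints_of_loop_segments}(2), no loop segment lives in the self-folded corner — the corner delimited on both sides by the self-folded arc. So every loop segment in this triangle occupies one of the other two corners.

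The plan is simply to observe that an insane triple requires loop segments living at three distinct corners of the triangle. Since one of the three corners of a self-folded triangle is the self-folded corner, and no loop segment can live there, there can be no triple of loop segments occupying three distinct corners. Hence the defining condition for an insane triple cannot even be set up, so there is no insane triple, and the arc-orderings are vacuously sane regardless of what they are.

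I would write this essentially as a one-line deduction: cite the definition of insane triple (three loop segments at three distinct corners), cite Lemma~\ref{lem:basic_lemma_on_endpoints_of_loop_segments}(2) to rule out occupancy of the self-folded corner, conclude at most two of the three corners can be occupied, hence no insane triple exists. There is essentially no obstacle here; the only thing to be careful about is making sure the reader recalls that a self-folded triangle genuinely has three distinct corners (one of which is the forbidden self-folded corner), which is the ``basic fact'' stated just before Lemma~\ref{lem:basic_lemma_on_endpoints_of_loop_segments}, so I would reference that too.
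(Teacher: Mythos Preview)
Your proposal is correct and takes essentially the same approach as the paper: the paper's proof is the one-line observation that by Lem.\ref{lem:basic_lemma_on_endpoints_of_loop_segments}(2) there cannot be a triple of loop segments living in all three corners of a self-folded triangle, which is exactly what you argue.
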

\begin{proof} By Lem.\ref{lem:basic_lemma_on_endpoints_of_loop_segments}.(2), there cannot be a triple of loop segments living in all three corners of a self-folded triangle.
\end{proof}

The following is an easy observation:
\begin{lemma}[triangle-ordering to arc-orderings]
\label{lem:triangle-ordering_to_arc-orderings}
Arc-orderings on the sides of an ideal triangulation induced from a triangle-ordering are compatible and sane.
\end{lemma}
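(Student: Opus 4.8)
The plan is to verify compatibility and sanity separately, working one triangle at a time, since both notions are local to a single triangle. Fix an ideal triangle $t$ of $T$ and let $<_t$ be the given triangle-ordering on the set of loop segments living in $t$; this induces an arc-ordering $<_e$ on each side $e$ of $t$ by transporting $<_t$ along the ``attached to'' relation (each juncture on $e$ coming from $t$ is attached to exactly one loop segment living in $t$, by Lem.\ref{lem:basic_lemma_on_endpoints_of_loop_segments}, so this transport is well-defined). The point is that all three induced arc-orderings are shadows of the single linear order $<_t$, and that is what forces both properties.

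For compatibility, take any two loop segments $j,k$ connecting a pair of sides $e,f$ of $t$. By Lem.\ref{lem:criterion_for_compatibility_of_arc-orderings} it suffices to check that $j_e <_e k_e$ holds if and only if $j_f <_f k_f$ holds. But by the very definition of the induced arc-orderings, $j_e <_e k_e$ is equivalent to $j <_t k$, and likewise $j_f <_f k_f$ is equivalent to $j <_t k$; hence the two conditions are equivalent, and in fact both are equivalent to $j <_t k$. Since this holds for every pair of sides, the arc-orderings on the sides of $t$ are compatible.

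For sanity, suppose toward a contradiction that $j,k,l$ is an insane triple of loop segments living at the three distinct corners of $t$, with respect to the induced arc-orderings. By definition of ``insane'', there is an orientation of the boundary of $t$ (say the counterclockwise one, the clockwise case being symmetric) such that on each side the induced arc-ordering of the two junctures among $\{j,k,l\}$ lying on that side agrees with the direction of travel along that side. Relabel so that, going counterclockwise, the sides are $e_1,e_2,e_3$ and the segment $l$ occupies the corner between $e_3$ and $e_1$, the segment $j$ the corner between $e_1$ and $e_2$, and $k$ the corner between $e_2$ and $e_3$; then $j$ and $l$ both meet $e_1$, $j$ and $k$ both meet $e_2$, and $k$ and $l$ both meet $e_3$. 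Reading off the cyclic condition on $e_1$, $e_2$, $e_3$ respectively and translating each via the definition of the induced arc-ordering gives a cyclic chain $l <_t j <_t k <_t l$ (with the labelling arranged as above), which contradicts the transitivity and antisymmetry of the total order $<_t$. Hence no insane triple exists and the induced arc-orderings are sane at $t$. As $t$ was arbitrary, this completes the proof.

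The only mildly delicate point is bookkeeping in the sanity argument: one must check that, for whichever cyclic orientation witnesses insanity, the three side-conditions can always be written as a closed $3$-cycle $l <_t j <_t k <_t l$ (after a suitable relabelling of which segment sits in which corner), rather than some combination that does not close up. This is a finite case check over the two cyclic orientations and the rotational choices of labelling, and in each case the three inequalities chain around the triangle; I expect this to be the main, though entirely routine, obstacle.
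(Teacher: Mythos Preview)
Your proof is correct and follows essentially the same approach as the paper. For compatibility you spell out what the paper dismisses as ``obvious,'' and for sanity you argue by contradiction that an insane triple would yield a $3$-cycle in the total order $<_t$; the paper instead picks the $<_t$-minimum among $j,k,l$ and observes directly that both of its endpoint junctures must be smaller than those of the other two segments, which immediately breaks cyclicity. These are two phrasings of the same observation, and your bookkeeping worry in the last paragraph is indeed routine (and is sidestepped entirely by the paper's minimum-element formulation).
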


\begin{proof}
Compatibility is obvious. For any loop segments $j,k,l$ living in three corners, there is a `smallest' one with respect to the given triangle-ordering, say $j$. In the notations as in Fig.\ref{fig:three_loop_segments}, we have $j_e<l_e$ and $j_f<k_f$, so the orderings on the endpoints of $j,k,l$ is not cyclic. Hence there is no insane triple.
\end{proof}

\begin{figure}
\hspace{15mm} \includegraphics[width=60mm]{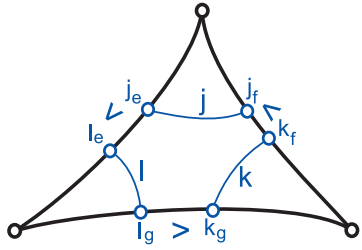} 
\caption{Example of an insane triple of loop segments $j,k,l$}
\label{fig:three_loop_segments}
\end{figure}

\vs

More important is that the converse also holds:

\begin{lemma}[arc-orderings to triangle-ordering]
\label{lem:arc-orderings_to_triangle-ordering}
If arc-orderings on the sides of an ideal triangulation are compatible and sane, then there exists a unique triangle-ordering that induce these arc-orderings.
\end{lemma}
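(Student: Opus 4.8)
The plan is to build the desired triangle-ordering on a given ideal triangle $\tau$ by hand from the three arc-orderings on its sides, and then check it does the job. Fix $\tau$ with sides $e,f,g$ (some possibly identified; by Lemma~\ref{lem:self-folded_is_always_sane} and Lemma~\ref{lem:basic_lemma_on_endpoints_of_loop_segments}.(2) the self-folded case is easy, since then loop segments occupy at most two corners and compatibility of the two relevant arc-orderings immediately glues them into one total order, so assume $\tau$ non-self-folded). Write $L_{ef}, L_{fg}, L_{ge}$ for the three (possibly empty) sets of loop segments occupying the three corners of $\tau$; every loop segment of $\tau$ lies in exactly one of these. Uniqueness is clear and already noted: a triangle-ordering restricts to the arc-orderings, and any two loop segments sharing a corner have their relative order forced by the arc-ordering on either shared side (compatibility says these two verdicts agree), while two loop segments in different corners share exactly one side and that side's arc-ordering forces their relative order; so the relation ``$a$ precedes $b$'' is completely determined. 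The content is that this determined relation is a total order, i.e.\ transitive and acyclic.

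First I would record the ``determined relation'' precisely: for $a,b$ in the same corner, $a\prec b$ iff $a$ precedes $b$ in the arc-ordering of one (equivalently, by Lemma~\ref{lem:criterion_for_compatibility_of_arc-orderings}, either) of the two bounding sides; for $a\in L_{ef}$ and $b\in L_{fg}$, compare their junctures on the common side $f$; similarly for the other two pairs of corners. Totality (any two are comparable) is immediate from this definition. Antisymmetry within a corner is immediate; between two corners it needs a small check that the two junctures on the common side are genuinely distinct, which follows since two distinct loop segments meeting the same arc meet it at distinct points. So the crux is transitivity, and the only nontrivial instance of transitivity is the one cycling through all three corners: given $a\in L_{ef}$, $b\in L_{fg}$, $c\in L_{ge}$ with $a\prec b$ (on side $f$) and $b\prec c$ (on side $g$), I must show $a\prec c$ (on side $e$). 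This is exactly where the sanity hypothesis enters: if instead $c\prec a$ on side $e$, then the triple $a,b,c$ would be cyclically ordered on all three sides, i.e.\ an insane triple, contradicting sanity at $\tau$. (One should also handle the mixed transitivity instances — two of the three elements in one corner and the third in an adjacent corner — but there the needed inequality transfers directly along a single common side via compatibility and the transitivity of the arc-ordering on that side, so no new idea is needed.) I expect this trichotomy-plus-sanity argument to be the main obstacle, mostly in being careful about the orientation conventions so that ``$a\prec b$ on $f$ and $b\prec c$ on $g$'' really is the configuration ruled out by Figure~\ref{fig:three_loop_segments}, rather than its mirror.

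Finally, having shown $\prec$ is a total order on the loop segments of $\tau$, I would verify it induces the prescribed arc-orderings: by construction the restriction of $\prec$ to the junctures on side $e$ is built from the given arc-ordering on $e$ on junctures in the same corner, and from comparisons on side $e$ for junctures in the two corners adjacent to $e$ — in every case the verdict is ``precedes in the arc-ordering of $e$'', so the induced arc-ordering on $e$ is the given one, and likewise for $f,g$. This argument is entirely local to one triangle, so applying it to every triangle of $T$ independently yields the triangle-orderings, and since each induces the prescribed (common) arc-orderings on the sides, no compatibility issue arises between triangles; the proof is complete. The only place one must pause is the self-folded triangle, already dispatched above by Lemma~\ref{lem:basic_lemma_on_endpoints_of_loop_segments}.(2).
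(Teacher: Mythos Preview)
Your proof is correct and takes a genuinely different route from the paper's. The paper proves existence by a \emph{recursive deletion algorithm}: it shows that among the three ``smallest'' junctures (one per side) two must be joined by a loop segment (this step uses compatibility and sanity exactly once), assigns that segment the next rank, deletes it, checks the remaining picture is still compatible and sane, and repeats. Your approach instead \emph{defines the relation $\prec$ directly} and verifies it is a strict total order, isolating sanity as precisely the obstruction to a three-corner transitivity failure and compatibility as what handles the same-corner and two-corner cases. Your argument is more conceptual and makes the role of the two hypotheses transparent (compatibility $\leftrightarrow$ two-corner transitivity, sanity $\leftrightarrow$ three-corner transitivity); the paper's argument is more constructive and yields an explicit numbering procedure. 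One small point: your treatment of the self-folded case is a bit quick --- there the two occupied corners share \emph{both} arcs as sides (every juncture on the self-folded arc $e$ is attached to one segment from each corner), so the cleanest way to phrase the argument is to order all segments by their junctures on the non-self-folded arc $f$ and then invoke compatibility at each corner to see that this also induces the given arc-ordering on $e$.
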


\begin{proof}
Let's first show the existence. We will construct a triangle-ordering that induces the given arc-orderings. We describe an algorithm to assign the numbers $1,2,\ldots,n$ to the loop segments, where $n$ is the total number of loop segments in this triangle, and the numbers represent the ordering. A hypothesis of this algorithm is that we are given arc-orderings on the sides of a triangle that are compatible and sane.

\vs

Step1: On each arc, find the `smallest' juncture, according to the arc-ordering.

\vs

$\leadsto$ Claim1 : two junctures among these three are connected by a loop segment.

~ $\because$ Assume this is not true. For each of these junctures, consider the loop segment in this triangle attached to this juncture; by assumption, these three loop segments are distinct. Suppose first that two of these loop segments live in a same corner. By Lem.\ref{lem:basic_lemma_on_endpoints_of_loop_segments}, this corner is delimited by two distinct arcs, say $e$ and $f$. Now, it is easy to see, from the minimality of the smallest junctures on $e,f$ and from Lem.\ref{lem:criterion_for_compatibility_of_arc-orderings}, that the arc-orderings on $e,f$ is not compatible, which is absurd. Suppose now that all these three loop segments live in distinct corners. By Lem.\ref{lem:basic_lemma_on_endpoints_of_loop_segments}, this triangle must be non-self-folded. By the minimality of the smallest junctures on the arcs, we see that these loop segments form an insane triple of loop segments, which is absurd. (end of proof of Claim1)

\vs

Step2: To the loop segment found by Claim1, assign the smallest number among the numbers in $1,2,\ldots,n$ that are not assigned yet.

\vs

Step3. Erase this loop segment, together with its two endpoint junctures.

\vs

$\leadsto$ Claim2 : The new picture with one loop segment erased inherits arc-orderings on the sides which are compatible and sane.

~ $\because$ Note that the orderings on the set of endpoints of loop segments in each arc in the new picture coincide with those in the previous picture, before erasing one loop segment, because these orderings have nothing to do with the erased segment. 

\quad Suppose not compatible. Then there are two loop segments in the same corner of a new picture, so that the criterion in Lem.\ref{lem:criterion_for_compatibility_of_arc-orderings} fails. 
Then this criterion for these two loop segments also fails in the previous picture, meaning that the previous picture is not compatible, which is absurd. Now, suppose not sane. Then there is an insane triple of loop segments in the new picture. Since the insanity is about the orderings of the endpoint junctures of these three loop segments, these loop segments is also an insane triple in the previous picture. This contradicts to the sanity of the previous picture. (end of proof of Claim2)

\vs

Step4. With this new picture, go to Step1.

\vs

This way we assign $1,2,\ldots,n$ to loop segments, i.e. get a triangle-ordering. Notice that in this process, junctures are erased in the ascending order on each arc. So, at each arc, the $i$-th smallest juncture is connected with the $i$-th erased loop segment among all loop segments that are connected to this arc (not among all loop segments in the triangle). This means that our triangle-ordering induces the given arc-ordering on each arc. 

\vs

For uniqueness, suppose that there is a different triangle ordering that induces the given arc-orderings. Then there are two loop segments whose order between them in our triangle-ordering is different from that in this new one. There is one arc connected to both of these two loop segments, and on this arc, these two triangle-orderings induce different arc-orderings, which is a contradiction.
\end{proof}

\begin{theorem}[arc-ordering problem]
\label{thm:arc-ordering}
Let $S,T,\gamma$ be as in Def.\ref{def:basic}. There exists a choice of an arc-ordering on each ideal arcs of $T$ so that these arc-orderings are compatible and sane at every ideal triangle.
\end{theorem}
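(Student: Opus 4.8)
\textbf{Proof proposal for Theorem \ref{thm:arc-ordering}.}

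The plan is to reduce the arc-ordering problem to a purely local question about inner arc segments, and then solve that local question by constructing the regional graph $\mathcal{R}$ sketched in the introduction. First I would recall the translation: by Lem.\ref{lem:arc-orderings_to_triangle-ordering}, producing compatible-and-sane arc-orderings is the same as producing a triangle-ordering for each triangle that matches across shared sides, which is exactly Thm.\ref{thm:ordering_problem}; so it suffices to construct arc-orderings of the desired type. An arc-ordering on an ideal arc $a$ is equivalent to assigning, to each of the (finitely many) junctures on $a$, a real number (with distinct values), and what actually matters for compatibility and sanity is only the relative order. So I would reformulate the data as: for each \emph{inner arc segment} $i$ (a segment of an ideal arc delimited by two adjacent junctures), an \emph{orientation} (i.e. which of its two endpoint junctures is larger) together with a positive real \emph{difference number} $d_i$; these must be globally consistent around each arc (the orientations and differences must actually come from a real-valued function on the junctures of that arc, which on a single arc is automatic as long as one is careful with the cyclic vs. linear structure), and must be such that the resulting arc-orderings are compatible and sane at every triangle.

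Next I would carry out the local analysis inside a single triangle. Two inner arc segments $i_1, i_2$ on two sides $e, f$ of a triangle are ``connected'' if there is a region of the triangle (cut out by the loop segments) having both $i_1$ and $i_2$ on its boundary; such regions with at least one inner arc segment on their boundary are the \emph{narrow regions}. I would check, using Lem.\ref{lem:basic_lemma_on_endpoints_of_loop_segments} and the minimal-position hypothesis, that each narrow region touches at most three inner arc segments (hence $m \in \{0,1\}$ in the introduction's notation), classify the few combinatorial types of narrow region, and for each type write down precisely which constraints on the orientations and difference numbers of its bounding inner arc segments are forced by, and which are sufficient for, compatibility (via Lem.\ref{lem:criterion_for_compatibility_of_arc-orderings}) and sanity of the arc-orderings it induces. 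This yields a finite list of admissible local constraints attached to the vertices of $\mathcal{R}$.

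Then I would build the regional graph $\mathcal{R}$ on $S$: one vertex per narrow region, one edge per inner arc segment shared by two narrow regions, so that edges of $\mathcal{R}$ are in bijection with inner arc segments and each vertex has valence $1$, $2$, or $3$. The remaining task is to orient the edges of $\mathcal{R}$ and weight them by positive reals so that at every vertex the forced local constraint is satisfied \emph{and} the induced data on each ideal arc is consistent (i.e. genuinely comes from a linear order on that arc's junctures). I expect the main obstacle to be precisely this global existence step: a single local constraint is easy to satisfy, but one must choose the orientations coherently over the whole (possibly topologically complicated) graph $\mathcal{R}$, avoiding ``directed cycles'' that would obstruct realizing the orientation by a real-valued height function, and simultaneously avoiding insane triples. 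I would handle this by exploiting the special structure of $\mathcal{R}$ coming from its topological origin --- in particular that it sits on $S$ transverse to $\gamma$ and to $T$, that its faces correspond to the non-narrow pieces, and that vertices have valence at most three --- to reduce the orientation problem to one that can be solved component by component, and then choose the difference numbers $d_i$ (e.g. inductively, making later ones small compared to earlier ones) to absorb the inequalities from the narrow-region constraints. Since this argument is constructive, it will also furnish the promised algorithm producing the arc-orderings, and hence the triangle-orderings of Thm.\ref{thm:ordering_problem}.
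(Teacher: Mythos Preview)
Your outline follows the same architecture as the paper --- reformulate via orientations and difference numbers on inner arc segments, build the regional graph $\mathcal{R}$, impose local constraints at its vertices, then solve globally --- but it leaves out precisely the mechanisms that make this architecture go through, and in one place it misidentifies the obstruction.

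First, the remark about ``avoiding directed cycles that would obstruct realizing the orientation by a real-valued height function'' is off target. There is no global height function on the vertices of $\mathcal{R}$; orientations on edges of $\mathcal{R}$ are transferred (Def.\ref{def:transfer}) to orientations on inner arc segments, and on each ideal arc (a linear, not cyclic, object) \emph{any} assignment of signs and positive differences integrates to a real function on the junctures. Directed cycles in $\mathcal{R}$ are not forbidden --- indeed the paper's construction for the type~II component begins by \emph{building} one (Lem.\ref{lem:initial_chain_for_type_II}). The genuine obstructions are (i) injectivity of the induced function on junctures of each arc, (ii) compatibility across loop segments, and (iii) absence of insane triples; none of these is a cycle condition on $\mathcal{R}$.

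Second, your plan to ``choose the difference numbers inductively, making later ones small'' gestures at the right idea but misses the specific device the paper uses: \emph{dyadic} weights, i.e.\ distinct powers of $2$ on distinct edges (Def.\ref{def:dyadic}, Lem.\ref{lem:arc-binary-ordering_to_arc-ordering}, Lem.\ref{lem:how_to_read_dyadic_arc-ordering}). The point is that with such weights the order of any two junctures on an arc is decided solely by the orientation of the single largest-weight inner arc segment between them. This is what turns the compatibility and sanity checks into purely local statements at one $2$- or $3$-valent vertex of $\mathcal{R}$ (the proof of Prop.\ref{prop:sufficient_condition_for_data_on_R}); a generic ``small enough later'' scheme does not give you this clean reduction, and without it the sanity argument for $3$-end regions does not localize.

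Third, and most substantively, you do not distinguish the two kinds of connected components of $\mathcal{R}$ (Def.\ref{def:types}). The paper shows that a component with no $1$-valent vertex (type~II) can exist only when $\gamma$ bounds a subsurface without punctures, that there is at most one such component (Lem.\ref{lem:type_II_topological_implication}, Cor.\ref{cor:structure_of_R}), and that it must contain a $3$-valent vertex (Lem.\ref{lem:type_II_component_contains_a_3-valent_vertex}). This dichotomy is not cosmetic: the sufficient local condition at $3$-valent vertices is different in the two cases (the ``flow-in/flow-out/left-over'' pattern for type~I, versus merely ``not all in and not all out'' for type~II, see Prop.\ref{prop:sufficient_condition_for_data_on_R}(3)), and the existence proof (Prop.\ref{prop:existence_of_data_on_R}) requires a separate initialization for the type~II component via a carefully chosen cycle through a $3$-valent vertex. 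Your proposal, which treats all components uniformly and speaks of avoiding cycles, would stall exactly here.
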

Lem.\ref{lem:triangle-ordering_to_arc-orderings} tells us Thm.\ref{thm:ordering_problem}$\Rightarrow$Thm.\ref{thm:arc-ordering}, and Lem.\ref{lem:arc-orderings_to_triangle-ordering} tells us Thm.\ref{thm:arc-ordering}$\Rightarrow$Thm.\ref{thm:ordering_problem}. Hence it is enough to prove this Thm.\ref{thm:arc-ordering}.

\vs

A key idea of our argument came from the consideration of yet another problem, which is easier. Namely, instead of investigating an arc-ordering on an ideal arc, i.e. an ordering on the set of all junctures of an arc, we study the ordering on each pair of junctures that are next to each other in an arc.
\begin{definition}
For each ideal arc having at least one juncture, the junctures on this arc divide this arc into \ul{\em arc segments}. An \ul{\em inner arc segment} is an arc segment bounded by junctures only. See Fig.\ref{fig:arc_segments}. Two junctures living in one ideal arc are said to be \ul{\em adjacent} if and only if they bound an inner arc segment.

\vs

An \ul{\em arc-binary-ordering} on an ideal arc is a choice of an ordering on each pair of adjacent junctures living in this arc, depicted in the picture by the inequality sign $>$ or $<$ drawn on the each corresponding inner arc segment, as if it is an orientation on the inner arc segment.
\end{definition}
The `compatibility', but not the `sanity', of arc-binary-orderings on the sides of a triangle can be defined in a straightforward manner, similarly as for arc-orderings.
\begin{theorem}[arc-binary-ordering problem]
\label{thm:arc-binary-ordering}
Let $S,T,\gamma$ be as in Def.\ref{def:basic}. There exists a choice of an arc-binary-ordering on each ideal arcs of $T$ so that these arc-binary-orderings are compatible at every ideal triangle.
\end{theorem}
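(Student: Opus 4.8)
The plan is to reduce the arc-binary-ordering problem to a local combinatorial problem on a certain graph drawn on $S$, and then solve it there. First I would set up the \emph{regional graph} $\mathcal{R}$ described in the introduction: inside each ideal triangle, the loop segments cut the triangle into \emph{regions}, and I only keep the \emph{narrow regions}, i.e. those having at least one inner arc segment on their boundary. I pick one interior point of each narrow region as a vertex, and join two such vertices by an edge whenever the corresponding narrow regions share an inner arc segment, drawing the edge so that it crosses that inner arc segment exactly once and meets $\gamma$ nowhere. The crucial bookkeeping fact I would establish first is that inner arc segments are in natural bijection with edges of $\mathcal{R}$, so that an orientation on an edge of $\mathcal{R}$ is the same datum as an inequality sign $>$ or $<$ on the corresponding inner arc segment, i.e. an arc-binary-ordering. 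I would also record the elementary local structure of $\mathcal{R}$: each vertex has valence $1$, $2$, or $3$ (a narrow region borders at most three sides of its triangle, and each side contributes at most one inner arc segment bordering that region), and I would analyze, corner by corner, exactly which pairs of inner arc segments on two different sides of a triangle are forced to be linked by the compatibility requirement.

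The core step is to translate \emph{compatibility at a triangle} into a purely local condition on orientations of the edges of $\mathcal{R}$ lying in that triangle. Compatibility of arc-binary-orderings on two sides $e,f$ of a triangle says: for each pair of loop segments $j,k$ both connecting $e$ and $f$ and adjacent on \emph{both} sides, the inequality sign on the inner arc segment between $j_e,k_e$ must "match" the one on the inner arc segment between $j_f,k_f$ (in the sense of Lem.\ref{lem:criterion_for_compatibility_of_arc-orderings}, reading off the picture Fig.\ref{fig:compatibility_criterion}). Each such pair $j,k$ bounds a narrow region between them inside the corner, and the two inner arc segments in question are precisely the two edges of $\mathcal{R}$ incident to the vertex of that narrow region that sit on the two different sides $e,f$. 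So compatibility becomes: at each vertex of $\mathcal{R}$ of valence $\ge 2$, a prescribed \emph{consistency rule} must hold among the orientations of the incident edges — either "both point into the region" / "both point out" or "one in, one out", according to the sign data dictated by the geometry of Fig.\ref{fig:compatibility_criterion}. After fixing signs appropriately (replacing some edge orientations by their opposites via a relabeling), this should become the uniform condition that at each valence-$2$ vertex the two incident edges induce \emph{the same direction} through the region (so $\mathcal{R}$'s valence-$\le 2$ part wants to be coherently oriented along its components), and at each valence-$3$ vertex a similarly explicit local rule holds.

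Having reduced to this, the remaining task is to show a consistent global choice of edge-orientations exists. The key structural input is topological: $\mathcal{R}$ is a graph embedded in $S$ with all valences $\le 3$, and — here is the real content — I would argue that $\mathcal{R}$ \emph{has no "bad" cycles}, i.e. the consistency rules never wrap around a loop of $\mathcal{R}$ to give a contradiction. Concretely, after discarding valence-$3$ vertices the rules just say "orient each component of the remaining subgraph coherently", which is always possible; the interaction at valence-$3$ vertices is where an obstruction could in principle appear, and I would trace through how such a vertex forces orientations on its three edges and check (using that $\gamma$ is a simple closed curve in minimal position, so the regions and their adjacencies have the restricted shapes established in Lem.\ref{lem:basic_lemma_on_endpoints_of_loop_segments}) that these local constraints are mutually compatible and can be propagated. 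The main obstacle I anticipate is exactly this global consistency at valence-$3$ vertices: ruling out a cyclic chain of forced orientations that closes up inconsistently. I expect this to require genuinely using the topology of the embedding of $\mathcal{R}$ in $S$ and the minimality of $\gamma$ — a naive graph-theoretic argument would not suffice, since the obstruction, if it existed, would be a "framing anomaly" of the kind alluded to in \S\ref{subsec:BW}. Once this is settled, an explicit traversal (say, a spanning-forest-based propagation of orientations) yields the desired arc-binary-orderings, and the construction is manifestly algorithmic.
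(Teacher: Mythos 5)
Your reduction to the regional graph $\mathcal{R}$ is the right idea, and it is a genuinely different route from the paper's: the paper never proves Thm.~\ref{thm:arc-binary-ordering} directly, but derives it as an immediate consequence of the strictly stronger arc-ordering problem (Thm.~\ref{thm:arc-ordering}), to which it devotes the whole of Section~3. However, as written your proposal stops short of a proof and, more importantly, anticipates an obstruction that does not exist. If you carry out your own ``corner by corner'' analysis to the end, you will find that the compatibility of arc-binary-orderings imposes a constraint \emph{only} at $2$-valent vertices of $\mathcal{R}$, and \emph{no constraint at all} at $3$-valent vertices. Indeed, a compatibility constraint between two sides $e,f$ of a triangle compares inner arc segments only for pairs of loop segments that are consecutive in the corner $(e,f)$ and \emph{both} connect $e$ and $f$; the narrow region bounded by such a pair is always a $2$-end region. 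A $3$-end narrow region (hence a $3$-valent vertex of $\mathcal{R}$) has each of its three ends bounded by two loop segments living in two \emph{different} corners of the triangle, so those two loop segments never both connect the same pair of sides and no compatibility constraint links two edges of $\mathcal{R}$ incident at a $3$-valent vertex. The relevant constraint would be the ``sanity'' condition of Def.~\ref{def:compatible}, but that condition is part of the arc-ordering problem (Thm.~\ref{thm:arc-ordering}) and is explicitly \emph{not} part of the arc-binary-ordering problem.

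Once this is observed, your worry about ``bad cycles'' closing up at $3$-valent vertices and your expectation that ``the topology of the embedding of $\mathcal{R}$ in $S$'' must be invoked both evaporate: nothing is forced through a $3$-valent vertex, so no inconsistency can propagate around a cycle. Delete all $1$- and $3$-valent vertices from $\mathcal{R}$; what remains is a disjoint union of maximal chains of edges through $2$-valent vertices (each a path or a closed cycle), and the compatibility rule just asks you to orient each such chain coherently, which is trivially possible. Combined with the ``turn right'' transfer convention (Def.~\ref{def:transfer}) this gives compatible arc-binary-orderings on every ideal arc, proving Thm.~\ref{thm:arc-binary-ordering} by an elementary graph-theoretic argument and with no topological input. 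You should be aware, though, that this shortcut does \emph{not} get you what the paper actually needs downstream. The application to Thm.~\ref{thm:ordering_problem} requires the arc-ordering problem Thm.~\ref{thm:arc-ordering}, where the sanity condition does impose nontrivial local rules at $3$-valent vertices (condition~3 of Prop.~\ref{prop:sufficient_condition_for_data_on_R}) and the topological structure of $\mathcal{R}$ (the type~I/type~II dichotomy, Cor.~\ref{cor:structure_of_R}) is genuinely used; there your worry about global obstructions is well placed, and the paper's chain-partition construction is what resolves it.
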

Thm.\ref{thm:arc-ordering} obviously implies Thm.\ref{thm:arc-binary-ordering}. Although it is not clear whether Thm.\ref{thm:arc-binary-ordering} implies Thm.\ref{thm:arc-ordering}, this easier problem provides an insight for an approach to Thm.\ref{thm:arc-ordering}, as we shall see in the following section.

\begin{figure}
\hspace{0mm} \includegraphics[width=60mm]{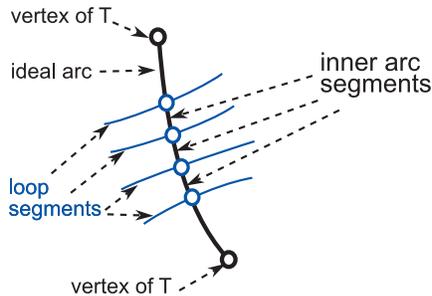} 
\caption{Inner arc segments}
\label{fig:arc_segments}
\end{figure}

\section{Solving the ordering problem}

\subsection{Dyadic arc-orderings}

An arc-ordering on an ideal arc naturally induces an arc-binary-ordering on the arc. This assignment arc-ordering$\mapsto$arc-binary-ordering is onto, but not one-to-one. We now consider a section of this assignment, i.e. a way to construct an arc-ordering from an arc-binary-ordering so that this arc-ordering induces the original arc-binary-ordering.

\vs

First, notice that an {\em ordering} on a set $A$ can be thought of as having an order-preserving injection $\pi : A\to B$, where $B$ is a totally ordered set. For our case $A$ will always be a finite set. When $A$ has $n$ elements, a standard choice of $B$ would be $B = \{1,2,\ldots,n\}$ with the usual ordering. So an ordering on $A$ is an assignment to each element $a\in A$ a number $\pi(a) \in \{1,\ldots,n\}$, so that $a<b$ with respect to the ordering on $A$ if and only if $\pi(a)<\pi(b)$. This choice of $B$ is convenient, because each element $a\in A$ is the $\pi(a)$-th `smallest' element of $A$. Another convenient choice for $B$ is $\mathbb{R}$ with the usual ordering, which we often employ in the present paper. This means that we assign a real number $\pi(a)$ to each element $a\in A$, so that $a<b$ iff $\pi(a)<\pi(b)$. An ordering on $A$ can be represented by several different order-preserving injections $\pi:A\to B$. Conversely, given a set $A$, a totally ordered set $B$, and a map $\pi : A \to B$, one can define a partial ordering on $A$ by declaring $a\le b$ in $A$ iff $\pi(a) \le \pi(b)$; only when $\pi$ is injective, this partial ordering is a total ordering on $A$.

\vs

{\bf Notation for junctures and inner arc segments.} Suppose that an ideal arc has $n$ junctures, labeled by $j_1,j_2,\ldots,j_n$, located on the arc in this order, so that $j_a$ is adjacent to $j_{a+1}$, for each $a=1,\ldots,n-1$; denote by $i_a$ this inner arc segment bounded by the junctures $j_a,j_{a+1}$.

\vs

An arc-ordering on this arc can be represented as a map $\pi : \{j_1,\ldots,j_n\} \to \mathbb{R}$. The arc-ordering lets us compare each two junctures, which we denote by the inequality $j_a < j_b$, which is equivalent to the condition $\pi(j_a)< \pi(j_b)$. That is, $\pi$ assigns a real number to each juncture, allowing us to compare the `size' of junctures, i.e. which one is the biggest, etc. To record the corresponding arc-binary-ordering in the picture, for each $a=1,\ldots,n-1$, we indicate the orientation on the inner arc segment $i_a$ by the symbol $>$ (resp. $<$) written on the arc segment, in case $j_a > j_{a+1}$ (resp. $j_a < j_{a+1}$). So the orientation arrow is directing towards a smaller juncture of the two.

\vs

In addition to the orientation symbol, we also write down the positive real number $|\pi(j_a) - \pi(j_{a+1})|$ on the inner arc segment $i_a$, indicating the `difference'; call this number the \ul{\em difference number} for this inner arc segment. With such orientation with a positive real difference number given on each inner arc segment, one can reconstruct a map $\til{\pi} : \{j_1,\ldots,j_n\} \to \mathbb{R}$ recursively; let $\til{\pi}(j_1)$ be any real number, then define $\til{\pi}(j_2)$ to be the unique real number according to the orientation and the difference number written on the inner arc segment $i_1$ bounded by $j_1$ and $j_2$, then define $\til{\pi}(j_3)$ uniquely, etc. It is easy to see that $\pi$ and $\til{\pi}$ differ only by the overall addition of a single constant.

\vs

Now suppose that we are given an arc-binary-ordering on an ideal arc. That is, on each inner arc segment, an orientation is given. We then would like to choose positive real number for each inner arc segment, and construct a map $\til{\pi}:\{j_1,\ldots,j_n\} \to \mathbb{R}$ as just described. If such constructed $\til{\pi}$ is injective, one obtains a total ordering on the set of juncture $\{j_1,\ldots,j_n\}$, i.e. an arc-ordering. In order to guarantee the injectivity of $\til{\pi}$, we consider the following special way of assigning the difference numbers to inner arc segments.
\begin{lemma}[arc-binary-ordering to arc-ordering]
\label{lem:arc-binary-ordering_to_arc-ordering}
Suppose that an arc-binary-ordering is given on an ideal arc, i.e. an orientation is given on each inner arc segment. Suppose that each inner arc segment is given a positive real number of the form $2^m$ for some nonnegative integer $m$, so that distinct inner arc segments have distinct numbers. Then a map $\til{\pi}: \{j_1,\ldots,j_n\} \to \mathbb{R}$ constructed as above, using these orientations and difference numbers on inner arc segments, is injective, and yields a unique arc-ordering on this arc which induces the original arc-binary-ordering.
\end{lemma}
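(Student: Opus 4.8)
The plan is to set up the recursion and show that the resulting values $\til\pi(j_1),\dots,\til\pi(j_n)$ are pairwise distinct by a base-$2$ (dyadic) argument. First I would fix an arbitrary starting value, say $\til\pi(j_1)=0$, and then for $a=1,\dots,n-1$ put $\til\pi(j_{a+1}) = \til\pi(j_a) \pm 2^{m_a}$, with the sign $-$ if the orientation on $i_a$ points from $j_a$ to $j_{a+1}$ (i.e.\ the symbol on $i_a$ is the one recording $j_a>j_{a+1}$) and $+$ otherwise, where $2^{m_a}$ is the prescribed difference number on the inner arc segment $i_a$. By telescoping, for any $a<b$ one has
\begin{equation*}
\til\pi(j_b) - \til\pi(j_a) \;=\; \sum_{c=a}^{b-1} \varepsilon_c\, 2^{m_c}, \qquad \varepsilon_c \in \{+1,-1\}.
\end{equation*}
Since the exponents $m_a,\dots,m_{b-1}$ are pairwise distinct nonnegative integers (by hypothesis distinct inner arc segments carry distinct powers of $2$), I would invoke the elementary fact that a signed sum $\sum \varepsilon_c 2^{m_c}$ of distinct powers of two is never zero: the term with the largest exponent, say $2^{M}$, dominates, because $\big|\sum_{m_c < M} \varepsilon_c 2^{m_c}\big| \le \sum_{t=0}^{M-1} 2^t = 2^M - 1 < 2^M$. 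Hence $\til\pi(j_b)\neq \til\pi(j_a)$ whenever $a\neq b$, so $\til\pi$ is injective.

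Injectivity of $\til\pi$ gives, by the general principle recalled just before the lemma (a map to a totally ordered set that is injective induces a total ordering on the domain), a total ordering on $\{j_1,\dots,j_n\}$, i.e.\ an arc-ordering. It remains to check two things: that this arc-ordering induces the original arc-binary-ordering, and that it is the unique arc-ordering with that property compatible with the given difference numbers. For the first, note that for adjacent junctures $j_a, j_{a+1}$ the sign of $\til\pi(j_{a+1})-\til\pi(j_a)$ is by construction exactly the sign dictated by the orientation on $i_a$; so $j_a < j_{a+1}$ in the induced ordering iff the orientation on $i_a$ is the ``$<$'' symbol, which is precisely the statement that the induced arc-binary-ordering equals the given one. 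For uniqueness, recall from the discussion preceding the lemma that an arc-ordering together with the recorded difference numbers determines $\pi$ up to an overall additive constant; two maps differing by a constant induce the same ordering, so the arc-ordering is uniquely determined by the orientations and the difference numbers.

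I do not expect any real obstacle here; the only mild point requiring care is the dyadic non-vanishing argument, and even that is the standard uniqueness-of-binary-representation estimate $\sum_{t=0}^{M-1}2^t = 2^M-1$. The reason powers of two (rather than arbitrary distinct positive reals) are needed is exactly to make this domination argument work regardless of the pattern of signs $\varepsilon_c$ coming from the orientations; with generic reals one could have accidental cancellations. Everything else is bookkeeping about the telescoping recursion.
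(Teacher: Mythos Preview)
Your proof is correct and follows essentially the same approach as the paper: telescope the difference $\til\pi(j_b)-\til\pi(j_a)$ into a signed sum of distinct powers of two, then use the domination estimate $\sum_{t<M}2^t = 2^M-1 < 2^M$ to conclude non-vanishing (hence injectivity), with the remaining points about inducing the given arc-binary-ordering and uniqueness handled by the surrounding discussion. The paper actually proves this lemma jointly with the next one (Lem.~\ref{lem:how_to_read_dyadic_arc-ordering}), extracting from the same inequality the slightly stronger conclusion that the sign of $\til\pi(j_b)-\til\pi(j_a)$ equals the sign $\varepsilon_d$ of the largest term, but your argument already contains this.
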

We will shortly prove this. Such arc-orderings deserve a name, because not all arc-orderings can be obtained this way.
\begin{definition}
\label{def:dyadic}
An arc-ordering that can be obtained in the above situation, i.e. with the difference numbers being distinct $2^m$'s, is said to be \ul{\em dyadic}.
\end{definition}
 For example, in case there are four junctures $j_1,j_2,j_3,j_4$, the ordering given by the map $\pi(j_1) = 2$, $\pi(j_2) = 4$, $\pi(j_3) = 1$, $\pi(j_4) = 3$, is not dyadic. Why isn't this not dyadic? What properties do dyadic arc-orderings have that general arc-orderings do not have? We notice that, for a dyadic arc-ordering, there is a very convenient way to determine the ordering on any two junctures on this arc, as follows:
\begin{lemma}[how to read dyadic arc-ordering]
\label{lem:how_to_read_dyadic_arc-ordering}
Suppose a dyadic arc-ordering is given on an ideal arc; that is, orientations and distinct difference numbers of the form $2^m$ are assigned to inner arc segments. For any two distinct junctures $j_a$ and $j_b$ on this arc, the ordering on these two junctures agrees with the orientation of the inner arc segment whose assigned difference number $2^m$ is the biggest among the ones appearing in between the junctures $j_a$ and $j_b$ on the arc. 
\end{lemma}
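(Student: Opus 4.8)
\textbf{Proof proposal for Lemma \ref{lem:how_to_read_dyadic_arc-ordering}.}

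The plan is to reduce everything to an elementary statement about sums of distinct powers of two, exploiting the uniqueness of binary representations. Fix two distinct junctures $j_a$ and $j_b$ with, say, $a<b$, and let $i_a, i_{a+1}, \ldots, i_{b-1}$ be the inner arc segments lying between them, carrying difference numbers $2^{m_a}, 2^{m_{a+1}}, \ldots, 2^{m_{b-1}}$, which are pairwise distinct by hypothesis. Using the recursive construction of $\til\pi$ from Lemma \ref{lem:arc-binary-ordering_to_arc-ordering}, I would first observe that
\[
\til\pi(j_b) - \til\pi(j_a) = \sum_{c=a}^{b-1} \epsilon_c \, 2^{m_c},
\]
where $\epsilon_c = -1$ if the orientation on $i_c$ points from $j_c$ toward $j_{c+1}$ (i.e. $j_c > j_{c+1}$, the arrow $>$) and $\epsilon_c = +1$ otherwise. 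So the ordering between $j_a$ and $j_b$ is governed purely by the sign of this signed sum of distinct powers of two.

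The key step is then a purely arithmetic claim: if $S = \sum_{c} \epsilon_c 2^{m_c}$ is a finite sum with all exponents $m_c$ distinct and each $\epsilon_c \in \{+1,-1\}$, then the sign of $S$ equals $\epsilon_{c^*}$, where $c^*$ is the index achieving the maximal exponent $m_{c^*} = \max_c m_c$. This follows from the standard estimate $\sum_{m < M} 2^m = 2^M - 1 < 2^M$: the single term $\epsilon_{c^*} 2^{m_{c^*}}$ dominates the absolute value of the sum of all the others, so $S$ is nonzero and its sign is that of the dominant term. In particular $\til\pi(j_a)\ne\til\pi(j_b)$, which incidentally re-proves the injectivity asserted in Lemma \ref{lem:arc-binary-ordering_to_arc-ordering}. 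Translating back: $j_a < j_b$ (i.e. $\til\pi(j_a) < \til\pi(j_b)$, i.e. $S>0$) precisely when $\epsilon_{c^*} = +1$, which by our sign convention means the orientation arrow on the segment $i_{c^*}$ with the largest difference number points away from $j_b$, i.e. the ordering on $j_a, j_b$ agrees with the orientation of that segment. This is exactly the assertion of the lemma.

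I do not expect any real obstacle here; the only thing to be careful about is bookkeeping of the sign convention (the excerpt's convention is that the arrow $>$ on $i_c$ points toward the \emph{smaller} juncture, so one must make sure the $\epsilon_c$ are defined so that ``arrow agrees with ordering'' comes out correctly at the end), and the trivial remark that $\til\pi$ is only defined up to a global additive constant, which does not affect any difference $\til\pi(j_b) - \til\pi(j_a)$. One could also phrase the arithmetic claim via induction on the number of terms rather than the geometric-series bound, but the bound $\sum_{m<M}2^m < 2^M$ is the cleanest route and also makes transparent why the construction forces \emph{dyadic} orderings: it is exactly the property that reading off pairwise comparisons from a single ``dominant'' segment is consistent.
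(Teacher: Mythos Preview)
Your proposal is correct and matches the paper's own argument essentially step for step: the paper also writes $\til\pi(j_b)-\til\pi(j_a)$ as the signed sum $\sum_c \varepsilon_c 2^{m_c}$, isolates the term with maximal exponent, bounds the remainder in absolute value by $2^0+\cdots+2^{m_d-1}=2^{m_d}-1<2^{m_d}$, and concludes that the sign of the difference (hence the ordering of $j_a,j_b$) is dictated by the orientation on the segment with the largest difference number; it likewise remarks that this yields the injectivity of $\til\pi$.
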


\ul{\it Proof of Lemmas \ref{lem:arc-binary-ordering_to_arc-ordering} and \ref{lem:how_to_read_dyadic_arc-ordering}.} Suppose that orientations and distinct difference numbers of the form $2^m$ are assigned to inner arc segments of an ideal arc, whose junctures are $j_1,\ldots,j_n$ located in this order. For each $a=1,\ldots,n-1$, record the orientation on the inner arc segment $i_a$ as the number $\varepsilon_a \in \{-1,+1\}$, so that $\varepsilon_a=1$ indicates $j_a < j_{a+1}$ and $\varepsilon_a=-1$ indicates $j_a > j_{a+1}$. One can view $\varepsilon_a$ as the sign of the difference number $2^{m_a}$ assigned to the inner arc segment $i_a$.

\vs

Then, for a map $\til{\pi} : \{j_1,\ldots,j_n\} \to \mathbb{R}$ appearing in Lem.\ref{lem:arc-binary-ordering_to_arc-ordering}, the difference $\til{\pi}(j_{a+1}) - \til{\pi}(j_a)$ for the adjacent junctures $j_a,j_{a+1}$ equals the signed difference number $\varepsilon_a 2^{m_a}$, for each $a=1,\ldots,n-1$; such is the defining property of $\til{\pi}$. So, if $j_a$ and $j_b$ are any two distinct junctures, say with $a<b$, the difference value $\til{\pi}(j_b) - \til{\pi}(j_a)$ is
$$
\til{\pi}(j_b) - \til{\pi}(j_a) = \textstyle \sum_{c=a}^{b-1} (\til{\pi}(j_{c+1}) - \til{\pi}(j_c)) = \sum_{c=a}^{b-1} \varepsilon_c 2^{m_c} = \varepsilon_a 2^{m_a} + \varepsilon_{a+1} 2^{m_{a+1}} + \cdots + \varepsilon_{b-1} 2^{m_{b-1}}.
$$
Note that $2^{m_a},2^{m_{a+1}},\ldots,2^{m_{b-1}}$, which are the difference numbers appearing in between the junctures $j_a$ and $j_b$, are mutually distinct; let $2^{m_d}$ be the largest among them. Then the absolute value of the sum of the signed differences with $\varepsilon_d 2^{m_d}$ omitted is
\begin{align*}
|\varepsilon_a e^{2m_a} + \cdots + \wh{\varepsilon_d 2^{m_d}} + \cdots + \varepsilon_{b-1} 2^{m_{b-1}}| 
& \le |\varepsilon_a 2^{m_a}| + \cdots + \wh{|\varepsilon_d 2^{m_d}|} + \cdots + |\varepsilon_{b-1} 2^{m_{b-1}}| \\
& = 2^{m_a} + \cdots + \wh{2^{m_d}} + \cdots + 2^{m_{b-1}} \\
& \le 2^0 + 2^1 + 2^2 + \cdots + 2^{m_d -1 } = 2^{m_d} - 1 \\
& < 2^{m_d},
\end{align*}
where the hat $\,\wh{\,\,}\,$ denotes the omitted term, and the second inequality holds because $m_a,m_{a+1},\ldots,\wh{m_d},\ldots,m_{b-1}$ are mutually distinct members among the integers $0,1,2,\ldots,m_d-1$. Since
\begin{align}
\label{eq:partial_sum_of_signed_differences}
\varepsilon_a 2^{m_a} + \cdots + \varepsilon_{b-1} 2^{m_{b-1}}
= \varepsilon_d 2^{m_d} + (\varepsilon_a e^{2m_a} + \cdots + \wh{\varepsilon_d 2^{m_d}} + \cdots + \varepsilon_{b-1} 2^{m_{b-1}}) 
\end{align}
where $(\varepsilon_a e^{2m_a} + \cdots + \wh{\varepsilon_d 2^{m_d}} + \cdots + \varepsilon_{b-1} 2^{m_{b-1}})$ is a number whose absolute value is strictly less than $2^{m_d}$, the sign of \eqref{eq:partial_sum_of_signed_differences} is completely determined by the term $\varepsilon_d 2^{m_d}$. That is, $\varepsilon_a 2^{m_a} + \cdots + \varepsilon_{b-1} 2^{m_{b-1}}>0$ if $\varepsilon_d=1$, and $\varepsilon_a 2^{m_a} + \cdots + \varepsilon_{b-1} 2^{m_{b-1}}<0$ if $\varepsilon_d=-1$. Thus indeed the sign of $\til{\pi}(j_b)-\til{\pi}(j_a)$ is completely determined by the orientation $\varepsilon_d$ at the inner arc segment $i_d$ having the largest difference number $2^{m_d}$ in between the junctures $j_a$, $j_b$. More precisely, if $j_d<j_{d+1}$ then $j_a<j_b$, and if $j_d>j_{d+1}$ then $j_a>j_b$, as desired for Lem.\ref{lem:how_to_read_dyadic_arc-ordering}. 

\vs

In particular, the difference value $\til{\pi}(j_b) - \til{\pi}(j_a)$ is nonzero, for any two distinct junctures $j_a$, $j_b$, proving the injectivity of $\til{\pi}$. As already mentioned before, the signed difference numbers for adjacent junctures completely determine the function $\til{\pi}$ up to overall addition of a single constant. So such $\til{\pi}$ yields a well-defined ordering on the junctures $\{j_1,\ldots,j_n\}$, so that $\til{\pi} : \{j_1,\ldots,j_n\} \to \mathbb{R}$ is order-preserving. Thus Lem.\ref{lem:arc-binary-ordering_to_arc-ordering} is proved. \qed

\vs

One consequence of the above lemma is as follows. Suppose we have a dyadic ordering on an ideal arc, with notations as in the above proof. Among all difference numbers $2^{m_1},\ldots,2^{m_{n-1}}$ we see in this arc, let $2^{m_d}$ be the largest one. Then, by Lem.\ref{lem:how_to_read_dyadic_arc-ordering}, we either have $j_a < j_b$ for all $a=1,2,\ldots,d$ and all $b=d+1,d+2,\ldots,n$, or $j_a > j_b$ for all $a=1,2,\ldots,d$ and all $b=d+1,d+2,\ldots,n$. So the largest inner arc segment $i_d$ partitions the junctures into the `smaller' group $\{j_1,\ldots,j_d\}$ and the `larger' group $\{j_{d+1},\ldots,j_n\}$ (or vice versa). Now, for each group, such separation must occur again, etc. Hence the name `dyadic'. One notices that, for the above mentioned example $\pi(j_1) = 2$, $\pi(j_2) = 4$, $\pi(j_3) = 1$, $\pi(j_4) = 3$, such separation of junctures is not possible. Although the dyadic arc-orderings may seem to form quite a restrictive class of arc-orderings, we find them convenient to handle when constructing and checking various statements, thanks to their special property in Lem.\ref{lem:how_to_read_dyadic_arc-ordering}.

\subsection{Narrow regions, and the regional graph $\mathcal{R}$}

The `easier' problem Thm.\ref{thm:arc-binary-ordering} is about the existence of a compatible choice of orientations on inner arc segments. The compatibility condition of these orientations leads to investigation of how different inner arc segments are `connected' to each other, which inspired the following notions.
\begin{definition}
Let $S,T,\gamma$ be as in Def.\ref{def:basic}. On each ideal triangle of $T$, the loop segments in it divide the triangle into several regions, which we call \ul{\em small regions}. A small region whose boundary contains an inner arc segment is called a \ul{\em narrow region}.

\vs

For a narrow region, each inner arc segment appearing in its boundary is called an \ul{\em end} of the narrow region. A narrow region having $k$ ends in total is called a \ul{\em $k$-end (narrow) region}.
\end{definition}

\begin{figure}
\hspace{0mm} \includegraphics[width=129mm]{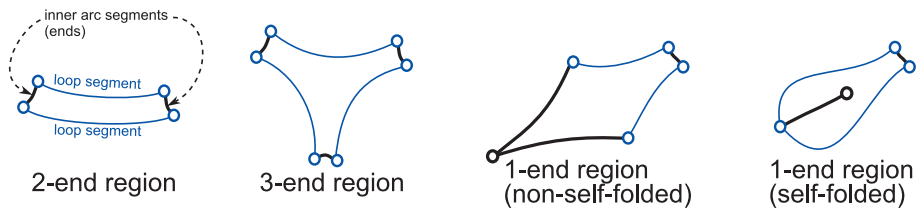} 
\caption{Three kinds of narrow regions (see Fig.\ref{fig:triangle1} and Fig.\ref{fig:triangle2})}
\label{fig:narrow_regions}
\end{figure}

\begin{figure}
\hspace{10mm} \includegraphics[width=60mm]{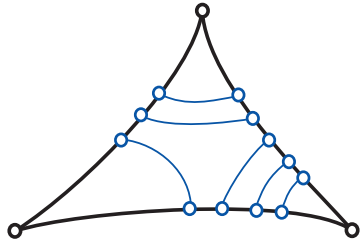} 
\caption{Example of a triangle having loop segments at all corners}
\label{fig:triangle1}
\end{figure}

\begin{figure}
\hspace{10mm} \includegraphics[width=60mm]{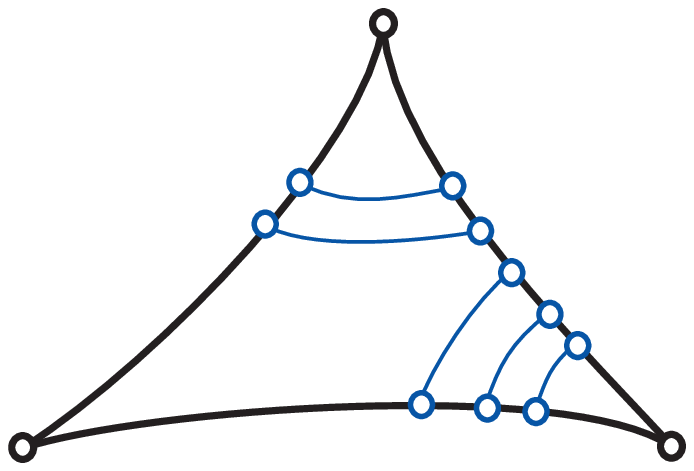} \includegraphics[width=60mm]{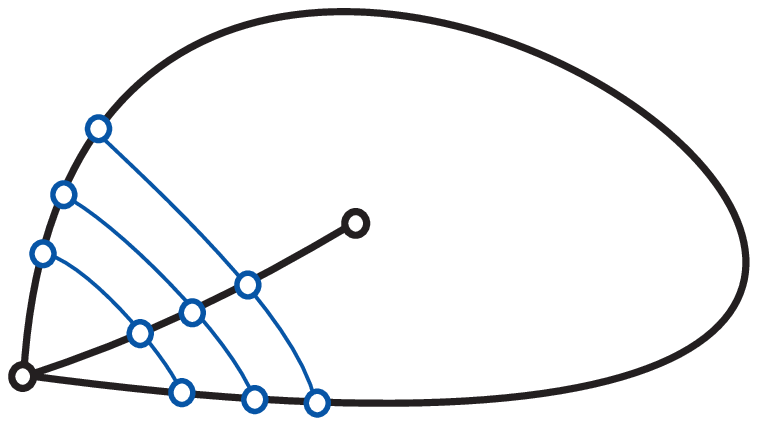} 
\caption{Example of a triangle having loop segments at only two corners}
\label{fig:triangle2}
\end{figure}

We notice that $k$ can be only $1$, $2$, or $3$; so there are three kinds of narrow regions. See Fig.\ref{fig:narrow_regions}. Each narrow region carries the information on how inner arc segments are `connected' in a triangle. We find it necessary to study the relationship between such `connectivity' in adjacent triangles, i.e. how adjacent narrow regions sharing an end are related. Such information is encoded in a graph we define as follows. Recall that an (undirected) \ul{\em graph} consists of a set of \ul{\em vertices} and a set of \ul{\em edges}, where an edge is an unordered pair of vertices, representing a line connecting these vertices.

\begin{definition}
\label{def:regional_graph}
Let $S,T,\gamma$ be as in Def.\ref{def:basic}. The \ul{\em regional graph $\mathcal{R}$} for this data $(S,T,\gamma)$, is the graph defined as follows. The set of vertices of $\mathcal{R}$ is in bijection with the set of all narrow regions. Two distinct vertices of $\mathcal{R}$ are connected by $m$ edges of $\mathcal{R}$ if the corresponding two narrow regions have $m$ inner arc segments in common in their boundaries. We declare that $\mathcal{R}$ has no cycle of length $1$ (i.e. a self-loop).
\end{definition}

It is natural to set that $\mathcal{R}$ has no self-loop, because there is no narrow region such that two of its ends  are identified (i.e. glued); if there is such a narrow region, then these glued inner arc segments must be on a self-folded ideal arc, and one can easily see that in this case one loop segment forms a peripheral loop by itself, which is absurd.

\begin{lemma}
\label{lem:ends_live_in_different_arcs}
For each $k$-end narrow region, the $k$ ends live in $k$ distinct ideal arcs.
\end{lemma}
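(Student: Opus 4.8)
\textbf{Proof proposal for Lemma \ref{lem:ends_live_in_different_arcs}.}

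The plan is to argue by contradiction, assuming that a $k$-end narrow region $N$ (necessarily $k \in \{2,3\}$, since the case $k=1$ is vacuous) has two of its ends lying in the same ideal arc $e$ of $T$. Let $N$ sit inside the ideal triangle $\Delta$, one of the (at most two) triangles having $e$ as a side. The first step is to translate the hypothesis into concrete pictures: since $N$ is a small region of $\Delta$ cut out by loop segments, and its boundary contains two inner arc segments $i_1, i_2$ both lying on $e$, I want to trace the boundary of $N$ and see that $i_1$ and $i_2$ are connected within $\partial N$ by arcs of loop segments, forming a closed curve. The key geometric observation is that such a configuration produces a loop segment (or a concatenation of a loop segment with a sub-arc of $e$) that, together with part of $e$, bounds a disc in $\Delta$ containing the portion of $e$ between $i_1$ and $i_2$; this is essentially the "half-circle" phenomenon ruled out in the proof of Lemma \ref{lem:basic_lemma_on_endpoints_of_loop_segments}.

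In more detail, I would proceed as follows. Between the two ends $i_1, i_2$ on $e$ there is, by definition of inner arc segment, at least one juncture strictly between them — call the sub-arc of $e$ from the far endpoint of $i_1$ to the near endpoint of $i_2$ (the one facing $N$) the \emph{middle portion} $e'$ of $e$. The boundary of $N$ along the side facing away from $e'$ consists of pieces of loop segments. Walking along $\partial N$ from $i_1$ to $i_2$ not through $e'$, one passes through a sequence of loop segments and junctures; concatenating these loop segments gives an arc $\beta$ of $\gamma$ running from a juncture $j_1 \in i_1$-side to a juncture $j_2 \in i_2$-side, both on $e$, and $\beta \cup e'$ bounds the disc $\ol{N}$ together with possibly a bigon-like region. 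Now $\beta$ is a sub-arc of the simple closed curve $\gamma$ whose two endpoints lie on $e$ and which is homotopic rel endpoints (pushing across $\ol{N}$) into $e$; this lets one homotope $\gamma$ to reduce its number of intersections with the arcs of $T$ (remove at least the junctures on $e'$), contradicting minimality of the position of $\gamma$ (Def.\ref{def:minimal_position}). If instead $e$ is a self-folded arc, one invokes Lemma \ref{lem:basic_lemma_on_endpoints_of_loop_segments}.(2) and the absence-of-self-loop discussion preceding the present lemma to rule this out directly: the glued ends would force a peripheral loop.

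The main obstacle I expect is the careful bookkeeping of the boundary walk of $N$: one must be sure that the loop segments encountered along $\partial N$ genuinely concatenate into a single embedded arc $\beta$ of $\gamma$ with the claimed endpoint configuration, rather than, say, $N$ being pinched or the two ends $i_1,i_2$ being separated along $e$ by other narrow regions in a way that breaks the homotopy argument. Handling the $3$-end case (where the third end $i_3$ lies on a possibly different arc) and the possibility that $\Delta$ is self-folded requires a short case analysis, but in every case the conclusion reduces to exhibiting a disc in $S$ whose boundary is a sub-arc of $\gamma$ capped off by a sub-arc of a single ideal arc, which then contradicts either minimality or non-peripherality of $\gamma$. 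Once the boundary-walk bookkeeping is set up cleanly, the rest is a routine invocation of the already-established facts in Lemma \ref{lem:basic_lemma_on_endpoints_of_loop_segments}.
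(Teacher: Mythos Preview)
Your approach reaches the right contradiction but takes a much longer route than the paper's, and in doing so introduces some imprecision that you yourself flag as ``the main obstacle.''

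The paper's proof is two sentences: if two ends $i_1,i_2$ of a narrow region $N$ lie on the same ideal arc $e$, then one of the loop segments in $\partial N$ has both of its endpoint junctures on $e$, contradicting Lemma~\ref{lem:basic_lemma_on_endpoints_of_loop_segments}(3). The reason this is immediate is the structure of $\partial N$ for $k\in\{2,3\}$: the boundary is a $2k$-gon whose sides alternate between the $k$ inner arc segments (the ends) and $k$ loop segments. Hence any two ends are separated on $\partial N$ by a single loop segment, and that loop segment has its two endpoint junctures on the arcs carrying those two ends. If those two arcs coincide, Lemma~\ref{lem:basic_lemma_on_endpoints_of_loop_segments}(3) is violated. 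Done.

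Compared to this, your plan re-derives the content of Lemma~\ref{lem:basic_lemma_on_endpoints_of_loop_segments}(3) (the half-circle/minimality argument) rather than simply invoking it. Your ``arc $\beta$'' obtained by walking along $\partial N$ is in fact always a \emph{single} loop segment --- there is no concatenation to worry about --- so the ``boundary-walk bookkeeping'' you anticipate as the main difficulty is not needed at all. Worse, if one goes around $\partial N$ the long way (in the $3$-end case), the walk passes through the third end $i_3$, which is an inner arc segment and not part of $\gamma$; so your claim that the walk ``concatenates loop segments into an arc $\beta$ of $\gamma$'' would fail unless you specify the short direction. Once you notice that the short direction gives a single loop segment with both endpoints on $e$, you can stop and cite Lemma~\ref{lem:basic_lemma_on_endpoints_of_loop_segments}(3), exactly as the paper does.
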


\begin{proof}
Suppose some two ends of a $k$-end narrow region live in a same ideal arc. Then it follows that there is a loop segment which is a part of the boundary of this $k$-end narrow region that does not connect two distinct ideal arcs. This contradicts to Lem.\ref{lem:basic_lemma_on_endpoints_of_loop_segments}.(3).
\end{proof}

\begin{corollary}
\label{cor:3-end_narrow_region_cannot_occur_in_a_self-folded_triangle}
A $3$-end narrow region cannot occur in a self-folded triangle. \qed
\end{corollary}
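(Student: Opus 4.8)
\textbf{Proof proposal for Corollary \ref{cor:3-end_narrow_region_cannot_occur_in_a_self-folded_triangle}.}

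The plan is to combine Lemma \ref{lem:ends_live_in_different_arcs} with the simple fact that a self-folded triangle has only two distinct sides, so by the pigeonhole principle it cannot host a narrow region whose three ends lie in three distinct ideal arcs. First I would observe that a self-folded triangle $\Delta$ is, by definition, delimited by only two distinct constituent arcs of $T$: the self-folded arc (appearing with multiplicity two) and the outer arc. Hence every inner arc segment in the boundary of any small region contained in $\Delta$ lies in one of these two arcs. Now suppose for contradiction that $\Delta$ contained a $3$-end narrow region $N$; then $N$ would have three ends, each an inner arc segment, and by Lemma \ref{lem:ends_live_in_different_arcs} these three ends would live in three distinct ideal arcs. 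But $\Delta$ only offers two distinct arcs, a contradiction. Therefore no $3$-end narrow region can occur in a self-folded triangle.

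There is essentially no obstacle here; the only point that requires a moment's care is checking that the ends of a narrow region inside $\Delta$ really are forced to lie among the arcs delimiting $\Delta$, rather than, say, on some arc `far away'. This is immediate once one recalls that a narrow region is by definition a small region of the decomposition of a single ideal triangle by its loop segments, so its entire boundary consists of loop segments inside that triangle together with arc segments lying on the sides of that same triangle; an inner arc segment in $\partial N$ is thus a sub-segment of one of the (at most two) sides of $\Delta$. Given that, the counting argument above closes immediately, and in fact the same reasoning shows more generally that a $k$-end narrow region can only occur in a triangle with at least $k$ distinct sides, which already rules out $k=3$ for self-folded triangles and is consistent with Lemma \ref{lem:basic_lemma_on_endpoints_of_loop_segments}.(2).
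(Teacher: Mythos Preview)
Your proposal is correct and is exactly the intended argument: the paper marks this corollary with \qed\ because it follows immediately from Lemma~\ref{lem:ends_live_in_different_arcs} together with the observation that a self-folded triangle has only two distinct sides. Your additional paragraph justifying that the ends of a narrow region in $\Delta$ must lie on the sides of $\Delta$ is sound and makes explicit what the paper leaves implicit.
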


\begin{lemma}
The number $m$ in the description of $\mathcal{R}$ (Def.\ref{def:regional_graph}) can only be $0$ or $1$.
\end{lemma}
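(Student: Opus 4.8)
The plan is to show that two distinct narrow regions cannot share two or more inner arc segments in their boundaries; equivalently, that the multiplicity $m$ in Def.\ref{def:regional_graph} is always $0$ or $1$. Suppose, for contradiction, that two distinct narrow regions $R_1$ and $R_2$ have (at least) two inner arc segments $i$ and $i'$ in common in their boundaries. By Lem.\ref{lem:ends_live_in_different_arcs}, within $R_1$ the ends $i$ and $i'$ lie in two distinct ideal arcs, and likewise within $R_2$; moreover $R_1$ and $R_2$ live in two distinct ideal triangles (if they were in the same triangle, the union of their boundaries would have to double-cross the loop $\gamma$ near both $i$ and $i'$, which is incompatible with $R_1\ne R_2$ being small regions of a single triangle). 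So $i$ and $i'$ lie in two distinct ideal arcs $e$ and $e'$, each of which is a common side of the two triangles containing $R_1$ and $R_2$.

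First I would extract the resulting topological configuration: the closure of $R_1 \cup i \cup i' \cup R_2$ is a disc-like region $D$ whose boundary consists of the two inner arc segments $i\subset e$, $i'\subset e'$, together with arcs coming from loop segments of $\gamma$ in the two triangles. Tracing $\partial D$, one sees that $\gamma$ contributes two disjoint arcs to $\partial D$ — one "above" (the loop segments bounding $R_1$ between its ends $i$ and $i'$) and one "below" (those bounding $R_2$). Each of these two arcs of $\gamma$ runs from a point of $e$ to a point of $e'$ and, together with sub-segments of $e$ and $e'$, bounds a sub-disc. This exhibits two parallel loop segments (or short chains of loop segments) of $\gamma$ connecting $e$ and $e'$ in each triangle, with the strip between them containing no other part of $\gamma$.

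The key step is then a minimal-position/bigon-type argument, exactly of the flavor already used in the proof of Lem.\ref{lem:basic_lemma_on_endpoints_of_loop_segments}: the two arcs of $\gamma$ forming the top and bottom of $\partial D$, together with the two arc segments of $e$ and $e'$ cut out between their endpoints, bound a disc in $S$ across which $\gamma$ could be homotoped to reduce the number of intersections of $\gamma$ with $e$ and $e'$ by at least two. (One must check that the two endpoints of $i$ are among these intersection points and genuinely get removed, and that the homotopy does not create new intersections with other arcs of $T$ — this uses that $R_1,R_2$ are small regions, hence contain no loop segments in their interiors, and that no loop segment is a half-circle.) This contradicts the minimality of the position of $\gamma$ with respect to $T$ (Def.\ref{def:minimal_position}), which we may assume throughout. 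Hence no such $i'$ exists and $m\le 1$.

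The main obstacle I expect is the careful bookkeeping in the degenerate cases: when one or both of the triangles containing $R_1,R_2$ is self-folded, or when $e,e'$ share a marked point or a puncture as an endpoint, so that the disc $D$ may be pinched at a vertex. In those cases "$D$ is a disc" must be argued from Lem.\ref{lem:basic_lemma_on_endpoints_of_loop_segments} (no loop segment lives in a self-folded corner, no half-circles) and from the fact, already noted after Def.\ref{def:regional_graph}, that no narrow region has two of its own ends glued. I would handle the self-folded case by invoking Cor.\ref{cor:3-end_narrow_region_cannot_occur_in_a_self-folded_triangle} and Lem.\ref{lem:self-folded_is_always_sane}-style reasoning to limit the possible pictures, and then run the same bigon-removal argument; the non-self-folded case is the generic one and is the cleanest instance of the argument above.
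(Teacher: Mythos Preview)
Your argument has a real gap in the geometry of the region $D$. When $R_1$ and $R_2$ share two ends $i\subset e$ and $i'\subset e'$, gluing them along \emph{both} $i$ and $i'$ produces an annulus, not a disc. Concretely: the two ideal arcs $e,e'$ share a vertex $p$ (a puncture or marked point), and in each triangle the portion of $\partial R_j$ between $i$ and $i'$ on the side towards $p$ is a single loop segment $l_j$ in the corner at $p$. The key point you miss is that $l_1$ and $l_2$ have the \emph{same} endpoints --- namely the endpoint of $i$ nearest $p$ and the endpoint of $i'$ nearest $p$ --- so $l_1\cup l_2$ is a closed curve, not an arc. Thus $\partial D$ does not consist of ``two disjoint arcs of $\gamma$ running from $e$ to $e'$'' as you assert; one boundary component of the annulus $D$ is the closed curve $l_1\cup l_2\subset\gamma$. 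There is no bigon here and nothing to slide across to reduce intersection number; indeed a small peripheral loop around $p$ is already in minimal position with respect to $T$.

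The contradiction is therefore not with minimality but with the hypothesis that $\gamma$ is non-peripheral: since $\gamma$ is connected and $l_1\cup l_2$ is a closed subcurve of $\gamma$, we must have $\gamma=l_1\cup l_2$, which is a peripheral loop around $p$. This is exactly the paper's argument. The paper also treats $m=3$ separately (the two triangles then share all three sides, forcing $S$ to be the thrice-punctured sphere or the once-punctured torus, and in each case one checks directly that $\gamma$ is peripheral); your reduction to $m\ge 2$ does not automatically cover this, since there one cannot single out a unique vertex $p$ in the same way.
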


\begin{proof}
Suppose $m = 2$ for some two vertices of $\mathcal{R}$, corresponding to narrow regions $N, N'$. Each of $N,N'$ has at least two ends which are identified with two ends of the other of the two narrow regions $N,N'$. For each $N,N'$, by Lem.\ref{lem:ends_live_in_different_arcs} these two ends live in distinct ideal arcs. So the two triangles where $N,N'$ live in must have at least two ideal arcs in common, which in particular are not self-folded. So the situation must be like in Fig.\ref{fig:two_triangles_double_glued}, hence some two loop segments form a peripheral loop around the puncture which is the common endpoint of these two ideal arcs. Part of a non-peripheral loop $\gamma$ forming a peripheral loop is absurd. 

\vs

Suppose $m=3$ for some two narrow regions $N,N'$. Each of $N,N'$ is a $3$-end narrow region, and the three ends live in distinct ideal arcs, by Lem.\ref{lem:ends_live_in_different_arcs}. So the two triangles containing $N,N'$ must share all three sides, so these two triangles form the entire triangulation of the surface. The only such decorated surfaces is either the sphere with three punctures, or the once-punctured torus. For the former case, as in the left of Fig.\ref{fig:two_triangles_triple_glued}, at least three pairs of loop segments form peripheral loops, which is absurd. For the latter case, as in the right of Fig.\ref{fig:two_triangles_triple_glued}, these six loop segments form a closed loop hence the whole loop $\gamma$, which by inspection is a peripheral loop, which is absurd.
\end{proof}

\begin{figure}
\hspace{10mm} \includegraphics[width=100mm]{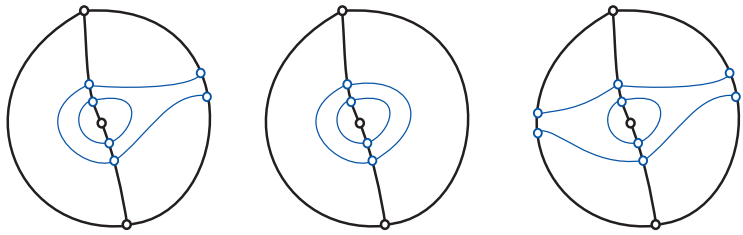} 
\caption{Narrow regions sharing two ends}
\label{fig:two_triangles_double_glued}
\end{figure}

\begin{figure}
\hspace{10mm} \includegraphics[width=50mm]{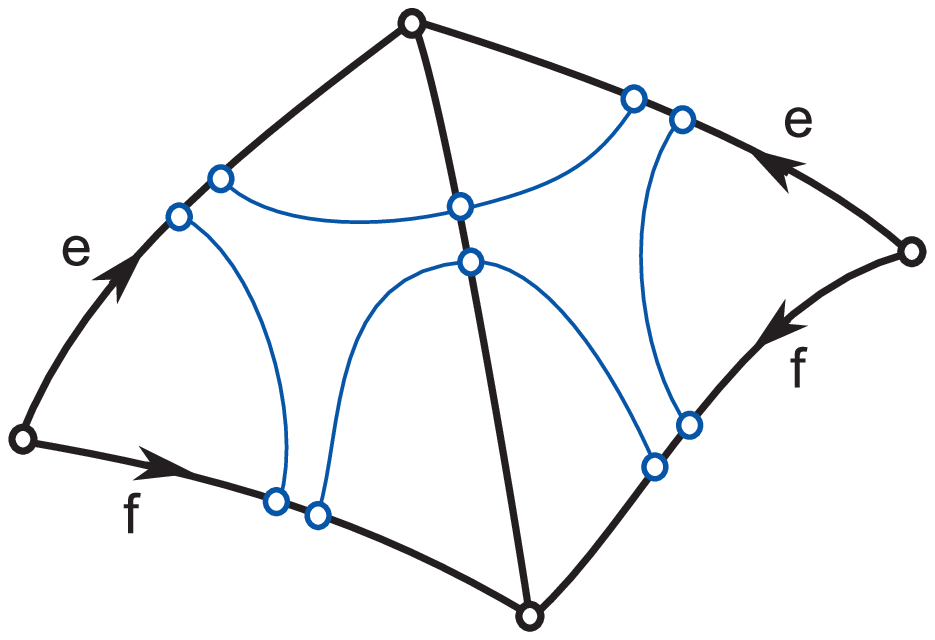}  \hspace{10mm} \includegraphics[width=50mm]{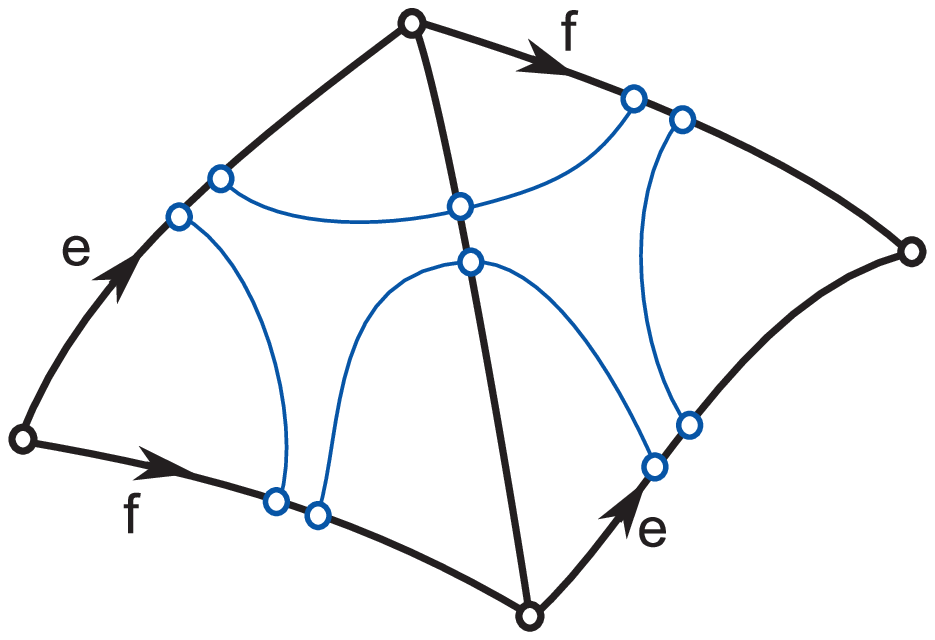} 
\caption{Narrow regions sharing three ends}
\label{fig:two_triangles_triple_glued}
\end{figure}

In practice, it is convenient to give labels to narrow regions, hence accordingly to the vertices of $\mathcal{R}$. See Fig.\ref{fig:regional_graphs} for examples of  regional graph $\mathcal{R}$, in case $S$ is a once-punctured torus or a twice-punctured torus. Notice that the regional graph $\mathcal{R}$ need not be connected.

\vs

It is clear that the \ul{\em valence} of a vertex of $\mathcal{R}$, i.e. the number of edges of $\mathcal{R}$ \ul{\em attached} to this vertex, can be $1,2$, or $3$. The $k$-end narrow region of $S$ corresponds to a $k$-valent vertex of $\mathcal{R}$. Since $\mathcal{R}$ has no self-loop, the $k$ edges attached to a $k$-valent vertex of $\mathcal{R}$ are mutually distinct.

\begin{figure}
\hspace{0mm} \includegraphics[width=100mm]{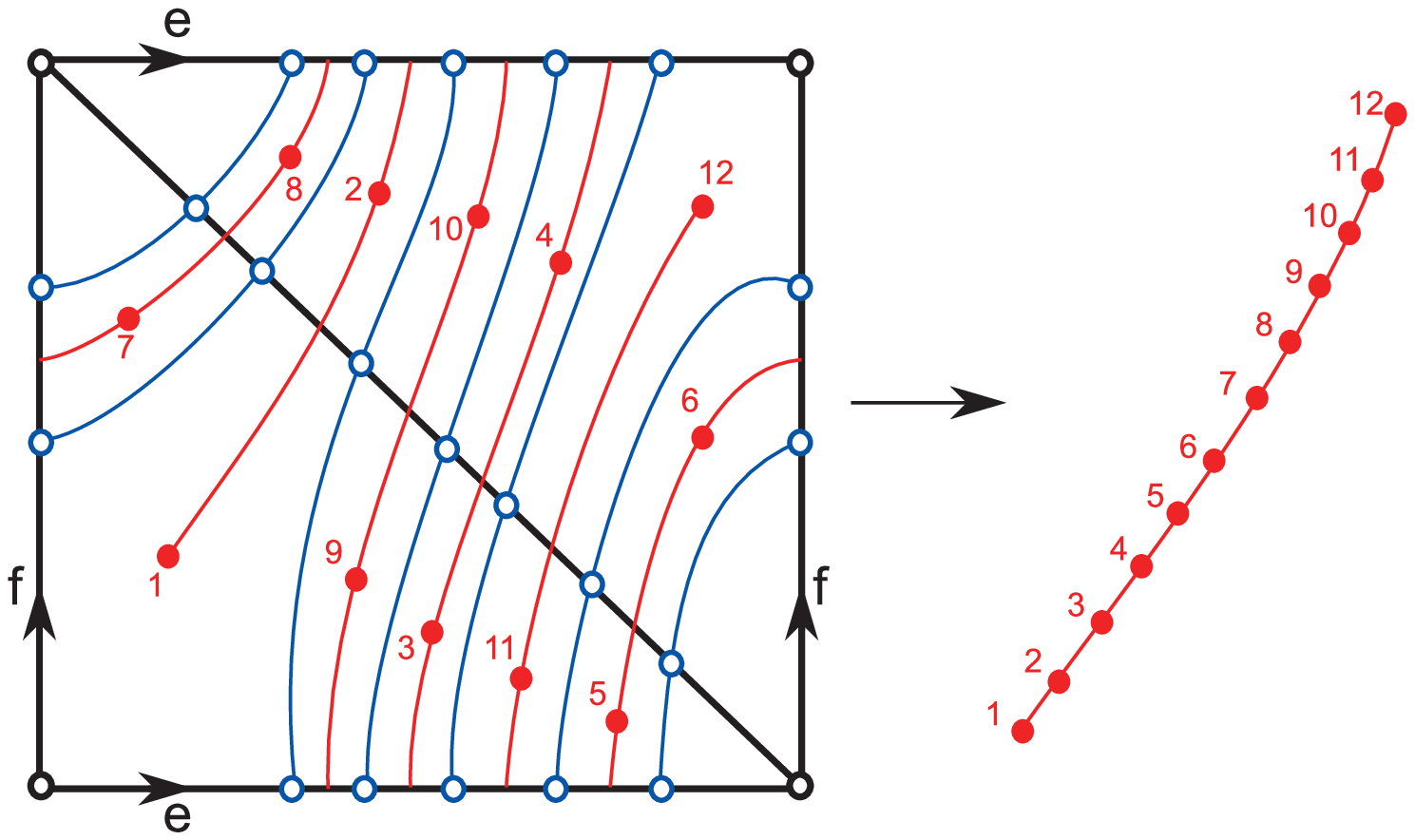}  

\vspace{2mm}

\includegraphics[width=120mm]{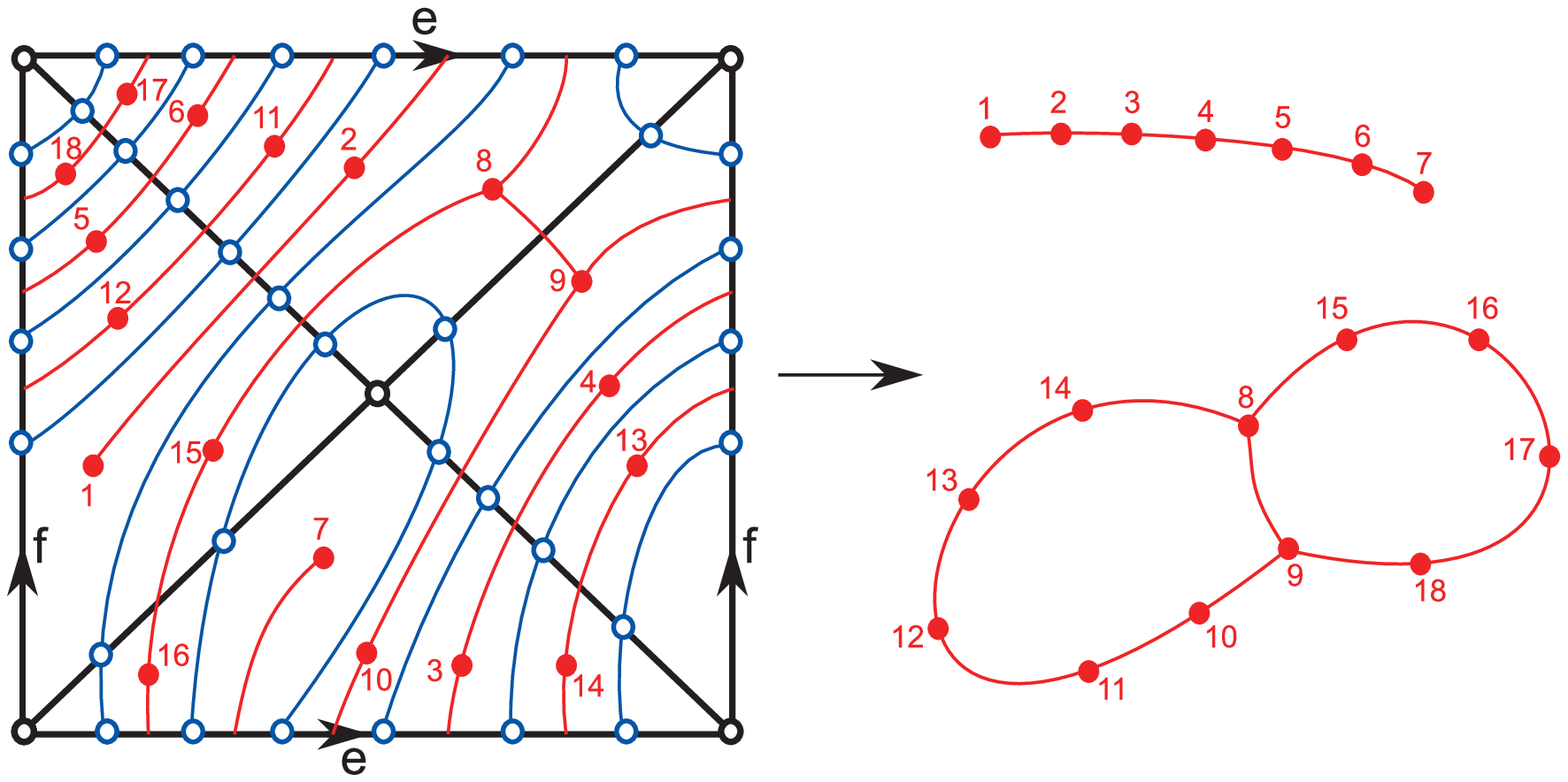} 
\caption{Examples of regional graphs: once-punctured torus and twice-punctured torus}
\label{fig:regional_graphs}
\end{figure}

\vs

We shall use the fact that the graph $\mathcal{R}$ is constructed from an oriented surface $S$. It is helpful to think of $\mathcal{R}$ as living on the surface $S$ as follows. For each narrow region, choose a point in the interior, and use it as a vertex of $\mathcal{R}$. For each inner arc segment, choose a path in $S$ with endpoints being the chosen interior points of the two narrow regions that have this inner arc segment in their boundaries, so that this path traverses this inner arc segment exactly once, and traverses no other arc segment nor the loop $\gamma$; view this path as being an edge of the regional graph $\mathcal{R}$. 

\vs

Notice that the set of all inner arc segments for the data $(S,T,\gamma)$ is naturally in bijection with the set of all edges of the regional graph $\mathcal{R}$. Meanwhile, our strategy to prove Thm.\ref{thm:arc-ordering} is to find dyadic arc-orderings on ideal arcs satisfying the desired conditions of compatibility and sanity. Recall that, to construct a dyadic arc-ordering on an ideal arc is to choose an orientation and a difference number $2^m$ on each inner arc segment. Using the above mentioned bijection, the data on inner arc segments can be transferred to the same kind of data on the edges of $\mathcal{R}$, and vice versa. So, on each edge of $\mathcal{R}$ we shall find a choice of orientation on the edge and a number $2^m$ which we call a \ul{\em weight} on the edge, so that the corresponding data on inner arc segments yield dyadic arc-orderings on the ideal arcs that satisfy the desired conditions.

\vs

We need to fix a concrete way of such `transferring' of data on inner arc segments to/from those on edges of $\mathcal{R}$. The numbers (or weights) $2^m$ can be transferred in an obvious manner, while for the transfer of orientations we make use of the orientation of the oriented surface $S$.

\begin{definition}
\label{def:transfer}
The choice of an orientation and a difference number $2^m$ on an inner arc segment is said to be \ul{\em transferred} from the choice of an orientation and a weight on the corresponding edge of the regional graph $\mathcal{R}$ if

1) the weight on this edge of $\mathcal{R}$ is the same number $2^m$, and

2) the orientation on this edge of $\mathcal{R}$, drawn on the surface $S$, and the orientation on the inner arc segment are as if the orientation on the edge `turns to right' at the intersection of this edge with the inner arc segment. See Fig.\ref{fig:transfer}.
\end{definition}

\begin{figure}
\hspace{0mm} \includegraphics[width=125mm]{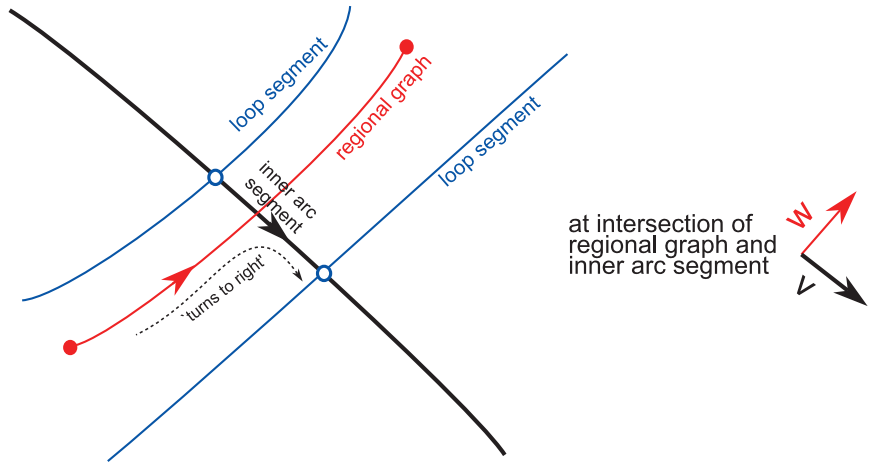}  
\caption{Transfer of orientation from edge of regional graph to inner arc segment}
\label{fig:transfer}
\end{figure}

In 2), the notion of `turns to right' can be made precise, by using the orientation on the surface $S$. Or, condition 2) can be written alternatively as:

\vs

{\em 2') let $p$ be the point of intersection of the inner arc segment and the corresponding edge of $\mathcal{R}$ drawn on $S$, and let $v$ be a positively-oriented basis of the tangent space at $p$ to the inner arc segment, and $w$ a positively-oriented basis of the tangent space at $p$ to the edge of $\mathcal{R}$; we require $\{v,w\}$ to be a positively-oriented basis of the tangent space at $p$ to the surface $S$. See Fig.\ref{fig:transfer}.}

\vs

To be more precise, we must make sure that the inner arc segment and the edge of $\mathcal{R}$ are smooth near $p$. For the notion of `positively-oriented basis of the tangent space' we use the chosen orientation on each relevant (sub)manifold, i.e. the inner arc segment, edge of $\mathcal{R}$, and $S$.

\vs

Notice that there is a unique choice of orientation and difference number on each inner arc segment that is transferred from any given choice of orientation and weight on each edge of $\mathcal{R}$, and also vice versa.

\vs

Before moving on to handle the orientations and weights on edges of $\mathcal{R}$, we study the structure of $\mathcal{R}$. First, recall some basic notions from graph theory:

\begin{definition}
A \ul{\em subgraph} of a graph $\mathcal{G}$ is a graph whose set of vertices is a subset of the set of all vertices of $\mathcal{G}$, and whose set of edges is a subset of the set of all edges of $\mathcal{G}$.

\vs

We say that a graph $\mathcal{G}$ is \ul{\em connected} if any two vertices of $\mathcal{G}$ can be connected by a sequence of edges of $\mathcal{G}$.

\vs

A \ul{\em connected component} of a graph $\mathcal{G}$ is a maximal connected subgraph of $\mathcal{G}$, i.e. a connected subgraph $\mathcal{G}'$ of $\mathcal{G}$ that is not a subgraph of a connected subgraph of $\mathcal{G}$ distinct from $\mathcal{G}'$.
\end{definition}
Any graph $\mathcal{G}$ decomposes into `disjoint union' of its connected components. That is, each vertex or each edge of $\mathcal{G}$ belongs to a unique connected component of $\mathcal{G}$, and two vertices from two different connected components cannot be connected by a sequence of edges. We find it handy to have the following simple lemma, whose proof is a straightforward exercise left to readers. 
\begin{lemma}[a connected component criterion]
\label{lem:component_criterion}
A subgraph $\mathcal{G}'$ of a graph $\mathcal{G}$ is a disjoint union of some connected components of $\mathcal{G}$ if and only if for each vertex $v$ of $\mathcal{G}'$, all edges of $\mathcal{G}$ attached to $v$ belong to $\mathcal{G}'$. \qed
\end{lemma}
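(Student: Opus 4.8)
The statement to prove is Lemma~\ref{lem:component_criterion}, a basic graph-theoretic fact: a subgraph $\mathcal{G}'$ of $\mathcal{G}$ is a disjoint union of some connected components of $\mathcal{G}$ if and only if for every vertex $v$ of $\mathcal{G}'$, every edge of $\mathcal{G}$ attached to $v$ already belongs to $\mathcal{G}'$. The authors themselves flag this as "a straightforward exercise left to readers," so the plan is simply to write the two implications cleanly.

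Let me sketch a proof proposal.

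**Approach.** The plan is to prove the two directions separately. The "only if" direction is immediate from the definitions. The "if" direction requires showing that the local closure condition (every edge at a vertex of $\mathcal{G}'$ lies in $\mathcal{G}'$) forces $\mathcal{G}'$ to be saturated: it must contain all vertices reachable from any of its vertices, and all edges among them, so it is a union of connected components.

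**Key steps.** First I would handle ($\Rightarrow$): if $\mathcal{G}'$ is a disjoint union of connected components of $\mathcal{G}$ and $v$ is a vertex of $\mathcal{G}'$, then $v$ lies in some connected component $\mathcal{C}$ contained in $\mathcal{G}'$; any edge $e$ of $\mathcal{G}$ attached to $v$ connects $v$ to another vertex $w$, and $e$ together with $w$ lies in the same connected component $\mathcal{C}$ of $\mathcal{G}$ (since $v$ and $w$ are joined by the single edge $e$), hence $e \in \mathcal{C} \subseteq \mathcal{G}'$. Second, for ($\Leftarrow$): assume the local closure condition. Pick any vertex $v_0$ of $\mathcal{G}'$ and let $\mathcal{C}$ be the connected component of $\mathcal{G}$ containing $v_0$. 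I claim $\mathcal{C} \subseteq \mathcal{G}'$: argue by induction on the length of a path in $\mathcal{G}$ from $v_0$ to an arbitrary vertex $w$ of $\mathcal{C}$; at each step, we are at a vertex already known to lie in $\mathcal{G}'$, and the next edge of the path is attached to that vertex, hence lies in $\mathcal{G}'$ by hypothesis, and therefore so does its other endpoint; this shows every vertex and every edge of $\mathcal{C}$ lies in $\mathcal{G}'$ (for edges of $\mathcal{C}$ not on a chosen path, use the same argument applied to an endpoint). Conversely $\mathcal{G}' \subseteq$ the union of such components is clear since every vertex and edge of $\mathcal{G}'$ lies in $\mathcal{G}$, hence in one of its connected components, and that component is contained in $\mathcal{G}'$ by what we just showed. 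Thus $\mathcal{G}'$ is exactly the disjoint union of the connected components of $\mathcal{G}$ that meet it.

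**Main obstacle.** There is essentially no real obstacle here; the only thing requiring a little care is the bookkeeping in the ($\Leftarrow$) direction — making sure one concludes that \emph{all} edges of the component $\mathcal{C}$ (not merely the vertices, and not merely the edges along one spanning walk) belong to $\mathcal{G}'$. This is handled by noting that any edge of $\mathcal{C}$ has an endpoint in $\mathcal{C}$, that endpoint is a vertex of $\mathcal{G}'$ by the vertex part of the claim, and then the local closure hypothesis applies directly to that vertex. Since $\mathcal{G}'$ is itself a graph and an edge requires its endpoints, no inconsistency arises. I expect to present this in under half a page.
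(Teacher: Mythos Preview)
Your proposal is correct and complete. The paper itself does not supply a proof of this lemma---it explicitly calls it ``a straightforward exercise left to readers'' and marks it with \qed---so there is nothing to compare your argument against; your write-up of both implications, including the care you take to ensure all edges (not just vertices) of each component land in $\mathcal{G}'$, is exactly the routine verification the authors had in mind.
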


For our purpose, we classify the connected components of our regional graph $\mathcal{R}$ as follows.
\begin{definition}
\label{def:types}
A connected component of the regional graph $\mathcal{R}$ is said to be of \ul{\em type I} if it contains a $1$-valent vertex, and \ul{\em type II} otherwise.
\end{definition}
So, each vertex of a type II connected component of $\mathcal{R}$ has valence $2$ or $3$; in a sense, such component has no `open end', and hence is `closed'. The following observation, which we find quite amusing, says that the existence of a type II connected component of $\mathcal{R}$ is a somewhat rare phenomenon, and encodes an interesting topological property of the loop $\gamma$.

\begin{lemma}[implication of the existence of a type II component of $\mathcal{R}$]
\label{lem:type_II_topological_implication}
Let $S,T,\gamma$ be as in Def.\ref{def:basic}, and let $\mathcal{R}$ be the corresponding regional graph. Suppose that $\mathcal{R}$ has a type II connected component, say $\mathcal{R}'$. Then, the union of all narrow regions corresponding to the vertices of $\mathcal{R}'$ is a subsurface of $S$ enclosed by the loop $\gamma$, and this subsurface contains no puncture or a boundary component of $S$.
\end{lemma}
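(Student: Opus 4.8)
\textbf{Proof proposal for Lemma \ref{lem:type_II_topological_implication}.}

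The plan is to analyze the structure of a type II component $\mathcal{R}'$ by separating the narrow regions it indexes from everything else in $S$. Let $\Sigma$ denote the union of the closures of all narrow regions corresponding to vertices of $\mathcal{R}'$. First I would examine the boundary $\partial\Sigma$: since each narrow region is a small region cut out of a triangle by loop segments, its boundary consists of inner arc segments, loop segments, and possibly corner arcs (pieces of ideal arcs adjacent to a marked point or puncture) or vertices. By Cor.\ref{cor:3-end_narrow_region_cannot_occur_in_a_self-folded_triangle} and Lem.\ref{lem:ends_live_in_different_arcs}, the combinatorics near each vertex of $\mathcal{R}'$ is controlled. The crucial point is that every inner arc segment appearing as an end of a narrow region $N$ in $\mathcal{R}'$ is shared with exactly one other narrow region $N'$, and by Lem.\ref{lem:component_criterion} applied to the vertex set of $\mathcal{R}'$ (which by Def.\ref{def:types} has no $1$-valent vertices, so every end of every such $N$ is actually glued to another narrow region in $\mathcal{R}'$), that neighbor $N'$ is again in $\mathcal{R}'$. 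Hence no inner arc segment survives in $\partial\Sigma$: all of them are glued up internally. Therefore $\partial\Sigma$ is composed purely of arcs of $\gamma$ (loop segments) together with at most some corner pieces. I would then rule out the corner pieces: a corner piece in $\partial\Sigma$ would force $\Sigma$ to reach a marked point or puncture, but a narrow region abutting such a corner would have to be bounded on both sides of that corner by loop segments connecting to the same ideal arc, which is excluded, or would produce a self-folded-corner loop segment, excluded by Lem.\ref{lem:basic_lemma_on_endpoints_of_loop_segments}(2). So $\partial\Sigma$ consists entirely of pieces of $\gamma$, and since $\gamma$ is a simple closed curve, $\partial\Sigma$ is a union of full copies of $\gamma$; as $\Sigma$ is connected and $\gamma$ is connected, $\partial\Sigma = \gamma$ (traversed possibly from both sides, but as a point set it is $\gamma$). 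This exhibits $\Sigma$ as a subsurface of $S$ whose boundary is $\gamma$, i.e. a subsurface enclosed by the loop.

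Next I would argue that $\Sigma$ contains no puncture and no boundary component of $S$. A puncture of $S$ lies at a vertex of the triangulation $T$; any narrow region touching that vertex would contribute a corner piece to $\partial\Sigma$ unless the full star of the vertex is contained in $\Sigma$. But the star of a puncture consists of triangle corners, and a loop segment cannot live in a self-folded corner (Lem.\ref{lem:basic_lemma_on_endpoints_of_loop_segments}(2)); more to the point, if all corners around a puncture were part of narrow regions in $\Sigma$ then some loop segment near that puncture would either be a "half-circle" around the puncture or would form a peripheral loop, both contradicting minimality of $\gamma$ and non-peripherality, as in the proof of Lem.\ref{lem:basic_lemma_on_endpoints_of_loop_segments}. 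A similar argument excludes a boundary component of $S$: a good loop never meets a boundary arc, so a boundary arc cannot be in $\partial\Sigma$, hence any boundary component met by $\Sigma$ would lie in the interior of $\Sigma$, forcing $\gamma$ to enclose it — but then the innermost loop segments near that boundary component would again be parallel half-circles, contradicting minimal position.

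I expect the main obstacle to be the careful bookkeeping of $\partial\Sigma$: making precise that, once every end of every narrow region in $\mathcal{R}'$ is glued to another narrow region in $\mathcal{R}'$ (which is exactly the content of $\mathcal{R}'$ being a type II component together with Lem.\ref{lem:component_criterion}), the only surviving boundary pieces are arcs of $\gamma$, and then upgrading "$\partial\Sigma$ is a union of loop segments" to "$\partial\Sigma$ is all of $\gamma$" using connectedness of $\mathcal{R}'$ and of $\gamma$. The exclusion of punctures and boundary components should then follow by the same "no parallel half-circles / no peripheral sub-loop" reasoning already used in Lem.\ref{lem:basic_lemma_on_endpoints_of_loop_segments}, so I would state it concisely and refer back to that proof rather than repeating it in full. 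One should also be mildly careful that $\Sigma$ is genuinely a subsurface (a $2$-manifold with boundary), which holds because it is a union of closed regions glued along full inner arc segments and loop segments, i.e. along arcs, in the oriented surface $S$.
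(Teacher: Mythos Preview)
Your approach is essentially the same as the paper's, but you are missing the one observation that makes the argument clean, and as a result you introduce unnecessary (and somewhat hand-wavy) side arguments.

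The paper's key point is this: since $\mathcal{R}'$ is of type II, every vertex of $\mathcal{R}'$ has valence $2$ or $3$, so every narrow region in $\Sigma$ is a $2$-end or $3$-end region. For $k\in\{2,3\}$, the boundary of a $k$-end narrow region consists of \emph{exactly} $k$ inner arc segments and $k$ loop segments, and nothing else---in particular no ``corner pieces'' and no contact with a triangle vertex (this is precisely what fails for $k=1$; see Fig.~\ref{fig:narrow_regions}). Once you note this, two things follow immediately: (i) after the inner arc segments are glued internally (your correct use of Lem.~\ref{lem:component_criterion} and the type II hypothesis), $\partial\Sigma$ consists solely of loop segments, with no corner pieces to rule out; and (ii) since no constituent $2$- or $3$-end region touches a puncture, marked point, or boundary arc, the union $\Sigma$ cannot contain any puncture or boundary component of $S$. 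Your separate paragraphs devoted to ``ruling out corner pieces'' and to the star-of-a-puncture / half-circle argument are therefore unnecessary, and the justifications you sketch for them are not fully convincing as written.

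For the step from ``$\partial\Sigma$ is a union of loop segments'' to ``$\partial\Sigma=\gamma$'', the paper also inserts the small observation that the endpoint junctures of these loop segments match up (again because the inner arc segments at those junctures have been absorbed into the interior of $\Sigma$), so $\partial\Sigma$ is a closed $1$-submanifold of $\gamma$; since $\gamma$ is a single circle, this forces $\partial\Sigma=\gamma$. You jump over this, asserting ``a union of full copies of $\gamma$'' without saying why the loop segments close up; it is worth stating explicitly.
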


\begin{proof}
Let $S'$ be this subsurface. Let's investigate the boundary of $S'$. Notice that each narrow region corresponding to a vertex of $\mathcal{R}'$ is either a $2$-end region or a $3$-end region; this is because each vertex of $\mathcal{R}'$ is $2$-valent or $3$-valent, since it is of type II. For $k=2,3$, the boundary of a $k$-end narrow region is the union of $k$ inner arc segments, i.e. $k$ ends, and $k$ loop segments; note that this is not true for $k=1$. Since $\mathcal{R}'$ is a connected component, observe from Lem.\ref{lem:component_criterion} that, for each vertex $v$ of $\mathcal{R}'$, every vertex of $\mathcal{R}$ that is connected to $v$ by one edge of $\mathcal{R}$ belongs to $\mathcal{R}'$. So, for each narrow region $N$ constituting the subsurface $S'$, every narrow region in $S$ that shares an end with $N$ is also one of the constituent narrow regions of $S'$. 

\vs

For $k=2,3$, for a $k$-end narrow region, let's call the $k$ ends and $k$ loop segments constituting its boundary the \ul{\em boundary pieces} of this $k$-end narrow region; these boundary pieces are all distinct, thanks to  Lem.\ref{lem:ends_live_in_different_arcs}. So, for $k=2,3$, a $k$-end narrow region has $2k$ boundary pieces. Then $S'$ can be thought of as obtained by gluing some $2$-end regions and $3$-end regions along some of their boundary pieces, where each gluing `glues' two whole boundary pieces of a same kind. In particular, the boundary of $S'$ is the union of some boundary pieces of its constituent narrow regions. Meanwhile, we just saw that, for each $k$-end narrow region $N$ constituting $S'$, each of the $k$ ends among the boundary pieces of $N$ is glued to an (end) boundary piece of some constituent narrow region of $S'$. Hence it follows that the boundary of $S'$ is the union of some loop segments. 

\vs

A similar argument as above also shows that, for each loop segment constituting the boundary of $S'$, each of its two endpoint junctures are glued to an endpoint juncture of a loop segment constituting the boundary of $S'$. Hence the boundary of $S'$ is itself a one-dimensional manifold without ($0$-dimensional) boundary. Since $\gamma$ has only one connected component, its only non-empty one-dimensional submanifold is $\gamma$ itself. Hence the boundary of $S'$ is the whole $\gamma$. Finally, notice that each constituent narrow region of $S'$ has no puncture or part of a boundary component of $S$ in its interior nor on its boundary. So $S'$ does not contain a puncture or a boundary component of $S$. 
\end{proof}

This lemma leads to the following crucial structure result on $\mathcal{R}$.
\begin{corollary}[structure of regional graph $\mathcal{R}$]
\label{cor:structure_of_R}
Let $S,T,\gamma$ be as in Def.\ref{def:basic}, and let $\mathcal{R}$ be the corresponding regional graph. Then the number of type II connected components of $\mathcal{R}$ is at most one.
\end{corollary}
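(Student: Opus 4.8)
The plan is to derive a contradiction from the assumption that $\mathcal{R}$ has two distinct type II connected components, say $\mathcal{R}'$ and $\mathcal{R}''$. By Lemma \ref{lem:type_II_topological_implication}, applied to each of them, the union $S'$ of the narrow regions corresponding to the vertices of $\mathcal{R}'$ is a subsurface of $S$ whose boundary is the whole loop $\gamma$ and which contains no puncture or boundary component of $S$; likewise the union $S''$ of the narrow regions attached to $\mathcal{R}''$ is a subsurface with boundary $\gamma$ and containing no puncture or boundary component. Since $\mathcal{R}'$ and $\mathcal{R}''$ are distinct connected components, they share no vertex, so $S'$ and $S''$ share no narrow region; moreover, an inner arc segment is an edge of $\mathcal{R}$ and hence belongs to a unique connected component, so $S'$ and $S''$ share no inner arc segment in their interiors. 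Thus $S'$ and $S''$ meet, if at all, only along $\gamma$.

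First I would observe that the common boundary forces $S'$ and $S''$ to glue along $\gamma$ into a closed surface. Indeed, $\gamma$ is a two-sided simple closed curve in the oriented surface $S$ (it has a collar neighborhood $\gamma \times (-1,1)$). Each of the two sides of this collar lies in some narrow region; one side lies in a narrow region of $S'$ (since $\partial S' = \gamma$ and $S'$ is a union of closures of narrow regions abutting $\gamma$) and, because $S''$ also has $\partial S'' = \gamma$, the \emph{other} side must lie in $S''$ --- otherwise both sides would lie in $S'$, and then $\gamma$ would be an interior curve of $S'$ rather than its boundary, or $S'$ would be all of a neighborhood of $\gamma$ and $S''$ could not also reach $\gamma$ from a narrow region disjoint from those of $S'$. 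Hence $S' \cup_\gamma S''$ is a compact surface without boundary, which is a union of some (but, as I will argue, not all) narrow regions, together with a subcollection of the ideal arcs of $T$.

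Next I would reach the contradiction. The closed surface $\Sigma := S' \cup_\gamma S''$ is a subsurface of $S$; since $S$ is connected and $\Sigma$ has empty boundary, $\Sigma$ is either a proper closed subsurface (impossible: a connected surface with nonempty boundary or punctures cannot contain a closed surface of the same dimension as a proper subset, as that subset would be open and closed) or $\Sigma = S$. If $\Sigma = S$, then $S$ is a closed surface, contradicting Definition \ref{def:decorated_surface} together with condition \eqref{eq:S_condition}, which forces $\partial S \setminus \{\text{marked points}\}$ to have at least one component (equivalently, $S$ has at least one puncture or boundary arc). Either way we contradict the hypotheses on $S$, so at most one type II connected component can exist.

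The step I expect to be the main obstacle is the middle one: pinning down precisely that the two subsurfaces $S'$ and $S''$ attach to \emph{opposite} sides of the collar of $\gamma$ and therefore glue up to a genuine closed surface, rather than, say, $S'$ and $S''$ both lying on the same side or overlapping in some degenerate way. This requires a careful local analysis at $\gamma$ using the orientation of $S$ and the structure of narrow regions adjacent to a loop segment of $\gamma$ (each loop segment of $\gamma$ bounds exactly two small regions of $T$, one on each side, and by Lemma \ref{lem:type_II_topological_implication} the ones on the $S'$-side are all in $S'$); the bookkeeping that every loop segment in $\partial S'$ also lies in $\partial S''$, uniformly along all of $\gamma$, is where one must be attentive.
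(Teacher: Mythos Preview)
Your proof is correct, and the obstacle you flag is easily resolved: since $\mathcal{R}'$ and $\mathcal{R}''$ share no vertex and no edge, the interiors of $S'$ and $S''$ (which consist of the interiors of their narrow regions together with the inner arc segments internal to each component) are disjoint. Each of $S',S''$ contains a one-sided collar of $\gamma$ because $\partial S' = \partial S'' = \gamma$; disjointness of interiors then forces these collars to be on opposite sides, so $S'\cup_\gamma S''$ is indeed a closed $2$-manifold sitting inside $S$, and your clopen argument finishes.

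The paper's proof takes a shorter, non-contradiction route. It simply observes that cutting $S$ along $\gamma$ yields one or two components, and asks how many of those components can be free of punctures and boundary. If $\gamma$ is non-separating, or separating with both sides carrying a puncture/boundary component, there is no such region, so by Lem.~\ref{lem:type_II_topological_implication} there is no type~II component at all. If $\gamma$ is separating and one side is puncture-free, the other side must contain a puncture or boundary (since $S$ has at least one), so there is exactly one candidate region, and hence at most one type~II component. This avoids the collar/opposite-sides bookkeeping entirely: rather than gluing $S'$ and $S''$ and deriving a contradiction, the paper bounds the number of possible target subsurfaces for Lem.~\ref{lem:type_II_topological_implication} directly. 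Your argument is a valid alternative, but the paper's is tidier.
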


\begin{proof}
Note that cutting the surface $S$ along $\gamma$ yields one or two connected components, which means that either $\gamma$ divides $S$ into two distinct regions, or it does not divide $S$ into distinct regions at all.  Suppose either that $\gamma$ does not divide $S$ into distinct regions, or that $\gamma$ divides $S$ into two distinct regions, each of which contains a puncture or a boundary component of $S$. Then there is no subsurface of $S$ enclosed by $\gamma$ that does not contain a puncture or a boundary component of $S$. Hence, by Lem.\ref{lem:type_II_topological_implication}, $\mathcal{R}$ does not have a connected component of type II. 

\vs

Suppose now that $\gamma$ divides $S$ into two distinct regions, one of which contains no puncture or a boundary component of $S$. Then the other region must contain a puncture or a boundary component of $S$, because the two regions constitute the whole surface $S$ which contains at least one puncture or a boundary component. Hence there is only one subsurface of $S$ enclosed by $\gamma$ having no puncture or a boundary component of $S$. Thus, by Lem.\ref{lem:type_II_topological_implication}, $\mathcal{R}$ has at most one connected component of type II. 
\end{proof}

We will also need the following property of a type II connected component of $\mathcal{R}$:
\begin{lemma}[type II component contains a $3$-valent vertex]
\label{lem:type_II_component_contains_a_3-valent_vertex}
Let $S,T,\gamma$ be as in Def.\ref{def:basic}, and let $\mathcal{R}$ be the corresponding regional graph. Suppose $\mathcal{R}$ has a type II connected component, say $\mathcal{R}'$. Then $\mathcal{R}'$ has at least one $3$-valent vertex.
\end{lemma}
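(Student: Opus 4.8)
The plan is to argue by contradiction: suppose $\mathcal{R}'$ is a type II connected component all of whose vertices are $2$-valent. Since every vertex of $\mathcal{R}'$ has valence $2$, the graph $\mathcal{R}'$ is a disjoint union of cycles (closed edge-paths); but $\mathcal{R}'$ is connected, so $\mathcal{R}'$ is a single cycle, say on vertices $v_1, v_2, \ldots, v_p$ (edges $v_1v_2, v_2v_3, \ldots, v_pv_1$). Each $v_a$ corresponds to a $2$-end narrow region $N_a$, and by Lemma \ref{lem:type_II_topological_implication} the union $S' = \bigcup_a N_a$ is a subsurface of $S$ whose boundary is the whole loop $\gamma$ and which contains no puncture or boundary component of $S$. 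The strategy is to show this forces $S'$ to be an annulus with $\gamma$ retractible to its core, which would then give a homotopy of $\gamma$ reducing its intersection number with $T$, contradicting minimality — or more directly, to count loop segments and arc segments in $S'$ and derive an impossibility.

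First I would pin down the combinatorial structure of $S'$. As observed in the proof of Lemma \ref{lem:type_II_topological_implication}, a $2$-end narrow region has exactly $4$ boundary pieces: $2$ ends (inner arc segments) and $2$ loop segments, and $S'$ is obtained by gluing the $N_a$'s along whole ends, with every end of every $N_a$ glued to an end of another $N_a$ (by the connected-component property via Lemma \ref{lem:component_criterion}). So the $p$ vertices of the cycle $\mathcal{R}'$ contribute $p$ edges, i.e. $p$ inner arc segments interior to $S'$, each shared by two consecutive regions. The $2p$ loop segments among the boundary pieces of the $N_a$'s are \emph{not} glued to each other (loop segments are never glued in the construction), so they all lie on $\partial S'$, which by Lemma \ref{lem:type_II_topological_implication} equals $\gamma$. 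Thus $\gamma$, as it sits in $S'$, is cut by the $p$ interior inner arc segments into exactly $2p$ loop segments, and each $N_a$ is a disc (a small region of a triangle is always a disc) bounded by alternating arcs: end, loop segment, end, loop segment.

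Now I would run an Euler characteristic computation on $S'$. Build a CW structure on $S'$: the $0$-cells are the junctures lying on $\partial S' = \gamma$; the $1$-cells are the $2p$ loop segments together with the $p$ interior inner arc segments; the $2$-cells are the $p$ discs $N_a$. Counting the junctures: each interior inner arc segment has $2$ endpoint junctures, and each such juncture is an endpoint of exactly one interior inner arc segment and two loop segments (this uses that a juncture on $\gamma$ inside $S'$ lies on exactly one ideal arc and is the meeting point of two loop segments and the arc segments of that arc). The precise bookkeeping — how the $p$ interior arc segments, the $2p$ loop segments, and their endpoint junctures fit together around $\partial S'$ — is the step that needs care, but it should yield that the number of junctures is $2p$ (each consecutive pair of loop segments along $\gamma$ meets at a juncture through which one interior arc segment passes, and there are $2p$ loop segments hence $2p$ such meeting points). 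Then $\chi(S') = 2p - (2p + p) + p = 0$, so $S'$ is an annulus (it has one boundary circle, namely $\gamma$, hence cannot be a disc; $\chi = 0$ with one boundary circle and orientable forces the annulus... wait, an annulus has two boundary circles).

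I should instead conclude $S'$ is a Möbius band or handle the boundary count honestly: the point is that $\partial S' = \gamma$ is a \emph{single} circle, and an orientable surface with one boundary circle and $\chi(S') = 0$ is impossible (disc has $\chi = 1$; every other orientable surface with one boundary circle has $\chi \le -1$). This is the desired contradiction. The main obstacle I anticipate is getting the CW-complex counts exactly right — in particular verifying that every interior inner arc segment contributes two fresh junctures and that no juncture on $\gamma \cap S'$ fails to lie on an interior arc segment — which requires using minimality of $\gamma$ and Lemma \ref{lem:basic_lemma_on_endpoints_of_loop_segments} to rule out degenerate local pictures (e.g. a loop segment and an arc segment bounding a bigon). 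Once the count $\chi(S') = 0$ is established, combined with $\partial S'$ being connected, the contradiction with orientability of $S$ (hence of $S'$) is immediate, so $\mathcal{R}'$ must contain a $3$-valent vertex.
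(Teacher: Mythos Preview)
Your approach is correct and takes a genuinely different route from the paper. The paper argues by a direct inductive gluing: since every constituent narrow region of $S'$ is a rectangle (two ends, two loop segments) and all gluings are along ends, one shows step by step that the partial union is always either a rectangle or a cylinder/M\"obius band; but a cylinder has two boundary circles and a M\"obius band is non-orientable, so neither can equal $S'$, whose boundary is the single circle $\gamma$ and which sits in the oriented surface $S$.

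Your argument reaches the same contradiction via Euler characteristic, and once you stop hedging it is actually cleaner than the paper's. Your CW counts are exactly right and need no extra hypotheses: since $\partial S' = \gamma$ consists of the $2p$ (distinct, unglued) loop-segment boundary pieces, and a closed loop has as many junctures as loop segments, there are $2p$ junctures; each of those junctures is the endpoint of exactly one of the $p$ interior inner arc segments because the other arc segment at that juncture would lead to a puncture in $S'$, which Lemma~\ref{lem:type_II_topological_implication} forbids. So the computation $\chi(S') = 2p - (2p + p) + p = 0$ is solid, and since a compact orientable surface with exactly one boundary circle has $\chi = 1 - 2g$, which is odd, you get the contradiction immediately. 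Your initial detour through ``$S'$ is an annulus'' was the only misstep, and you corrected it; the concern about degenerate bigons is handled automatically by the minimal-position hypothesis and Lemma~\ref{lem:basic_lemma_on_endpoints_of_loop_segments}, so you can drop that caveat. The paper's hands-on gluing is more elementary in that it avoids surface classification, while your argument is shorter and more systematic.
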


\begin{proof}
As before, let $S'$ be the subsurface of $S$ obtained as the union of all narrow regions corresponding to the vertices of $\mathcal{R}'$. Suppose that the constituent narrow regions of this subsurface $S'$ are all $2$-end narrow regions. Each $2$-end narrow region is homeomorphic to a rectangle, and its four boundary pieces consist of two inner arc segments and two loop segments, the two kinds appearing alternatingly on the boundary. These $2$-end narrow regions are glued along their boundary pieces. Since all these $2$-end narrow regions lie in $S'$ enclosed by the loop $\gamma$, one observes that the gluing among them are always along their inner-arc-segment boundary pieces, and never along their loop-segment boundary pieces. This is because the boundary of the subsurface $S'$ resulting after all the gluing is the entire loop $\gamma$, so no loop segment of $\gamma$ should be missing.

\vs

Pick any one constituent $2$-end narrow region of $S'$, and consider another constituent $2$-end narrow region adjacent to it, sharing an inner arc segment. One thinks of gluing these two narrow regions along their common inner-arc-segment boundary piece. Then glue another adjacent constituent narrow region, etc. By induction, at each step, one observes that the subsurface obtained so far by gluing is either homeomorphic to a rectangle whose boundary consists of inner arc segments and loop segments, or a rectangle with two opposite inner-arc-segment sides are glued, i.e. homeomorphic to a cylinder or a M\"obius strip, in which case no more gluing along inner-arc-segment boundary piece is allowed. However, out of such inductive gluing we must be able to obtain the whole subsurface $S'$, which itself is an oriented $2$-manifold with boundary, having only one boundary component coinciding with the whole loop $\gamma$. So, by the above inductive gluing, it is impossible to obtain $S'$, which is a contradiction.

\vs

Therefore, at least one of the constituent narrow regions of $S'$ is a $3$-end narrow region. Hence the desired claim.
\end{proof}

As we shall soon see, the vertices of $\mathcal{R}$ are dealt with differently, according to whether it belongs to a type I connected component, or to a type II connected component. Hence we label them as follows.
\begin{definition}
\label{def:types_of_vertices}
A vertex of $\mathcal{R}$ is said to be of \ul{\em type I} if it belongs to a type I connected component of $\mathcal{R}$, and \ul{\em type II} if it belongs to a type II connected component of $\mathcal{R}$.
\end{definition}

\subsection{A sufficient condition on orientations and weights on $\mathcal{R}$}
\label{subsec:sufficient_condition_for_data_on_R}

We find one sufficient condition on orientations and weights on the regional graph $\mathcal{R}$ that solves the desired problem, Thm.\ref{thm:arc-ordering}. Without further ado, we state it:
\begin{proposition}[a sufficient condition on orientations and weights on $\mathcal{R}$]
\label{prop:sufficient_condition_for_data_on_R}
Let $S,T,\gamma$ be as in Def.\ref{def:basic}, and $\mathcal{R}$ be the corresponding regional graph. Suppose that an orientation and a weight is assigned to each edge of $\mathcal{R}$, satisfying all of the following conditions:
\begin{enumerate}
\item[\rm 1)] Each edge of $\mathcal{R}$ is assigned a weight $2^m$ for some positive integer $m$, and distinct edges are assigned distinct weights.

\item[\rm 2)] For each $2$-valent vertex of $\mathcal{R}$, the orientations and the weights assigned to the two edges attached to this vertex are as in the left of Fig.\ref{fig:flows}; that is, one incoming with weight $2^m$ for some $m$, and the other outgoing with weight $2^{m+1}$.

\item[\rm 3)] For each $3$-valent vertex of $\mathcal{R}$, the orientations and the weights assigned to the three edges attached to this vertex must satisfy:

$\bullet$ if this vertex is of type I: \quad among the three edges attached to this vertex, there is one incoming with $2^m$ for some $m$, called the \ul{\em flow-in edge}, there is one outgoing with $2^{m+1}$, called the \ul{\em flow-out edge}, and the remaining edge is given either orientation and a weight $2^r$ such that $r<m$, called the \ul{\em left-over edge}. See the right of Fig.\ref{fig:flows}.

$\bullet$ if this vertex is of type II: \quad the three edges attached to this vertex are not all incoming, nor all outgoing; that is, there is at least one incoming one and at least one outgoing one.

\item[\rm 4)] In case $\mathcal{R}$ has a connected component of type I and a connected component of type II, the weight on any edge of any type I connected component is larger than the weight on any edge of a type II connected component.

\end{enumerate}
Then, the orientations and difference numbers on the inner arc segments transferred (Def.\ref{def:transfer}) from the orientations and weights on the edges of $\mathcal{R}$ yield dyadic arc-orderings on the ideal arcs of $T$ that satisfy Thm.\ref{thm:arc-ordering}, i.e. these arc-orderings are compatible and sane at every ideal triangle.

\end{proposition}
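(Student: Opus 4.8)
\textbf{Proof strategy for Proposition \ref{prop:sufficient_condition_for_data_on_R}.}

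The plan is to verify, triangle by triangle, that the transferred dyadic arc-orderings are both compatible and sane. Since dyadic arc-orderings are governed entirely by Lem.\ref{lem:how_to_read_dyadic_arc-ordering} (the order between two junctures is dictated by the orientation of the inner arc segment carrying the largest weight strictly between them), and since the data on inner arc segments is in bijection with the data on edges of $\mathcal{R}$, everything reduces to local statements about a single triangle and the edges of $\mathcal{R}$ living inside it. First I would fix a triangle $t$ of $T$, list its narrow regions (at most three kinds, by Fig.\ref{fig:narrow_regions}), and translate each of conditions 2), 3), 4) into statements about the inner arc segments bounding those narrow regions, using the transfer rule of Def.\ref{def:transfer}: an incoming edge at a $k$-valent vertex becomes an orientation on the corresponding inner arc segment pointing one way along the arc, an outgoing edge the other way, with the `turns to right' convention fixing which is which consistently relative to the orientation of $S$.

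\textbf{Compatibility.} For compatibility at $t$ I would use the criterion Lem.\ref{lem:criterion_for_compatibility_of_arc-orderings}: given two loop segments $j,k$ in a single corner, delimited by sides $e,f$, I must show $j_e<k_e \iff j_f<k_f$. The segments $j$ and $k$, together with the loop segments lying between them in that corner, bound a chain of narrow regions forming a `strip' from side $e$ to side $f$; each such narrow region is a $2$-end region (it sits in a corner between two loop segments, so it has exactly two inner-arc-segment ends, one on $e$ and one on $f$) or it touches one more side. Walking along this strip, condition 2) says consecutive edges of $\mathcal{R}$ along a chain of $2$-valent vertices alternate in-then-out with strictly increasing weights $2^m, 2^{m+1}$; hence the \emph{largest} weight on the $e$-side of the strip and the largest weight on the $f$-side of the strip are realized by inner arc segments that are `synchronized' — either both innermost or both outermost in the same rotational sense — so by Lem.\ref{lem:how_to_read_dyadic_arc-ordering} the orders of $j_e$ vs.\ $k_e$ and of $j_f$ vs.\ $k_f$ agree. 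The $3$-valent case (which by Cor.\ref{cor:3-end_narrow_region_cannot_occur_in_a_self-folded_triangle} only arises in non-self-folded triangles) is handled by the flow-in/flow-out/left-over structure of condition 3) together with condition 4): for type I vertices the flow-in and flow-out weights $2^m<2^{m+1}$ carry the comparison into the same direction on the two relevant sides, while the left-over edge has weight $2^r$ with $r<m$, so it is never the largest weight in any span it participates in and therefore never disrupts a comparison; for the unique type II component, condition 4) makes \emph{all} of its weights smaller than every type I weight, so type II edges are likewise dominated whenever they compete against a type I edge, and among themselves the `not all in / not all out' clause of 3) plus the global ordering of 4) and the structure lemma Lem.\ref{lem:type_II_component_contains_a_3-valent_vertex} suffice to keep the comparisons coherent around the closed subsurface.

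\textbf{Sanity.} For sanity I must rule out an insane triple $j,k,l$ at three distinct corners of a (necessarily non-self-folded, by Lem.\ref{lem:self-folded_is_always_sane}) triangle whose endpoint junctures run cyclically around the three sides. Such a triple would encircle a $3$-end narrow region in the triangle, hence a $3$-valent vertex $v$ of $\mathcal{R}$ with its three edges. The cyclicity forces, on going around, a consistent `rotational' pattern of the three orientations on the three ends of that $3$-end region; translating back through the transfer rule, this would require the three edges at $v$ to be either all `rotating one way' — but condition 3) forbids exactly this: for a type I vertex one edge is a flow-in, one a flow-out, one a left-over with strictly smaller weight, which breaks the rotational symmetry, and for a type II vertex the edges are not all incoming nor all outgoing, which is precisely the combinatorial obstruction to the cyclic pattern. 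A short case check (three corners, each comparison decided by the largest weight between the relevant junctures, using condition 1) so that there are no ties and condition 4) so the dominant comparison is always won by the `outer' structure) then shows the ordering on the endpoints of $j,k,l$ cannot be cyclic, so no insane triple exists. Finally I would invoke Lem.\ref{lem:arc-orderings_to_triangle-ordering} only implicitly: having checked compatibility and sanity at every triangle, Thm.\ref{thm:arc-ordering} holds for these dyadic arc-orderings, which is the assertion of the proposition.

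\textbf{Main obstacle.} The delicate point is the bookkeeping of orientations under the transfer rule across a whole strip or around a $3$-end region: one must be scrupulous that the `turns to right' convention (Def.\ref{def:transfer}, and its reformulation 2')) composes correctly as one moves from one narrow region to an adjacent one through a shared end, so that the alternation pattern of condition 2) and the flow structure of condition 3) really do translate into the \emph{monotone-in-the-same-direction} behaviour that Lem.\ref{lem:criterion_for_compatibility_of_arc-orderings} demands. I expect the type II component to be the most error-prone: it is a closed subsurface bounded by $\gamma$, it must contain a $3$-valent vertex (Lem.\ref{lem:type_II_component_contains_a_3-valent_vertex}), and one has to see that the weaker condition 3) imposed there (merely `not all in, not all out') together with the global weight separation of condition 4) is genuinely enough — this is where I would spend the most care, probably by arguing that any comparison straddling the type II subsurface is resolved by a type I edge thanks to 4), and any comparison internal to a single arc within the type II region is resolved exactly as in the type I strip analysis because the internal structure of 2)–3) is the same.
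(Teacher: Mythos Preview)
Your overall shape is right --- triangle-by-triangle, use Lem.\ref{lem:how_to_read_dyadic_arc-ordering} to reduce each comparison to the single largest weight in the relevant span, then read the orientation via Def.\ref{def:transfer} --- but there are two genuine gaps, one in each half.

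\textbf{Compatibility.} You overcomplicate and misstate. The narrow regions lying strictly between two loop segments $j,k$ in the \emph{same} corner are \emph{all} $2$-end regions; no $3$-end region can appear there, so conditions 3) and 4) are irrelevant for compatibility. Moreover, condition 2) does \emph{not} force the weights to be strictly increasing along the strip --- it only says that at each $2$-valent vertex the two incident weights are consecutive powers $2^m,2^{m+1}$ with the larger one outgoing; globally the weights along the strip can go up and down. The correct argument (and what the paper does) is: among all inner arc segments in the strip between $j$ and $k$, take the single one with the largest weight $2^m$; it is an end of some $2$-end region, and by condition 2) the \emph{other} end of that region carries $2^{m\pm 1}$, which by maximality must be $2^{m-1}$. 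Hence $2^m$ is the largest on its side and $2^{m-1}$ is the largest on the other side, and the in/out orientation at that $2$-valent vertex forces the two sides to agree via the transfer rule.

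\textbf{Sanity.} This is the real gap. You assume that the cyclicity of an insane triple $j,k,l$ is governed directly by the three edges at the enclosed $3$-end region $\til{N}$, and then invoke condition 3). But the region inside $j,k,l$ typically contains many $2$-end regions in addition to $\til{N}$, and the ``largest inner arc segment'' on each of $e,f,g$ --- which by Lem.\ref{lem:how_to_read_dyadic_arc-ordering} is what decides the orders $j_e$ vs $l_e$, etc. --- may well sit at a $2$-end region, not at $\til{N}$. So you need a case split on where the overall largest weight inside $j,k,l$ lives. If it is at a $2$-end region, the same maximality argument as in compatibility gives one inward and one outward largest edge on two of the three sides, killing cyclicity. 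If it is at an end of $\til{N}$ and the vertex is type I, condition 3) makes that end the flow-out ($2^m$, outward) and forces another end to be the flow-in ($2^{m-1}$, inward), and these are the largest on their respective sides, again killing cyclicity. The subtle case is when the largest sits at $\til{N}$ and the vertex is type II: here condition 3) only says ``not all in, not all out'', which decides insanity \emph{only if} the three ends of $\til{N}$ are themselves the largest on $e,f,g$ --- i.e.\ only if there are no $2$-end regions inside $j,k,l$ at all. This is exactly what condition 4) is for: any $2$-end region adjacent to $\til{N}$ lies on the other side of $\gamma$, hence (by Lem.\ref{lem:type_II_topological_implication} and Cor.\ref{cor:structure_of_R}) in a type I component, hence by 4) carries strictly larger weights than every end of $\til{N}$, contradicting maximality. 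So in Case~2-II the region inside $j,k,l$ is just $\til{N}$, and condition 3) for type II finishes. Your proposal inverts the role of 4): it is not used to let type I edges ``dominate'' comparisons, but to certify that in the type II sanity case no type I edges are present inside $j,k,l$ at all.
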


\begin{figure}
\hspace{00mm} \includegraphics[width=50mm]{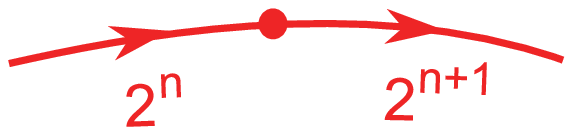} \quad \includegraphics[width=60mm]{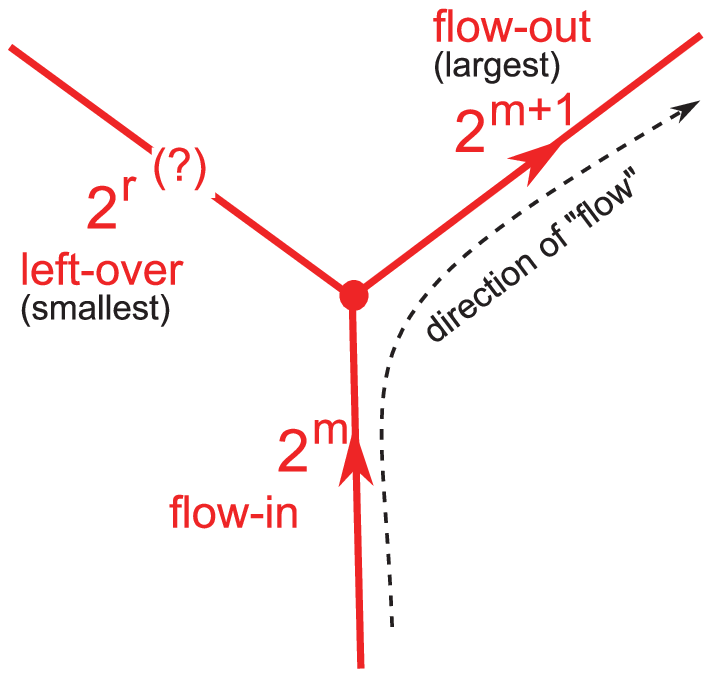}
\caption{sufficient condition on 2-valent vertices and type I 3-valent vertices}
\label{fig:flows}
\end{figure}

\begin{proof}
{\bf [constructing arc-orderings]} Suppose that orientations and weights are given to all edges of $\mathcal{R}$, satisfying the conditions 1), 2), 3), and 4). By Def.\ref{def:transfer}, this data transfers to orientations and difference numbers on all inner arc segments. By condition 1), notice that on each ideal arc, the difference numbers assigned to inner arc segments are mutually distinct and are of the form $2^m$ for positive integers $m$. So by Lem.\ref{lem:arc-binary-ordering_to_arc-ordering}, this data yields a well-defined arc-ordering on each arc, which we called a dyadic arc-ordering (Def.\ref{def:dyadic}). 

\vs

{\bf [compatibility of arc-orderings]} Pick any ideal triangle, and let's check the compatibility of the arc-orderings on its sides. If this triangle has no loop segment, its sides have no junctures, hence no arc-ordering at all, so there's nothing to check. So assume that there is at least one loop segment in this triangle. In view of Lem.\ref{lem:criterion_for_compatibility_of_arc-orderings}, to check the compatibility of arc-orderings in a triangle, we choose any two sides $e,f$ of a triangle, such that there is at least one loop segment connecting these sides; by Lem.\ref{lem:basic_lemma_on_endpoints_of_loop_segments}.(2), $e,f$ must be distinct. Notice that the compatibility for these sides $e,f$ is automatically satisfied if there is only one loop segment connecting these sides; so assume that there are at least two.

\vs

Let $l_1,l_2,\ldots,l_n$ be all the loop segments connecting these sides $e,f$, located in this order `from' the common endpoint vertex of $e,f$. That is, $l_1$ is the closest from the common vertex, and then $l_2$, etc. We just assumed that $n\ge 2$. For each $a=1,\ldots,n$, let $j_a,j'_a$ be the endpoint junctures of the loop segment $l_a$ living in the sides $e,f$ respectively. Then, on side $e$, we have junctures $j_1,\ldots,j_n$ located in this order, and on side $f$ we have junctures $j'_1,\ldots,j'_n$ located in this order. On each $e,f$, there may be more junctures than these ones; however, for each $a=1,\ldots,n-1$, the junctures $j_a,j_{a+1}$ are adjacent to each other in $e$, and $j'_a,j'_{a+1}$ are adjacent to each other in $f$. For each $a=1,\ldots,n-1$, let $i_a$ be the inner arc segment in $e$ bounded by $j_a,j_{a+1}$, and let $i_a'$ be the inner arc segment in $f$ bounded by $j'_a,j'_{a+1}$. On each $i_1,\ldots,i_{n-1}$ an orientation and a difference number is given, yielding a function $\til{\pi}:\{j_1,\ldots,j_n\} \to \mathbb{R}$, which in turn yields an ordering on $\{j_1,\ldots,j_n\}$. Likewise, the orientations and difference numbers on $i_1',\ldots,i'_{n-1}$ yield a function $\til{\pi}' : \{j_1',\ldots,j_n'\}\to \mathbb{R}$, yielding an ordering on $\{j_1',\ldots,j_n'\}$. Compatibility as defined in Def.\ref{def:compatible} is saying that these two orderings are `compatible' in the sense that $j_a < j_b \Leftrightarrow j_a' < j_b'$, for any distinct $a,b\in \{1,\ldots,n\}$ (Lem.\ref{lem:criterion_for_compatibility_of_arc-orderings}). So let's check whether this really holds.

\vs

Pick any distinct $a,b\in \{1,\ldots,n\}$; without loss of generality, $a>b$. See Fig.\ref{fig:compatibility_proof_example} for a non-self-folded example of this situation. Consider the region of this triangle `in between' the loop segments $l_a$ and $l_b$; this region consists of $a-b$ $2$-end narrow regions connecting $e$ and $f$. In this paragraph, we only consider this region, not the whole triangle. Among all the inner arc segments in this region (i.e. appearing in between $l_a$ and $l_b$), i.e. among $i_a,\ldots,i_{b-1}$, $i'_a,\ldots,i'_{b-1}$, find the one having the largest difference number. Without loss of generality, let this `maximal' inner arc segment occur on $e$, i.e. is $i_d$ for some $d\in\{b,\ldots,a-1\}$. Let $2^m$ be the difference number for this $i_d$. It is an end of a $2$-end narrow region connecting $e$ and $f$, the other end being the inner arc segment $i'_d$ in $f$. This narrow region corresponds to a $2$-valent vertex of $\mathcal{R}$, and the inner arc segments $i_d,i'_d$ correspond to the two edges of $\mathcal{R}$ attached to this vertex. The edge of $\mathcal{R}$ corresponding to $i_d$ has weight $2^m$, so by condition 2) the edge of $\mathcal{R}$ corresponding to $i'_d$ must have weight either $2^{m-1}$ or $2^{m+1}$, which is the difference number on $i'_d$. By maximality of $2^m$, it must be $2^{m-1}$, hence is the second biggest among all difference numbers appearing in this region between $l_a$ and $l_b$. Since the difference numbers appearing in this region are all of the form $2^r$ with distinct integers $r$, it follows that $2^m$ is the largest difference number on $e$, and $2^{m-1}$ is the largest one on $f$, in this region. Hence, by Lem.\ref{lem:how_to_read_dyadic_arc-ordering}, the orientation on the corresponding inner arc segment $i_d$ completely determines the ordering on $j_a$ and $j_b$, and that on $i'_d$ determines the ordering on $j'_a$ and $j'_b$. On the other hand, by condition 2) and in view of Def.\ref{def:transfer}, the orientation on $i_d$ and $i'_d$ are `compatible' in the sense that either we have $j_d<j_{d+1}$ and $j'_d<j'_{d+1}$, or we have $j_d>j_{d+1}$ and $j'_d>j'_{d+1}$. Thus, either we have $j_a<j_b$ and $j'_a<j'_b$, or we have $j_a>j_b$ and $j'_a>j'_b$, as desired. This proof also covers the self-folded triangle case.

\vs

\begin{figure}
\hspace{0mm} \includegraphics[width=100mm]{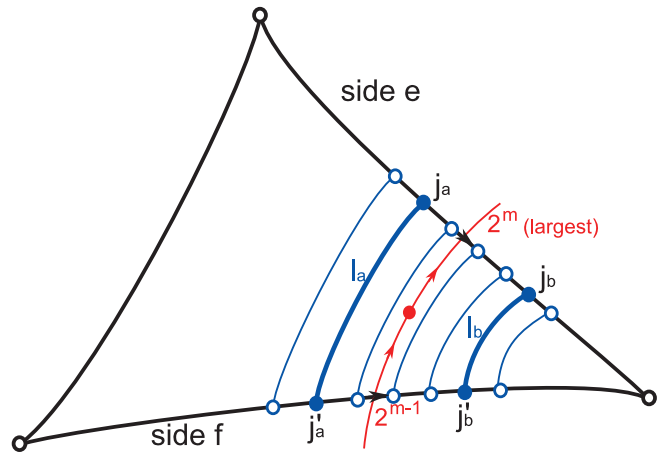} 
\caption{Compatibility check: example situation}
\label{fig:compatibility_proof_example}
\end{figure}

\vs

{\bf [sanity of arc-orderings]} Pick any ideal triangle, and let's show that the arc-orderings on its sides are sane in the sense of Def.\ref{def:compatible}; that is, we must show that there is no insane triple of loop segments in this triangle with respect to these arc-orderings. Let's assume that each of the three corners of the triangle has a loop segment, because otherwise there's nothing to check. In particular, as pointed out in Lem.\ref{lem:basic_lemma_on_endpoints_of_loop_segments}.(3) and Lem.\ref{lem:self-folded_is_always_sane}, we may assume that this triangle is not self-folded; for self-folded, there is nothing to check. Now pick any triple of loop segments, say $j,k,l$, living in three different corners. Let $e,f,g$ be the ideal arcs, so that $j$ connects $e,f$, while $k$ connects $f,g$, and $l$ connects $g,e$. The two endpoint junctures $j$ are denoted by $j_e,j_f$, each living in $e,f$ respectively. Likewise, denote the endpoint junctures of $k$ by $k_f,k_g$, and those of $l$ by $l_g,l_e$. See Fig.\ref{fig:biggest_in_corridor} for an example.

\vs

\ul{(the insanity criterion)} 

Before proceeding, we discuss how to check whether these loop segments $j,k,l$ are insane or not. On the arc $e$, find the inner arc segment $i_e$ with the largest difference number appearing in between the junctures $j_e$ and $l_e$; the orientation on this inner arc segment $i_e$ determines the ordering on the two junctures $j_e$ and $l_e$, by Lem.\ref{lem:how_to_read_dyadic_arc-ordering}. For convenience, let's call the inner arc segment \ul{\em largest} if the corresponding difference number is the largest among the ones in consideration. Likewise, let $i_f$ be the largest inner arc segment on the arc $f$ in between $j_f$ and $k_f$, and $i_g$ be the largest inner arc segment on the arc $g$ in between $k_g$ and $l_g$. So, the loop segments $j,k,l$ are insane iff the orientations on these inner arc segments $i_e,i_f,i_g$ are `cyclic'. In terms of the orientations on the corresponding edges of the regional graph $\mathcal{R}$ drawn on the surface $S$, one observes from the `transferring' relation (Def.\ref{def:transfer}) that this cyclicity condition for $i_e,i_f,i_g$ is equivalent to the orientations on the three edges of $\mathcal{R}$ corresponding to $i_e,i_f,i_g$ being either all pointing `\ul{\em inward}' toward the interior of the triangle or all pointing `\ul{\em outward}'. To summarize, on each arc find the largest inner arc segment located inside this region bounded by $j,k,l$. Look at the orientations of the edges of $\mathcal{R}$ corresponding to these three inner arc segments. If they are all pointing inward or all pointing outward, then $j,k,l$ are insane. Otherwise, $j,k,l$ are not insane. 

(end of insanity criterion)

\vs

So, to show that $j,k,l$ are not insane, we may restrict our attention to the region in this triangle `\ul{\em inside}' these three loop segments $j,k,l$, or `bounded by' $j,k,l$; what happens outside this region is not relevant. This region consists of narrow regions, exactly one of which is a $3$-end narrow region, and the remaining, if any, are $2$-end narrow regions.

\vs

For convenience, in this proof with a fixed choice of $j,k,l$, the \ul{\em largest number on the arc $e$} refers to the the largest number among the difference numbers assigned to inner arc segments in $e$ between $j_e$ and $l_e$; that is, we omit the phrase `between $j_e$ and $l_e$'. The inner arc segment to which the largest number on $e$ is assigned is called the \ul{\em largest inner arc segment on the arc $e$}, and the corresponding edge of $\mathcal{R}$ the \ul{\em largest edge of $\mathcal{R}$ for the arc $e$}. Likewise for the arcs $f$ and $g$. 

\vs

Now, consider all the inner arc segments appearing in the region inside $j,k,l$, i.e. inner arc segments on arcs $e,f,g$ living in between the endpoint junctures of $j,k,l$. Consider the difference numbers assigned to them; we refer to these numbers as \ul{\em difference numbers inside $j,k,l$}. By condition 1), these numbers are mutually distinct and are of the form $2^m$ for positive integers $m$.

\vs

[Case 1: when the largest among the difference numbers inside $j,k,l$ occurs at an inner arc segment that is one end of a $2$-end narrow region $N$ in this triangle]

\vs

Let this number be $2^m$, and without loss of generality, suppose that this inner arc segment is in the arc $e$, which the loop segments $j,l$ intersect with, as in Fig.\ref{fig:biggest_in_corridor}. Also, without loss of generality, suppose that this $2$-end narrow region $N$ connects the arcs $e$ and $g$; almost same proof shall work for the case when it connects $e$ and $f$. Then, by the condition 2) of the present Lemma that we are trying to prove, the difference number on the inner arc segment that is the other end of this $2$-end narrow region $N$ is either $2^{m-1}$ or $2^{m+1}$. Since this other inner arc segment is also inside $j,k,l$, and since $2^m$ must be the largest difference number inside $j,k,l$, it can't be $2^{m+1}$, so it must be $2^{m-1}$. Hence, in turn, by condition 2), we also know the orientations on the two edges of $\mathcal{R}$ attached to the vertex corresponding to this $2$-end narrow region $N$; these orientations go `from' $2^{m-1}$ `to' $2^m$, as depicted in Fig.\ref{fig:biggest_in_corridor}, or, equivalently, the $2^m$-edge of $\mathcal{R}$ is pointing outward and the $2^{m-1}$-edge of $\mathcal{R}$ is pointing inward with respect to this triangle. Note that $2^{m-1}$ is the second largest number inside $j,k,l$ in this triangle. Thus $2^{m-1}$ is the largest number on arc $g$, while $2^m$ is the largest number on arc $e$; and we just saw that the corresponding edges of $\mathcal{R}$ are inward and outward, respectively. Hence, by the `insanity criterion' above, $j,k,l$ are not insane. [end of Case 1]

\begin{figure}
\includegraphics[width=80mm]{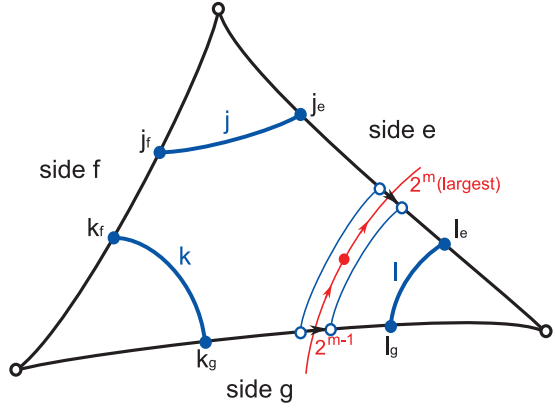} 
\caption{Case 1, when biggest $2^m$ inside $j,k,l$ occurs at a $2$-end narrow region}
\label{fig:biggest_in_corridor}
\end{figure}

\vs

[Case 2 : when the largest among the differences numbers inside $j,k,l$ occurs at an inner arc segment that is one end of the unique $3$-end narrow region $\til{N}$ inside $j,k,l$.]

\vs

[Case 2-I : the vertex of $\mathcal{R}$ corresponding to this $3$-end narrow region $\til{N}$ is of type I (Def.\ref{def:types_of_vertices})]

\vs

Let the largest number inside $j,k,l$ be $2^m$. Without loss of generality, suppose that this largest end of the $3$-end narrow region $\til{N}$, to which $2^m$ is assigned, is on the arc $e$. Consider the three edges of $\mathcal{R}$ corresponding to the three ends of this $3$-end narrow region $\til{N}$. In particular, $2^m$ is the largest among the three weights on these three edges of $\mathcal{R}$. 

\vs

By condition 3) of the present Lemma, it must be that this largest $2^m$-edge is the flow-out edge, and in particular, it is pointing outward with respect to this triangle. Then, again by condition 3), one of the remaining two edges of $\mathcal{R}$ for this $3$-end narrow region $\til{N}$ is the flow-in edge, and hence is given the weight $2^{m-1}$ with the inward orientation. See Fig.\ref{fig:biggest_in_3-way-juncture} for an example when the inner arc segment for this $2^{m-1}$-edge lies in the arc $g$. Since $2^m$ is the largest number inside $j,k,l$, it must be that it is the largest number on the arc $e$, and that $2^{m-1}$ is the second largest number inside $j,k,l$, and hence is the largest number on the arc $g$. Since the $2^m$-edge of $\mathcal{R}$ is outward and $2^{m-1}$-edge of $\mathcal{R}$ is inward, by the `insanity criterion', we see that $j,k,l$ are not insane. [end of Case 2-I]

\vs

[Case 2-II : the vertex of $\mathcal{R}$ corresponding to this $3$-end narrow region $\til{N}$ is of type II (Def.\ref{def:types_of_vertices})]

\vs

Let's show that in this case, the region of this triangle inside $j,k,l$ consists of just one narrow region, namely the $3$-end narrow region $\til{N}$. Suppose not, so that there is a $2$-end narrow region lying inside $j,k,l$. Then there must be at least one $2$-end narrow region $L$ inside $j,k,l$ that is `adjacent to', i.e. sharing a common loop segment with, the $3$-end narrow region $\til{N}$. 

\vs

The present Case 2-II is assuming the existence of a type II connected component of the regional graph $\mathcal{R}$, hence by Lem.\ref{lem:type_II_topological_implication}, $\gamma$ must enclose a subsurface $S'$ of $S$ containing no puncture or a boundary component of $S$. Since the whole surface $S$ has a puncture or a boundary component of $S$, the subsurface $S'$ cannot equal $S$. Hence the loop $\gamma$ divides the surface $S$ into two distinct regions, one being the subsurface $S'$. The region other than $S'$, which we may call $S''$, contains all punctures and boundary components of $S$. Note that $S'$ is located on one `side' with respect to $\gamma$, and $S''$ on the other `side' with respect to $\gamma$. For example, if we give an orientation to the loop $\gamma$, then we may say that one of $S'$ and $S''$ is at the `right' of the loop $\gamma$, and the other is at the `left' of $\gamma$.

\vs

Note now that the narrow regions $\til{N}$ and $L$ are at different `sides' with respect to the common loop segment, hence with respect to the loop $\gamma$. Since $\til{N}$ corresponds to a type II $3$-valent vertex of $\mathcal{R}$, Lem.\ref{lem:type_II_topological_implication} says that $\til{N}$ belongs to the subsurface $S'$. Hence $L$ does not belong to $S'$, and belongs to the other region $S''$. We claim that the vertex of $\mathcal{R}$ corresponding to the narrow region $L$ is contained in a type I connected component of $\mathcal{R}$. If not, then it is contained in a type II connected component. Since a type II connected component is unique ($\because$ Cor.\ref{cor:structure_of_R}), Lem.\ref{lem:type_II_topological_implication} says that the narrow region $L$ is contained in the subsurface $S'$, which is a contradiction; so the claim is proved. In particular, $\mathcal{R}$ has a connected component of type I and a connected component of type II. Therefore, by condition 4), the weight for any of the two edges of $\mathcal{R}$ attached to the vertex of $\mathcal{R}$ corresponding to $L$ is larger than the weight on any edge of $\mathcal{R}$ attached to the vertex of $\mathcal{R}$ corresponding to $\til{N}$. So the difference numbers on the two ends of the $2$-end narrow region $L$ are larger than any of the difference numbers on the three ends of $\til{N}$. Since $L$ is also located inside $j,k,l$, this contradicts to the assumption of Case 2 that the largest difference number inside $j,k,l$ occur at an end of $\til{N}$.

\vs

So, indeed, in this Case 2-II, there cannot exist a $2$-end narrow region inside $j,k,l$. Hence the region of the triangle inside $j,k,l$ coincides with the $3$-end narrow region $\til{N}$. In particular, on each arc $e,f,g$, there is only one inner arc segment lying in this region inside $j,k,l$. Notice that the orientations on the edges of $\mathcal{R}$ corresponding to the three ends of $\til{N}$ are neither all inward nor all outward, by condition 3).  So, by the `insanity criterion', $j,k,l$ are not insane. [end of Case 2-II].
\end{proof}

\begin{figure}
\includegraphics[width=83mm]{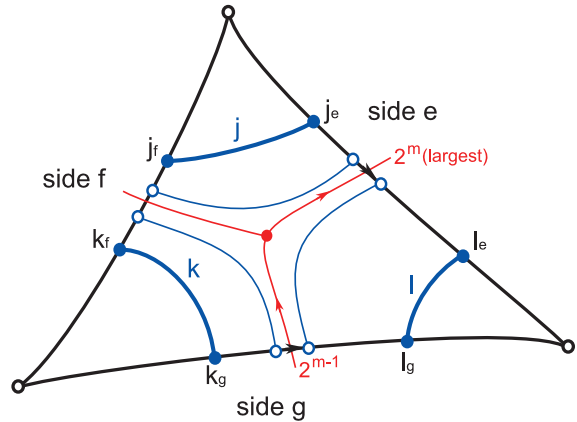} 
\caption{Case 2-I: biggest $2^m$ inside $j,k,l$ occurs at $3$-end narrow region of type I}
\label{fig:biggest_in_3-way-juncture}
\end{figure}

\subsection{Existence of good orientations and weights on $\mathcal{R}$}

Now it only remains to find a choice of orientations and weights on the edges of $\mathcal{R}$ that meets the condition of the above Prop.\ref{prop:sufficient_condition_for_data_on_R}. This is the most technical part of the present paper. 
\begin{proposition}[existence of good orientations and weights on edges of $\mathcal{R}$]
\label{prop:existence_of_data_on_R}
Let $S,T,\gamma$ be as in Def.\ref{def:basic}, and $\mathcal{R}$ be the corresponding regional graph. Then there exists a choice of orientations and weights on edges of $\mathcal{R}$ satisfying all conditions of Prop.\ref{prop:sufficient_condition_for_data_on_R}.
\end{proposition}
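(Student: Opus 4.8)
The plan is to build the data on $\mathcal{R}$ component by component, deferring all bookkeeping about distinctness, positivity, and the type I/type II comparison to the very end. The key observation is that if orientations together with (not necessarily positive, not necessarily distinct) integer exponents $m_e$ on the edges of one connected component satisfy conditions 2) and 3), then so do the orientations with every $m_e$ replaced by $m_e+c$ for a fixed integer $c$, since 2) and 3) constrain only \emph{differences} of exponents. Hence it suffices to produce, on each component, an orientation and integer exponents that are pairwise distinct \emph{within that component} and satisfy 2) and 3); then, at the end, one shifts the finitely many components into pairwise disjoint ranges of \emph{positive} integers, placing the unique type II component (unique by Cor.\ref{cor:structure_of_R}) below every type I component, which yields 1) and 4). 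So from now on the task is purely local to a component, with exponents allowed to range over $\mathbb{Z}$.

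For the (at most one) type II component the argument is short, and this is where the weakness of condition 3) for type II vertices is used. Choose any orientation of its edges with $|\mathrm{indeg}(v)-\mathrm{outdeg}(v)|\le 1$ at every vertex — a ``balanced'' orientation, which exists for any graph (adjoin a new vertex to all odd-degree vertices, orient along an Eulerian circuit, delete the new vertex). Then every $2$-valent vertex has in-degree exactly $1$ and every $3$-valent vertex has in-degree $1$ or $2$, so condition 3) for type II vertices is automatic. Now the edges split into the maximal ``chains'' through $2$-valent vertices (each chain including the two boundary edges reaching its non-$2$-valent endpoints) together with the ``lone'' edges joining two $3$-valent vertices; by Lem.\ref{lem:component_criterion}, a cycle through only $2$-valent vertices would be an entire connected component with no $1$-valent vertex, which is excluded by Lem.\ref{lem:type_II_component_contains_a_3-valent_vertex}, so no chain can fail to carry a coherent orientation. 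Orient each chain coherently, put consecutive powers of $2$ along it, put arbitrary powers on the lone edges, and choose all the ranges disjoint; there is no weight relation to check at $3$-valent vertices, so 2) and 3) hold.

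The substance is the type I case. Fix such a component $\mathcal{C}$ and a $1$-valent vertex $v_0$; contract every maximal chain through $2$-valent vertices to a single \emph{super-edge} (remembering its length), obtaining a connected multigraph $\mathcal{C}'$ whose vertices all have valence $1$ or $3$ and in which, by the same argument as above, every cycle meets a $3$-valent vertex. Pick a depth-first spanning tree of $\mathcal{C}'$ rooted at $v_0$ (so every non-tree super-edge joins a vertex to an ancestor), orient the tree super-edges away from the root, and orient/designate the remaining data. The aim is to arrange that: at each $3$-valent vertex exactly one incident super-edge is a \emph{flow-in} (incoming, chain directed toward the vertex), one is a \emph{flow-out} (outgoing, chain directed away), and one is a \emph{left-over}; the \emph{flow digraph} on super-edges — one arc from the flow-in to the flow-out super-edge at each $3$-valent vertex — has no directed cycle, so that (it has in- and out-degrees $\le 1$, hence) the super-edges, and thus the edges of $\mathcal{C}$, decompose into finitely many directed non-closed \emph{flow lines}; and the \emph{dependency relation} declaring $L'\prec L$ whenever a left-over super-edge at a $3$-valent vertex lying on $L$ belongs to $L'$, is acyclic. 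Granting this, expand super-edges back to chains: along each flow line condition 2) forces consecutive powers of $2$, i.e.\ a block $2^{a_L},\dots,2^{a_L+|L|-1}$; choosing a linear extension of $\prec$ and laying out these blocks in that order, from low to high exponents and with ample gaps, makes all exponents distinct and forces every left-over super-edge at a $3$-valent vertex — being one endpoint (or all) of some $L'\prec L$, hence of exponent $\le a_{L'}+|L'|-1 < a_L \le $ the flow-in exponent there — to satisfy condition 3). A final uniform shift makes everything positive; together with the component normalization of the first paragraph this completes the proof.

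The hard part, and the only place real care is needed, is producing orientations and flow-in/flow-out/left-over designations on $\mathcal{C}'$ with the two acyclicity properties. The naive recipe — declare every non-tree (back) super-edge a left-over at both endpoints, and at every other $3$-valent vertex $w\ne v_0$ make the parent super-edge the flow-in and a tree-child the flow-out — both keeps every flow line inside the rooted tree (so the flow digraph is acyclic) and makes $\prec$ refine the ``descendant'' order (hence acyclic); but it breaks down precisely at a $3$-valent vertex $w$ that is a \emph{leaf} of the depth-first tree, where both non-parent super-edges are back edges, forcing one of them to be the flow-out and thereby sending a flow line \emph{up} a back edge, where it may close up or create a cyclic dependency. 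Overcoming this is the technical crux: one must choose the spanning tree and orient/designate the back super-edges carefully — exploiting that back edges run to ancestors, that $\mathcal{C}$ is a simple graph of maximum valence $3$, and that every cycle of $\mathcal{C}'$ passes through a $3$-valent vertex — so that whenever a flow line is forced onto a back super-edge it is immediately absorbed as a left-over at the ancestor end, ruling out both closed flow lines and cyclic dependencies. This bookkeeping is elementary but lengthy, and, being constructive, yields an explicit algorithm for the orientations and weights on $\mathcal{R}$.
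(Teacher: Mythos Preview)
Your overall framework—handle connected components independently, then shift exponents at the end to enforce distinctness, positivity, and the type~II/type~I comparison—is sound and matches the paper's. Your type~II argument via a balanced (Eulerian-type) orientation is correct and genuinely cleaner than the paper's: the paper instead extracts a special initial cycle $\mathcal{C}_0$ through a $3$-valent vertex (Lem.~\ref{lem:initial_chain_for_type_II}) and then runs the same iterative chain-extraction machinery used for type~I, whereas you exploit that condition~3) at type~II $3$-valent vertices is so weak that a balanced orientation satisfies it automatically, after which the weight assignment decouples into independent chains through $2$-valent vertices.

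For type~I, however, there is a genuine gap. You correctly isolate the obstruction at a $3$-valent DFS leaf $w$, where the flow line is forced onto a back super-edge $wu$, and you claim it suffices to have that edge ``absorbed as a left-over at the ancestor end'' $u$. But that is exactly what the naive recipe already does, and it does \emph{not} rule out the bad case: since $u$ has parent as flow-in and its unique tree child (toward $w$) as flow-out, the flow line $L$ descending through $u$ continues straight down to $w$ and then terminates in $wu$; thus $u$ lies on $L$ and the left-over edge at $u$ is the last edge of $L$ itself, giving $L\prec L$. Your block-layout argument then demands that the exponent on $wu$ be strictly less than the flow-in exponent at $u$, yet $wu$ carries the \emph{largest} exponent on $L$—a contradiction. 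Redesignating at $u$ (e.g.\ making $wu$ the flow-in) creates a directed cycle in the flow digraph instead; rerouting the flow-out at some intermediate vertex can help in specific examples but is not available when the path from $u$ to $w$ has no branching. So the assertion that ``careful choice of the spanning tree and designations'' always works is precisely the content that needs proof, and nothing in the listed properties (back edges go to ancestors, max valence $3$, simple graph) forces it. The paper sidesteps the DFS picture entirely: it peels off maximal chains $\mathcal{C}_N$ one at a time, each starting from a vertex of residual degree~$1$, and maintains an invariant (``sustainability'', Def.~\ref{def:sustainable}) guaranteeing such a vertex exists at every stage; the subtle Step~2 of Def.~\ref{def:N-th_stage} (reversing a chain whose terminal vertex is a middle vertex of itself) together with Lem.~\ref{lem:property_of_C_N} is exactly what prevents the analog of your self-dependency, and the extraction order plays the role of your~$\prec$ with acyclicity built in.
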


We devote the present subsection for a proof of Prop.\ref{prop:existence_of_data_on_R}. We shall describe an algorithm to construct orientations and weights on the edges of $\mathcal{R}$ satisfying the desired conditions. We deal with each connected component of $\mathcal{R}$ separately. From now on, denote by $\mathcal{G}$ a connected component of $\mathcal{R}$. Lem.\ref{lem:component_criterion} tells us that $\mathcal{G}$ is a `full subgraph' of $\mathcal{R}$; that is, for each pair of vertices of $\mathcal{G}$, if these vertices are connected by an edge of $\mathcal{R}$, then this edge is contained in $\mathcal{G}$. Notice also that the valence of each vertex of $\mathcal{G}$ is $1,2$, or $3$, and that $\mathcal{G}$ has no cycle of length 1 (i.e. $\mathcal{G}$ has no self-loop).

\begin{definition}

Let $\mathcal{G}$ be a connected component of $\mathcal{R}$.

$\bullet$ For an edge of $\mathcal{G}$, the two vertices of $\mathcal{G}$ that are connected by this edge are called the \ul{\em endpoint vertices} of this edge. We say that this edge is \ul{\em attached to} each of its endpoint vertices.

$\bullet$ For a subset $S$ of the set of all edges of $\mathcal{G}$, a \ul{\em chain $\mathcal{C}$ in $S$} is a sequence $e_1,e_2,\ldots,e_m$ of mutually distinct edges in $\mathcal{G}$ (with $m\ge 1$) such that for each $i=1,2,\ldots,m-1$, the two edges $e_i,e_{i+1}$ have a common endpoint vertex. We say that $e_1,\ldots,e_m$ are the \ul{\em constituent edges of $\mathcal{C}$}.

$\bullet$ This $m$ is called the \ul{\em length} of the chain $\mathcal{C}$, and we write $m = |\mathcal{C}|$.

$\bullet$ For a chain $\mathcal{C} = e_1,e_2,\ldots,e_m$, we say that $\mathcal{C}$ \ul{\em starts from} the edge $e_1$ and \ul{\em ends at} the edge $e_m$, and that $e_1$ is the \ul{\em starting edge of $\mathcal{C}$} and $e_m$ is the \ul{\em ending edge of $\mathcal{C}$}.

$\bullet$ For a chain $\mathcal{C} = e_1,e_2,\ldots,e_m$ with $m\ge2$, let $v$ be the endpoint vertex of $e_1$ that is not an endpoint vertex of $e_2$, and let $w$ be the endpoint vertex of $e_m$ that is not an endpoint vertex of $e_{m-1}$. We say that $v$ is the \ul{\em departing vertex of $\mathcal{C}$}, and $w$ is the \ul{\em terminating vertex of $\mathcal{C}$}. For each $i=1,\ldots,m-1$, the common endpoint vertex of $e_i$ and $e_{i+1}$ is called a \ul{\em middle vertex of $\mathcal{C}$}.

$\bullet$ We declare that a chain $\mathcal{C}$ of length $1$ consists of the choice of an edge $e$, together with the choice of one endpoint vertex of $e$ to be the departing vertex of $\mathcal{C}$; then the other endpoint vertex of $e$ is the terminating vertex of $\mathcal{C}$. 

$\bullet$ For two chains $\mathcal{C} = e_1,\ldots,e_m$ and $\mathcal{C}' = e'_1,\ldots,e'_M$ in $S$, we say that $\mathcal{C}$ \ul{\it extends} $\mathcal{C}'$ if $m\ge M$ and there exists $i\in \{0,1,\ldots,m-M'\}$ such that $e_{i+j} = e'_j$ for all $j=1,2,\ldots,M$.

$\bullet$ A chain in a subset $S$ is called \ul{\em maximal} if it cannot be extended to a chain in $S$ different from itself, i.e. there is no member $e$ in $S$ such that either $e,e_1,e_2,\ldots,e_m$ or $e_1,e_2,\ldots,e_m,e$ is a chain in $S$.

$\bullet$ For chain $\mathcal{C} =e_1,e_2,\ldots,e_m$, define the \ul{\em reverse chain $\ol{\mathcal{C}}$} of $\mathcal{C}$ as the chain $\ol{\mathcal{C}} = e_m,e_{m-1},\ldots,e_2,e_1$, whose departing vertex is the terminating vertex of $\mathcal{C}$ and whose terminating vertex is the departing vertex of $\mathcal{C}$.
\end{definition}
It is easy to see that the reverse chain of a chain is indeed also a chain, and that the reverse chain of a maximal chain is also maximal. The departing vertex and the terminating vertex of a chain may coincide. The departing vertex of a chain may also be a middle vertex of the same chain. The terminating vertex of a chain may also be a middle vertex of the same chain.

\vs

Let $S_0$ be the set of all edges of the connected component $\mathcal{G}$ of $\mathcal{R}$. We shall partition $S_0$, i.e. the graph $\mathcal{G}$, into chains as follows, in a recursive manner. At each $N$-th stage with $N\ge 0$, we will construct a chain $\mathcal{C}_N$ inside $S_N$ satisfying a certain condition, then let $S_{N+1} := S_N \setminus \mathcal{C}_N$. In particular, we will have a sequence of nested sets $S_0 \supset S_1 \supset S_2 \supset \cdots$. The initial stage is $N=0$, when we have $S_0$ at our hand and have to construct $\mathcal{C}_0$.

\begin{definition}[partial valence]
For a subset $S$ of the set of all edges of $\mathcal{G}$, and a vertex $v$ of $\mathcal{G}$, denote by $S(v)$ the set of all edges in $S$ attached to $v$, and by $|S(v)|$ the size of $S(v)$.
\end{definition}

\begin{lemma}
\label{lem:partial_valence}
If $S\subset S'$, then $S(v) \subset S'(v)$, hence $|S(v)| \le |S'(v)|$. \qed
\end{lemma}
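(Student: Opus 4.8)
The plan is simply to unwind the definition of partial valence; there is nothing deep to do here. Recall that $S(v)$ is by definition the set of all edges belonging to $S$ that are attached to the vertex $v$. First I would take an arbitrary element $e \in S(v)$: this means $e$ is an edge lying in $S$ and attached to $v$. Since $S \subset S'$ by hypothesis, the edge $e$ also lies in $S'$, while of course it is still attached to $v$; hence $e \in S'(v)$. As $e \in S(v)$ was arbitrary, this establishes the inclusion $S(v) \subset S'(v)$.

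For the cardinality statement, I would then observe that both $S(v)$ and $S'(v)$ are finite sets: they are subsets of the collection of all edges of $\mathcal{G}$ attached to $v$, whose size is the valence of $v$ in $\mathcal{G}$, and this valence is at most $3$. An inclusion of finite sets yields the corresponding inequality of sizes, so $|S(v)| \le |S'(v)|$, as claimed. The only point needing even a moment's attention is checking finiteness so that $|\cdot|$ is well-defined, but this is immediate from the bounded valence of vertices of $\mathcal{R}$. I expect no genuine obstacle in this lemma; it is a purely set-theoretic triviality recorded here only for convenient reference in the recursive construction that follows.
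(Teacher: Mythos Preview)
Your proposal is correct and matches the paper's treatment: the paper marks the lemma with an immediate \qed, i.e.\ regards it as a triviality following directly from the definition of $S(v)$, which is exactly what you unwind. There is nothing to add.
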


We shall require that each $S_N$ should satisfy the following condition.

\begin{definition}[notion of sustainability of an edge set]
\label{def:sustainable}
A subset $S$ of the set of all edges of $\mathcal{G}$ is said to be \ul{\em sustainable} if all the following three conditions hold:

\vs

(a) For each $2$-valent vertex $v$ of $\mathcal{G}$, we have $|S(v)| \in \{0,2\}$.

\vs

(b) For each $3$-valent vertex $v$ of $\mathcal{G}$, we have $|S(v)| \in \{0,1,3\}$.

\vs

(c) If $S$ is non-empty, then there exists a vertex $v$ of $\mathcal{G}$ such that $|S(v)|=1$.
\end{definition}

Condition (c) is used in the following situation.
\begin{lemma}[existence of a maximal chain with a prescribed departing vertex]
\label{lem:existence_of_maximal_chain}
Let $S$ be any nonempty subset of the set of all edges of $\mathcal{G}$. Let $v$ be a vertex of $\mathcal{G}$ such that $|S(v)|=1$. Let $S(v) = \{e\}$. Then there exists a maximal chain $\mathcal{C}$ in $S$, such that $\mathcal{C}$ starts from $e$ and the departing vertex of $\mathcal{C}$ is $v$.
\end{lemma}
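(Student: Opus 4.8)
The plan is to let $\Gamma$ be the set of all chains in $S$ that start from $e$ and have departing vertex $v$, and to take for $\mathcal{C}$ a chain in $\Gamma$ of maximal length. This set is nonempty --- it contains the length-$1$ chain consisting of the edge $e$ with $v$ chosen as its departing vertex --- and finite, since a chain consists of distinct edges of $S$ and so has length at most $|S|$; hence a maximal-length element $\mathcal{C} = c_1,\ldots,c_m$ (with $c_1 = e$, necessarily, as $\mathcal{C}\in\Gamma$) exists. It then remains to check that this $\mathcal{C}$ is maximal in the sense of the definition, i.e. that it can be neither appended nor prepended.

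For appending: if $g\in S\setminus\{c_1,\ldots,c_m\}$ is such that $c_1,\ldots,c_m,g$ is a chain, then this is a chain of length $m+1$ starting from $c_1 = e$, and its departing vertex is again $v$. (When $m\ge 2$ this is immediate, as the departing vertex depends only on $c_1,c_2$. When $m=1$, the edge $g$ shares a vertex with $c_1 = e$; it cannot be $v$, for an edge of $S$ incident to $v$ must equal $e$ because $S(v)=\{e\}$, whereas $g\ne c_1$; so $g$ meets $e$ at the other endpoint of $e$, and the departing vertex of $c_1,g$ is $v$.) Thus $c_1,\ldots,c_m,g\in\Gamma$, contradicting the maximality of $|\mathcal{C}|$.

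For prepending: suppose $g\in S\setminus\{c_1,\ldots,c_m\}$ with $g,c_1,\ldots,c_m$ a chain. Then $g$ shares a vertex with $c_1 = e$; as above this vertex cannot be $v$, so it is the other endpoint $u$ of $e$. If $m=1$, then $c_1,g$ is a chain of length $2$ starting from $e$ with departing vertex $v$ (since $g$ meets $e$ at $u$), again contradicting maximality. If $m\ge 2$, then $c_2$ also shares a vertex with $c_1$, which again cannot be $v$, so $c_2$ too is incident to $u$; hence $c_1,g,c_2,c_3,\ldots,c_m$ is a chain --- the consecutive pairs $(c_1,g)$ and $(g,c_2)$ meet at $u$, the remaining consecutive pairs are unchanged, and all the listed edges are distinct because $g\notin\{c_1,\ldots,c_m\}$ --- of length $m+1$, starting from $e$, with departing vertex $v$ (the edge $g$ following $c_1$ meets $c_1$ at $u$). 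So $c_1,g,c_2,\ldots,c_m\in\Gamma$, contradicting the maximality of $|\mathcal{C}|$. Therefore $\mathcal{C}$ is maximal, and by construction it starts from $e$ with departing vertex $v$.

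The step I expect to need the most care is the prepending case: a chain obtained by prepending an edge to $\mathcal{C}$ starts from a different edge and so is not automatically in $\Gamma$, so one cannot directly invoke length-maximality; the fix is the small re-routing move of inserting $g$ immediately after $c_1$, which is legitimate exactly because the hypothesis $|S(v)|=1$ forces both $g$ and $c_2$ to be attached to $e$ at the single vertex $u$. Everything else is bookkeeping, including the easy verification that a length-$1$ chain with prescribed departing vertex is a legitimate member of $\Gamma$.
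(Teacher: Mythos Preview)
Your proof is correct. The approach differs from the paper's: the paper extends the length-$1$ chain $e$ greedily at both ends to some maximal chain $\mathcal{C}$, then uses $|S(v)|=1$ to argue that $v$ cannot be a middle vertex of $\mathcal{C}$, hence must be the departing or terminating vertex; in the latter case one simply takes the reverse chain $\overline{\mathcal{C}}$. Your argument instead maximizes length directly among chains with the prescribed departure, which makes the appending case trivial but forces you to handle prepending by the re-routing move $c_1,g,c_2,\ldots,c_m$. That move is valid precisely because $|S(v)|=1$ forces both $g$ and $c_2$ to meet $c_1=e$ at its other endpoint $u$, so the re-inserted chain still departs from $v$. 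The paper's route is slightly slicker since reversal avoids the re-routing altogether, but your argument has the minor advantage of never leaving the set $\Gamma$ and is equally rigorous.
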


\begin{proof}
For any chain $\mathcal{C}' = e_1,\ldots,e_m$ in a subset $S$ of edges of $\mathcal{G}$, the existence of a maximal chain $\mathcal{C}$ extending $\mathcal{C}'$, is obvious. Namely, one can construct $\mathcal{C}$ recursively from $\mathcal{C}'$ step by step, as follows. Let $\mathcal{C}'_0 = \mathcal{C}'$. If $\mathcal{C}'_0$ is maximal in $S$, we are done. If not, there is an edge $e$ in $S$ such that either $e,\mathcal{C}'_0$, i.e. the sequence $e,e_1,\ldots,e_m$, or $\mathcal{C}'_0,e$, i.e. the sequence $ e_1,\ldots,e_m,e$, is a chain in $S$; let this new chain be $\mathcal{C}'_1$. If $\mathcal{C}'_1$ is maximal, we are done; if not, find an edge $f$ in $S$ s.t. either $f,\mathcal{C}'_1$ or $\mathcal{C}'_1,f$ is a chain in $S$. Let this chain be $\mathcal{C}'_2$. And so on, until one obtains a maximal chain in $S$. This process stops in a finitely many steps, because in our case, the graph $\mathcal{R}$, hence also $\mathcal{G}$, has only finitely many edges in total. 

\vs

For this lemma, let $\mathcal{C}' = e$ be the chain of length $1$ in $S$. Hence there exists a maximal chain $\mathcal{C}$ in $S$ extending $\mathcal{C}'$. Suppose that $v$ is neither the departing vertex nor the terminating vertex of $\mathcal{C}$. Since $v$ is an endpoint vertex of $e$ which is a constituent of $\mathcal{C}$, it follows that $v$ is a middle vertex of $\mathcal{C}$. This means there is an edge $f$ in $S$ such that $e,f$ are both attached to $v$, and the chain $\mathcal{C}$ extends either the chain $e,f$ or the chain $f,e$. But then $S(v)$ contains $e$ and $f$, so $|S(v)|\ge 2$, contradicting to the assumption $|S(v)|=1$. So $v$ is either the departing vertex or the terminating vertex of $\mathcal{C}$. In the former case, it follows that $\mathcal{C}$ starts from $e$ with the departing vertex $v$, so we are done. In the latter case, the reverse chain $\ol{\mathcal{C}}$ works.
\end{proof}

\begin{lemma}[initial sustainability for type I]
If $\mathcal{G}$ is a type I connected component of $\mathcal{R}$, then $S_0$ is sustainable.
\end{lemma}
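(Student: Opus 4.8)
The plan is to observe that $S_0$, being by definition the entire edge set of the connected component $\mathcal{G}$, has the property that for every vertex $v$ of $\mathcal{G}$ the quantity $|S_0(v)|$ is exactly the (graph-theoretic) valence of $v$ in $\mathcal{G}$; no appeal to Lem.\ref{lem:partial_valence} is needed here since nothing is being removed from the edge set yet. With this observation in hand, one simply checks the three defining conditions of sustainability (Def.\ref{def:sustainable}) in turn.

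Conditions (a) and (b) are immediate: every vertex of $\mathcal{G}$ has valence $1$, $2$, or $3$, so a $2$-valent vertex $v$ satisfies $|S_0(v)| = 2 \in \{0,2\}$, and a $3$-valent vertex $v$ satisfies $|S_0(v)| = 3 \in \{0,1,3\}$. For condition (c), I would invoke the definition of a type I connected component (Def.\ref{def:types}): such a component contains at least one $1$-valent vertex $v$, and for this $v$ we have $|S_0(v)| = 1$, which is precisely what (c) requires. (The conditional hypothesis in (c) that $S_0$ be nonempty is automatically met, since a type I component, having a $1$-valent vertex, has at least one edge.)

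There is no genuine obstacle in this lemma — it is a direct unwinding of the definitions of "type I" and "sustainable". The only point that deserves a word of care, which I would state explicitly, is the identification of $|S_0(v)|$ with the valence of $v$, which holds exactly because $S_0$ is the full edge set; this is what makes conditions (a) and (b) trivially true for the initial set and is the reason the lemma is confined to the $N=0$ stage of the recursion.
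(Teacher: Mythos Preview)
Your proposal is correct and follows essentially the same approach as the paper: both observe that since $S_0$ is the full edge set of $\mathcal{G}$, the quantity $|S_0(v)|$ equals the valence of $v$, making (a) and (b) immediate, and then invoke the definition of type I (existence of a $1$-valent vertex) to obtain (c). Your added remark about the nonemptiness hypothesis in (c) being automatically satisfied is a minor elaboration not in the paper's proof, but the arguments are otherwise identical.
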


\begin{proof}
Since $S_0$ contains all edges of $\mathcal{G}$, obviously (a) and (b) hold for $S_0$; if $v$ is a $k$-valent vertex of $\mathcal{G}$ with $k\in\{2,3\}$, then $|S_0(v)| = k$. By definition of a type I connected component (Def.\ref{def:types}), $S_0$ has an edge attached to a $1$-valent vertex of $\mathcal{G}$, so (c) holds.
\end{proof}

However, when $\mathcal{G}$ is of type II, $S_0$ is not sustainable, because of condition (c). For this case, we shall construct a chain $\mathcal{C}_0$ in a special way as follows. 
\begin{lemma}[the initial chain for a type II component]
\label{lem:initial_chain_for_type_II}
If $\mathcal{G}$ is a type II connected component of $\mathcal{R}$, then there exists a chain $\mathcal{C}_0$ in $S_0$ such that 
\begin{enumerate}
\item[\rm (1)] the departing vertex of $\mathcal{C}_0$ is a $3$-valent vertex of $\mathcal{G}$,

\item[\rm (2)] the departing vertex of $\mathcal{C}_0$ coincides with the terminating vertex of $\mathcal{C}_0$,

\item[\rm (3)] no middle vertex of $\mathcal{C}_0$ coincides with the departing vertex of $\mathcal{C}_0$.
\end{enumerate}
From now on, choose one such $\mathcal{C}_0$ and fix it. The departing vertex of $\mathcal{C}_0$ is called the \ul{\em special type II vertex}.
\end{lemma}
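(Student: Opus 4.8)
The plan is to translate conditions (2) and (3) into pure graph theory: a chain $\mathcal{C}_0$ whose departing vertex equals its terminating vertex $v_0$ and none of whose middle vertices is $v_0$ is precisely a cycle through $v_0$ in which $v_0$ occurs only at the two ends. For a $3$-valent vertex $v_0$ of $\mathcal{G}$, such a cycle exists exactly when two of the three edges at $v_0$, say $a=\{v_0,w_a\}$ and $b=\{v_0,w_b\}$, have their far endpoints in the same connected component of $\mathcal{G}-v_0$ (the graph obtained by deleting $v_0$ and its incident edges): taking a no-repeated-vertex chain $P$ in $\mathcal{G}-v_0$ from $w_a$ to $w_b$ and setting $\mathcal{C}_0:=a,P,b$ does the job, since $a,b$ are distinct and cannot appear in $P$ (they meet $v_0\notin\mathcal{G}-v_0$), while the middle vertices of $\mathcal{C}_0$ all lie in $\mathcal{G}-v_0$ and hence differ from $v_0$. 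So the whole lemma reduces to the claim: \emph{some $3$-valent vertex $v_0$ of $\mathcal{G}$ has two incident edges whose far endpoints lie in the same component of $\mathcal{G}-v_0$.}

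\textbf{Proof of the claim, by contradiction.} Recall the standing facts: $\mathcal{G}$ is a finite connected graph, every vertex has valence $2$ or $3$ (as $\mathcal{G}$ is of type II), $\mathcal{G}$ has no self-loop (Def.\ref{def:regional_graph}) and no multi-edge (since the number $m$ there is $0$ or $1$), and $\mathcal{G}$ has at least one $3$-valent vertex by Lem.\ref{lem:type_II_component_contains_a_3-valent_vertex}. Suppose the claim fails. Then for every $3$-valent vertex $v$ the three neighbours of $v$ lie in three pairwise distinct components of $\mathcal{G}-v$; since $\mathcal{G}$ is connected, each component of $\mathcal{G}-v$ contains a neighbour of $v$, so $\mathcal{G}-v$ has exactly three components, and one checks that each of the three edges at $v$ is then a bridge of $\mathcal{G}$ (deleting it separates the component containing its far endpoint from the rest). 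On the other hand, if $v$ is a $2$-valent vertex, then either both or neither of its two edges is a bridge: were one a bridge and the other not, the non-bridge edge would lie on a cycle, and that cycle, passing through the $2$-valent $v$, would be forced to use the bridge edge too, which is impossible.

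\textbf{Deriving the contradiction.} Let $B\subseteq\mathcal{G}$ be the subgraph consisting of all bridges. Since no bridge lies on a cycle, $B$ is a forest. By the previous paragraph, in $B$ every $3$-valent vertex of $\mathcal{G}$ has degree $3$ and every $2$-valent vertex of $\mathcal{G}$ has degree $0$ or $2$; hence $B$ has no vertex of degree $1$. But a nonempty forest has a leaf, so $B$ has no edges, i.e.\ $\mathcal{G}$ has no bridge at all --- contradicting the fact that the (at least one) $3$-valent vertex of $\mathcal{G}$ has all three incident edges being bridges. This proves the claim, and with it the lemma, since $\mathcal{C}_0=a,P,b$ as above has departing and terminating vertex the $3$-valent $v_0$, with no middle vertex equal to $v_0$.

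\textbf{Main obstacle.} I expect the only genuinely delicate part to be the bridge bookkeeping: getting the degree constraints in $B$ exactly right (separating the $2$-valent and $3$-valent cases and ruling out degree $1$), and verifying carefully that "$\mathcal{G}-v$ has exactly three components'' really forces all three edges at a $3$-valent $v$ to be bridges. The rest --- existence of the no-repeated-vertex chain $P$, checking that $\mathcal{C}_0$ is a chain in the paper's precise sense, and reading off the departing/terminating/middle-vertex data --- is routine. One should also note that the degenerate possibilities ($\mathcal{G}$ edgeless, or $w_a=w_b$) are excluded outright, respectively by Lem.\ref{lem:type_II_component_contains_a_3-valent_vertex} and by the absence of multi-edges. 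The argument is moreover constructive: locating a $3$-valent vertex not all of whose incident edges are bridges, and then a path in $\mathcal{G}-v_0$, can be done algorithmically, which fits the paper's emphasis on producing an explicit ordering.
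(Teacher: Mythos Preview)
Your proof is correct, but it takes a genuinely different route from the paper's.

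The paper argues constructively: it picks a maximal chain in $\mathcal{G}$, uses the absence of $1$-valent vertices to show the vertex sequence must repeat, extracts a shortest cycle $\mathcal{C}_0''$ from a minimal repetition, and then observes that if all vertices of $\mathcal{C}_0''$ were $2$-valent the cycle would (by Lem.~\ref{lem:component_criterion}) be all of $\mathcal{G}$, contradicting Lem.~\ref{lem:type_II_component_contains_a_3-valent_vertex}; a cyclic shift then puts a $3$-valent vertex at the start/end.

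You instead start from a $3$-valent vertex (whose existence you import from Lem.~\ref{lem:type_II_component_contains_a_3-valent_vertex}) and reduce the question to showing that not all three incident edges can be bridges. Your bridge bookkeeping is clean: under the contrary hypothesis, every $3$-valent vertex has $B$-degree $3$, every $2$-valent vertex has $B$-degree $0$ or $2$, so the forest $B$ has no leaf and hence no edges, contradicting the existence of any $3$-valent vertex. The construction of $\mathcal{C}_0 = a,P,b$ then goes through because $\mathcal{G}$ is simple (no multi-edges by the lemma following Def.~\ref{def:regional_graph}), so $w_a\neq w_b$ and $P$ has length at least $1$, making $|\mathcal{C}_0|\ge 3$.

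What each approach buys: the paper's argument is entirely self-contained within the chain language already set up and directly hands you the cycle; yours imports a small amount of standard graph theory (bridges, forests having leaves) but gives a crisper structural reason \emph{why} such a cycle must exist. Your claim of constructivity is fair---testing whether an edge is a bridge and finding a simple path in $\mathcal{G}-v_0$ are both straightforward---though the paper's version is perhaps more immediately algorithmic. One minor point: you use finiteness of $\mathcal{G}$ (for ``a nonempty forest has a leaf''); this is implicit throughout the paper and made explicit in the proof of Lem.~\ref{lem:existence_of_maximal_chain}, so it is safe to invoke.
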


\begin{proof}
By assumption, $\mathcal{G}$ is nonempty. As in the proof of Lem.\ref{lem:existence_of_maximal_chain}, there exists a maximal chain $\mathcal{C}_0'$ of $\mathcal{G}$. If $\mathcal{C}'_0$ already satisfies the desired condition, we are done. Write $\mathcal{C}'_0 = e_1,e_2,\ldots,e_m$, with the corresponding vertices $v_0,v_1,\ldots,v_m$. That is, $v_0$ is the departing vertex, $v_m$ is the terminating vertex, and $v_i$ is the middle vertex shared by $e_i,e_{i+1}$, for each $i=1,\ldots,m-1$. First claim is that $v_0,v_1,\ldots,v_m$ are not mutually distinct. Suppose they are. The valence of $v_m$ is either $2$ or $3$ because $\mathcal{G}$ is of type II, and $e_m$ is one of the edges attached to $v_m$. Let $e'$ any edge of $\mathcal{G}$ that is attached to $v_m$ and is different from $e_m$. Then $e'$ is not one of $e_1,\ldots,e_m$. It cannot be $e_m$. If it is $e_i$ for some $i=1,\ldots,m-1$, then $v_m$ must have appeared among $v_0,v_1,\ldots,v_{m-1}$, which is a contradiction. Hence $e_1,\ldots,e_m,e'$ is a chain in $\mathcal{G}$, contradicting to the maximality of $\mathcal{C}'_0$. So $v_i= v_j$ for some $i,j\in \{0,1,\ldots,m\}$.

\vs

Find such pair of $i,j$ with a minimal value of $|i-j|$; we may assume $i<j$. Let $\mathcal{C}_0''$ be the chain $e_{i+1},e_{i+2},\ldots,e_j$, whose corresponding vertices are $v_i,v_{i+1},\ldots,v_j$. In particular, the departing vertex $v_i$ equals its terminating vertex $v_j$, and $v_i,v_{i+1},\ldots,v_{j-1}$ are mutually distinct. So $\mathcal{C}''_0$ is a cycle of length $j-i$ inside the graph $\mathcal{G}$. Suppose that all the vertices $v_i,v_{i+1}, \ldots,v_{j-1}$ of $\mathcal{C}''_0$ are $2$-valent. If they are, then for the subgraph $\mathcal{G}'$ formed by this cycle $\mathcal{C}''_0$, for each vertex of $\mathcal{G}'$, all edges of $\mathcal{G}$ attached to this vertex belongs to $\mathcal{G}'$, hence by Lem.\ref{lem:component_criterion} the subgraph $\mathcal{G}'$ is a connected component of $\mathcal{G}$, for $\mathcal{G}'$ is connected. But $\mathcal{G}$ itself is a connected component of $\mathcal{R}$, hence $\mathcal{G}$ is a connected graph, therefore $\mathcal{G}'$ must be the entire $\mathcal{G}$. Then it follows that all vertices of $\mathcal{G}$ are $2$-valent, contradicting to Lem.\ref{lem:type_II_component_contains_a_3-valent_vertex} which says that any type II connected component of $\mathcal{R}$ has at least one $3$-valent vertex. Therefore, the assumption that all vertices of $\mathcal{C}''_0$ are $2$-valent is false, so some $v_k$ among them is $3$-valent (with $k\in\{i,i+1,\ldots,j-1\}$). Let $\mathcal{C}_0$ be the cycle obtained by cyclically shifting $\mathcal{C}_0''$ so that its departing vertex is $v_k$. That is, let $\mathcal{C}_0 = e_{k+1},e_{k+2},\ldots,e_j,e_{i+1},e_{i+2},\ldots,e_k$, with corresponding vertices being $v_k,v_{k+1},\ldots,v_j=v_i,v_{i+1},v_{i+2},\ldots,v_k$. Because $\mathcal{C}''_0$ is a cycle, i.e. $v_i = v_j$, one notices that $\mathcal{C}_0$ defined as such is also a well-defined chain in $\mathcal{G}$, and that the vertices $v_k,v_{k+1},\ldots,v_j=v_i,v_{i+1},v_{i+2},\ldots,v_{k-1}$ are mutually distinct. The departing vertex $v_k$ of $\mathcal{C}_0$ is $3$-valent and coincides with the terminating vertex of $\mathcal{C}_0$. A middle vertex of $\mathcal{C}_0$, i.e. a vertex among $v_{k+1},\ldots,v_j=v_i,v_{i+1},\ldots,v_{k-1}$, does not coincide with the departing and terminating vertex $v_k$ of $\mathcal{C}_0$.
\end{proof}

\vs

\begin{lemma}[initial sustainability for type II]
If $\mathcal{G}$ is a type II connected component of $\mathcal{R}$, let $S_1 := S_0 \setminus \mathcal{C}_0$, where $\mathcal{C}_0$ is as constructed in Lem.\ref{lem:initial_chain_for_type_II}. Then $S_1$ is sustainable.
\end{lemma}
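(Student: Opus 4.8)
The plan is to verify the three sustainability conditions (a), (b), (c) of Def.\ref{def:sustainable} for $S_1 = S_0\setminus\mathcal{C}_0$ directly, using the special structure of the chain $\mathcal{C}_0$ provided by Lem.\ref{lem:initial_chain_for_type_II}. Recall that $\mathcal{C}_0$ is a cycle: its departing and terminating vertex is one and the same $3$-valent vertex (the special type II vertex), and no middle vertex of $\mathcal{C}_0$ equals this vertex. The key bookkeeping observation is about how many edges of $\mathcal{C}_0$ are attached to each vertex $v$ of $\mathcal{G}$. If $v$ is the special type II vertex, then exactly two edges of $\mathcal{C}_0$ are attached to $v$, namely the starting edge $e_{k+1}$ and the ending edge $e_k$ in the notation of the proof of Lem.\ref{lem:initial_chain_for_type_II} (and these are distinct since $\mathcal{G}$ has no self-loop and $\mathcal{C}_0$ has length $\ge 2$, as a cycle through a $3$-valent vertex must). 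If $v$ is a middle vertex of $\mathcal{C}_0$, then since the constituent edges of a chain are mutually distinct and $v$ can occur at most once among the middle vertices (they are mutually distinct by construction), exactly two edges of $\mathcal{C}_0$ are attached to $v$. If $v$ is not a vertex of any edge of $\mathcal{C}_0$, then zero edges of $\mathcal{C}_0$ are attached to $v$. So for every vertex $v$ of $\mathcal{G}$, the number of edges of $\mathcal{C}_0$ attached to $v$ is either $0$ or $2$; consequently $|S_1(v)| = |S_0(v)| - |\mathcal{C}_0(v)|$ equals $|S_0(v)|$ or $|S_0(v)|-2$.

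First I would check (a): if $v$ is $2$-valent then $|S_0(v)|=2$, so $|S_1(v)|\in\{2,0\}$, as required. Next (b): if $v$ is $3$-valent, then $|S_0(v)|=3$, so $|S_1(v)|\in\{3,1\}$ — but if $v$ is the special type II vertex it contributes $2$ edges to $\mathcal{C}_0$, giving $|S_1(v)|=1$; if $v$ is a middle vertex of $\mathcal{C}_0$ it gives $|S_1(v)|=1$ as well; and if $v$ is untouched by $\mathcal{C}_0$ then $|S_1(v)|=3$. In all cases $|S_1(v)|\in\{0,1,3\}$; note that the value $0$ is attained only when $v$ is not $3$-valent or, vacuously, not relevant — in fact for a $3$-valent $v$ it is $1$ or $3$, which is still inside $\{0,1,3\}$, so (b) holds. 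Then (c): if $S_1$ is non-empty I must produce a vertex $v$ with $|S_1(v)|=1$. This is exactly where the special shape of $\mathcal{C}_0$ pays off: the special type II vertex is $3$-valent, so $|S_0|$-value $3$ there, and $\mathcal{C}_0$ removes precisely $2$ of its incident edges, leaving $|S_1(v)|=1$. The third incident edge of the special type II vertex is not in $\mathcal{C}_0$ — because the only edges of $\mathcal{C}_0$ at that vertex are $e_{k+1}$ and $e_k$ — so that edge lies in $S_1$, and in particular $|S_1(\text{special vertex})|=1$ regardless of whether $S_1$ is otherwise empty. (If one is worried about the degenerate possibility $S_1=\emptyset$, observe it cannot happen here, since that third edge is always present; but even granting (c) only conditionally on $S_1\neq\emptyset$, the special vertex supplies the required $1$-valent vertex.)

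I expect the main obstacle to be the careful counting that no vertex receives an odd number of edges from $\mathcal{C}_0$ — equivalently, that $\mathcal{C}_0$ behaves like an embedded cycle in $\mathcal{G}$ with its two ends glued only at the special type II vertex. This rests on two facts from Lem.\ref{lem:initial_chain_for_type_II}: the middle vertices of $\mathcal{C}_0$ are mutually distinct and distinct from the departing/terminating vertex, and the constituent edges of a chain are by definition mutually distinct. Once this is pinned down, conditions (a) and (b) are immediate arithmetic, and (c) follows by exhibiting the special type II vertex, whose $3$-valence combined with the removal of exactly its two $\mathcal{C}_0$-edges forces partial valence $1$ in $S_1$. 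Finally I would remark that $S_1$ being sustainable is precisely what lets the recursive construction of the partition into chains continue from stage $N=1$ onward, now on the same footing as the type I case.
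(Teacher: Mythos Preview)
Your proof is correct and follows the same strategy as the paper's: verify conditions (a), (b), (c) of Def.~\ref{def:sustainable} directly from the cycle structure of $\mathcal{C}_0$ guaranteed by Lem.~\ref{lem:initial_chain_for_type_II}. The only difference is packaging: you first isolate the counting fact $|\mathcal{C}_0(v)|\in\{0,2\}$ for every vertex $v$ (using that the middle vertices of $\mathcal{C}_0$ are mutually distinct---which indeed follows, either from the explicit construction or simply from valence $\le 3$ plus the absence of self-loops together with condition (3)) and then read off (a), (b), (c) as arithmetic, whereas the paper argues each of (a), (b), (c) separately by contradiction without stating that intermediate lemma.
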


\begin{proof}
Let $v$ be a $2$-valent vertex of $\mathcal{G}$; let $e,f$ be the edges of $\mathcal{G}$ attached to $v$. Suppose one of these two edges appears in $\mathcal{C}_0$, say $e$. Then $v$ must be a middle vertex of $\mathcal{C}_0$, because the departing and terminating vertex of $\mathcal{C}_0$ is $3$-valent, by Lem.\ref{lem:initial_chain_for_type_II}. Hence it follows that $f$ also appears in $\mathcal{C}_0$, next to $e$. Likewise, $f$ appearing in $\mathcal{C}_0$ implies that $e$ appears in $\mathcal{C}_0$. So, $e,f$ either both appear in $\mathcal{C}_0$, or both are absent from $\mathcal{C}_0$. Hence $|S_1(v)|=0$ or $2$.

\vs

Let $v$ be a $3$-valent vertex of $\mathcal{G}$; let $e,f,g$ be the edges of $\mathcal{G}$ attached to $v$. Suppose one of these three edges appears in $\mathcal{C}_0$, say $e$. So $S_1(v) \subseteq \{f,g\}$. Suppose $S_1(v) = \{f,g\}$. This means $f,g$ do not appear in $\mathcal{C}_0$, so $v$ cannot be a middle vertex of $\mathcal{C}_0$. Hence $v$ is the departing and terminating vertex of $\mathcal{C}_0$. Hence $e$ is either the starting edge or the ending edge of $\mathcal{C}_0$. In case $e$ is the starting edge, then the ending edge must be $f$ or $g$, for the ending edge is an edge attached to the terminating vertex $v$ and must be different from $e$. Likewise, in case $e$ is the ending edge, then the starting edge must be $f$ or $g$. So, in either case, $f$ or $g$ also appears in $\mathcal{C}_0$, contradicting to the assumption. Hence $S_1(v)$ cannot be $\{f,g\}$, so $|S_1(v)|<2$. So we proved that $|S_1(v)|$ is one of $0$, $1$, or $3$, where $3$ means none of $e,f,g$ appears in $\mathcal{C}_0$. 

\vs

Suppose $S_1$ is non-empty. Let $v$ be the departing and terminating vertex of $\mathcal{C}_0$, which is $3$-valent; see Lem.\ref{lem:initial_chain_for_type_II}. Let $e,f,g$ be the edges of $\mathcal{G}$ attached to $v$. The starting edge of $\mathcal{C}_0$ is one of these three, say $e$, and the ending edge of $\mathcal{C}_0$ is another one, say $f$. So $S_1(v) \subseteq \{g\}$. Let's show $S_1(v) = \{g\}$. If not, it means $S_1(v) = {\O}$, i.e. $g$ appears in $\mathcal{C}_0$. Then $v$ must also be a middle vertex of $\mathcal{C}_0$, contradicting to condition 3) of Lem.\ref{lem:initial_chain_for_type_II}. Hence indeed $S_1(v)=\{g\}$, meaning $|S_1(v)|=1$. So all conditions (a), (b), (c) for $S_1$ being sustainable (Def.\ref{def:sustainable}) are satisfied.
\end{proof}

\begin{definition}[description of the $N$-th inductive stage]
\label{def:N-th_stage}
Suppose that a subset $S_N$ of the set of all edges of $\mathcal{G}$ is sustainable. Construct a maximal chain $\mathcal{C}_N$ in $S_N$, and the next set $S_{N+1}$ as follows.

\vs

\ul{Step 1 of $N$-th stage:} \quad perform exactly one of the following 
two cases.

\vs

Case 1) When $S_N$ is empty, we are done; i.e. the entire recursive process is completed at this moment. Note then that  $S_0$ is the disjoint union of chains $\mathcal{C}_0,\mathcal{C}_1,\ldots,\mathcal{C}_{N-1}$.

\vs

Case 2) When $S_N$ is non-empty, find a vertex $v$ of $\mathcal{G}$ s.t. $|S_N(v)|=1$, whose existence is guaranteed by condition (c) of Def.\ref{def:sustainable}. Then choose a maximal chain $\mathcal{C}_N'$ in $S_N$, such that $\mathcal{C}'_N$ starts from $e$ and the departing vertex of $\mathcal{C}'_N$ is $v$ (such chain exists by Lem.\ref{lem:existence_of_maximal_chain}).

\vs

\vs

\ul{Step 2 of $N$-th stage:} \quad perform exactly one of the following two cases.

\vs

Case 1) If the terminating vertex of the chain $\mathcal{C}_N'$ is not a middle vertex of the chain $\mathcal{C}_N'$ itself, then let $\mathcal{C}_N := \mathcal{C}_N'$. 

\vs

Case 2) If the terminating vertex of the chain $\mathcal{C}_N'$ is a middle vertex of the chain $\mathcal{C}_N'$ itself, then let $\mathcal{C}_N := \ol{\mathcal{C}'_N}$, the reverse chain of $\mathcal{C}_N'$. In particular, $\mathcal{C}_N$ is also maximal.

\vs

Write the chain $\mathcal{C}_N$ resulting from Step 2 as
$$
\mathcal{C}_N = e_{N,1},e_{N,2},\ldots,e_{N,r_N},
$$
where $r_N = |\mathcal{C}_N|\ge 1$ is the length of $\mathcal{C}_N$.

\vs

\ul{Step 3 of $N$-th stage:} \quad Let $S_{N+1} := S_N\setminus \mathcal{C}_N$. 

\end{definition}

As the notation suggests, we then would feed in $S_{N+1}$ into the above algorithm to perform the $(N+1)$-th stage, to construct a chain $\mathcal{C}_{N+1}$, etc. For this algorithm to go on, a key thing to check is whether $S_{N+1}$ is also sustainable. We shall verify this in the following three lemmas. Meanwhile, notice that $S_{N+1}$ is a proper subset of $S_N$, so the inductive process finishes after finitely many stages.

\begin{lemma}[a behavior on $3$-valent vertex]
\label{lem:behavior_on_3-valent_vertex}
Suppose $S_0,S_1,\ldots,S_N$ are constructed so far, with $N\ge 1$ and $S_1,\ldots,S_N$ being sustainable. Let $v$ be a $3$-valent vertex of $\mathcal{G}$ such that $|S_N(v)|=1$. Then $v$ is either a middle vertex of $\mathcal{C}_{N'}$ for some $N'<N$, or the special type II vertex in Lem.\ref{lem:initial_chain_for_type_II}.
\end{lemma}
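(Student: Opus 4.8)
The plan is to track how the partial valence $|S_M(v)|$ changes as $M$ increases from $0$ to $N$. Since $v$ is $3$-valent and each $S_M$ is sustainable, condition~(b) of Def.~\ref{def:sustainable} forces $|S_M(v)|\in\{0,1,3\}$ for every $M$. The sequence $|S_0(v)|,|S_1(v)|,\ldots$ is non-increasing by Lem.~\ref{lem:partial_valence}, since $S_0\supset S_1\supset\cdots$. We are told $|S_N(v)|=1$, so there must be a first index $M_0$ with $1\le M_0\le N$ at which the value drops to $1$; that is, $|S_{M_0-1}(v)|=3$ and $|S_{M_0}(v)|=1$. (If instead the value $1$ were present already in $S_0$, then $\mathcal{G}$ is of type II and $v$ is the special type II vertex, because for a type I component we saw $|S_0(v)|=3$ for every $3$-valent vertex; so that case is immediately covered.) Thus I would split into the case where $|S_0(v)|=1$ — handled by the type-II initial construction — and the generic case $|S_{M_0-1}(v)|=3$, $|S_{M_0}(v)|=1$.

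In the generic case, $S_{M_0}=S_{M_0-1}\setminus\mathcal{C}_{M_0-1}$, so exactly two of the three edges of $\mathcal{G}$ attached to $v$ lie in the chain $\mathcal{C}_{M_0-1}$. The key step is then: if $v$ is an endpoint vertex of two constituent edges of a chain $\mathcal{C}_{M_0-1}$, and these two edges are not the starting and ending edges with $v$ playing both departing and terminating roles, then the two edges must be consecutive in the chain, whence $v$ is a middle vertex of $\mathcal{C}_{M_0-1}$. I would argue this directly from the definition of a chain: write $\mathcal{C}_{M_0-1}=e_{1},\ldots,e_{r}$; the two edges attached to $v$ are some $e_s,e_t$ with $s<t$. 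If $t=s+1$ then $v$ is the middle vertex shared by them and we are done with $N'=M_0-1$. If $t>s+1$, then consider the sub-chain $e_{s+1},\ldots,e_{t-1}$ connecting the "far" endpoint of $e_s$ to the "far" endpoint of $e_t$; combined with the fact that $v$ is shared by $e_s$ and $e_t$, one obtains a cycle in $\mathcal{G}$ passing through $v$. I would then need to check that such a configuration is incompatible with $\mathcal{C}_{M_0-1}$ having been produced by the algorithm of Def.~\ref{def:N-th_stage}, in particular with the Step~2 adjustment that guarantees the terminating vertex of $\mathcal{C}_{M_0-1}$ is never a middle vertex of $\mathcal{C}_{M_0-1}$; the only way two edges at $v$ can appear non-consecutively is if one is the starting edge and the other the ending edge, forcing $v$ to be both the departing and terminating vertex, which Step~2 Case~1 explicitly excludes (it would make the terminating vertex a middle vertex, triggering the reversal) — and after the reversal the same edge pair is still start/end. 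This is the delicate bookkeeping point and I expect it to be the main obstacle: carefully ruling out, using sustainability of $S_{M_0-1}$ together with the chain-construction rules, that $v$ could be simultaneously departing and terminating for $\mathcal{C}_{M_0-1}$ with the third edge already removed at an earlier stage.

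Finally I would assemble the cases: either $|S_0(v)|=1$, in which case $\mathcal{G}$ is type II, $|S_1(v)|=\cdots=|S_N(v)|=1$ is impossible to have dropped (it was never $3$), and one checks from Lem.~\ref{lem:initial_chain_for_type_II} and the sustainability proof for $S_1$ that the unique vertex $v$ with $|S_0(v)|=1$ would have to be... actually one revisits: the type-II initial construction leaves $|S_1(v')|=1$ exactly at the special type II vertex $v'$; by condition~(c) and the non-increasing nature, if $v\ne v'$ were ever the unique vertex of partial valence $1$, it must have dropped from $3$ at some stage $N'<N$, putting us back in the generic case and yielding that $v$ is a middle vertex of $\mathcal{C}_{N'}$. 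Either way the conclusion holds. I would then state this cleanly, citing Lem.~\ref{lem:partial_valence}, Def.~\ref{def:sustainable}(b), Def.~\ref{def:N-th_stage}, and Lem.~\ref{lem:initial_chain_for_type_II}, and flag that the heart of the argument is the elementary but fiddly claim that two chain-edges sharing a non-endpoint (non-departing/terminating) vertex are consecutive.
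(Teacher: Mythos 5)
Your overall strategy matches the paper's: track the non-increasing sequence $|S_i(v)|\in\{0,1,3\}$, isolate the stage $N'$ at which it drops from $3$ to $1$, deduce that exactly two of the three edges at $v$ lie in $\mathcal{C}_{N'}$, and conclude that $v$ is either a middle vertex of $\mathcal{C}_{N'}$ or both departing and terminating vertex of $\mathcal{C}_{N'}$. Two issues, one minor and one substantial.

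The minor one: your parenthetical about ``$|S_0(v)|=1$'' is spurious. Since $S_0$ is the full edge set of $\mathcal{G}$ and $v$ is $3$-valent, one always has $|S_0(v)|=3$, regardless of whether $\mathcal{G}$ is type I or II. There is no case $|S_0(v)|=1$; the type-II special vertex only acquires partial valence $1$ after $\mathcal{C}_0$ is removed, i.e.\ in $S_1$. Your discussion conflates $S_0$ and $S_1$.

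The substantial one is the final contradiction, which you yourself flag as the main obstacle. Having reached the point where $v$ is both the departing and terminating vertex of $\mathcal{C}_{N'}$ and not a middle vertex, you try to rule this out via Step 2 of Def.~\ref{def:N-th_stage}, claiming it ``would make the terminating vertex a middle vertex, triggering the reversal.'' This is backwards: Step 2 reverses precisely when the terminating vertex \emph{is} a middle vertex, and your situation has $v$ \emph{not} a middle vertex, so Case 1 of Step 2 applies with no obstruction at all. The reversal mechanism does not forbid a vertex from being simultaneously departing and terminating, and so gives no contradiction. The paper's argument uses Step 1 instead: if $\mathcal{C}_{N'}$ was produced by the $N'$-th inductive stage, then $\mathcal{C}'_{N'}$ (of which $\mathcal{C}_{N'}$ is either equal or the reverse, hence has the same departing/terminating vertex $v$) was chosen with its departing vertex having $|S_{N'}(\cdot)|=1$; but we are in the regime $|S_{N'}(v)|=3$, contradiction. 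This forces $\mathcal{C}_{N'}$ to be the initial type-II chain $\mathcal{C}_0$, whose departing/terminating vertex is by Lem.~\ref{lem:initial_chain_for_type_II} exactly the special type II vertex. So the ingredient you were hunting for is Step 1's ``departing vertex has partial valence $1$'' condition, not Step 2's reversal.
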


\begin{proof}
By construction, we have $S_0 \supset S_1 \supset \cdots \supset S_N$, hence $|S_0(v)| \ge |S_1(v)| \ge \cdots \ge |S_N(v)|=1$ by Lem.\ref{lem:partial_valence}. Since $S_0$ has all edges of $\mathcal{G}$, it follows $|S_0(v)|=3$. Condition (b) of the sustainability (Def.\ref{def:sustainable}) of $S_1,\ldots,S_N$ gives $|S_i(v)| \in \{0,1,3\}$ for each $i=1,\ldots,N$. So there exists $N'\ge 0$ such that $|S_i(v)|=3$ for all $i=0,1,\ldots,N'$ and $|S_i(v)|=1$ for all $i=N'+1,\ldots,N$; in particular $N'<N$. So the three edges of $\mathcal{G}$ attached to $v$, say $e,f,g$, all belong to $S_{N'}$, and only one of them, say $g$, belongs to $S_{N'+1} = S_{N'} \setminus \mathcal{C}_{N'}$. This means that the other two edges $e,f$ belong to the chain $\mathcal{C}_{N'}$, and that $g$ does not. In particular, $v$ is at least one of: the departing vertex, the terminating vertex, or a middle vertex of $\mathcal{C}_{N'}$. 

\vs

Suppose that $v$ is not a middle vertex of $\mathcal{C}_{N'}$. Then $v$ is either the departing vertex or the terminating vertex of $\mathcal{C}_{N'}$. Suppose that $v$ is the departing vertex; then the starting edge of $\mathcal{C}_{N'}$ must be $e$ or $f$, say $e$. Since $f$ also appears in the chain $\mathcal{C}_{N'}$, and since its endpoint vertex $v$ is not a middle vertex of $\mathcal{C}_{N'}$, it must be that $v$ is the terminating vertex of $\mathcal{C}_{N'}$ and that $f$ is the ending edge of $\mathcal{C}_{N'}$. Likewise, $v$ being the terminating vertex of $\mathcal{C}_{N'}$ implies $v$ being the departing vertex of $\mathcal{C}_{N'}$. In any case, $v$ is both the departing and the terminating vertex of $\mathcal{C}_{N'}$. 

\vs

Now assume that $\mathcal{C}_{N'}$ is not the chain $\mathcal{C}_0$ of the type II connected component $\mathcal{G}$. Then $\mathcal{C}_{N'}$ must have been constructed by the $N'$-th inductive stage in Def.\ref{def:N-th_stage}. In view of Step 2 of $N'$-th inductive stage (Def.\ref{def:N-th_stage}), the chain $\mathcal{C}'_{N'}$ is either $\mathcal{C}_{N'}$ or $\ol{\mathcal{C}_{N'}}$; hence $v$ is also the departing and the terminating vertex of $\mathcal{C}'_{N'}$. In view of Step 1 of $N'$-th stage, it must be that $|S_{N'}(v)|=1$, contradicting to our case $|S_{N'}(v)|=3$.

\vs

So we showed that $v$ is either a middle vertex of $\mathcal{C}_{N'}$, or the departing and terminating vertex of $\mathcal{C}_0$ of the type II connected component of $\mathcal{G}$, i.e. the special type II vertex.
\end{proof}

\begin{lemma}[property of the chain $\mathcal{C}_N$]
\label{lem:property_of_C_N}
Let $N\ge 0$.
\begin{enumerate}
\item[\rm (1)] Neither the departing vertex nor the terminating vertex of the constructed chain $\mathcal{C}_N$ is a $2$-valent vertex of $\mathcal{G}$.

\item[\rm (2)] If a vertex $v$ of $\mathcal{G}$ is the departing vertex or the terminating vertex of $\mathcal{C}_N$, then one of the following holds:

(2-1) $v$ is a $1$-valent vertex,

(2-2) $v$ is a $3$-valent vertex that is also a middle vertex of the chain $\mathcal{C}_{N'}$ for some $N'\le N$,

(2-3) $v$ is the special type II vertex.

\item[\rm (3)] The terminating vertex of $\mathcal{C}_N$ is not a middle vertex of $\mathcal{C}_N$.
\end{enumerate}
\end{lemma}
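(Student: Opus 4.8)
The plan is to prove the three statements essentially in the order (1), then (3), then (2), since (1) and (3) feed into (2); in fact I expect the bulk of the work is to carefully track the case split in Step 1 and Step 2 of the $N$-th inductive stage (Def.\ref{def:N-th_stage}) against the special treatment of the initial chain $\mathcal{C}_0$ for type II components (Lem.\ref{lem:initial_chain_for_type_II}). First I would dispose of the special case $N=0$: if $\mathcal{G}$ is of type I, then $\mathcal{C}_0$ is constructed by the $N=0$ inductive stage exactly as for general $N$, so it falls under the general argument below; if $\mathcal{G}$ is of type II, then by Lem.\ref{lem:initial_chain_for_type_II} the departing and terminating vertex of $\mathcal{C}_0$ coincide, it is $3$-valent (so (1) holds and (2-3) holds with $v$ the special type II vertex), and no middle vertex of $\mathcal{C}_0$ equals it, which is exactly (3). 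So it remains to treat $\mathcal{C}_N$ constructed via the $N$-th inductive stage with $S_N$ sustainable.

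For the inductive-stage case, recall from Step 1, Case 2 that $\mathcal{C}'_N$ is a \emph{maximal} chain in $S_N$ whose departing vertex $v$ satisfies $|S_N(v)|=1$, and that $\mathcal{C}_N$ is either $\mathcal{C}'_N$ or $\overline{\mathcal{C}'_N}$. For (1): let $w$ be the departing or terminating vertex of $\mathcal{C}_N$; then $w$ is the departing or terminating vertex of $\mathcal{C}'_N$, hence has $|S_N(w)|=1$ at the departing end, while at the terminating end maximality of $\mathcal{C}'_N$ forces every edge of $\mathcal{G}$ at $w$ other than the ending edge $e_{N,r_N}$ to already be absent from $S_N$ (else we could extend), so $|S_N(w)|=1$ there too — here I use that the ending edge itself is in $S_N$. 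A $2$-valent vertex has $|S_N(w)|\in\{0,2\}$ by sustainability condition (a), contradicting $|S_N(w)|=1$; this gives (1). For (3): if the terminating vertex $w$ of $\mathcal{C}_N$ were a middle vertex of $\mathcal{C}_N$, then by the construction in Step 2 we are in Case 1 (because Case 2 replaces $\mathcal{C}'_N$ by its reverse precisely to avoid this), so $\mathcal{C}_N=\mathcal{C}'_N$ and the terminating vertex of $\mathcal{C}'_N$ is not a middle vertex of $\mathcal{C}'_N$ — a contradiction; this gives (3). (A small point to check: the reversal in Case 2 does not reintroduce the bad configuration, i.e. the terminating vertex of $\overline{\mathcal{C}'_N}$, which is the departing vertex $v$ of $\mathcal{C}'_N$, is not a middle vertex of $\mathcal{C}'_N$; this holds because $|S_N(v)|=1$ while a middle vertex lies on two constituent edges, forcing partial valence $\ge 2$.)

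For (2): let $w$ be the departing or terminating vertex of $\mathcal{C}_N$. By (1), $w$ is not $2$-valent, so it is $1$-valent or $3$-valent; if $1$-valent we are in case (2-1). Suppose $w$ is $3$-valent. As shown in the proof of (1), $|S_N(w)|=1$. Now invoke Lem.\ref{lem:behavior_on_3-valent_vertex} (applicable since $S_1,\ldots,S_N$ are sustainable and $w$ is a $3$-valent vertex with $|S_N(w)|=1$): $w$ is either a middle vertex of $\mathcal{C}_{N'}$ for some $N'<N$, which is case (2-2), or the special type II vertex, which is case (2-3). This exhausts the possibilities, completing (2). The main obstacle I anticipate is not any single deep step but the bookkeeping in (1)/(3): one must argue simultaneously about the departing vertex (controlled by the hypothesis $|S_N(v)|=1$ from Step 1) and the terminating vertex (controlled by maximality of $\mathcal{C}'_N$ in $S_N$), and one must keep straight which of $\mathcal{C}'_N$ and $\overline{\mathcal{C}'_N}$ is $\mathcal{C}_N$ under the Step 2 case split, while separately handling the $N=0$, type II initial chain where $\mathcal{C}_0$ is built by Lem.\ref{lem:initial_chain_for_type_II} rather than by the inductive stage.
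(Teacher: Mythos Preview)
Your overall plan mirrors the paper's approach closely: dispose of the type~II $N=0$ case via Lem.~\ref{lem:initial_chain_for_type_II}, then for the inductive stage use $|S_N(v)|=1$ at the departing vertex of $\mathcal{C}'_N$ together with maximality at the terminating vertex, and feed the result into Lem.~\ref{lem:behavior_on_3-valent_vertex}. Part~(3) is handled correctly, including your parenthetical check for Case~2.

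There is, however, a real gap in your argument for~(1), which then propagates into~(2). You claim that maximality of $\mathcal{C}'_N$ forces $|S_N(w)|=1$ at the terminating vertex $w$ of $\mathcal{C}'_N$, reasoning that any other edge at $w$ in $S_N$ would allow an extension. This overlooks the possibility that such an edge is already a constituent of $\mathcal{C}'_N$ (so no extension is possible, yet the edge is still in $S_N$). Precisely this happens in Case~2 of Step~2: there $w$ is a \emph{middle} vertex of $\mathcal{C}'_N$, so two consecutive edges $e_{N,i},e_{N,i+1}$ of $\mathcal{C}'_N$ are attached to $w$ in addition to the ending edge, and one gets $|S_N(w)|=3$, not $1$. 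Your assertion $|S_N(w)|=1$ is simply false here.

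The fix, which is what the paper does, is to treat Case~2 separately: observe that $w$ must be $3$-valent (three distinct edges at $w$, using that $\mathcal{G}$ has no self-loops), and that $w$ is a middle vertex of $\mathcal{C}_N=\ol{\mathcal{C}'_N}$ itself, so (2-2) holds with $N'=N$. Note this is exactly why the statement of (2-2) allows $N'\le N$ rather than $N'<N$; your invocation of Lem.~\ref{lem:behavior_on_3-valent_vertex} only ever produces $N'<N$, so your~(2) as written never reaches the $N'=N$ case. In Case~1 you must also check, when ruling out $2$-valence of $w$ via maximality, that the second edge at $w$ cannot already lie in $\mathcal{C}'_N$; the paper does this by noting the only way it could is as the starting edge with $w=v$, which contradicts $|S_N(v)|=1$ versus $|S_N(w)|=2$.
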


\begin{proof}
First, suppose that $N=0$ and $\mathcal{G}$ is a type II connected component of $\mathcal{R}$. Then all the statements of the present Lemma follows immediately from Lem.\ref{lem:initial_chain_for_type_II}. So, now let's exclude this case. Then $\mathcal{C}_N$ must have been constructed by the $N$-th inductive stage of Def.\ref{def:N-th_stage}. In particular, $\mathcal{C}_N$ is a chain in the sustainable set $S_N$. 

\vs

(1) This is a consequence of (2).

\vs

(2) Let $v$ be the departing vertex of the chain $\mathcal{C}'_N$ in $S_N$ constructed in Step 1 of Def.\ref{def:N-th_stage}. So $|S_N(v)|=1$. In view of the sustainability condition of $S_N$ (Def.\ref{def:sustainable}), $v$ is either $1$-valent or $3$-valent. In case $v$ is $3$-valent, by Lem.\ref{lem:behavior_on_3-valent_vertex} it follows that $v$ is either a middle vertex of a chain $\mathcal{C}_{N'}$ for some $N'<N$, or the special type II vertex. So, in any case, this departing vertex $v$ of $\mathcal{C}'_N$ satisfies the desired condition.

\vs

Let's now show that the terminating vertex $w$ of $\mathcal{C}_N'$ also satisfies the condition. Suppose it were the Case 2 at Step 2 of Def.\ref{def:N-th_stage}. Then $w$ is also a middle vertex of $\mathcal{C}_N'$, hence is a common endpoint vertex of some consecutive constituent edges $e,f$ of $\mathcal{C}_N'$. One observes that the ending edge $g$ of $\mathcal{C}_N'$ cannot be $e$ or $f$; if it were, then both endpoints of this edge are $w$, so that this edge is a self-loop, which is absurd. So there are three distinct edges $e,f,g$ attached to $w$, meaning that $w$ is $3$-valent. Meanwhile, $\mathcal{C}_N = \ol{\mathcal{C}'_N}$, so $w$ is a middle vertex of $\mathcal{C}_N$, as desired. Suppose now it were the Case 1 at Step 2. If $w$ is $1$-valent, we are done. Suppose not. Let $g$ be the ending edge of $\mathcal{C}_N'$, so $g$ is attached to $w$. This means $g\in S_N(w)$, so $|S_N(w)|\ge 1$. In case $w$ is $2$-valent, by condition (a) of the sustainability condition for $S_N$ (Def.\ref{def:sustainable}), it follows $|S_N(w)|=2$. This means that the edge $h$ of $\mathcal{G}$ attached to $w$ that is not $g$ is also in $S_N$. Moreover, note that $h$ cannot appear in $\mathcal{C}'_N$, because the only way for $h$ to appear in $\mathcal{C}'_N$ is as the starting edge of $\mathcal{C}'_N$ with $w$ being the departing vertex of $\mathcal{C}'_N$, implying $w=v$, which is impossible because $|S_N(v)|=1$. Hence the chain $\mathcal{C}_N'$ can be extended by adding $h$ at the (right) end, contradicting to its maximality. In case $w$ is $3$-valent, by condition (b) of the sustainability condition for $S_N$, it follows $|S_N(w)|=1$ or $3$. Suppose $|S_N(w)|=3$. Note that $w$ is neither the departing vertex of $\mathcal{C}'_N$ (because $w\neq v$, for $|S_N(w)|=3$ and $|S_N(v)|=1$) nor a middle vertex of $\mathcal{C}'_N$ (by Case 1 of Step 2). Hence,  neither of the two edges of $\mathcal{G}$ attached to $w$ different from the ending edge of $\mathcal{C}'_N$ appears in the chain $\mathcal{C}_N'$, so $\mathcal{C}'_N$ can be extended by adding one of these edges at the (right) end, contradicting to its maximality. So it must be $|S_N(w)|=1$. Then by Lem.\ref{lem:behavior_on_3-valent_vertex}, $w$ is either a middle vertex of a chain $\mathcal{C}_{N'}$ for some $N'<N$, or the special type II vertex. Done.

\vs

So we showed that both the departing vertex and the terminating vertex of $\mathcal{C}'_N$ satisfy the desired condition. Since $\mathcal{C}_N$ is either $\mathcal{C}'_N$ or the reversed chain $\ol{\mathcal{C}'_N}$, we are done.

\vs

(3) At Step 2 of Def.\ref{def:N-th_stage}, if it were the Case 1, then we are good. If it were the Case 2, then the terminating vertex $v$ of $\mathcal{C}_N$ is the departing vertex $v$ of $\mathcal{C}_N'$. Looking at Step 1, we have $|S_N(v)|=1$, so $v$ is either $1$-valent or $3$-valent, in view of condition (a) of the sustainability of $S_N$. If $v$ is a $1$-valent vertex, then it cannot be a middle vertex of $\mathcal{C}_N$, for it cannot be the common endpoint vertex of two distinct edges of $\mathcal{G}$. Suppose now that $v$ is a $3$-valent vertex with $|S_N(v)|=1$. Since $\mathcal{C}_N \subset S_N$, there can be at most $1$ edge in $\mathcal{C}_N$ attached to $v$. Hence, $v$ cannot be a middle vertex of $\mathcal{C}_N$; if it is, then there should be at least two distinct edges in $\mathcal{C}_N$ attached to $v$. So (3) is proved. 
\end{proof}

\begin{lemma}
[sustainability is preserved at each inductive step]
\label{lem:induction_process}
For $N\ge 0$, the set $S_{N+1}$, constructed by the $N$-th induction stage of Def.\ref{def:N-th_stage}, is sustainable.

\end{lemma}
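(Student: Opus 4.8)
The plan is to verify the three clauses (a), (b), (c) of sustainability (Def.\ref{def:sustainable}) for $S_{N+1}$ in turn, under the standing induction hypothesis that $S_N$ — and, by earlier applications of this same lemma, $S_1,\dots,S_N$, together with $S_0$ when $\mathcal{G}$ is of type I — are sustainable, so that Lem.\ref{lem:behavior_on_3-valent_vertex} and Lem.\ref{lem:property_of_C_N} are available for the chain $\mathcal{C}_N$. The only arithmetic needed is that, for every vertex $v$ of $\mathcal{G}$, the value $|S_{N+1}(v)|$ equals $|S_N(v)|$ minus the number of constituent edges of $\mathcal{C}_N$ attached to $v$ (since $\mathcal{C}_N\subseteq S_N$ and $S_{N+1}=S_N\setminus\mathcal{C}_N$); one also records that $S_{N+1}\subsetneq S_N\subseteq S_0$ because $\mathcal{C}_N$ is nonempty. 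When $\mathcal{G}$ is of type II and $N=0$, the set $S_{N+1}=S_1$ is already sustainable by the initial-sustainability lemma for type II components, so I may assume throughout that $\mathcal{C}_N$ and $S_{N+1}$ arise from the $N$-th stage of Def.\ref{def:N-th_stage}.

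For clause (a) I would take a $2$-valent vertex $v$ with incident edges $e,f$: if $|S_N(v)|=0$ then $|S_{N+1}(v)|=0$, and if $|S_N(v)|=2$ then, since $v$ is neither the departing nor the terminating vertex of $\mathcal{C}_N$ by Lem.\ref{lem:property_of_C_N}.(1), the appearance of one of $e,f$ on $\mathcal{C}_N$ forces $v$ to be a middle vertex of $\mathcal{C}_N$ and hence forces both $e,f$ onto $\mathcal{C}_N$; so $|S_{N+1}(v)|\in\{0,2\}$. For clause (b) I would take a $3$-valent vertex $v$ with incident edges $e,f,g$: the cases $|S_N(v)|\in\{0,1\}$ give $|S_{N+1}(v)|\le 1$ directly, and the crux is to rule out "$|S_N(v)|=3$ while exactly one of $e,f,g$ lies on $\mathcal{C}_N$". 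In that situation $v$ carries only one chain-edge, hence is not a middle vertex of $\mathcal{C}_N$, hence is the departing or terminating vertex of $\mathcal{C}_N$; Lem.\ref{lem:property_of_C_N}.(2) then leaves three possibilities — $v$ is $1$-valent (excluded), or a $3$-valent middle vertex of some $\mathcal{C}_{N'}$ with $N'\le N$, or the special type II vertex of Lem.\ref{lem:initial_chain_for_type_II} — and each of these either contradicts $v$ not being a middle vertex of $\mathcal{C}_N$ (the case $N'=N$) or places two distinct edges at $v$ into an earlier chain and so forces $|S_N(v)|\le 1$, a contradiction. Hence $v$ carries $0$, $2$, or $3$ chain-edges and $|S_{N+1}(v)|\in\{0,1,3\}$.

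Clause (c) is the step I expect to be the main obstacle, being the one that uses a real idea rather than bookkeeping. The plan there is a contradiction argument: suppose $S_{N+1}\neq\emptyset$ but no vertex has $|S_{N+1}(v)|=1$; then, invoking clauses (a) and (b) already proved, every vertex of $\mathcal{G}$ has the "all-or-nothing" property that either all of its incident edges lie in $S_{N+1}$ or none do ($0$ at $1$-valent vertices, $\{0,2\}$ at $2$-valent ones, $\{0,3\}$ at $3$-valent ones). This is precisely the hypothesis of the connected-component criterion Lem.\ref{lem:component_criterion}, so the subgraph of $\mathcal{G}$ with edge set $S_{N+1}$ is a disjoint union of connected components of $\mathcal{G}$; but $\mathcal{G}$ is connected and $S_{N+1}$ is nonempty, forcing $S_{N+1}=S_0$ and contradicting $S_{N+1}\subsetneq S_0$. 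Hence a vertex of partial valence $1$ exists, which is clause (c). The delicate part is thus twofold: the careful case analysis in (b) ensuring that every branch of Lem.\ref{lem:property_of_C_N}.(2) really does collapse $|S_N(v)|$ to at most $1$, and the observation in (c) that the numerical constraints of sustainability are exactly what the component criterion consumes; everything else is routine.
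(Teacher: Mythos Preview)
Your proposal is correct and follows essentially the same approach as the paper's proof: clause (a) via Lem.\ref{lem:property_of_C_N}(1), clause (b) via Lem.\ref{lem:property_of_C_N}(2) to reduce the bad case $|S_N(v)|=3$, $|S_{N+1}(v)|=2$ to a contradiction, and clause (c) via the all-or-nothing argument feeding into Lem.\ref{lem:component_criterion}. Your treatment is in fact slightly more streamlined than the paper's in two places: in (b) for the special type~II vertex you contradict $|S_N(v)|=3$ directly (using $N\ge 1$ from your separation of the type~II $N=0$ case) rather than $|S_{N+1}(v)|=2$, and in (c) you assume outright that no vertex has $|S_{N+1}(v)|=1$ rather than splitting into the $1$-valent and $3$-valent subcases first---both are harmless simplifications of the same argument.
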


\begin{proof}
Note $S_{N+1} = S_N\setminus \mathcal{C}_N$, hence $S_{N+1} \subset S_N$. Recall that the $N$-th induction stage assumes the sustainability of $S_N$.

\vs

(a) Let $v$ be a $2$-valent vertex of $\mathcal{G}$. If $|S_N(v)|=0$, then $|S_{N+1}(v)|=0$, because $S_{N+1} \subset S_N$ and therefore $|S_{N+1}(v)|\le |S_N(v)|$ by Lem.\ref{lem:partial_valence}. So condition (a) for $S_{N+1}$ is satisfied. Suppose $|S_N(v)|=2$. If none of the two edges of $\mathcal{G}$ attached to $v$ belongs to $\mathcal{C}_N$, then $|S_{N+1}(v)|=2$, so (a) is satisfied. Suppose $\mathcal{C}_N$ has an edge $e$ attached to $v$. By Lem.\ref{lem:property_of_C_N}.(1), $v$ cannot be the departing vertex or the terminating vertex of $\mathcal{C}_N$, so it must be a middle vertex of $\mathcal{C}_N$, hence is the common endpoint vertex of some consecutive constituent edges $e_{N,i},e_{N,i+1}$ of the chain $\mathcal{C}_N$. These edges $e_{N,i},e_{N,i+1}$ are the two edges of $\mathcal{G}$ attached to $v$. Since they are in $\mathcal{C}_N$, they do not belong to $S_{N+1} = S_N \setminus\mathcal{C}_N$, so $|S_{N+1}(v)|=0$. Hence (a) is satisfied.

\vs

(b) Let $v$ be a $3$-valent vertex of $\mathcal{G}$. Suppose $|S_N(v)|\le 1$. Then since $0\le |S_{N+1}(v)|\le |S_N(v)|$, it must be that $|S_{N+1}(v)|$ is either $0$ or $1$, so (b) is satisfied. Suppose now $|S_N(v)|=3$. Since $|S_{N+1}(v)|\le |S_N(v)|$, it suffices to check that $|S_{N+1}(v)|$ cannot be $2$. Suppose it is $2$. This means that all three edges of $\mathcal{G}$ attached to $v$ belong to $S_N$, and that only one of them, say $e$, is in the chain $\mathcal{C}_N$. So $e=e_{N,i}$ for some $i\in \{1,2,\ldots,r_N\}$. In case $v$ is a middle vertex of $\mathcal{C}_N$, then it is the common endpoint vertex of some consecutive constituent edges $e_{N,j},e_{N,j+1}$ of $\mathcal{C}_N$, which in particular are attached to $v$; this contradicts to the assumption that only one edge of $\mathcal{G}$ attached to $v$ belongs to $\mathcal{C}_N$. So $v$ is not a middle vertex of $\mathcal{C}_N$. However, since it is an endpoint of a constituent edge $e$ of $\mathcal{C}_N$, and since it is not a middle vertex of $\mathcal{C}_N$, it must be at least one of: the departing vertex or the terminating vertex $\mathcal{C}_N$. By Lem.\ref{lem:property_of_C_N}.(2), $v$ must be either a middle vertex of $\mathcal{C}_{N'}$ for some $N'\le N$, or the special type II vertex. In the former case, by what we just saw, it must be that $N'<N$, which in particular implies $N\ge 1$. So, $v$ is the common endpoint vertex of some consecutive constituent edges $e_{N',k},e_{N',k+1}$ of $\mathcal{C}_{N'}$.  As $S_N = S_0 \setminus (\mathcal{C}_0 \cup \mathcal{C}_1 \cup \cdots \cup \mathcal{C}_{N-1})$, it follows that the edges $e_{N',k},e_{N',k+1}$, which are attached to $v$, do not belong to $S_N$; this is a contradiction to $|S_N(v)|=3$. Now, in the latter case of Lem.\ref{lem:property_of_C_N}.(2), i.e. when $v$ is the special type II vertex, then by Lem.\ref{lem:initial_chain_for_type_II}, the starting edge and the ending edge of the chain $\mathcal{C}_0$ are two distinct edges attached to $v$. Since $S_{N+1} = S_0 \setminus (\mathcal{C}_0 \cup \cdots \cup \mathcal{C}_N)$, we see that these two edges do not belong to $S_{N+1}$, hence $|S_{N+1}(v)|\le 1$, contradicting to the current assumption $|S_{N+1}(v)|=2$. So, in any case, the assumption $|S_{N+1}(v)|=2$ leads to a contradiction, hence is false. Therefore indeed $|S_{N+1}(v)|$ can only be one of $0,1,3$. So (b) is satisfied.
 
\vs

(c) If $S_{N+1}$ is empty, there is nothing to check for (c). Suppose $S_{N+1}$ is not empty, and suppose that $S_{N+1}$ does not have an edge attached to a $1$-valent vertex. In order to show that (c) is satisfied, in view of condition (a), it suffices to show that $S_{N+1}$ has an edge $e$ attached to a $3$-valent vertex $v$ such that $|S_{N+1}(v)|=1$. Suppose not. For each edge $e$ of $S_{N+1}$, for each endpoint vertex $v$ of $e$, we have $|S_{N+1}(v)|\ge 1$ because $e\in S_{N+1}(v)$. So the valence of $v$ must be $2$ or $3$, for if the valence is $1$ then $|S_{N+1}(v)|$ would be $1$. If $v$ is $2$-valent, then by condition (a) it follows $|S_{N+1}(v)|=2$. If $v$ is $3$-valent, then by condition (b) and our assumption $|S_{N+1}(v)|\neq 1$ it follows that $|S_{N+1}(v)|=3$. This means that, for any endpoint vertex $v$ of any edge $e$ of $S_{N+1}$, all edges of $\mathcal{G}$ attached to $v$ belong to $S_{N+1}$. So, by Lem.\ref{lem:component_criterion} we see that $S_{N+1}$ together with all endpoint vertices of its members form a union of connected components of $\mathcal{G}$. Since $\mathcal{G}$ is connected, it has only one connected component, namely itself. Hence $S_{N+1}$ must be the entire graph $\mathcal{G}$. This is absurd, because $S_{N+1}$ is a proper subset of $S_0$, which is the set of all edges of $\mathcal{G}$. Hence we showed what we wanted to show, and (c) is satisfied.
\end{proof}

\vspace{5mm}

In the end, by running the inductive process (Def.\ref{def:N-th_stage}) until it finishes, we obtain a partition of $S_0$, the set of all edges of any chosen connected component $\mathcal{G}$ of the regional graph $\mathcal{R}$, into chains $\mathcal{C}_0, \mathcal{C}_1,\ldots, \mathcal{C}_M$, for some $M\ge 0$.

\vs

We assign orientations to members $e_{i,1},\ldots,e_{i,r_i}$ of $\mathcal{C}_i$ so that the orientations are directing toward the `forward traveling direction', i.e. `from the departing vertex to the terminating vertex'; in case $r_i\ge 2$, it looks like 
$$
\xymatrix@C-2mm{
\ar[r]^{e_{i,1}} & \ar[r]^{e_{i,2}} & \ar[r]^{\cdots} & \ar[r]^{e_{i,r_i}} &
}
$$

\vs

Consider the sequence of edges in $\mathcal{G}$ constructed as the concatenation of chains $\mathcal{C}_M,\mathcal{C}_{M-1},\ldots,\mathcal{C}_0$ arranged in this order, i.e.
\begin{align}
\label{eq:sequence_of_edges_of_G}
e_{M,1},e_{M,2},\ldots,e_{M,r_M}, \, e_{M-1,1},e_{M-1,2},\ldots,e_{M-1,r_{M-1}}, \ldots, e_{0,1},e_{0,2},\ldots,e_{0,r_0}.
\end{align}
Notice that each edge of $\mathcal{G}$ appears exactly once in this sequence. 

\vs

Now, let $\mathcal{G}_1,\ldots,\mathcal{G}_d$ be all the connected components of the regional graph $\mathcal{R}$. If $\mathcal{R}$ has a type II connected component, then by Cor.\ref{cor:structure_of_R} there is only one type II connected component; in this case, re-label the connected components if necessary, so that $\mathcal{G}_1$ is the type II component. For each $\mathcal{G}_i$, we have a sequence of its edges as constructed in eq.\eqref{eq:sequence_of_edges_of_G}. Concatenate these sequences for $\mathcal{G}_1,\mathcal{G}_2,\ldots,\mathcal{G}_d$, arranged in this order; so the sequence for $\mathcal{G}_1$ comes first. Then we obtain the sequence of edges of $\mathcal{R}$, such that each edge of $\mathcal{R}$ appears exactly once. For this sequence, assign weights $2^1,2^2,\ldots,2^{|\mathcal{R}|}$ to the members of this sequence in this order, where $|\mathcal{R}|$ is the number of edges of $\mathcal{R}$. This way we assigned orientations and weights on all edges of $\mathcal{R}$.

\vs

Finally, let us prove Prop.\ref{prop:existence_of_data_on_R}; that is, let's show that these orientations and weights assigned on the edges of $\mathcal{R}$ satisfy all the conditions 1), 2), 3), and 4) of Prop.\ref{prop:sufficient_condition_for_data_on_R}.

\vs

\ul{\it Proof of Prop.\ref{prop:existence_of_data_on_R}}.

\vs

\ul{[condition 1) of Prop.\ref{prop:sufficient_condition_for_data_on_R}]} : \quad trivially satisfied.

\vs

\ul{[condition 4) of Prop.\ref{prop:sufficient_condition_for_data_on_R}]} : \quad Suppose $\mathcal{R}$ has a type I connected component and a type II connected component. Then the unique type II connected component is $\mathcal{G}_1$, which comes `before' other connected components, which are all of type I. In the above construction of weights, notice that the weight given to any edge of $\mathcal{G}_1$ is smaller than the weight given to any edge of $\mathcal{G}_i$, for any $i\ge 2$. So condition 4) is satisfied.

\vs

\ul{[condition 2) of Prop.\ref{prop:sufficient_condition_for_data_on_R}]} : \quad Let $v$ be any $2$-valent vertex of $\mathcal{R}$. By Lem.\ref{lem:property_of_C_N}.(1) and Lem.\ref{lem:initial_chain_for_type_II}, we see that $v$ is not the departing vertex nor the terminating vertex of any of the constructed chains $\mathcal{C}_N$. Meanwhile, note that each vertex of $\mathcal{R}$, in particular $v$, is an endpoint vertex of an edge of $\mathcal{R}$, and that each edge of $\mathcal{R}$ belongs to exactly one of these chains $\mathcal{C}_N$. Note also that each vertex of a constituent edge of a chain is at least one of: the departing vertex, the terminating vertex, or a middle vertex of the chain. It follows that $v$ is a middle vertex of some chain $\mathcal{C}_N$. By construction of the orientations and the weights given on the members of $\mathcal{C}_N$, we see that condition 2) is satisfied on this $2$-valent vertex $v$.

\vs

\ul{[condition 3) of Prop.\ref{prop:sufficient_condition_for_data_on_R}]} : \quad Let $v$ be any $3$-valent vertex of $\mathcal{R}$. Note that, in the above inductive process, right after we construct the `last' (or, `final') chain $\mathcal{C}_M$ for the connected component $\mathcal{G}$ where $v$ belongs, we defined $S_{M+1} := S_M \setminus \mathcal{C}_M$ at Step 3 of the $M$-th stage (Def.\ref{def:N-th_stage}). Note that $\mathcal{C}_M$ being the last chain means that $S_{M+1}$ is the empty set. Now, for each $N=1,2,\ldots,M+1$, we know $|S_N(v)|$ belongs to $\{0,1,3\}$, by the sustainability condition (b) (Def.\ref{def:sustainable}) and the sustainability result (Lem.\ref{lem:induction_process}). By construction of $S_N$'s, we have $S_0 \supset S_1 \supset \cdots \supset S_M \supset S_{M+1}$, and therefore $|S_0(v)| \ge |S_1(v)| \ge \cdots \ge |S_M(v)| \ge |S_{M+1}(v)|$ by Lem.\ref{lem:partial_valence}. Note $|S_0(v)| = 3$, for $S_0$ is the set of all edges of $\mathcal{G}$, and note $|S_{M+1}(v)|=0$, for $S_{M+1}$ is the empty set. Find the largest $N \in \{0,1,\ldots,M+1\}$ such that $|S_N(v)| > 0$; then $0\le N\le M$, and $|S_N(v)|$ is either $3$ or $1$, while $|S_{N+1}(v)|=0$. 

\vs

Suppose $|S_N(v)|=3$. This means all three edges of $\mathcal{G}$ attached to $v$ belong to $S_N$, and none of them belong to $S_{N+1} = S_N \setminus \mathcal{C}_N$. Thus all these three edges belong to the chain $\mathcal{C}_N$. We claim that $v$ is a middle vertex of $\mathcal{C}_N$; otherwise, each of the three edges attached to $v$ is the starting edge or the ending edge of $\mathcal{C}_N$, which is impossible because these three edges are distinct. Hence, $v$ is the common endpoint vertex of some consecutive constituent edges $e_{N,i}, e_{N,i+1}$ of the chain $\mathcal{C}_N = e_{N,1},\ldots,e_{N,r_N}$; these two edges are attached to $v$. We now claim that this $i$ is the unique number in $0,1,\ldots,r_N-1$ such that the common vertex of $e_{N,i},e_{N,i+1}$ is $v$. If there is another $i'$ such that $v$ is the common vertex of $e_{N,i'}, e_{N,i'+1}$, then all of the edges $e_{N,i},e_{N,i+1},e_{N,i'}, e_{N,i'+1}$ are attached to $v$, hence cannot all be distinct, and the only possibility is either $i+1=i'$ or $i'+1=1$. In the former case, both endpoints of $e_{N,i+1}$ are $v$, and in the latter case, both endpoints of $e_{N,i'+1}$ are $v$; this is absurd, because our graph $\mathcal{G}$ has no self-loop. So uniqueness of $i$ is proved.

Let $e = e_{N,j} \in \mathcal{C}_N$ be the remaining edge of $\mathcal{G}$ attached to $v$; in particular, $j \notin \{i,i+1\}$. Suppose that $j\neq 1$, i.e. $e=e_{N,j}$ is not the starting edge of $\mathcal{C}_N$. In case $e=e_{N,j}$ is not the ending edge of $\mathcal{C}_N$ either, then at the two endpoint vertices of $e=e_{N,j}$ are attached the edges $e_{N,j-1}$ and $e_{N,j+1}$ of $\mathcal{C}_N$, respectively. Since $v$ is an endpoint vertex of $e=e_{N,j}$, it follows that either $e_{N,j-1}$ or $e_{N,j+1}$ is attached to $v$. So $v$ is a common vertex of $e_{N,j-1},e_{N,j}$, or that of $e_{N,j},e_{N,j+1}$; this contradicts to the uniqueness of the above $i$. So it must be that $e=e_{N,j}$ is the ending edge of $\mathcal{C}_N$, i.e. $e=e_{N,j}=e_{N,r_N}$. In case $v$ is not the terminating vertex of $\mathcal{C}_N$, then it must be a middle vertex that is a common endpoint vertex of $e_{N,r_N-1},e_{N,r_N}$, again contradicting to the uniqueness of the above $i$. So it follows that $v$ is the terminating vertex. However, since $v$ is a middle vertex of $\mathcal{C}_N$, Lem.\ref{lem:property_of_C_N}.(3) tells us that $v$ cannot be the terminating vertex of $\mathcal{C}_N$. So the assumption $j\neq 1$ cannot be true, and therefore we have $j=1$, i.e. $e=e_{N,1}$ is the starting edge of $\mathcal{C}_N$ (we can also show that $v$ is the departing vertex of $\mathcal{C}_N$). By construction of orientations and weights on the constituent edges of $\mathcal{C}_N$, it follows that the orientation on $e_{N,i}$ is incoming (toward $v$), that on $e_{N,i+1}$ is outgoing, and the weights on $e_{N,1},e_{N,i},e_{N,i+1}$ are $2^r,2^m,2^{m+1}$ for some positive integers $r,m$ with $r<m$; see the left of Fig.\ref{fig:condition3}. So, whether $v$ is of type I or type II, the condition 3) of Prop.\ref{prop:sufficient_condition_for_data_on_R} holds.

\begin{figure}
\hspace{10mm} \includegraphics[width=100mm]{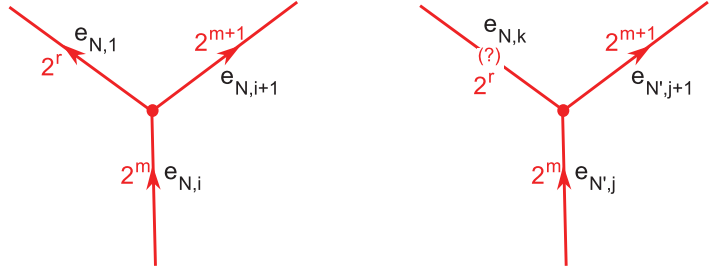}
\caption{examples for checking condition 3}
\label{fig:condition3}
\end{figure}

\vs

Suppose now $|S_N(v)|=1$, where $N$ was the largest number such that $|S_N(v)|>0$; so $|S_{N+1}(v)|=0$. By Lem.\ref{lem:behavior_on_3-valent_vertex}, $v$ is either a middle vertex of a chain $\mathcal{C}_{N'}$ for some $N'<N$, or the special type II vertex (Lem.\ref{lem:initial_chain_for_type_II}). In the former case, $v$ is the common endpoint vertex of some consecutive constituent edges $e_{N',j},e_{N',j+1}$ of $\mathcal{C}_{N'}$. Since $|S_{N+1}(v)| = |(S_N\setminus \mathcal{C}_N)(v)|=0$ and $|S_N(v)|=1$, it means that one edge of $\mathcal{G}$ attached to $v$ belongs to $\mathcal{C}_N$, so this edge is $e_{N,k}$ for some $k$ (one can prove that $k$ is either $1$ or $r_N$). Since $N'\neq N$, this edge must be distinct from $e_{N',j},e_{N',j+1}$, for $\mathcal{C}_{N'}$ and $\mathcal{C}_N$ are disjoint. By construction of orientations and weights on the constituent edges of $\mathcal{C}_N$ and $\mathcal{C}_{N'}$, it follows that the orientation on $e_{N',j}$ is incoming (toward $v$), that on $e_{N',j+1}$ is outgoing, and the weights on $e=e_{N,k},e_{N',j},e_{N',j+1}$ are $2^r,2^m,2^{m+1}$ for some positive integers $r,m$ with $r<m$; see the right of Fig.\ref{fig:condition3}. So, whether $v$ is of type I or type II, the condition 3) of Prop.\ref{prop:sufficient_condition_for_data_on_R} is satisfied. Now, for the latter case of Lem.\ref{lem:behavior_on_3-valent_vertex}, i.e. when $v$ is the special type II vertex, from Lem.\ref{lem:initial_chain_for_type_II} one observes that in the chain $\mathcal{C}_0$, the starting edge is outgoing from $v$ and the ending edge is incoming toward $v$. Hence the type II version of condition 3) of Prop.\ref{prop:sufficient_condition_for_data_on_R} is satisfied.

\vs

This finishes the proof of Prop.\ref{prop:existence_of_data_on_R}. \qed

\vs

Let us summarize what we have proved so far. In the present subsection we proved Prop.\ref{prop:existence_of_data_on_R}, which says that there exists a choice of orientations and weights on the edges of the regional graph $\mathcal{R}$ satisfying all conditions of Prop.\ref{prop:sufficient_condition_for_data_on_R}. The statement of Prop.\ref{prop:sufficient_condition_for_data_on_R} is that the orientations and difference numbers on inner arc segments transferred from such orientations and weights on $\mathcal{R}$ yield (via Lem.\ref{lem:arc-binary-ordering_to_arc-ordering}) dyadic arc-orderings on the ideal arcs of $T$ that are compatible and sane at every ideal triangle. Hence the arc-ordering problem is solved, i.e. Thm.\ref{thm:arc-ordering} is proved. Since Thm.\ref{thm:arc-ordering} implies Thm.\ref{thm:ordering_problem} as mentioned already in \S\ref{subsec:the_ordering_problems}, we proved the original ordering problem of loop segments, i.e. Thm.\ref{thm:ordering_problem}. In the next section, we shall finally prove the main Thm.\ref{thm:main}, using this Thm.\ref{thm:ordering_problem}.

\section{Applying the ordering problem to quantum Teichm\"uller theory}
\label{sec:applying}

\subsection{Chekhov-Fock algebra and its square-root version}

Here we briefly recall the constructions in Allegretti-Kim \cite{AK} and Bonahon-Wong \cite{BW}, and show how our topological result Thm.\ref{thm:ordering_problem} implies the algebraic positivity result Thm.\ref{thm:main}. For more details, please refer to those two original papers.

\vs

We shall also refine the notations and statements employed in the introduction section. We mostly try to follow notations in \cite{AK}, but modify when necessary. First, choose a decorated surface $S$ and an ideal triangulation $T$ of $S$ (see \S\ref{subsec:basic_definitions}). Let $t_1,t_2,\ldots,t_m$ be all the ideal triangles of $T$. For each $i=1,\ldots,m$, let $e_{i1}, e_{i2}, e_{i3}$ denote the sides of the triangle $t_i$, so that these sides occur in the clockwise order in $t_i$. The \ul{\em triangle square-root algebra $\mathcal{Z}^\omega_{t_i}$} is the algebra over $\mathbb{Z}[\omega,\omega^{-1}]$ generated by $\wh{Z}_{i1}, \wh{Z}_{i2}, \wh{Z}_{i3}$, and their inverses, with relations
\begin{align}
\label{eq:commutation_in_Z_in_triangle}
\wh{Z}_{i1} \wh{Z}_{i2} = \omega^2 \wh{Z}_{i2} \wh{Z}_{i1}, \quad \wh{Z}_{i2} \wh{Z}_{i3} = \omega^2 \wh{Z}_{i3} \wh{Z}_{i2}, \quad \wh{Z}_{i3} \wh{Z}_{i1} = \omega^2 \wh{Z}_{i1} \wh{Z}_{i3}.
\end{align}
Here $\omega$ can be thought of as a formal parameter symbol, or as a fixed nonzero complex number. Now consider the tensor product algebra $\bigotimes_{i=1}^m \mathcal{Z}^\omega_{t_i}$. Identify each element of $\mathcal{Z}^\omega_{t_i}$ as an element of this tensor product algebra, via the natural embedding map. For each non-self-folded ideal arc $e$ of $T$, let $t_i,t_j$ be the two triangles having $e$ as one of their sides, and let $e_{ia}$ and $e_{jb}$ be the sides of $t_i$ and $t_j$ corresponding to $e$; define $\wh{Z}_e := \wh{Z}_{ia} \wh{Z}_{jb}$ as an element of the tensor product. When $e$ is a self-folded side $e_{ia}=e_{i \, a+1}$ of the triangle $t_i$, define $\wh{Z}_e := \omega^{-1} \wh{Z}_{ia} \wh{Z}_{i \,a+1}$. When $e$ is a boundary arc of $T$, say the edge $e_{ia}$ of the triangle $t_i$, define $\wh{Z}_e := \wh{Z}_{ia}$ (this boundary arc case is missing in \cite{BW}).

\vs

Define the \ul{\em Chekhov-Fock square-root algebra $\mathcal{Z}^\omega_T$} as the subalgebra of $\bigotimes_{i=1}^m \mathcal{Z}^\omega_{t_i}$ generated by these elements $\wh{Z}_e$, for $e\in T$. The defining set of relations of $\mathcal{Z}^\omega_T$ for these generators is
\begin{align}
\label{eq:commutation_in_Z}
\wh{Z}_e \wh{Z}_f = \omega^{2\varepsilon_{ef}} \wh{Z}_f \wh{Z}_e, \qquad \forall e,f\in T,
\end{align}
where the constant $\varepsilon_{ef} \in \{-2,-1,0,1,2\}$ is defined as
\begin{align*}
\varepsilon_{ef} & = a_{ef} - a_{fe}, \\
\quad a_{ef} & = \mbox{the number of corners of triangles in $T$ delimited by $e$ in the left and $f$ in the right,}
\end{align*}
where the notion of `left' and `right' can be chosen consistently, using the orientation on the surface $S$. The actual \ul{\em Chekhov-Fock algebra} $\mathcal{X}^q_T$ is the $\mathbb{Z}[q,q^{-1}]$-subalgebra of $\mathcal{Z}^\omega_T$ generated by the squares of generators
\begin{align}
\label{eq:X_and_Z}
\mbox{$\wh{X}_e := \wh{Z}_e^2$, \,\, $e\in T$, \quad with \quad $q := \omega^4$.}
\end{align}
The defining relations of the generators of $\mathcal{X}^q_T$ are
$$
\wh{X}_e \wh{X}_f = q^{2\varepsilon_{ef}} \wh{X}_f \wh{X}_e, \qquad \forall e,f\in T.
$$
Soon we will see why needed to consider the square-roots of the variables too. 

\vs

Per each change of ideal triangulation $T \leadsto T'$, there is a quantum coordinate change map between the corresponding algebras $\mathcal{X}^q_T$ and $\mathcal{X}^q_{T'}$, which is a quantization of the classical coordinate change formula for the exponential shear coordinates associated to the change $T\leadsto T'$. To be more precise, for any $T,T'$, there is an isomorphism
\begin{align}
\label{eq:Phi_q}
\Phi^q_{TT'} : {\rm Frac}(\mathcal{X}^q_{T'}) \to {\rm Frac}(\mathcal{X}^q_T)
\end{align}
between the skew-fields of fractions of the Chekhov-Fock algebras \footnote{See \cite{BW} and references therein for skew-fields of fractions}, which satisfy the consistency relations
$$
\Phi^q_{TT'} \circ \Phi^q_{T'T''} = \Phi^q_{TT''},
$$
(see \cite{F} \cite{CF} \cite{Kash} \cite{FG09}) and therefore let us identify all ${\rm Frac}(\mathcal{X}^q_T)$ for different $T$'s in a consistent manner.  A `(rational) function' on the quantum Teichm\"uller space can be viewed as an element of ${\rm Frac}(\mathcal{X}^q_T)$, for any chosen $T$, and such element can be viewed as an element of ${\rm Frac}(\mathcal{X}^q_{T'})$ for any different $T'$, via the map $\Phi^q_{TT'}$. A function is considered to be {\em regular with respect to $T$} if it belongs to the subalgebra $\mathcal{X}^q_T$ of ${\rm Frac}(\mathcal{X}^q_T)$. A  function that is regular with respect to $T$ may not be regular with respect to a different $T'$,  because the subalgebra $\Phi^q_{T'T}(\mathcal{X}^q_T)$ of ${\rm Frac}(\mathcal{X}^q_{T'})$ may not lie inside $\mathcal{X}^q_{T'}$. A  function is called \ul{\em regular} if it is regular with respect to all triangulations $T$, i.e. can be expressed, for each ideal triangulation $T$, as a Laurent polynomial in the variables $\wh{X}_e$'s, $e\in T$, with coefficients in $\mathbb{Z}[q,q^{-1}]$. The ring of all regular functions, which we denote by $\mathcal{O}^q(\mathcal{X})$ and was written as $\mathcal{X}^q_{{\rm PGL}_2,S}$ in the introduction, is thus the intersection of the subalgebras $\mathcal{X}^q_T$ of ${\rm Frac}(\mathcal{X}^q_T)$ for all $T$, put inside one ambient skew-field via the maps $\Phi^q_{TT'}$. That is, for each $T$, we can realize $\mathcal{O}^q(\mathcal{X})$ as a subalgebra of ${\rm Frac}(\mathcal{X}^q_T)$, in a consistent manner. The main task undertaken in \cite{AK} is to construct a nice `$\mathbb{Z}$-basis' of this ring of quantum regular functions $\mathcal{O}^q(\mathcal{X})$. We put the quotation marks to `basis' because it is not yet proved to be actually a basis, even for punctured surfaces $S$.

\vs

The Allegretti-Kim `basis' of $\mathcal{O}^q(\mathcal{X})$ is enumerated by even integral laminations, which we describe now. 
\begin{definition}[\cite{F} \cite{FG06} \cite{AK}]
An \ul{\em integral lamination} \footnote{This word does not seem to have a universal definition but has many versions, hence must be carefully defined each time.} $\ell$ in $S$ is a homotopy class of a (possibly empty) collection of finitely many mutually-non-intersecting good loops (Def.\ref{def:good_loop}) in $S$ with the choice of an integer weight for each constituent loop, under the following condition and the equivalence relation:
\begin{enumerate}
\item[\rm 1)] The weight on a constituent loop can be negative only when the loop is peripheral (Def.\ref{def:good_loop});

\item[\rm 2)] A lamination having a constituent loop with zero weight is equivalent to the lamination with that loop removed;

\item[\rm 3)] A lamination having homotopic loops of weights $a$ and $b$ is equivalent to the lamination with one of these loops removed and the weight $a+b$ on the other.
\end{enumerate}
The collection of all integral laminations is denoted by $\mathring{\mathcal{A}}_{\rm L}(S,\mathbb{Z})$.
\end{definition}
In fact, more general laminations (constituting the usual $\mathcal{A}_{\rm L}(S,\mathbb{Z})$) should allow components that are not loops but curves ending at points on boundary edges, but we will now use them in our paper.
\begin{definition}[Fock coordinates of laminations; \cite{F} \cite{FG06}]
\label{def:Fock_coordinates}
Let $\ell \in \mathring{\mathcal{A}}_{\rm L}(S,\mathbb{Z})$ and $T$ be an ideal trianguation of $S$. Suppose each of the constituent loops of $\ell$ is in a minimal position (Def.\ref{def:minimal_position}) with respect to $T$. For each ideal arc $e$ of $T$, define the number $a_e (\ell) = a_{T,e}(\ell)$ to be $\frac{1}{2}$ times the total weight of $\ell$ on the arc $e$.
\end{definition}
So each number $a_e(\ell)$ is an half-integer, i.e. is in $\frac{1}{2}\mathbb{Z}$.
\begin{definition}[laminations with integer coordinates; \cite{FG06} \cite{AK}]
\label{def:integral_laminations}
An \ul{\em even integral lamination on $S$} \footnote{This is not a widely used term, and only used in the present paper.} is an element $\ell \in \mathring{\mathcal{A}}_{\rm L}(S,\mathbb{Z})$ whose Fock coordinate $a_{T,e}(\ell)$ is an integer for each ideal triangulation $T$ and each ideal arc $e$ of $T$. Denote by $\mathring{\mathcal{A}}_{{\rm SL}_2,S}(\mathbb{Z}^t)$ the set of all even integral laminations on $S$.
\end{definition}
\begin{lemma}[\cite{FG06}]
For $\ell \in \mathring{\mathcal{A}}_{\rm L}(S,\mathbb{Z})$, if $a_{T,e}(\ell) \in \mathbb{Z}$ holds for all ideal arcs $e$ of a triangulation $T$, then it is true also for any other ideal triangulation. 
\end{lemma}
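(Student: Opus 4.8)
The plan is to show that the integrality of all Fock coordinates is invariant under the elementary moves (flips) relating any two ideal triangulations. Since any two ideal triangulations of $S$ are connected by a finite sequence of flips at non-self-folded arcs (together with the bookkeeping of self-folded configurations), it suffices to prove: if $T'$ is obtained from $T$ by a single flip at an arc $k$, and $a_{T,e}(\ell) \in \mathbb{Z}$ for all $e \in T$, then $a_{T',e'}(\ell) \in \mathbb{Z}$ for all $e' \in T'$. All arcs of $T'$ other than the new diagonal $k'$ are also arcs of $T$, so the only thing to check is that the coordinate on the new arc $k'$ is an integer. So the heart of the argument is a single local computation inside the quadrilateral $Q$ whose diagonal is flipped from $k$ to $k'$.

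First I would set up the local picture. Let the quadrilateral $Q$ have sides $a,b,c,d$ (in cyclic order), old diagonal $k$ (separating $a,b$ from $c,d$) and new diagonal $k'$ (separating $b,c$ from $d,a$). Every constituent loop of $\ell$ is homotoped into minimal position with respect to $T$; one then homotopes it across $Q$ to minimal position with respect to $T'$, changing only how the strands cross $Q$. The strands of $\ell$ entering $Q$ come in finitely many homotopy types, classified by which pair of the four sides they connect (the types $ab$, $bc$, $cd$, $da$, $ac$, $bd$); in minimal position with respect to $T$ there are no strands of a type that would create a bigon with $k$, and similarly for $T'$. Writing $n_{xy}$ for the number of strands connecting sides $x$ and $y$, I would express the weighted intersection numbers $a_{T,k}(\ell)$, $a_{T',k'}(\ell)$, and the (common) side coordinates in terms of the $n_{xy}$ weighted by the loop weights, and read off the classical tropical mutation (piecewise-linear "half" of the Fock--Goncharov $\mathcal{A}$-mutation):
\begin{align*}
2\,a_{T',k'}(\ell) \;=\; -\,2\,a_{T,k}(\ell) \;+\; \max\!\big(\,2a_{T,a}(\ell)+2a_{T,c}(\ell),\ 2a_{T,b}(\ell)+2a_{T,d}(\ell)\,\big),
\end{align*}
i.e. $a_{T',k'}(\ell) = -a_{T,k}(\ell) + \max(a_{T,a}(\ell)+a_{T,c}(\ell),\ a_{T,b}(\ell)+a_{T,d}(\ell))$, where in the self-folded or degenerate cases some of $a,b,c,d$ coincide and the formula is interpreted accordingly. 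Since the right-hand side is a $\mathbb{Z}$-linear combination of values that are assumed integral, together with a $\max$ of two integers, it is an integer; hence $a_{T',k'}(\ell) \in \mathbb{Z}$.

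The step I expect to be the genuine obstacle is making the above local count fully rigorous across all the degenerate configurations: flips where the quadrilateral $Q$ is not embedded (some sides glued to each other), flips whose effect is to create or destroy a self-folded triangle, and keeping track of the factor $\tfrac12$ in Def.\ref{def:Fock_coordinates} so that the parity works out. My plan for this is to reduce each degenerate case to the generic one by passing to a branched/unbranched cover or by cutting $S$ open along the relevant arcs, and to handle the self-folded case separately using Lem.\ref{lem:basic_lemma_on_endpoints_of_loop_segments} (no loop segment lives in a self-folded corner), which pins down exactly which strand types $n_{xy}$ can be nonzero near a self-folded arc. Once the single-flip statement is established in all cases, the lemma follows by induction along a flip sequence connecting any two ideal triangulations, using that the integrality hypothesis at $T$ propagates to $T'$ and that all arcs surviving the flip keep their (integral) coordinates unchanged.
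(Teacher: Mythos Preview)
The paper does not prove this lemma at all; it simply records it as a result from \cite{FG06} and moves on. So there is no ``paper's own proof'' to compare against.

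Your approach is the standard one and is essentially correct: reduce to a single flip using connectivity of the flip graph, and observe that the tropical $\mathcal{A}$-mutation formula
\[
a_{T',k'}(\ell) = -a_{T,k}(\ell) + \max\big(a_{T,a}(\ell)+a_{T,c}(\ell),\ a_{T,b}(\ell)+a_{T,d}(\ell)\big)
\]
preserves integrality. This is exactly how Fock--Goncharov argue. One minor remark: you do not really need to pass to covers or cut the surface to handle degenerate quadrilaterals. The intersection-number formula above is a statement about how the (weighted) geometric intersection number with the new diagonal is determined by the intersection numbers with the old diagonal and the four sides, and it holds verbatim when some of $a,b,c,d$ are identified --- the combinatorics of strands in $Q$ is local to the lifted (abstract) quadrilateral regardless of how its boundary is glued in $S$. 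The self-folded case can likewise be absorbed into the same formula (with the self-folded arc counted once on each side), so the case analysis you anticipate is lighter than you suggest.
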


The Allegretti-Kim construction \cite{AK} can be thought of as an injective map
\begin{align}
\label{eq:AK_map}
\wh{\mathbb{I}}^q : \mathring{\mathcal{A}}_{{\rm SL}_2,S}(\mathbb{Z}^t) \hookrightarrow \mathcal{O}^q(\mathcal{X}),
\end{align}
satisfying certain desired properties, so that the image of $\wh{\mathbb{I}}^q$ is a nice `basis' of $\mathcal{O}^q(\mathcal{X})$; in \cite{AK} only punctured surfaces are considered, but it is straightforward to extend to decorated surfaces. In practice, this map $\wh{\mathbb{I}}^q$ is constructed as a map
\begin{align}
\label{eq:AK_map_for_T}
\wh{\mathbb{I}}^q_T : \mathring{\mathcal{A}}_{{\rm SL}_2,S}(\mathbb{Z}^t) \hookrightarrow \mathcal{X}^q_T \subset {\rm Frac}(\mathcal{X}^q_T)
\end{align}
for each triangulation $T$, satisfying $\Phi^q_{TT'} \circ \wh{\mathbb{I}}^q_{T'} = \wh{\mathbb{I}}^q_T$ for each pair of triangulations $T,T'$. 

\vs

Before going into the construction of this map, we need one more refinement; namely, we must consider square-roots of generators, in order to apply the results of \cite{BW} to construct such a map \eqref{eq:AK_map_for_T}. First, there is a square-root version of \eqref{eq:Phi_q}: there exists an algebra isomorphism \cite{Hiatt}
$$
\Theta^\omega_{TT'} : \wh{\mathcal{Z}}^\omega_{T'} \to \wh{\mathcal{Z}}^\omega_{T}
$$
where $\wh{\mathcal{Z}}^\omega_T$ is a subalgebra of ${\rm Frac}(\mathcal{Z}^\omega_T)$ satisfying a certain parity condition; in fact, what is constructed in \cite{Hiatt} is such a map for algebras only involving ideal arcs but not boundary arcs, but it is not hard to extend it to boundary arcs too, and in fact we only need to deal with ideal arcs in our case. Namely, $\wh{\mathcal{Z}}^\omega_T$ is the span of all elements of the form $PQ^{-1}$ with $Q\in \mathcal{T}^q_T$ and $P$ a {\em balanced} element of $\mathcal{Z}^\omega_T$, which is a linear combination of monomials such that for each triangle $t$ of $T$ the powers of generators for the sides of $t$ in each monomial add up to even integers. These isomorphisms restrict to the previous $\Phi^q_{TT'}$, and also enjoy the consistency relation $\Theta^\omega_{TT'} \circ \Theta^\omega_{T'T''} = \Theta^\omega_{TT''}$, letting us identify all $\wh{\mathcal{Z}}^\omega_T$ for different $T$'s. Now, let $\mathcal{O}^\omega(\mathcal{Z})$ be the subalgebra of $\wh{\mathcal{Z}}^\omega_T$ given by the intersection of all $\Theta^\omega_{TT'}(\mathcal{Z}^\omega_{T'})$, where $T'$ ranges over all possible ideal triangulations. So an element of $\mathcal{O}^\omega(\mathcal{Z})$ is a function on the quantum Teichm\"uller space that can be expressed, for each ideal triangulation $T$, as a Laurent polynomial in the variables $\wh{Z}_e$'s, $e\in T$, with coefficients in $\mathbb{Z}[\omega,\omega^{-1}]$. In particular, $\mathcal{O}^q(\mathcal{X}) \subset \mathcal{O}^\omega(\mathcal{Z})$ under the relationship \eqref{eq:X_and_Z}, and note that $\mathcal{O}^\omega(\mathcal{Z})$ was denoted by $\mathcal{Z}^\omega_{{\rm PGL}_2,S}$ in the introduction. Allegretti and Kim \cite{AK} construct a map
$$
\mathbb{I}^\omega : \mathring{\mathcal{A}}_{\rm L}(S,\mathbb{Z}) \to \mathcal{O}^\omega(\mathcal{Z}),
$$
whose restriction to $\mathring{\mathcal{A}}_{{\rm SL}_2,S}(\mathbb{Z}^t)$ is the desired map $\wh{\mathbb{I}}^q$. Again, in practice, what is constructed is a map
$$
\mathbb{I}^\omega_T : \mathring{\mathcal{A}}_{\rm L}(S, \mathbb{Z}) \hookrightarrow \mathcal{Z}^\omega_T \cap \wh{\mathcal{Z}}^\omega_T \subset \wh{\mathcal{Z}}^\omega_T
$$
 for each ideal triangulation $T$, satisfying $\Theta^\omega_{TT'} \circ \mathbb{I}^\omega_{T'} = \mathbb{I}^\omega_T$ for each pair of ideal triangulations $T,T'$. We will now be describing this map $\mathbb{I}^\omega_T$ constructed in \cite{AK}.

\subsection{Bonahon-Wong construction}
\label{subsec:BW}

A key of the construction of $\mathbb{I}^\omega_T(\ell)$ is for the case when $\ell$ consists of a single non-peripheral good loop with weight $1$. For this, Allegretti and Kim used Bonahon and Wong's map \cite{BW} from the `skein algebra of $S$' to $\mathcal{O}^\omega(\mathcal{Z})$.
\begin{definition}[from \cite{BW}]
\label{def:framed_link}
A \ul{\em link} in a $3$-manifold $M$ is an unoriented $1$-dimensional submanifold possibily with boundary and several connected components. A \ul{\em framing} on a link is a continuous choice of an element $v_p$ of the tangent space $T_pM$ to $M$ at each point $p$ on the link, such that $v_p$ is not in the tangent space to the link. A \ul{\em framed link} is a link together with a framing on it.

\vs

For a decorated surface $S$ and a ring $R$, the \ul{\em framed link algebra} $\mathcal{K}(S;R)$ over $R$ is the free $R$-module with a basis consisting of all isotopy classes of framed links $K$ in $S\times [0,1]$ satisfying the following conditions:
\begin{enumerate}
\item[\rm 1)] $\partial K = K \cap \partial(S\times [0,1])$ is a finite subset of $(\partial S) \times [0,1]$,

\item[\rm 2)] at each point $p$ of $K$, the framing $v_p \in T_p(S\times [0,1])$ at $p$ is \ul{\em upward vertical}, i.e. it is parallel to the $[0,1]$ factor and points toward $1 \in [0,1]$,

\item[\rm 3)] for every boundary arc $k$ of $S$, the points of $\partial K$ in $k \times [0,1]$ have different elevations, where the \ul{\em elevation} of a point of $S\times [0,1]$ is its $[0,1]$-coordinate,
\end{enumerate}
and the isotopy of framed links must respect all three conditions 1), 2), and 3).

\vs

The multiplication of $\mathcal{K}(S;R)$ is given by the `superposition operation'; for two basis elements $K, K'$, define $K K' := K''$, where $K''$ is the disjoint union of $K$ rescaled to live in $S\times [0,\frac{1}{2}]$ and $K'$ rescaled to live in $S\times [\frac{1}{2},1]$.
\end{definition}
From the requirement that isotopies should respect condition 3), we see that for each boundary arc $k$ of $S$, the ordering on the set $\partial K \cap (k\times [0,1])$ induced by their elevations is well-defined, i.e. preserved by isotopies.

\vs

When dealing with a framed link $K$ in $S\times [0,1]$ satisfying the above conditions 1), 2), and 3), Bonahon and Wong project $K$ down to a diagram on the surface $S$ in the following way. Choose an arbitrary orientation on each boundary arc $k$ of $S$. Through the projection $P: S\times [0,1] \to S$, the set $\partial K \cap (k \times [0,1])$ projects to $P(\partial K) \cap k$, and we may assume that, after an isotopy if necessary, this projection is injective on $\partial K \cap (k \times [0,1])$. The orientation on $k$ induces an ordering on the set $P(\partial K) \cap k$, hence on $\partial K \cap (k\times [0,1])$. But the set $\partial K \cap (k\times [0,1])$ also has an ordering induced by the elevation of its members; we may isotope $K$ such that these two orderings coincide. The projected diagram of $K$ is basically $P(K)$, with the following enhancement; we may assume by using isotopy that the map $K \to P(K)$ is at most $2$-to-$1$. Above a small neighborhood of a point of $P(K)$ with two inverse images in $K$, there are two little pieces of $K$. When we draw $P(K)$, for such `crossing' we indicate which segment has higher elevation, as in the left (i.e. $K_1$) of Fig.\ref{fig:Kauffman_triple}; the `broken' segment sits below the `unbroken' segment. This way we may identify a basis element of the framed link algebra $\mathcal{K}(S;R)$ by a projected diagram in $S$ with crossings. Meanwhile, there is a natural way of `resolving' the crossings, appearing in the theory of framed links.
\begin{definition}[from \cite{BW}]
A triple of basis elements $(K_1,K_0,K_\infty)$ of $\mathcal{K}(S;R)$ is a \ul{\em Kauffman triple} if they are identical except above a small disc in $S$, where their projected diagrams are as in Fig.\ref{fig:Kauffman_triple}. 

\vs

For a ring $R$ and $A$ a formal symbol or a complex number, the \ul{\em (Kauffman) skein algebra} $\mathcal{S}^A(S;R)$ is the quotient of the framed link algebra $\mathcal{K}(S;R[A,A^{-1}])$ by the two-sided ideal generated by $K_1 - A^{-1} K_0 - A K_\infty$, where $(K_1,K_0,K_\infty)$ runs over all Kauffman triples.

\vs

An element $[K] \in \mathcal{S}^A(S;R)$ represented by a single basis element $K$ of $\mathcal{K}(S;R[A,A^{-1}])$ is called a \ul{\em skein} in $S$. The relations
$$
[K_1] = A^{-1} [K_0] + A [K_\infty]
$$
are called the \ul{\em skein relations}.

\vs

Let $\mathcal{S}^A(S) := \mathcal{S}^A(S;\mathbb{C})$, and consider $\omega$ and $q$ as being elements of $\mathbb{C}^*$ with $q= \omega^4$.

\vs

A \ul{\em state} for a skein $[K] \in \mathcal{S}^A(S)$ is the choice of signs at each boundary point of $K$, i.e. a map $\partial K \to \{+,-\}$. A \ul{\em stated skein} is a skein together with a state. Let $\mathcal{S}^A_{\rm s}(S)$ be the algebra consisting of linear combinations of stated skeins.
\end{definition}

\begin{figure}
\hspace{0mm} \includegraphics[width=80mm]{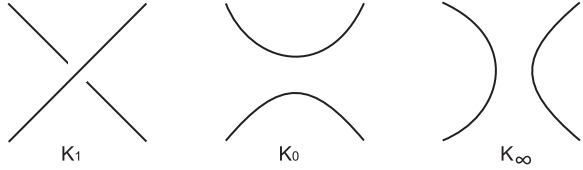}
\caption{Kauffman triple $(K_1,K_0,K_\infty)$ over a small disc}
\label{fig:Kauffman_triple}
\end{figure}

Multiplication in $\mathcal{S}^A_{\rm s}(S)$ follows the multiplication rule of $\mathcal{S}^A(S)$ in an obvious manner. The main result of \cite{BW} is the construction of an algebra homomorphism
$$
{\rm Tr}^\omega : S^{1/\omega^2}_{\rm s}(S) \to \mathcal{O}^\omega(\mathcal{Z}),
$$
called the \ul{\em quantum trace map}, which is a map from the stated skein algebra to the square-root version of the ring of regular functions on the quantum Teichm\"uller space. What is actually constructed is the map
$$
{\rm Tr}^\omega_T : S^{1/\omega^2}_{\rm s}(S) \to \mathcal{Z}^\omega_T  \cap \wh{\mathcal{Z}}^\omega_T
$$
for each ideal triangulation $T$ of $S$, which are mutually compatible in the sense that $\Theta^\omega_{TT'} \circ {\rm Tr}^\omega_T = {\rm Tr}^\omega_{T'}$ holds for each pair of ideal triangulation $T,T'$. The Bonahon-Wong quantum trace map ${\rm Tr}^\omega$ is in fact a unique map satisfying some natural properties including the gluing rule for gluing decorated surfaces along boundaries. These defining properties allow us to express the value of ${\rm Tr}^\omega$ in terms of the values of ${\rm Tr}^\omega$ for smaller surfaces obtained by cutting $S$ along ideal arcs of a triangulation of $S$. After cutting along all ideal arcs of a triangulation, each of the resulting smaller surfaces is a `triangle', i.e. a decorated surface of genus $0$ with one boundary component having three marked points on the boundary. In the end, ${\rm Tr}^\omega$ for each triangle is all we need to know.

\vs

Instead of writing down these defining properties of ${\rm Tr}^\omega$, let us just describe the state-sum formula for ${\rm Tr}^\omega$ which is a consequence of these properties. Let $T$ be an ideal triangulation of $S$, and let $[K,s] \in \mathcal{S}^{1/\omega^2}_{\rm s}(S)$ be a stated skein, represented by a framed link $K$ with a state $s : \partial K \to \{+,-\}$. Let $\wh{T}$ be the \ul{\em split ideal triangulation} of $S$ corresponding to $T$, which is obtained by replacing each constituent arc of $T$ by two parallel copies of it, forming a `biangle'. So the number of constituent arcs of $\wh{T}$ is twice that of $T$, and $\wh{T}$ divides $S$ into triangles and biangles. Let $B_1,\ldots,B_{|T|}$ be the biangles of $\wh{T}$ (where $|T|$ is the number of constituent arcs of $T$), and $t_1,\ldots,t_m$ be the ideal triangles of $\wh{T}$. For each constituent arc of $T$, choose an arbitrary orientation, inducing corresponding orientations on the constituent arcs of $\wh{T}$. 
\begin{lemma}[\cite{BW}]
\label{lem:good_position}
In the above situation, one can isotope $K$ such that $K$ is in a \ul{\em good position}, meaning that the following three conditions hold:
\begin{enumerate}
\item[\rm 1)] for each contituent arc $e$ of $\wh{T}$, the link $K$ meets $e\times [0,1]$ transversally,

\item[\rm 2)] for each constituent arc $e$ of $\wh{T}$, the ordering on the set $K \cap (e\times [0,1])$ induced by the elevation of its members coincides with that induced by the orientation on $e$ \footnote{This condition it not part of the definition of Bonahon-Wong's good position, but part of their diagram convention.},

\item[\rm 3)] for each ideal triangle $t_j$ of $\wh{T}$, the set $K \cap (t_j \times [0,1])$ has finitely many connected components, each connected component connects two distinct components of $(\partial t_j) \times [0,1]$ and has upward vertical framing, each connected component is at a constant elevation, and distinct connected components have distinct elevations.
\end{enumerate}
\end{lemma}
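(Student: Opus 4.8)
The plan is to put $K$ into good position in three stages: first achieve transversality to the split triangulation $\wh{T}$, then simplify the intersection of $K$ with each ideal triangle, and finally adjust elevations so that the orderings on the arcs agree with the ones induced by the chosen orientations. For the first stage, apply a generic $C^\infty$-small ambient isotopy of $S\times[0,1]$; this makes $K$ meet each $e\times[0,1]$ (for $e$ a constituent arc of $\wh{T}$) transversally and in finitely many points. The upward vertical framing condition is open, so it survives such a perturbation, and in any case can be restored by a further isotopy supported in a small neighborhood of $K$. This already yields condition 1) and the finiteness assertions contained in condition 3).

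For the second stage, fix an ideal triangle $t_j$ of $\wh{T}$. The set $K\cap(t_j\times[0,1])$ is a compact $1$-manifold properly embedded in $t_j\times[0,1]$, hence a disjoint union of circles and of arcs with endpoints on $(\partial t_j)\times[0,1]$. I would first eliminate every circle component and every arc whose two endpoints lie on the same side of $t_j$: choosing an innermost such component, it cobounds with a subarc of that side a disc meeting the rest of $K$ only along its boundary, and this disc can be used to push the component across the side into the adjacent biangle of $\wh{T}$, inside which the link may be isotoped freely. Iterating this innermost-first, after finitely many steps every component of $K\cap(t_j\times[0,1])$ connects two \emph{distinct} sides of $t_j$. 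Then, using the product structure of $t_j\times[0,1]$ and an ambient isotopy fixing a collar of the boundary, I would straighten each surviving arc so that it lies at a constant elevation with upward vertical framing, and — there being only finitely many — perturb these constant elevations to be pairwise distinct. This secures condition 3).

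For the third stage, it remains to arrange condition 2): on each constituent arc $e$ of $\wh{T}$, the elevation order on $K\cap(e\times[0,1])$ should agree with the order induced by the chosen orientation of $e$. These are two total orders on the same finite set, differing by a permutation, and that permutation is realized by an ambient isotopy supported in an arbitrarily thin collar of $e\times[0,1]$ which shuffles the elevations of the strands crossing $e$ while keeping them at constant elevation there. Performing one such isotopy per arc, with supports in pairwise disjoint collars chosen so as to avoid the constant-elevation arcs produced in the second stage except at their endpoints, disturbs neither condition 1) nor condition 3), and completes the construction.

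The step I expect to be the main obstacle is the second stage: one must check that the innermost-disc pushes into biangles really can be carried out simultaneously without recreating bad components, and that straightening the surviving arcs to distinct constant elevations is compatible across sides shared by adjacent triangles — the elevation bookkeeping along the sides being exactly what the third stage is designed to repair. (All of this is carried out in detail in \cite{BW}; the purpose of the sketch here is only to indicate why a good position exists.)
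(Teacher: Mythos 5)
The paper itself offers no proof of this lemma: it is stated as a citation to Bonahon--Wong \cite{BW}, and the argument lives entirely in that reference. So there is no ``paper's proof'' to compare against; what you have produced is a reconstruction. As such, your three-stage plan---transversality by generic perturbation, simplification inside each triangle, then re-shuffling elevations over each arc of $\wh{T}$ to match the chosen orientations---is the standard route and captures the right ideas. One point worth tightening in stage two: the innermost-disc step as you phrase it (``cobounds with a subarc of that side a disc'') only makes sense for \emph{arc} components with both endpoints on one side; a \emph{circle} component has no endpoints and hence cobounds nothing with a subarc of $\partial t_j$. Circles are handled separately: since $t_j\times[0,1]$ is a $3$-ball, any closed component can be pushed by an ambient isotopy across a side into an adjacent biangle (this works even if the circle is knotted, since the isotopy need not unknot it). Also, your disc argument is really being run on the projected diagram, and one should check that the $2$-dimensional innermost disc lifts to an embedded disc in $t_j\times[0,1]$ disjoint from the rest of $K$; when the strand being pushed is locally extremal in elevation this is automatic, and one can always arrange that by a preliminary vertical perturbation. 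You flag stage two yourself as the main obstacle and defer the bookkeeping to \cite{BW}, which is appropriate since that is exactly what the paper does. Finally, for the application at hand (the lift of a simple loop $\gamma$ at constant elevation in minimal position, so its diagram has no crossings and each loop segment already connects two distinct sides), the stage-two complications never arise, and only the elevation re-ordering of stage three does any real work.
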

Assume now that $K$ is in a good position. For an ideal triangle $t_j$, let $k_1,k_2,\ldots,k_l$ be the connected components of $K\cap (t_j \times [0,1])$, arranged in the increasing order of elevations. Regard $t_j$ as a decorated surface, with the boundary arcs $e_{j1},e_{j2},e_{j3}$, i.e. the sides of $t_j$, appearing in this order clockwise. Suppose some $k_i$ connects $e_{j1}$ and $e_{j2}$. Let $s_i: \partial k_i \to \{+,-\}$ be a state for $k_i$, given by the two signs $\sigma_1,\sigma_2$ at $k_i \cap e_{j1}, k_i\cap e_{j2}$. Then ${\rm Tr}^\omega_{t_j}([k_i,s_i])$ is defined as the following element of the triangle square-root algebra $\mathcal{Z}^\omega_{t_j}$, which is generated by $\wh{Z}_{j1},\wh{Z}_{j2},\wh{Z}_{j3}$:
\begin{align}
\label{eq:BW_formula_triangle_factor_one_segment}
{\rm Tr}^\omega_{t_j}([k_i,s_i]) := \left\{ \begin{array}{ll} 0 & \mbox{if $\sigma_1 = -$, $\sigma_2 = +$,} \\ \omega^{- \sigma_1 \sigma_2} \wh{Z}_{j1}^{\sigma_1} \wh{Z}_{j2}^{\sigma_2} & \mbox{otherwise}, \end{array} \right.
\end{align}
where each sign $+,-$ appearing in the exponent is understood as the number $-1,1$, respectively. If $k_i$ connects $e_{j2}$ and $e_{j3}$, then replace each subscript $1,2$ in the above right hand side by $2,3$. If $k_i$ connected $e_{j3}$ and $e_{j1}$, then replace each subscript $1,2$ in the above by $3,1$. For a state $\sigma_j$ for the skein $[K\cap (t_j\times [0,1])]$ in $t_j$, note that the equality
\begin{align}
\label{eq:BW_formula_triangle_factor_detail}
{\rm Tr}^\omega_{t_j}([K\cap (t_j\times [0,1]), \sigma_j]) = {\rm Tr}^\omega_{t_j}([k_1,s_1]) \, {\rm Tr}^\omega_{t_j}([k_2,s_2]) \, \cdots \, {\rm Tr}^\omega_{t_j}([k_l,s_l]),
\end{align}
where $s_1,\ldots,s_l$ constitutes $\sigma_j$, follows from the requirement that ${\rm Tr}^\omega_{t_j}$ be an algebra homomorphism. The tensor product
\begin{align}
\label{eq:BW_formula_triangle_factor}
\bigotimes_{j=1}^m {\rm Tr}^\omega_{t_j}([K\cap (t_j\times [0,1]), \sigma_j])
\end{align}
is an element of $\bigotimes_{j=1}^m \mathcal{Z}^\omega_{t_j}$. From the `gluing' property of ${\rm Tr}^\omega$, for each state $s$ of $K$ we get
\begin{align}
\label{eq:BW_formula}
{\rm Tr}^\omega_T([K,s]) = \sum_{\sigma_j,\tau_i} \left( \prod_{i=1}^{|T|} {\rm Tr}^\omega_{B_i}([K\cap(B_i \times [0,1]),\tau_i]) \, \bigotimes_{j=1}^m {\rm Tr}^\omega_{t_j}([K\cap (t_j\times [0,1]), \sigma_j]) \right).
\end{align}
Here, $\tau_i$ is a state of $[K\cap (B_i \times [0,1])]$ in the biangle $B_i$, $\sigma_j$ is a state of $[K\cap (t_j \times [0,1])]$ in the triangle $t_j$, and the whole sum is taken over all $\sigma_j,\tau_i$ that are mutually compatible at every constituent arc of $\wh{T}$ and compatible with $s$ at $\partial K$. It remains to describe ${\rm Tr}^\omega_{B_i}([K\cap (B_i \times [0,1]),\tau_i])$, which is an element of $\mathbb{Z}[\omega,\omega^{-1}]$. Viewing the biangle $B_i$ as a decorated surface, $[K\cap (B_i \times [0,1])]$ is a skein in $B_i$, hence can be written as linear combination of skeins without crossings, with the help of skein relations. Now, suppose $[K']$ is a skein in a biangle $B$ without crossings, and let $\tau$ be a state for $[K']$. Each connected component of $[K']$ together with its state induced by $\tau$ is of one of the forms illustrated in Fig.\ref{fig:segments_in_a_biangle}; let each of $a^{\sigma_1}_{\sigma_2}$, $b^{\sigma_1}_{\sigma_2}$, and $c^{\sigma_1}_{\sigma_2}$ be the number of components of the type on the left, the middle, and the right of Fig.\ref{fig:segments_in_a_biangle}, respectively. Let $d$ be the number of closed components of $[K']$, i.e. the contractible circular components. Then ${\rm Tr}^\omega_B([K',\tau])$ is defined as:
\begin{align}
\label{eq:BW_biangle_formula}
& {\rm Tr}^\omega_B([K',\tau]) \\
\nonumber
& = \left\{
\begin{array}{ll}
0 & \mbox{if one of $a^+_-,a^-_+,b^+_+,b^-_-,c^+_+,c^-_-$ is nonzero}, \\
(-1)^{b^+_- + c^-_+} \omega^{-(5b^+_-+b^-_+) + (5c^-_+ + c^+_-)} (-\omega^4 - \omega^{-4})^d, &\mbox{otherwise}.
\end{array}
\right.
\end{align}
The value of a stated skein in a biangle $B$ under ${\rm Tr}^\omega_B$ can also be described without resolving the skein using skein relations, but instead first express it as gluing of simpler skeins living in parallel biangles; for easy cases, see \cite{BW}. For later reference, note that it is only these biangle values ${\rm Tr}^\omega_{B_i}([K\cap (B_i \times [0,1]),\tau_i])$ where a minus sign can appear in the right hand side of the sum in eq.\eqref{eq:BW_formula}, and hence the key point of the proof of positivity is to deal with these. We note that Gabella's construction \cite{Gabella} of an analog of quantum trace also has the part corresponding to these biangle values which is called the `R-matrix' there, but the positivity proof in \cite[\S6.4]{Gabella} does not provide an argument to fully deal with the possible minus signs appearing in these biangle values.

\vs

This finishes a description of the result of the Bonahon-Wong quantum trace map ${\rm Tr}^\omega_T([K,s])$. A priori, the expression in eq.\eqref{eq:BW_formula_triangle_factor}, hence each term in the right hand side of eq.\eqref{eq:BW_formula}, is an element of $\bigotimes_{j=1}^m \mathcal{Z}^\omega_{t_j}$. The mutual-compatibility condition for $\sigma_j,\tau_i$ and the condition for ${\rm Tr}^\omega_{B_i}([K\cap(B_i \times [0,1]), \tau_i])$ to be zero guarantee that each nonzero term in the right hand side of \eqref{eq:BW_formula} belongs to the subalgebra $\mathcal{Z}^\omega_T$, the Chekhov-Fock square-root algebra for $T$.

\begin{figure}
\hspace{00mm} \includegraphics[width=100mm]{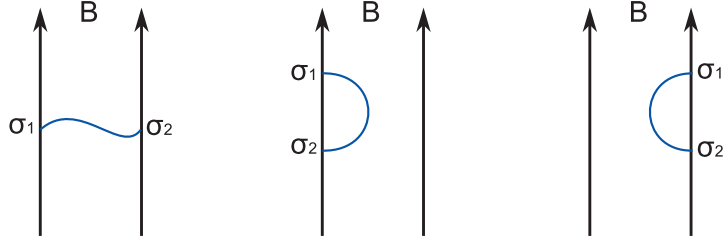}
\caption{segments in a biangle}
\label{fig:segments_in_a_biangle}
\end{figure}

\subsection{Allegretti-Kim construction}
\label{subsec:AK}

Now let's describe the Allegretti-Kim element $\mathbb{I}^\omega_T(\ell) \in \mathcal{Z}^\omega_T$, for an integral lamination $\ell \in \mathring{\mathcal{A}}_{\rm L}(S,\mathbb{Z})$. Represent $\ell$ by mutually non-homotopic constituent loops $\gamma_i$'s of $\ell$, with integer weight $k_i$ respectively. Let $k_i \ell_i$ be the integral lamination consisting of just one loop $\gamma_i$ with weight $k_i$. In particular, different $\gamma_i$'s are mutually non-intersecting. Write
\begin{align}
\label{eq:ell_as_sum}
\ell = \sum_i k_i \ell_i.
\end{align}
We define
\begin{align}
\label{eq:def_of_I_omega_as_product}
\mathbb{I}^\omega_T(\ell) := \prod_i \mathbb{I}^\omega_T(k_i \ell_i),
\end{align}
where the order of the product $\prod_i$ does not matter. We should now describe each factor $\mathbb{I}^\omega_T(k_i \ell_i)$. 

\vs

[Case 1] Suppose $\ell$ consists of just one non-peripheral good loop $\gamma$ with weight $1$. Lift $\gamma$ to a link in $S\times [0,1]$ at a constant elevation, and give it a constant upward vertical framing; we then obtain a framed link $\til{\gamma}$ in $S\times [0,1]$ satisfying all conditions 1), 2), and 3) of Def.\ref{def:framed_link}, thus yielding a skein $[\til{\gamma}]$ in $S$. Apply the Bonahon-Wong quantum trace ${\rm Tr}^\omega_T$ to this skein; no state is needed, as $\partial \til{\gamma}$ is empty. Define
\begin{align}
\label{eq:I_omega_Case_1}
\mathbb{I}^\omega_T(\ell) := {\rm Tr}^\omega_T([\til{\gamma},{\O}]) \in \mathcal{Z}^\omega_T.
\end{align}

\vs

[Case 2] Let $\ell$ be as above, and consider the integral lamination $k\ell$, consisting of one non-peripheral good loop $\gamma$ with weight $k\ge 1$. Define
\begin{align}
\label{eq:I_omega_Case_2}
\mathbb{I}^\omega_T(k\ell) := F_k(\mathbb{I}^\omega_T(\ell)),
\end{align}
where $F_k(x) \in \mathbb{Z}[x]$ is some version of the $k$-th Chebyshev polynomial, defined in our case by $F_0(x) =2$, $F_1(x)=x$ and the recursion relation
\begin{align}
\label{eq:F_k}
F_{k+1}(x) = F_k(x) \cdot x - F_{k-1}(x), \quad \forall k\ge 1.
\end{align}

\vs

[Case 3] Let $k\ell$ be an integral lamination consisting of just one peripheral good loop $\gamma$ with weight $k\in \mathbb{Z}$. As one travels once around along $\gamma$, let's say that one meets the ideal arcs $e_1,e_2,\ldots,e_r$ of $T$, in this order; in particular, $e_1,e_2,\ldots,e_r$ need not be mutually distinct. Define
\begin{align}
\label{eq:I_omega_Case_3}
\mathbb{I}^\omega_T(k\ell) := ( [\wh{Z}_{e_1} \wh{Z}_{e_2} \cdots \wh{Z}_{e_r} ] )^k,
\end{align}
where the bracket $[ \sim ]$ means the `Weyl ordering', i.e.
$$
\textstyle [\wh{Z}_{e_1} \wh{Z}_{e_2} \cdots \wh{Z}_{e_r}] := \omega^{(*)} \wh{Z}_{e_1} \wh{Z}_{e_2} \cdots \wh{Z}_{e_r}, \quad\mbox{where}\quad (*) = - \underset{1\le i<j\le r}{\sum} \varepsilon_{e_i e_j}.
$$

\vs

Note that each $k_i \ell_i$ appearing in the right hand side of eq.\eqref{eq:ell_as_sum} and \eqref{eq:def_of_I_omega_as_product} forms an integral lamination by itself, in an obvious manner; in particular, it consists of a single good loop with weight $k_i$. Hence $k_i \ell_i$ falls either to Case 2 or Case 3 above, for which we have defined the value $\mathbb{I}^\omega_T(k_i \ell_i)$. So, the formula \eqref{eq:def_of_I_omega_as_product} defines the value of $\mathbb{I}^\omega_T(\ell)$ for any integral lamination $\ell \in \mathring{\mathcal{A}}_{\rm L}(S,\mathbb{Z})$. A priori, $\mathbb{I}^\omega_T(\ell)$ is an element of the Chekhov-Fock square-root algebra $\mathcal{Z}^\omega_T$, i.e. a Laurent polynomial in the square-root generators $\wh{Z}_e$'s for $e\in T$ (in fact only the ideal arcs of $T$ appear) with coefficients in $\mathbb{Z}[\omega,\omega^{-1}]$, since each factor $\mathbb{I}^\omega_T(k_i\ell_i)$ is. It is proved in \cite{AK} that, in case $\ell$ belongs to $\mathring{\mathcal{A}}_{{\rm SL}_2,S}(\mathbb{Z}^t) \subset \mathring{\mathcal{A}}_{\rm L}(S,\mathbb{Z})$, i.e. is an even integral lamination, $\mathbb{I}^\omega_T(\ell)$ belongs to the Chekhov-Fock algebra $\mathcal{X}^q_T \subset \mathcal{Z}^\omega_T$, i.e. is a Laurent polynomial in the generators $\wh{X}_e = \wh{Z}_e^2$'s ($e\in T$) with coefficients in $\mathbb{Z}[q,q^{-1}]$, where $q = \omega^4$; we let
$$
\wh{\mathbb{I}}^q_T(\ell) := \mathbb{I}^\omega_T(\ell) \in \mathcal{X}^q_T, \qquad \forall \ell \in \mathring{\mathcal{A}}_{{\rm SL}_2,S}(\mathbb{Z}^t),
$$
which is really the sought-for duality map \eqref{eq:AK_map_for_T}, yielding \eqref{eq:AK_map}. The main result of the present paper is that, for $\ell \in \mathring{\mathcal{A}}_{\rm L}(S,\mathbb{Z})$, the coefficients of the Laurent polynomial expression of $\mathbb{I}^\omega_T(\ell)$ belong to $\mathbb{Z}_{\ge 0}[\omega,\omega^{-1}]$, i.e. the coefficients are Laurent polynomials in $\omega$ with {\em positive} integer coefficients. In particular, in case $\ell \in \mathring{\mathcal{A}}_{{\rm SL}_2,S}(\mathbb{Z}^t) \subset \mathring{\mathcal{A}}_{\rm L}(S,\mathbb{Z})$, the coefficients of the Laurent polynomial expression of $\wh{\mathbb{I}}^q_T(\ell) = \mathbb{I}^\omega_T(\ell)$ belongs to $\mathbb{Z}_{\ge 0}[q,q^{-1}]$, i.e. the coefficients are Laurent polynomials in $q$ with {\em positive} integer coefficients.

\subsection{Proof of the main result, using the ordering problem}

For convenience, let us introduce some new notations.
\begin{definition}[the positive semi-rings]
Denote by $\mathbb{Z}_{\ge 0}[\omega,\omega^{-1}]$ the semi-ring of all Laurent polynomials in $\omega$ with non-negative integer coefficients. Define $\mathbb{Z}_{\ge 0}[q,q^{-1}]$ similarly.

\vs

Let $(\mathcal{Z}^\omega_T)^+ \subset \mathcal{Z}^\omega_T$ be the semi-ring consisting of all elements of $\mathcal{Z}^\omega_T$ that can be written as Laurent polynomial in $\wh{Z}_e$'s ($e\in T$) with coefficients in $\mathbb{Z}_{\ge 0}[\omega,\omega^{-1}]$. 

\vs

Let $(\mathcal{X}^q_T)^+ \subset \mathcal{X}^q_T$ be the semi-ring of all elements of $\mathcal{X}^q_T$ that can be written as Laurent polynomial in $\wh{X}_e$'s ($e\in T$) with coefficients in $\mathbb{Z}_{\ge 0}[q,q^{-1}]$.
\end{definition}
A semi-ring means that it is closed under addition and multiplication, but not necessarily by subtraction. It is easy to verify that $\mathbb{Z}_{\ge 0}[\omega,\omega^{-1}]$, $\mathbb{Z}_{\ge 0}[q,q^{-1}]$, $(\mathcal{Z}^\omega_T)^+$, $(\mathcal{X}^q_T)^+$ are semi-rings containing $0$. In view of the inclusion $\mathcal{Z}^\omega_T \subset \mathcal{X}^q_T$, one observes
\begin{align}
\label{eq:Z_and_X_positive}
(\mathcal{X}^q_T)^+ = \mathcal{X}^q_T \cap (\mathcal{Z}^\omega_T)^+.
\end{align}

\vs

{\bf Equivalent form of the main theorem (Thm.\ref{thm:main}).} {\em For any even integral lamination $\ell \in \mathring{\mathcal{A}}_{{\rm SL}_2,S}(\mathbb{Z}^t)$, we have $\wh{\mathbb{I}}^q_T(\ell) \in (\mathcal{X}^q_T)^+$.}

\vs

We shall prove the following, which is stronger than the main theorem.
\begin{theorem}[square-root version of main theorem]
\label{thm:main2}
For each integral lamination $\ell \in \mathring{\mathcal{A}}_{\rm L}(S,\mathbb{Z})$, we have $\mathbb{I}^\omega_T(\ell) \in (\mathcal{Z}^\omega_T)^+$.
\end{theorem}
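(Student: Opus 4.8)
The plan is to reduce Theorem~\ref{thm:main2} to the three cases in the definition of $\mathbb{I}^\omega_T(\ell)$, using that $(\mathcal{Z}^\omega_T)^+$ is a semi-ring (closed under addition and multiplication). Since $\mathbb{I}^\omega_T(\ell) = \prod_i \mathbb{I}^\omega_T(k_i\ell_i)$ by \eqref{eq:def_of_I_omega_as_product} and each $k_i\ell_i$ is a single good loop with a weight, it suffices to show each factor lies in $(\mathcal{Z}^\omega_T)^+$. Case~3 (peripheral loop) is immediate: by \eqref{eq:I_omega_Case_3} the element is a $k$-th power of a Weyl-ordered monomial $[\wh{Z}_{e_1}\cdots\wh{Z}_{e_r}]$, which is a single monomial with coefficient a power of $\omega$ --- manifestly in $(\mathcal{Z}^\omega_T)^+$, and its powers (for $k\ge 0$; for $k<0$ one inverts, still a monomial) stay in $(\mathcal{Z}^\omega_T)^+$. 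So the real content is Case~1 and then Case~2.

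For Case~1, the heart is to establish Theorem~\ref{thm:Laurent_positivity_of_BW}, i.e. that ${\rm Tr}^\omega_T([\til\gamma,\O]) \in (\mathcal{Z}^\omega_T)^+$ for a non-peripheral good loop $\gamma$. Here I would put $\gamma$ in minimal position with respect to $T$, lift it at constant elevation with upward vertical framing, and analyze the state-sum formula \eqref{eq:BW_formula}. The triangle factors \eqref{eq:BW_formula_triangle_factor_one_segment}--\eqref{eq:BW_formula_triangle_factor} always contribute monomials with coefficients that are powers of $\omega$ (never negative), so the only source of minus signs is the biangle factors ${\rm Tr}^\omega_{B_i}([K\cap(B_i\times[0,1]),\tau_i])$ via the $(-1)^{b^+_-+c^-_+}$ in \eqref{eq:BW_biangle_formula}. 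The key observation --- flagged in the introduction --- is that whether a biangle factor is a crossing-free ``$b$''-type (no sign contribution, since for $\til\gamma$ there are no crossings in the projected diagram) or a ``$c$''-type (sign contribution) depends precisely on whether the two loop segments flanking the corresponding juncture arrive at the same elevation, and more precisely on the \emph{ordering} of loop segments in the two adjacent triangles. If for every ideal arc the arc-orderings induced from the two adjacent triangles agree --- exactly the compatibility condition --- then the biangle between those two triangles carries only ``$b$''-type components (up to isotopy respecting elevations), so no $(-1)$ appears, and every surviving term in \eqref{eq:BW_formula} has coefficient in $\mathbb{Z}_{\ge 0}[\omega,\omega^{-1}]$. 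By Theorem~\ref{thm:ordering_problem} (equivalently Theorem~\ref{thm:arc-ordering}, already proved in the excerpt), such compatible triangle-orderings exist; we may realize $\til\gamma$ in a good position with elevations inducing these orderings. Hence ${\rm Tr}^\omega_T([\til\gamma,\O])\in(\mathcal{Z}^\omega_T)^+$, giving Case~1.

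For Case~2 I would invoke the standard argument (as noted, already in \cite{AK1}): write $u := \mathbb{I}^\omega_T(\ell) = {\rm Tr}^\omega_T([\til\gamma,\O])$, which is central enough in the relevant sense that the Chebyshev recursion \eqref{eq:F_k} behaves well, and show $F_k(u)\in(\mathcal{Z}^\omega_T)^+$ by induction on $k$. The issue is that $F_k$ has some negative coefficients, so one cannot just plug in. Instead one uses the product formula for the skein/trace of a $k$-fold parallel copy: geometrically, $[\widetilde{\gamma^{(k)}}]$ (the skein consisting of $k$ disjoint parallel copies of $\gamma$ at distinct elevations) satisfies $F_k(u) = {\rm Tr}^\omega_T([\widetilde{\gamma^{(k)}}, \O])$ --- this is the skein-theoretic avatar of ${\rm Tr}(M^k)=F_k({\rm Tr}(M))$ for $M\in{\rm SL}_2$ --- because $k$ disjoint parallel non-crossing loops resolve to exactly the Chebyshev combination. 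Then the same compatible-ordering argument as in Case~1 applies verbatim to the link $\widetilde{\gamma^{(k)}}$: its projected diagram still has no crossings, each triangle contains loop segments we can order compatibly (the $k$ parallel copies refine the orderings for $\gamma$), so ${\rm Tr}^\omega_T([\widetilde{\gamma^{(k)}},\O])\in(\mathcal{Z}^\omega_T)^+$. Combining the three cases with closure of $(\mathcal{Z}^\omega_T)^+$ under multiplication proves Theorem~\ref{thm:main2}, and then \eqref{eq:Z_and_X_positive} gives the main theorem for even integral laminations.

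The main obstacle is the Case~1 claim that compatibility of arc-orderings actually forces the biangle contributions to be sign-free. This requires carefully unwinding the biangle formula \eqref{eq:BW_biangle_formula}: one must check that, given a crossing-free link segment pattern in a biangle $B_i$ whose two ends match up in elevation-order on both bounding arcs, the resolution of $[K\cap(B_i\times[0,1])]$ into crossing-free stated skeins produces only terms of ``$b$''-type (contributing $\omega$-powers, no $(-1)$), and in particular that no ``$c$''-type turn-backs or contractible circles with odd sign appear. This is where the precise definition of ``compatible at an ideal arc'' in terms of the induced orderings on junctures does its work, and verifying it cleanly --- ideally by an explicit isotopy making the link in each biangle a union of straight ``$b$''-strands --- is the delicate technical point. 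I would also need to double-check the boundary-arc and self-folded-arc corner cases, and confirm that the good position of Lemma~\ref{lem:good_position} can be chosen to realize the orderings coming from Theorem~\ref{thm:ordering_problem} simultaneously with conditions 1)--3) there.
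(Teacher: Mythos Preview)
Your overall reduction and your Case~1 argument are essentially the same as the paper's: use Theorem~\ref{thm:ordering_problem} to choose compatible triangle-orderings, realize $\til\gamma$ in a good position with those elevations, and observe that each biangle then contains only straight-across strands so that every biangle factor in \eqref{eq:BW_formula} is $0$ or $1$. (A minor point: in the paper's labeling the sign-free straight-across strands are the ``$a$''-type of Fig.~\ref{fig:segments_in_a_biangle}, not the ``$b$''-type; both $b^+_-$ and $c^-_+$ contribute signs in \eqref{eq:BW_biangle_formula}. Your idea is right, only the labels are off.)

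Your Case~2 argument, however, contains a genuine error. The skein consisting of $k$ disjoint parallel copies of $\gamma$ at distinct elevations is, by the very definition of the superposition product in $\mathcal{S}^A(S)$, equal to $[\til\gamma]^k$; applying ${\rm Tr}^\omega_T$ gives $u^k$, not $F_k(u)$. The identity ${\rm Tr}(M^k)=F_k({\rm Tr}(M))$ corresponds at the skein level to the \emph{$k$-fold wrapped} curve $\gamma^k$ (which is not simple and has self-crossings), not to $k$ parallel copies; and $F_k([\til\gamma])$ is a genuine $\mathbb{Z}$-linear combination of crossing-free skeins with some negative coefficients, not a single crossing-free link to which the Case~1 argument could be reapplied. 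So the claimed equality $F_k(u)={\rm Tr}^\omega_T([\widetilde{\gamma^{(k)}},\O])$ is false for $k\ge 2$ (already $F_2(u)=u^2-2\neq u^2$), and the proposed shortcut collapses.

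The paper handles Case~2 differently: it invokes Prop.~\ref{prop:positivity_of_non-peripheral_k}, an inductive argument from \cite{AK1} that exploits the specific structure of the Laurent expansion of $u={\rm Tr}^\omega_T([\til\gamma,\O])$ (a ``highest-term plus lower terms'' shape coming from the state sum) together with the recursion \eqref{eq:F_k} to show that the negative contributions in $F_k$ are always cancelled. That argument is a few pages of bookkeeping and is not bypassed by any single-skein positivity statement. You would need to either reproduce that induction or find a genuinely different mechanism; the parallel-copies idea does not provide one.
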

Indeed, Thm.\ref{thm:main2} implies Thm.\ref{thm:main}; for $\ell \in \mathring{\mathcal{A}}_{{\rm SL}_2,S}(\mathbb{Z}^t) \subset \mathring{\mathcal{A}}_{\rm S}(S,\mathbb{Z})$ we have $\wh{\mathbb{I}}^q_T(\ell) = \mathbb{I}^\omega_T(\ell) \in \mathcal{X}^q_T \subset \mathcal{Z}^\omega_T$, so Thm.4.9 yields $\wh{\mathbb{I}}^q_T(\ell) \in \mathcal{X}^q_T \cap (\mathcal{Z}^\omega_T)^+ = (\mathcal{X}^q_T)^+$ (see eq.\eqref{eq:Z_and_X_positive}). So it suffices to prove Thm.\ref{thm:main2}.

\vs

Using the ordering problem we solved (Thm.\ref{thm:ordering_problem}), we can first show the following, which is the most crucial step in our proof of Thm.\ref{thm:main2}:

\vs

{\bf Equivalent form of Thm.\ref{thm:Laurent_positivity_of_BW}} (Laurent positivity for a single non-peripheral loop with weight $1$)
{\em 
For an integral lamination $\ell \in \mathring{\mathcal{A}}_{\rm L}(S,\mathbb{Z})$ consisting of a single non-peripheral good loop $\gamma$ with weight $1$, the Laurent positivity holds, i.e. $\mathbb{I}^\omega_T(\ell)$ belongs to $(\mathcal{Z}^\omega_T)^+$.}

\vs

\ul{\it Proof.} This falls into the Case 1 of the previous subsection, hence $\mathbb{I}^\omega_T(\ell)$ is defined by eq.\eqref{eq:I_omega_Case_1}, i.e. by the Bonahon-Wong quantum trace ${\rm Tr}^\omega_T$ applied to the skein $[K]$ obtained by lifting $\gamma$ to a link $K = \til{\gamma}$ in $S\times [0,1]$ with constant elevation, with constant upward vertical framing. Since this $K$ has no boundary, we need no state $s$ for it, so let $s={\O}$. Then $\mathbb{I}^\omega_T(\ell) = {\rm Tr}^\omega_T([K,{\O}])$, by eq.\eqref{eq:I_omega_Case_1}. The quantum trace ${\rm Tr}^\omega_T$ is given by the sum formula \eqref{eq:BW_formula}, so it suffices to show that each term in the right hand side of \eqref{eq:BW_formula} belongs to $(\mathcal{Z}^\omega_T)^+$, for our $K$ and $s={\O}$. However, according to the recipe of Bonahon-Wong, we must first isotope $K$ into a good position, in the sense as in Lem.\ref{lem:good_position}. There are many ways of doing so, but we choose a specific one as follows. By Thm.\ref{thm:ordering_problem}, one can choose an ordering on the set of all loop segments of the loop $\gamma$ in each ideal triangle, so that these orderings are `compatible' at every ideal arc. That is, at each ideal arc that the loop $\gamma$ intersects, the two orderings on the set of junctures (i.e. intersection points) on this arc induced from the triangle-orderings on the two triangles having this arc as a side coincide with each other. 

\vs

Choose any orientation on each constituent arc of $T$. Let $\wh{T}$ be the split ideal triangulation associated to the triangulation $T$, and let the constituent arcs of $\wh{T}$ inherit the orientations from those on the constituent arcs of $T$. Let $t$ be an ideal triangle of $T$, and denote also by $t$ the corresponding triangle in $\wh{T}$. At the moment, the segments of $K$ over the triangle $t$ of $\wh{T}$ are all at some same constant elevation. Isotope $K$ so that each segment over $t$ is at some different constant elevation, such that the ordering on the set of all segments of $K$ over $t$ induced by their elevations coincide with the chosen triangle-ordering on $t$, i.e. with the ordering on the loop segments of $\gamma$ in $t$ obtained from Thm.\ref{thm:ordering_problem}; keep in mind that the loop segments of $\gamma$ in this triangle naturally correspond to the segments of $K$ over $t$. When isotoping  $K$ this way, we allow the segments of $K$ over the biangles adjacent to $t$ to deform accordingly; in particular, the segments over biangles need not be at constant elevations. When isotoping, we keep the upward vertical framing all the time. Apply such isotopy for each ideal triangle $t$ of $\wh{T}$. Finally, for each ideal arc $e$ of $\wh{T}$, `drag around' the points $K\cap (e\times [0,1])$ by an isotopy on $K$, keeping the elevations of these points, keeping the constant elevation of the segments of $K$ over the triangle having $e$ as a side, and such that the ordering on $K\cap (e\times [0,1])$ induced by the elevation coincides with that induced by the chosen orientation on $e$. Then $K$ satisfies all conditions of Lem.\ref{lem:good_position}, i.e. is in a good position. From the compatibility at each ideal arc of $T$ of the triangle-orderings for ideal triangles of $T$ we obtained from Thm.\ref{thm:ordering_problem}, we observe that the diagram of $K$ over each biangle consists of disjoint parallel lines, each line being of the form in the left of Fig.\ref{fig:segments_in_a_biangle}.

\vs

For each term in the right hand side of \eqref{eq:BW_formula}, consider the tensor product part, i.e. eq.\eqref{eq:BW_formula_triangle_factor}; each tensor factor ${\rm Tr}^\omega_{t_j}([K\cap (t_j \times [0,1]), \sigma])$ is as in eq.\eqref{eq:BW_formula_triangle_factor_detail}, i.e. is a product of factors of the form ${\rm Tr}^\omega_{t_j}([k_i,s_i])$. By the formula eq.\eqref{eq:BW_formula_triangle_factor_one_segment}, we observe that ${\rm Tr}^\omega_{t_j}([k_i,s_i])$ is either $0$ or a Laurent monomial in $\wh{Z}_{j1}, \wh{Z}_{j2},\wh{Z}_{j3}$, i.e. generators of the triangle square-root algebra $\mathcal{Z}^\omega_{t_j}$, with coefficient being some power of $\omega$. So, ${\rm Tr}^\omega_{t_j}([K\cap (t_j \times [0,1]), \sigma])$ is also either $0$ or a Laurent monomial in the generators of the triangle square-root algebra $\mathcal{Z}^\omega_{t_j}$ with coefficient being a power of $\omega$. In turn, $\otimes_{j=1}^m {\rm Tr}^\omega_{t_j}([K\cap (t_j \times [0,1]), \sigma])$ is either $0$ or a Laurent monomial in the generators of $\otimes_{j=1}^m \mathcal{Z}^\omega_{t_j}$, i.e. in $\wh{Z}_{j1}, \wh{Z}_{j2},\wh{Z}_{j3}$ with $j=1,2,\ldots,m$, with coefficients being a power of $\omega$.

\vs

Now let's look at the `coefficient' part $\prod_{i=1}^{|T|} {\rm Tr}^\omega_{B_i}([K\cap (B_i\times[0,1]),\tau_i])$ of a summand term in the right hand side of \eqref{eq:BW_formula}. Each ${\rm Tr}^\omega_{B_i}([K\cap(B_i\times[0,1]),\tau_i])$ is given by the formula  \eqref{eq:BW_biangle_formula}. As mentioned, in our case, the diagram of $K$ over $B_i$ consists of disjoint parallel lines, i.e. disjoint union of figures in the left of Fig.\ref{fig:segments_in_a_biangle}. So, by definition (see \S\ref{subsec:BW}, right above eq.\eqref{eq:BW_biangle_formula}), all numbers $b^{\sigma_1}_{\sigma_2}$, $c^{\sigma_1}_{\sigma_2}$, $d$ for ${\rm Tr}^\omega_{B_i}([K\cap(B_i\times[0,1]),\tau_i])$ are zero. Hence, in view of the formula eq.\eqref{eq:BW_biangle_formula}, ${\rm Tr}^\omega_{B_i}([K\cap(B_i\times[0,1]),\tau_i])$ is either $0$ or $1$. Hence each nonzero summand term of the right hand side of \eqref{eq:BW_formula} is a Laurent monomial in the generators of $\otimes_{j=1}^m \mathcal{Z}^\omega_{t_j}$, i.e. in $\wh{Z}_{j1}, \wh{Z}_{j2},\wh{Z}_{j3}$ with $j=1,2,\ldots,m$, with coefficients being a power of $\omega$. As mentioned at the end of \S\ref{subsec:BW}, it is a Laurent monomial in the generators of the Chekhov-Fock square-root algebra $\mathcal{Z}^\omega_T$, i.e. in $\wh{Z}_e$'s ($e\in T$), now with coefficient being a power of $\omega$. Thus, the whole sum in the right hand side of \eqref{eq:BW_formula} is a Laurent polynomial in $\wh{Z}_e$'s ($e\in T$) with coefficients in $\mathbb{Z}_{\ge 0}[\omega,\omega^{-1}]$. So we proved $\mathbb{I}^\omega_T(\ell) = {\rm Tr}^\omega_T([K,{\O}]) \in (\mathcal{Z}^\omega_T)^+$, as desired. \qed

\vs

What about an integral lamination $k\ell \in \mathring{\mathcal{A}}_{\rm L}(S,\mathbb{Z})$ consisting of a non-peripheral loop $\gamma$ with weight $k\ge 1$? This falls into the Case 2 of the previous subsection, so by eq.\eqref{eq:I_omega_Case_2} we have $\mathbb{I}^\omega_T(k\ell) = F_k(\mathbb{I}^\omega_T(\ell))$, where $F_k(x)\in \mathbb{Z}[x]$ is the $k$-th Chebyshev polynomial. Since $F_1(x)=x$, i.e. $F_1$ is the identity function, the case $k=1$ follows from Thm.\ref{thm:Laurent_positivity_of_BW}, as $\mathbb{I}^\omega_T(1\ell) = F_1(\mathbb{I}^\omega_T(\ell))=\mathbb{I}^\omega_T(\ell) \in (\mathcal{Z}^\omega_T)^+$. However, for $k\ge 2$, it is not obvious. For example, $F_2(x) = x^2 - 2$, $F_3(x) = x^3 - 3x$, etc, so not all coefficients of $F_k$ are positive. So the Laurent positivity of $\mathbb{I}^\omega_T(\ell)$ does not immediately imply that of $F_k(\mathbb{I}^\omega_T(\ell))$, and needs a proof.

\begin{proposition}[Laurent positivity for a single non-peripheral loop with weight $k$; \cite{AK1}]
\label{prop:positivity_of_non-peripheral_k}
Suppose $\ell \in \mathring{\mathcal{A}}_{\rm L}(S,\mathbb{Z})$ consists of a single non-peripheral good loop $\gamma$ with weight $1$. Denote by $k\ell$ the integer-weight lamination consisting of one loop $\gamma$ with weight $k\ge 1$.

\vs

Then $\mathbb{I}^\omega_T(\ell) \in (\mathcal{Z}^\omega_T)^+$ implies $\mathbb{I}^\omega_T(k\ell) \in (\mathcal{Z}^\omega_T)^+$.
\end{proposition}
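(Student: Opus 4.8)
$\textbf{Proof proposal for Prop.\ref{prop:positivity_of_non-peripheral_k}.}$

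The plan is to exploit the key structural fact, established in the proof of Thm.\ref{thm:Laurent_positivity_of_BW} just above, that $\mathbb{I}^\omega_T(\ell) = {\rm Tr}^\omega_T([\til\gamma,{\O}])$ is not merely Laurent-positive but is in fact a sum of Laurent $\emph{monomials}$ in $\wh Z_e$ ($e\in T$), each with coefficient a single power of $\omega$ (no cancellation, no multi-term coefficients). In particular $x := \mathbb{I}^\omega_T(\ell)$ is a sum of terms each of which is a unit in the skew-field ${\rm Frac}(\mathcal{Z}^\omega_T)$, and all these monomials commute with each other up to powers of $\omega$. This should let us show, by an elementary Chebyshev identity, that $F_k(x) \in (\mathcal{Z}^\omega_T)^+$ directly, without re-running the Bonahon--Wong machinery. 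I recall that this argument is already in \cite{AK1}, so the job here is to record it cleanly.

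First I would set up notation: write $x = \sum_{a=1}^{r} c_a M_a$ where each $M_a$ is a Laurent monomial in $\wh Z_e$'s and $c_a$ is a power of $\omega$; by the proof of Thm.\ref{thm:Laurent_positivity_of_BW} we may assume $r\ge 1$ (if the loop meets no arc the statement is trivial since then $\gamma$ would be contractible, contradicting $\gamma$ being a good loop). The crucial observation is a \emph{self-reproducing} property: for a loop of weight $1$, the skein $[\til\gamma]$ has a distinguished `highest' and `lowest' monomial (coming from the all-$+$ and all-$-$ juncture-states), and one checks that these two monomials are \emph{inverse} to each other up to a power of $\omega$ — indeed the all-$+$ state contributes $\prod_e \wh Z_e^{a_e}$ and the all-$-$ state $\prod_e \wh Z_e^{-a_e}$, Weyl-ordered. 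Then $x$ can be written, after conjugating/normalizing by a monomial, as $y + y^{-1} + (\text{lower-order positive terms})$ for a suitable monomial $y$, in the spirit of how trace of a hyperbolic element behaves. From here $F_k(x)$ is computed by the standard identity $F_k(y+y^{-1}) = y^k + y^{-k}$ together with the fact that $F_k$ applied to a sum $u + v$ with $u,v$ `positive' and $uv$ a positive monomial expands, via the recursion $F_{k+1}=xF_k - F_{k-1}$ and induction, into a positive Laurent polynomial — the subtraction in the recursion being absorbed because the `$-F_{k-1}$' term is exactly cancelled by a sub-sum of $xF_k$ coming from the cross terms, leaving only positive terms. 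Concretely: I would prove by induction on $k$ that $F_k(x) = \sum (\text{power of }\omega)\cdot(\text{monomial})$ with all coefficients positive, using that $x F_k(x) = F_{k+1}(x) + F_{k-1}(x)$ and that $x$ itself being a positive sum of monomials means $x F_k(x)$ is a positive sum whose `diagonal contribution' reproduces $F_{k-1}(x)$ with the correct coefficients.

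The main obstacle, and the step requiring genuine care rather than bookkeeping, is making precise the claim that the subtraction $-F_{k-1}(x)$ in the Chebyshev recursion is harmless — i.e.\ that $x F_k(x) - F_{k-1}(x) \in (\mathcal{Z}^\omega_T)^+$ even though $x F_k(x)$ and $F_{k-1}(x)$ are both large positive sums. The clean way to see this is to reduce to the commutative one-variable situation: all the monomials $M_a$ pairwise $q$-commute, so there is an algebra homomorphism from the subalgebra they generate to a Laurent polynomial ring in one variable $t$ (or to $\mathbb{Z}[\omega^{\pm 1}][t^{\pm 1}]$) sending $x\mapsto \omega^{s}(t + t^{-1})$ up to the positive lower-order terms; but more robustly, one uses that $F_k(u+u^{-1}) = u^k + u^{-k}$ holds as a polynomial identity, and that $\mathbb{I}^\omega_T(\ell)$, being ${\rm Tr}^\omega$ of a $\emph{simple}$ loop, satisfies $\mathbb{I}^\omega_T(\ell) = \mathbb{I}^\omega_T(\ell)$'s `leading part' $+$ positives, with the leading part being exactly of the form $u + u^{-1}$ for $u$ a monomial. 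Then $F_k(\mathbb{I}^\omega_T(\ell)) = u^k + u^{-k} + (\text{positive combination of products of the original positive monomials})$, and every such product lies in $(\mathcal{Z}^\omega_T)^+$ since that set is a semi-ring. I expect that verifying the `leading part is $u+u^{-1}$' claim — equivalently, that the highest and lowest monomials of ${\rm Tr}^\omega_T([\til\gamma])$ are mutually inverse and appear with coefficient a power of $\omega$ — is where one must actually invoke the explicit state-sum formula \eqref{eq:BW_formula} and the good position chosen via Thm.\ref{thm:ordering_problem}, but this is a short and self-contained check given everything already proved above.
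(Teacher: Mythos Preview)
Your strategy is the right one and matches what the paper invokes: the paper does not give its own proof here but cites \cite{AK1}, saying only that the argument is ``by induction on $k$, using basic properties of $F_k$ and $\mathbb{I}^\omega_T(\ell)$''. Your outline --- write $x=\mathbb{I}^\omega_T(\ell)$ as $u+u^{-1}+p$ with $u$ the highest (all-$+$) monomial, $u^{-1}$ the lowest (all-$-$) monomial, $p\in(\mathcal{Z}^\omega_T)^+$, and then run the Chebyshev recursion --- is exactly the shape of that argument.

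Two points in your write-up need tightening before the proof closes. First, ``inverse up to a power of $\omega$'' is not enough: already for $k=2$ one has $F_2(x)=x^2-2$, and if $M_+M_-=\omega^s$ then the scalar contribution is $2\omega^s-2$, which is positive only when $s=0$. You need $M_+M_-=1$ \emph{exactly}. This holds because (i) the all-$+$ and all-$-$ states give the unique highest and lowest monomials, (ii) $x$ is $*$-invariant (a standard property of the Bonahon--Wong trace), so each of these terms is individually $*$-invariant, forcing its Weyl-ordered coefficient to be $\omega^0=1$, and (iii) two Weyl-ordered monomials with opposite exponent vectors are exact inverses, since $\varepsilon$ is skew-symmetric.

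Second, the bare inductive hypothesis ``$F_k(x)$ is positive'' does not propagate: from $F_{k+1}=xF_k-F_{k-1}$ you cannot conclude positivity of $F_{k+1}$ knowing only that $xF_k$ and $F_{k-1}$ are separately positive. You must carry more structure. One clean strengthening: prove simultaneously that $F_k(x)=u^k+u^{-k}+q_k$ with $q_k\in(\mathcal{Z}^\omega_T)^+$ \emph{and} that $u\,q_k-q_{k-1}\in(\mathcal{Z}^\omega_T)^+$. The second statement inducts via
\[
u\,q_{k+1}-q_k \;=\; u\bigl(u\,q_k-q_{k-1}\bigr)\;+\;u\,p\,u^k\;+\;u\,p\,u^{-k}\;+\;u\,p\,q_k,
\]
which is manifestly positive; then $q_{k+1}=(u\,q_k-q_{k-1})+u^{-1}q_k+p\,u^k+p\,u^{-k}+p\,q_k$ is positive as well. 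Your ``diagonal contribution'' remark is gesturing at this, but the hypothesis has to be stated at this level for the induction to go through.
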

Allegretti and Kim proved this by induction on $k$, using basic properties of $F_k$ and $\mathbb{I}^\omega_T(\ell)$. Their proof, which is quite elementary and spans a few pages, is contained in \S3.2 of the first arXiv version \cite{AK1} of their paper, but is omitted in the final published version \cite{AK}, because they did not prove the positivity for a single non-peripheral loop with weight $1$, i.e. $\mathbb{I}^\omega_T(\ell) \in (\mathcal{Z}^\omega_T)^+$, which is only done in the present paper!

\vs

From this Prop.\ref{prop:positivity_of_non-peripheral_k}, together with Thm.\ref{thm:Laurent_positivity_of_BW} which we proved using our topological result Thm.\ref{thm:ordering_problem}, we now know that $\mathbb{I}^\omega_T(\ell) \in (\mathcal{Z}^\omega_T)^+$ holds for any integer-weight lamination $\ell \in \mathring{\mathcal{A}}_{\rm L}(S,\mathbb{Z})$ consisting of a single non-peripheral good loop $\gamma$ with any positive integer weight. 

\vs

We can finally finish a proof of our stronger version main theorem (Thm.\ref{thm:main2}), which asserts $\mathbb{I}^\omega_T(\ell) \in (\mathcal{Z}^\omega_T)^+$ for {\em any} $\ell \in \mathring{\mathcal{A}}_{\rm L}(S,\mathbb{Z})$.

\vs

\ul{\it Proof of Thm.\ref{thm:main2}.} Let $\ell \in \mathring{\mathcal{A}}_{\rm L}(S,\mathbb{Z})$ be any integer-weight lamination. Recall \S\ref{subsec:AK} for the definition of $\mathbb{I}^\omega_T(\ell)$. Let $\gamma_i$'s be the mutually non-homotopic constituent curves of $\ell$, with weight $k_i$, so that eq.\eqref{eq:ell_as_sum} holds. Let $k_i \ell_i$ be the lamination consisting just of $\gamma_i$ with weight $k_i$. Then $\mathbb{I}^\omega_T(\ell)$ is defined as $\prod_i \mathbb{I}^\omega_T(k_i \ell_i)$ (eq.\eqref{eq:def_of_I_omega_as_product}). Since $(\mathcal{Z}^\omega_T)^+$ is closed under multiplication, in order to show $\mathbb{I}^\omega_T(\ell) \in (\mathcal{Z}^\omega_T)^+$ it suffices to show that each factor $\mathbb{I}^\omega_T(k_i\ell_i)$ belongs to $(\mathcal{Z}^\omega_T)^+$. In case $\gamma_i$ is a non-peripheral loop, we showed $\mathbb{I}^\omega_T(k_i\ell_i) \in (\mathcal{Z}^\omega_T)^+$, as mentioned above. In case $\gamma_i$ is a peripheral loop, $\mathbb{I}^\omega_T(k_i\ell_i)$ is defined by the formula eq.\eqref{eq:I_omega_Case_3}, hence is clearly in $(\mathcal{Z}^\omega_T)^+$. Done. \qed

\vs

Just to recall the readers, Thm.\ref{thm:main2} which we just proved implies the original main theorem Thm.\ref{thm:main}.

\section{Further research}
\label{sec:further_research}

The first future problem is to look for a proof shorter than the one in the present paper. The second is to look for a deeper meaning of the (quantum) Laurent positivity result we proved, such as a categorification. A priori, there is no reason to expect that Thm.\ref{thm:main} or Thm.\ref{thm:ordering_problem} should hold. The Allegretti-Kim elements are `universally Laurent' elements, hence are very interesting objects in the theory of quantum cluster algebras and quantum cluster varieties. But not so much has been discussed on the `universally positive Laurent' elements. This may have to do with physical quantization, namely with positive-definiteness of some self-adjoint operators, which is also related to the following topic.

\vs

The third is to try to build a relationship between the Allegretti-Kim quantum elements $\mathbb{I}^\omega(\ell)$ (or $\wh{\mathbb{I}}^q(\ell)$) for integral laminations and the quantum operators associated to closed curves, constructed by J. Teschner in a totally different manner \cite{T}. Teschner assigns some quantum expression to each closed curve using a recursive algorithm, where the counterpart of the `positivity' that we just proved for Allegretti-Kim elements is already built-in. Teschner's quantum expressions for loops are constructed using a slightly different framework of quantum Teichm\"uller theory, and it is not known whether they satisfy as many properties as satisfied by the Allegretti-Kim elements. If we believe that both Teschner's and Allegretti-Kim's expressions have enough naturality and canonicity, then it is natural to expect that they are related by some natural map bridging between the two frameworks of quantum Teichm\"uller theory.

\vs

Among the properties of Fock-Goncharov's classical functions $\mathbb{I}(\ell)$'s and Allegretti-Kim's quantum elements $\wh{\mathbb{I}}^q(\ell)$'s that we have not listed, an important one is about their structure coefficients. Since $\mathbb{I}(\ell)$'s ($\ell \in \mathcal{A}_{{\rm SL}_2,S}(\mathbb{Z}^t)$) form a $\mathbb{Q}$-basis of the ring $\mathcal{O}(\mathcal{X}_{{\rm PGL}_2,S})$ say in case when $S$ is a punctured surface, a product of two of them $\mathbb{I}(\ell) \mathbb{I}(\ell')$ can be written as a $\mathbb{Q}$-linear combination of $\mathbb{I}(\ell'')$'s for some finitely many $\ell'' \in \mathcal{A}_{{\rm SL}_2,S}(\mathbb{Z}^t)$. Surprisingly, all these coefficients of $\mathbb{I}(\ell'')$'s appearing in the combination, usually called the {\em structure coefficients},  are again {\em positive} integers \cite{Th}. Such phenomenon usually hints for a `monoidal categorification', often leading to a very fruitful mathematics. In the quantum version, it is still true that $\wh{\mathbb{I}}^q(\ell) \wh{\mathbb{I}}^q(\ell')$ is a finite linear combination of $\wh{\mathbb{I}}^q(\ell'')$ with coefficients in $\mathbb{Z}[q,q^{-1}]$. Now, we expect that the positivity of the structure coefficients also perseveres in the quantum version too:
\begin{conjecture}[the `structure-coefficient' positivity of Allegretti-Kim quantum elements]
\label{conj:structure-coefficient_positivity}
For any integral laminations $\ell,\ell'\in \mathcal{A}_{{\rm SL}_2,S}(\mathbb{Z}^t)$, one has
\begin{align}
\label{eq:second_positivity}
\wh{\mathbb{I}}^q(\ell) \wh{\mathbb{I}}^q(\ell') = \sum_{\ell'' \in \mathcal{A}_{{\rm SL}_2,S}(\mathbb{Z}^t)} c^q(\ell,\ell';\ell'') \, \wh{\mathbb{I}}^q(\ell''),
\end{align}
where only finitely many structure coefficients $c^q(\ell,\ell';\ell'')$ are non-zero, and they all lie in $\mathbb{Z}_{\ge 0}[q,q^{-1}]$, i.e. are Laurent polynomials in $q$ with \ul{\em positive} integral coefficients.
\end{conjecture}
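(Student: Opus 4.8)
The plan is to reduce Conjecture \ref{conj:structure-coefficient_positivity} to a positivity statement internal to the Kauffman bracket skein algebra $\mathcal{S}^{1/\omega^2}(S)$, and then to a controlled crossing-resolution argument in the spirit of the proof of Thm.\ref{thm:main}. First I would use that, for laminations $\ell,\ell'$ whose constituents are non-peripheral, $\wh{\mathbb{I}}^q(\ell) = {\rm Tr}^\omega_T([\til{\ell}])$ and $\wh{\mathbb{I}}^q(\ell') = {\rm Tr}^\omega_T([\til{\ell}'])$, where $[\til{\ell}]$ denotes the Chebyshev-threaded skein lift of $\ell$ (thread $F_k$ from eq.\eqref{eq:F_k} around a loop of weight $k$, using that ${\rm Tr}^\omega_T$ is an algebra homomorphism), while peripheral constituents are handled separately via eq.\eqref{eq:I_omega_Case_3}. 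Since multiplication in the skein algebra is the superposition operation and ${\rm Tr}^\omega_T$ is a homomorphism, $\wh{\mathbb{I}}^q(\ell)\,\wh{\mathbb{I}}^q(\ell') = {\rm Tr}^\omega_T([\til{\ell}]\cdot[\til{\ell}'])$, the quantum trace of a single skein with crossings. Hence it suffices to show that in (a suitable localization at peripheral classes of) $\mathcal{S}^{1/\omega^2}(S)$ the product $[\til{\ell}]\cdot[\til{\ell}']$ expands as a $\mathbb{Z}_{\ge 0}[\omega,\omega^{-1}]$-linear combination of the threaded skeins $[\til{\ell}'']$, $\ell''\in\mathring{\mathcal{A}}_{{\rm SL}_2,S}(\mathbb{Z}^t)$; applying ${\rm Tr}^\omega_T$, which sends $[\til{\ell}'']$ to $\wh{\mathbb{I}}^q(\ell'')$, then yields eq.\eqref{eq:second_positivity} with coefficients in $\mathbb{Z}_{\ge 0}[q,q^{-1}]$. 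A preliminary technical point is that $\{[\til{\ell}'']\}$ is indeed a basis whose image under ${\rm Tr}^\omega_T$ is exactly the Allegretti--Kim basis; this is essentially contained in \cite{AK}\,\cite{BW} but would need to be stated carefully, including the inversion of peripheral loops, whose quantum traces are the positive Laurent monomials of eq.\eqref{eq:I_omega_Case_3}.

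Next I would carry out the crossing resolution of $[\til{\ell}]\cdot[\til{\ell}']$. Each application of a skein relation $[K_1] = \omega^{2}[K_0] + \omega^{-2}[K_\infty]$ introduces only positive monomial coefficients, so after resolving all crossings one obtains $[\til{\ell}]\cdot[\til{\ell}'] = \sum_i p_i(\omega)\,[M_i]$ with $p_i\in\mathbb{Z}_{\ge 0}[\omega,\omega^{-1}]$ and each $M_i$ a crossingless (multi)curve. It then remains to rewrite each $[M_i]$ in the threaded basis: a contractible circle contributes the factor $-\omega^{4}-\omega^{-4}$ --- the \emph{only} source of signs; $n$ parallel copies of an essential non-peripheral loop $\gamma$ are converted to threaded loops via the standard expansion of $x^n$ in the polynomials $F_k(x)$, whose coefficients are non-negative integers; parallel copies of a peripheral loop are absorbed into a positive power, consistently with eq.\eqref{eq:I_omega_Case_3}; components bounding once-punctured disks are themselves peripheral loops and are left as such. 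All of these steps except the first preserve positivity, so the whole problem concentrates on the contractible-circle factor.

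The main obstacle is therefore exactly the cancellation of the negative contributions $-\omega^{4}-\omega^{-4}$ produced by contractible circles created during resolution, against positive contributions of the other resolution states. This is already the crux classically, where Thurston \cite{Th} proved the corresponding positivity by passing from the parallel-copies (``bangle'') spanning set to the Chebyshev-threaded (``bracelet'') basis and exhibiting a manifestly positive combinatorial model for the structure constants; I expect a quantum refinement of Thurston's bracelet argument --- tracking $\omega$-weights through his gluing and normalization combinatorics --- to be the natural route, with the delicate point being that the naive ``collect the circles last'' reorganization does not commute with the monomial weights in the quantum setting. An alternative, and perhaps ultimately cleaner, approach would bypass skein theory altogether: identify $\wh{\mathbb{I}}^q$ with a basis of quantum theta functions and deduce eq.\eqref{eq:second_positivity} from a quantum analog of the Gross--Hacking--Keel--Kontsevich \cite{GHKK} broken-line positivity, which would however first require proving that the Allegretti--Kim elements coincide with such theta functions --- a substantial program in its own right. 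In either approach, Thm.\ref{thm:main} of the present paper should enter as the weight-one, single-lamination base case of any inductive scheme.
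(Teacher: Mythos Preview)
The statement you are attempting to prove is labeled a \emph{Conjecture} in the paper and is presented in \S\ref{sec:further_research} as an open problem; the paper gives no proof. The paper's entire treatment consists of stating the conjecture, remarking that the Laurent positivity of Thm.\ref{thm:main} ``probably helps much'' when investigating it, and pointing to prior appearances in \cite{FG06}, \cite{AK}, \cite{Th}, \cite{Le}, with \cite{Le} singled out for the role of Chebyshev polynomials. So there is nothing to compare your argument against: you are not reproducing a known proof but sketching an attack on a genuinely open question.

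Your proposal is an honest outline and you correctly locate the obstruction yourself: everything goes through with positive coefficients until one must absorb the contractible-circle value $-\omega^4-\omega^{-4}$, and you acknowledge that neither the quantum refinement of Thurston's bracelet combinatorics nor the identification with quantum theta functions is currently available. That is accurate; these are exactly the two programs the community regards as the natural routes, and neither is complete. One further gap worth flagging: your reduction assumes that the threaded-skein basis maps under ${\rm Tr}^\omega_T$ \emph{bijectively and positively-invertibly} onto the Allegretti--Kim basis, so that positivity of skein structure constants transports to positivity of the $c^q(\ell,\ell';\ell'')$. The forward direction is in \cite{AK}, but injectivity of ${\rm Tr}^\omega_T$ on the relevant span (equivalently, linear independence of the $\wh{\mathbb{I}}^q(\ell)$) is itself not fully established in the paper --- it notes the elements are only a ``\,`basis'\,'' in quotation marks --- so even granting skein positivity you would need this additional input to conclude. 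In short: your strategy is reasonable and aligned with the paper's own hints, but it is a research program, not a proof, and the paper does not claim otherwise.
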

Ideally, a slightly stronger version with $\ell,\ell',\ell'' \in \mathcal{A}_{\rm L}(S,\mathbb{Z})$ and $\wh{\mathbb{I}}^q$ replaced by $\mathbb{I}^\omega$ is expected to hold; the statement would be that the coefficients $c^\omega(\ell,\ell';\ell'')$ lie in $\mathbb{Z}_{\ge 0}[\omega,\omega^{-1}]$. Conjecture \ref{conj:structure-coefficient_positivity} or its stronger version then would hint to some more complicated version of monoidal categorification. We hope that, when investigating \eqref{eq:second_positivity}, it probably helps much if we know that each $\wh{\mathbb{I}}^q(\ell), \wh{\mathbb{I}}^q(\ell'), \wh{\mathbb{I}}^q(\ell'')$ are `Laurent positive' in the sense we proved in the present paper. We note that the above conjecture already appeared in several papers, including \cite{FG06} \cite{AK} \cite{Th} \cite{Le}. In particular, Le \cite{Le} investigated the role of the Chebyshev polynomials related to this structure coefficient positivity\footnote{pointed out to the third author by Dylan Allegretti.}.

\end{document}